\numberwithin{equation}{section}
\newtheorem{theorem}{Theorem}[section]
\newtheorem{lemma}[theorem]{Lemma}
\newtheorem{metatheorem}[theorem]{Meta-Theorem}
\newtheorem{proposition}[theorem]{Proposition}
\newtheorem{definition}[theorem]{Definition}
\newtheorem{remark}[theorem]{Remark}
\newcommand{\sredm}[1]{\ifmmode\text{\xout{\ensuremath{\displaystyle \textcolor{red}{#1}}}}\else\sout{\textcolor{red}{#1}}\fi} 
\def\ud{\mathrm{d}}
\def\E{{\mathbb E}}
\def\ES{\llbracket d \rrbracket}
\def\ESm{\llbracket m \rrbracket}
\def\ESdm{\llbracket d-m \rrbracket}
\def\R{{\mathbb R}}
\def\RR{{\mathbb R}}
\def \fd {\mathfrak{d}}
\def \be {\begin{equation}}
\def \ee {\end{equation}}
\begin{document}
\title[Finite state MFG{s} with Wright--Fisher common noise]{Finite state Mean Field Games with Wright--Fisher common noise}

 \author[E. Bayraktar]{Erhan Bayraktar \address{(E. Bayraktar) Department of Mathematics, University of Michigan, Ann Arbor, Michigan 48109, United States}\email{erhan@umich.edu}} 
 
\author[A. Cecchin]{Alekos Cecchin \address{(A. Cecchin) Universit\'e C\^ote d'Azur, CNRS, Laboratoire J.A. Dieudonn\'e, 06108 Nice, France
}\email{alekos.Cecchin@unice.fr
}}

 \author[A. Cohen]{Asaf Cohen \address{(A. Cohen) Department of Mathematics, University of Michigan, Ann Arbor, Michigan 48109, United States}\email{asafc@umich.edu}}

\author[F. Delarue]{ Fran\c{c}ois Delarue 
\address{(F. Delarue) Universit\'e C\^ote d'Azur, CNRS, Laboratoire J.A. Dieudonn\'e, 06108 Nice, France
}\email{francois.delarue@unice.fr
}}

\thanks{--E. Bayraktar is partially supported by the National Science Foundation and by the Susan M. Smith chair. 
 A. Cecchin and F. Delarue acknowledge the financial support of French ANR project ANR-16-CE40-0015-01 on ``Mean Field Games''. F. Delarue also thanks Institut Universitaire de France and French ANR projet ANR-19-P3IA-0002 ``3IA C\^ote d'Azur - Nice - Interdisciplinary Institute for Artificial Intelligence''. A. Cohen acknowledges the financial support of Research supported by the National Science Foundation (DMS-2006305).}

\thanks{--This is the final version of the paper. To appear in {\it Journal de Math\'ematiques Pures et Appliqu\'ees}.}

\date{\today}

\keywords{Mean-field games, master equation, Kimura operator, non-linear PDEs, forcing uniqueness, common noise, Wright--Fisher diffusion.}
\subjclass[2010]{ 91A13, 
91A15, 
35K65. 
}

\begin{abstract}
We force uniqueness in finite state mean field games by adding a Wright--Fisher common noise. We achieve this by analyzing the master equation of this game, which is a degenerate parabolic second-order partial differential equation set on the simplex whose characteristics solve the stochastic forward-backward system associated with the mean field game; see Cardaliaguet et al. \cite{CardaliaguetDelarueLasryLions}. We show that this equation, which is a non-linear version of the Kimura type equation studied in Epstein and Mazzeo \cite{EpsteinMazzeo}, has a unique smooth solution whenever the normal component of the drift at the boundary is strong enough. Among others, this requires a priori estimates of H\"older type for the corresponding Kimura operator when the drift therein is merely continuous.
\end{abstract}

\maketitle

\tableofcontents
\section{Introduction} 
{Aiming at} forcing uniqueness in the theory of mean field games (MFGs), a more complete account of which we provide below, we analyze here  
a system of parabolic partial differential equations (PDEs) with the main feature of being set on the space of probability measures on $\ES := \{1,\cdots,d\}$, the latter being referred to as the $(d-1)$-dimensional simplex, for a fixed integer $d \geq 1$.
This system is indexed by the elements $i$ of $\ES$ itself and 
has the following generic form:
\begin{equation}
\label{eq:master:equation:introl}
\begin{split}
&\partial_t U^i -\frac{1}{2}\sum_{j=1}^d(U^i-U^j)^2_++f^i(t,p) + 
\sum_{j =1}^d \varphi(p_{j})
\bigl[ U^j - U^i \bigr]
\\
&\qquad+
\sum_{1\le j,k \le d} p_k\bigl[ \varphi(p_{j}) + (U^k-U^j)_+ \bigr] \left( \partial_{p_{j}} U^i - \partial_{p_{k}} U^i\right) 
\\
&\qquad+ \varepsilon^2\sum_{j =1}^d p_{j} \left( \partial_{p_{i}} U^i - \partial_{p_j} U^i\right)
+\frac{\varepsilon^2}{2} \sum_{1\le j,k \le d}(p_j \delta_{jk}-p_{j} p_{k}) \partial^2_{p_{j} p_{k}} U^i =0,\\
&U^i(T,p)= g^i(p),
\end{split}
\end{equation}
for $i\in \ES$,
where $(t,p) \in [0,T] \times {\mathcal S}_{d-1}$ and $\mathcal S_{d-1}$ is the $(d-1)$-dimensional simplex. 
Whenever $\varepsilon$ is equal to zero and $\varphi$ is also identically equal to zero, this system is the so-called
\textit{master equation}
that describes the values of the equilibria in a (finite state) MFG
driven by a simple continuous time Markov decision process on $\ES$ and by the functions $(f^i)_{i \in \ES}$ and $(g^i)_{i \in \ES}$ as respective running and terminal costs.
To wit, the first line in \eqref{eq:master:equation:introl}, which has a form similar to a Hamilton-Jacobi equation on $\ES$, accounts for the optimization problem in the underyling MFG, whilst the second line accounts for the dynamics of the equilibria.
Although we provide a longer review on MFG later in the text, we feel useful to quote, at this early stage of the introduction, 
{\cite{gomes2014dual, gomes2014socio,Lionscollege2}}
and \cite[Chapter 7]{CarmonaDelarue_book_I} as references on the master equation for finite state mean field games, 
and
{\cite{Cardaliaguet,ben-fre-yam2015, CarmonaDelarue_book_II, CardaliaguetDelarueLasryLions, cha-cri-del2019,Lionscollege1,Lionsvideo}}
as references
for continuous state mean field games.
The main novelty here is the third line in 
\eqref{eq:master:equation:introl}. Therein, $\varepsilon$ is a (strictly) positive viscosity parameter 
which we call the {\it intensity of the common noise}. This terminology comes from the fact that equation \eqref{eq:master:equation:introl} is associated with a new form of MFG, which we are going to describe later, in which equilibria are no longer deterministic but are subjected to a so-called {\it common noise} and are hence randomized. 
Under the action of the common noise, the master equation becomes a system of second order PDEs, the principal part of 
which is the second-order operator in the third line of 
\eqref{eq:master:equation:introl} and is called a Kimura operator on the simplex (see \cite{EpsteinMazzeo,Kimura}). 
Accordingly, the master equation here reads as 
 a system of non-linear parabolic PDEs of Kimura type. 
 As for the additional function $\varphi$ in the first two lines of \eqref{eq:master:equation:introl}, it should be understood as a forcing term in the dynamics of the equilibria that allow the latter to escape for free from the boundary of the simplex. 
 In this context, one of our contributions  (see 
{Meta-Theorem  
 \ref{main:metathm-PDE}}
 and Theorem \ref{existence:master}) is to show that, when 
 $\varepsilon$ is strictly positive and the
 functions $(f^i)_{i \in \ES}$ and $(g^i)_{i \in \ES}$ satisfy some smoothness conditions, we can choose $\varphi$ large enough in the neighborhood of the boundary of $\mathcal S_{d-1}$ and null everywhere else in such a way that \eqref{eq:master:equation:introl} has a unique smooth
 solution 
 (in a so-called Wright--Fischer H\"older space of functions that are once differentiable in time and twice in space with a suitable behavior at the boundary of the simplex). Accordingly, our main result is that the corresponding MFG is uniquely solvable for a prescribed initial condition (see 
{Meta-Theorem \ref{main:metathm}
 and}
 Theorem \ref{main:thm}). Importantly, there are many examples for which the latter is false when $\varepsilon =0$, 
 which explains why we refer quite often to the concept of ``forcing uniqueness''.
%

A general framework to analyze linear {\it Kimura PDEs} was introduced by Epstein and Mazzeo in \cite{EpsteinMazzeo} and this framework was extended subsequently in \cite{eps-maz2016, eps-pop2017, pop2017b, POP}. Generally speaking, the analysis of Kimura PDEs suffers from two main difficulties: (1) the simplex boundary is not smooth, and (2) the PDE degenerates at the boundary.  
Despite these difficulties, the authors of \cite{EpsteinMazzeo} were able to prove the existence and uniqueness of smooth solutions to linear Kimura PDEs under enough regularity of the coefficients. 
However, these results do not apply to \eqref{eq:master:equation:introl} because 
the coefficients therein are time-dependent (Kimura operators are assumed to be time-homogeneous in 
\cite{EpsteinMazzeo})
 and, most of all, because the equation is non-linear.
While the additional time dependence can be handled with relative ease (see Lemma \ref{lem:4.6}), the non-linearity requires a sophisticated analysis, which, in fact, is the main technical part of this paper. In this respect, the main step in our study is Theorem \ref{main:holder}, which provides an a priori H\"older estimate to solutions of linear Kimura PDEs when driven by merely continuous drift terms that point inward the simplex in a sufficiently strong manner, whence our need for the additional $\varphi$ in equation \eqref{eq:master:equation:introl}. The proof of this a priori estimate uses a tailor-made {\it coupling by reflection} argument inspired by earlier works on couplings for multidimensional processes (see e.g., \cite{ChenLi}). However, the coupling by itself, as usually implemented in the literature for proving various types of smoothing effects for diffusion processes, is in fact not enough for our purpose. 
We indeed pay a price for the degeneracy of the equation at the boundary and, similar to other works on Kimura operators (see for instance \cite{albanesemangino}), we need to perform an induction over the dimension to handle the degeneracy properly; see Proposition \ref{prop:induction} for the details of the induction property.
Once we reach this point, the proof of existence of a solution to \eqref{eq:master:equation:introl} is straightforward, provided that 
$\varphi$ therein is chosen in a relevant way, and uses Schauder's fixed point theorem on the proper Wright--Fisher\footnote{Most of the time, we just say Wright--Fisher space instead of Wright--Fisher H\"older space.}  space, as well as Schauder's estimates derived in \cite{EpsteinMazzeo} for the linear equation and Lemma \ref{lem:4.6} mentioned earlier
(see Theorem \ref{existence:master}).

Let us now clarify our technical contribution into the context of MFGs. 
MFGs were introduced in the seminal works of Lasry and Lions \cite{Lasry2006,LasryLions2,LasryLions}, and Huang, Malham{\'e}, and Caines \cite{Huang2006,Huang2007}. Merging intuition from statistical physics and classical game theory, this paradigm provides the asymptotic behavior of many weakly interacting strategic players who are in a Nash equilibrium. Formally, this asymptotic equilibrium is described as
the fixed point of a best response map, which sends a given flow of measures to the distribution of a controlled state-dynamics.
For recent theoretical developments and applications of this theory, we refer the reader to { \cite{MR3268061, 
Bensoussan2013, Cardaliaguet,
CarmonaDelarue_book_I, CarmonaDelarue_book_II}} and the references therein.
MFGs with (a fixed and) finite number of states were {analyzed} by {\cite{GomesMohrSouza_discrete, Gomes2013, gue2015,Lionscollege2}}; for a probabilistic approach to finite state MFGs we refer to \cite{car-wan2018b, Cecchin2017}.

Typically, MFGs do not admit unique solutions. 
Two known instances of uniqueness are the small $T$ case 
and the so-called monotonous case due to Lasry and Lions, see \cite[Section 4]{Lasry2006} for the latter.
The thrust of our paper is to establish uniqueness by adding a common noise\footnote{\label{foo:BLL}Recently, Bertucci, Lasry, and Lions \cite{BertucciLasryLions} mentioned that ``The addition of a common noise in the MFG setting remains one of the most important questions in the MFG theory." We feel that our paper may be one step forward in this direction.} that emerges from the limiting behavior of Wright--Fisher population-genetics models. The special structure of the common noise we use leads to stochastic dynamics evolving inside the multidimensional simplex ${\mathcal S}_{d-1}$ and eventually to the second order form of \eqref{eq:master:equation:introl}. Besides the forcing uniqueness result, this is another interest of our work to incorporate population-genetics models into MFGs; to the best of our knowledge, this is a new feature in the field. 
{In this regard, it is worth pointing out that, even though we do not speak about it in this paper, it is in fact possible to explain 
 the common noise at the level of a particle system by a diffusion approximation. We refer to our companion work \cite{BCCD-game} for a complete overview.
}

In fact, we must stress that 
the recent work \cite{BertucciLasryLions} (to which we already alluded in the footnote \ref{foo:BLL})
also addresses a form of common noise for finite state MFGs.
As explained therein, the key point in this direction is to force the finite-player system to have many simultaneous jumps at some random times prescribed by the common noise. Although we  share a similar idea in our construction, our common noise structure is in the end different from \cite{BertucciLasryLions}:  While the simultaneous jumps 
in \cite{BertucciLasryLions} are governed by a deterministic transformation of the state space, 
they here obey a resampling procedure that is typical, as we have just said, of population-genetics models.    
Moreover, 
one of the questions in 
\cite{BertucciLasryLions}
is to decide whether the solution preserves monotonicity of the coefficients; in this regard, forcing uniqueness (outside the monotonous setting) is not discussed in 
\cite{BertucciLasryLions}. 
In fact, forcing uniqueness for MFGs was addressed in other works but in different 
settings. 
Recently, Delarue \cite{DelarueSPDE} established a forcing uniqueness result for a continuous state MFG  
obtained by forcing a deterministic (meaning that the players follow ordinary differential equations) MFG by means of a common noise.
 In this case, the common noise is infinite-dimensional and henceforth differs from the most frequent instance of common noise used in the literature, since the latter has very often a finite dimension, see e.g., \cite{CarmonaDelarue_book_II}. 
At this point, it is worth mentioning that forcing uniqueness is studied in 
\cite{fog2018} under the action of a standard finite-dimensional common noise, but 
for a linear quadratic MFG. This is due to the fact that the equilibrium distribution in that paper is Gaussian and is parametrized by its mean and variance, which reduces the dimension of the problem. 
On a more prospective level, forcing uniqueness by common noise might enable a selection criterion by taking small noise limit for cases where the limiting problem does not have a unique equilibrium.
This question was addressed in some specific cases in \cite{delfog2019} for a continuous state model and 
in \cite{cecdaifispel} for a finite state model (see also \cite{bayzhang2019})  and is the purpose of the forthcoming work \cite{cecdel2019} in a more general setting (with a finite state space). 

Once the master equation
\eqref{eq:master:equation:introl} 
 has been solved,
the equilibrium distribution of the mean field game, which becomes random under the action of the common noise, is {provided} by the solution of the forward component of the following {\it stochastic mean field game system}, given by the forward-backward stochastic differential equation 
\begin{equation}
\label{eq12}
\begin{split}
&d P_{t}^i =  \sum_{j =1}^d \Bigl( P_{t}^j \bigl( \varphi(P_{t}^i) + (u^j_t-u^i_t)_+ \bigr) - P_{t}^i \bigl( \varphi(P_{t}^j) 
+ (u^i_t-u^j_t)_+ \bigr)
\Bigr) \ud t
\\
&\hspace{30pt}
  + \frac{{\varepsilon}}{\sqrt 2} 
\sum_{j =1}^d
\sqrt{P_{t}^i P_{t}^j}  
d \bigl[ W_{t}^{i,j} - W_{t}^{j,i} \bigr],
\\
&\displaystyle du_{t}^i = - \Bigl( 
\sum_{j =1}^d \varphi(P^j_{t})
\bigl[ u_{t}^j - u_{t}^i \bigr]
-\frac{1}{2}\sum_{j=1}^d(u^i_{t}-u^j_{t})^2_+ + f^i(P_{t}) \Bigr) \ud t 
\\
&\hspace{30pt} - \frac{\varepsilon}{\sqrt{2}} \sum_{j =1}^d 
\sqrt{\frac{{P_{t}^j}}{{P_{t}^i}}} \bigl( \nu_{t}^{i,i,j} - \nu_{t}^{i,j,i} \bigr) \ud t
+ \sum_{1\le j \not = k\le d} \nu_{t}^{i,j,k} \ud W_{t}^{j,k},
\end{split}
\end{equation}
with a given initial deterministic probability vector {$(P_{0}^i = p_{0,i})_{i\in\ES}$} for the forward equation and the terminal condition  
{$(u^i_T=g^i(P_T))_{i\in\ES}$} for the backward equation. The process  
{$((W_{t}^{i,j})_{i,j \in \ES : i \not = j})_{0 \leq t \leq T}$}
is a  Wiener process and, as usual in the theory of backward stochastic differential equations, the role of the processes {$(((\nu^{i,j,k}_t))_{0\leq t\leq T})_{i,j,k\in \ES: j\neq k}$} is to force the solution {$((u^i_t)_{0 \leq t \leq T})_{i\in \ES }$} to be non-anticipating. The process {$((P^i_t)_{i\in \ES })_{0 \leq t \leq T}$} is  a Wright--Fisher diffusion process 
(taking values in the $(d-1)$-dimensional simplex), see \cite{Feller,Ethier,Sato}; accordingly, the forward equation in 
\eqref{eq12} must be interpreted as a stochastic Fokker-Planck equation on ${\mathcal S}_{d-1}$. The process {$((u^i_t)_{i\in \ES })_{0 \leq t \leq T}$} stands for the game-value for the representative player and the system of equations that it solves in 
\eqref{eq12} must be read as a (backward) stochastic Hamilton--Jacobi--Bellman equation on ${\mathcal S}_{d-1}$. 
The connection between \eqref{eq:master:equation:introl} and \eqref{eq12} is given by the following relationship.
\[u^i_t= U^i(t,P_t),\qquad\text{and}\qquad 
\nu_t^{i,j,k} = V^{i,j,k}(t,P_t), 
\]
where
\[
V^{i,j,k}(t,p)= \frac{\varepsilon}{\sqrt{2}} \left(\partial_{p_j}U^i(t,p)-\partial_{p_k} U^i(t,p)\right) \sqrt{{p_j p_k}}.
\] 
In fact, this relationship is the cornerstone to prove uniqueness of the solution of \eqref{eq:master:equation:introl} through a verification argument, see  Theorem \ref{lem:existence:mfg:from:master}. This argument is inspired from the original four-step-scheme in \cite{MaProtterYong}; in the framework of continuous state MFGs, it has already been 
used in \cite{CarmonaDelarue_book_II, CardaliaguetDelarueLasryLions, cha-cri-del2019}. 
On a more elaborated level, 
we should point out that the master equation has been a key tool
{(e.g., 
see 
\cite{CardaliaguetDelarueLasryLions} 
 for continuous state mean field games
and
\cite{bay-coh2019, cec-pel2019} 
for finite state mean field games)}
 to show convergence of the closed loop Nash-equilibrium of the $N$-player system to the MFG equilibrium. In both 
\cite{bay-coh2019, cec-pel2019}, there is no common noise and, in all the latter three cases, 
the Lasry--Lions monotonicity condition is assumed to be force, which is obviously in stark contrast 
to the setting of the current paper. The convergence problem in our setup is thus an interesting question, {which we resolve in our aforementioned work \cite{BCCD-game}.} 

{The rest of the paper is organized as follows. In Section \ref{se:MFG} we introduce the MFG model and provide preliminary versions of the main MFG and PDE results of the paper. The first one (Meta-Theorem 
\ref{main:metathm})
states that the MFG with common noise admits a unique solution; 
the second one (Meta-Theorem 
\ref{main:metathm-PDE}) states that the related master equation  
\eqref{eq:master:equation:introl}
has a unique smooth solution; the last one
(Theorem \ref{main:holder})
 is an a priori H\"older regularity estimate for linear PDEs driven by a merely continuous drift and a Kimura operator. 
In Section  
  \ref{sec:new:Kimura} we provide more material for the analysis of PDEs set on the simplex. This allows us to formulate more rigorous versions of 
  Meta-Theorems 
\ref{main:metathm} and 
\ref{main:metathm-PDE}, see Theorems \ref{main:thm}
and \ref{existence:master}. 
Section 
\ref{sec:MFG-master} 
is dedicated to the proofs of Theorems
 \ref{main:thm}
and \ref{existence:master}, taken for granted the a priori estimate 
from 
Theorem \ref{main:holder}. 
A key point therein is to make the connection between the master equation 
\eqref{eq:master:equation:introl} and 
 the MFG forward-backward system \eqref{eq12}. 
 The proof of Theorem \ref{main:holder} is the most demanding one of the paper and is given in Section \ref{sec:apriori}. The main two ingredients in the proof are the coupling construction provided in Proposition \ref{prop:coupling:2} and the induction step in Proposition \ref{prop:induction}.}

We now provide frequently used notation.

\vspace{5pt}
\noindent\textbf{Notation.}
For $a,b\in\R$, we let $a\wedge b:=\min\{a,b\}$. 
We use the notation $M^\dagger$ to denote the transpose of a matrix $M$. 
Moreover, we use the generic notation $p = (p_i)_{i \in \ES}$ (with $p$ in lower case and $i$ in subscript) for elements of $\R^d$, while processes are usually denoted by ${\boldsymbol P}=((P^i_t)_{i=1,\dots,d })_{0 \leq t \leq T}$ (with $P$ in upper case and $i$ in superscript).
For a subset {$A$} of a Euclidean space, we denote by 
 $\textrm{\rm Int}(A)$ the interior of $A$. 
Also, we recall the notation ${\mathcal S}_{d-1}:= \{ (p_{1},\cdots,p_{d}) \in (\RR_{+})^d : \sum_{i\in\llbracket d \rrbracket}p_{i}=1\}$, where $\llbracket d \rrbracket:=\{1,\ldots,d\}$. We can identify 
${\mathcal S}_{d-1}$
with the convex polyhedron of ${\mathbb R}^{d-1}$ $\hat{\mathcal S}_{d-1}:=\{ (x_{1},\cdots,x_{d-1}) \in (\RR_{+})^{d-1} : \sum_{i \in \llbracket d-1 \rrbracket} x_{i} \leq 1\}$. In particular, we sometimes write 
``the interior'' of ${\mathcal S}_{d-1}$; in such a case, we implicitly consider the interior 
 of ${\mathcal S}_{d-1}$ as the $(d-1)$-dimensional interior of $\hat{\mathcal S}_{d-1}$. 
Obviously, the interior of ${\mathcal S}_{d-1}$, when regarded as a subset of ${\RR}^d$, is empty, which makes it of a little interest. 
To make it clear, 
 for some $p \in {\mathcal S}_{d-1}$, 
 we sometimes write $p \in 
 \textrm{Int}(\hat{\mathcal S}_{d-1})$, meaning that 
 $p_{i}>0$ for any $i \in \ES$. 
We use the same convention when speaking about the boundary of ${\mathcal S}_{d-1}$:
 For some $p \in {\mathcal S}_{d-1}$, 
 we may write $p \in 
 \partial \hat{\mathcal S}_{d-1}$ to say that 
 $p_{i}=0$ for some $i \in \ES$. 
For $p \in {\mathcal S}_{d-1}$, we write $\sqrt{p}$ for the vector $(\sqrt{p_{1}},\cdots,\sqrt{p_{d}})$.  Finally, 
$\delta_{i,j}$ is the Kronecker symbol and $r_+$ denotes the positive part of $r\in\mathbb{R}$.

%
%
%

\section{Model and preliminary versions of the main results}
\label{se:MFG}

The purpose of this section is to introduce step by step {the model and then to provide preliminary versions of our main results. Although not definitive, those versions should help the reader to have a quick overview of the content of the paper. More complete statements are given in the next section. As we already accounted for in the introduction, our general objective is to prove that a relevant form of common noise may force uniqueness of equilibria to mean field games on a finite state space, see 
Meta-Theorem
\ref{main:metathm}. Our approach relies on the analysis of the master equation \eqref{eq:master:equation:introl}, whose solvability is addressed by means of 
a suitable smoothing property for so-called Kimura operators. We refer to Subsection 
 \ref{subsec:new:meta:PDE}
for an outlook on our PDE results. The complete description of Kimura operators  
is postponed to Section \ref{sec:new:Kimura}.} 


\subsection{A preliminary version of the mean field game}
\label{subse:3:1}

The first point that we need to clarify is the form of the mean field game itself. 
Whilst it is absolutely standard when there is no common noise, the mean field game addressed below takes indeed a more intricate and less obvious form in the presence of common noise. In fact, the somewhat non-classical structure that we use throughout the paper is specifically designed in order 
to be in correspondence with the class of second order differential operators on the simplex, referred to as Kimura operators in the text,
for which we can indeed prove the smoothing results announced in introduction, 
see Subsections
\ref{subsec:new:meta:PDE}
for a first account and
\ref{subse:weak:solvability}
for more details.

Clearly, the sharpest way to derive the form of mean field games that is used below would consist in going back to a game with a large but finite number $N$ of players and in justifying that, under the limit $N \rightarrow \infty$, this finite game converges in some sense 
to our form of mean field games. 
{Instead, we directly write down our version of mean field game with a common noise on a finite state space. 
Our rationale for doing so is that it allows the reader to jump quickly into the article. If she or he is interested, she or he may have a look at 
our companion paper
\cite{BCCD-game}, in which the discrete model is described in depth and the convergence problem is entirely resolved}.

\subsubsection{Mean field game without a common noise}
\label{subsubse:without}
When there is no common noise, our form of mean field game is directly taken from 
the earlier works 
\cite{GomesMohrSouza_discrete, Gomes2013}.
In short, a given tagged player  
evolves according to a Markov process with values in a finite state space $E$, which we will take for convenience as 
$E=\llbracket d \rrbracket :=\{1,\cdots,d\}$.
At any time $t \in [0,T]$, for a finite time horizon $T>0$, she chooses her transition rates in the form of 
a time-measurable $d \times d$--matrix $(\beta_{t}^{i,j})_{i,j \in \ES}$ satisfying 
\begin{equation}
\label{eq:Q-matrix}
\beta_{t}^{i,j} \geq 0, \quad i \not =j, \quad \beta_{t}^{i,i} = -\sum_{j \not = i} \beta_{t}^{i,j}, \quad t \in [0,T].
\end{equation}
Given the rates $((\beta^{i,j}_{t})_{i,j \in \llbracket d \rrbracket})_{0 \le t \le T}$, the marginal distribution
$((Q_{t}^i)_{i \in \llbracket d \rrbracket})_{0 \le t \le T}$ 
of the states of the tagged player evolves according to the discrete Fokker--Planck (or Kolmogorov) equation:
\begin{equation}
\label{eq:FPK:0noise}
\frac{\ud}{\ud t}Q_{t}^i = \sum_{j \in \ES} Q_{t}^j \beta_{t}^{j,i}, \quad t \in [0,T], 
\end{equation}
the initial statistical state $(Q_{0}^i)_{i \in \ES}$ being prescribed as an element of ${\mathcal S}_{d-1}$.
In words, $Q_{t}^i$ is the probability that the tagged player be in state $i$ at time $t$. 

With the tagged player, we assign a cost functional depending on a deterministic time-measurable 
${\mathcal S}_{d-1}$-valued
path $(P_{t})_{0 \le t \le T}$, referred to as an environment and starting from the same initial state as $(Q_{t})_{0 \le t \le T}$, namely $Q_{0}^i=P_{0}^i$ for $i \in \ES$. Intuitively, $P_{t}$ is understood as the statistical state 
at time $t$ of all the \textit{other} players in the continuum, which are basically assumed to be independent and identically distributed. 
Given $(P_{t})_{0 \le t \le T}$, the cost to the tagged player 
is written in the form
\begin{equation}
\label{eq:limit:cost:functional:0:noise}
{\mathcal J}\bigl((\beta_{t})_{0 \le t \le T},(P_{t})_{0 \le t \le T}\bigr)
:= \sum_{i \in \ES}
\biggl[ Q_{T}^i g (i,P_{T}) 
+ \int_{0}^T Q_{t}^i\Bigl(  f \bigl(t,i,P_{t} \bigr) + \frac12
 \sum_{j \not = i}
\bigl\vert \beta_{t}^{i,j} \bigr\vert^2
\Bigr) \ud t\biggr],
\end{equation}
where 
$g$ is a function from $\ES \times {\mathcal S}_{d-1}$ into ${\mathbb R}$
and 
$f$ is a function from $[0,T] \times \ES \times {\mathcal S}_{d-1}$ into ${\mathbb R}$.
To simplify the notations, we will sometimes write ${\boldsymbol \beta}$ for $(\beta_{t})_{0 \leq t \leq T}$
and ${\boldsymbol P}$ for $(P_{t})_{0 \leq t \leq T}$. Accordingly, we will write 
${\mathcal J}({\boldsymbol \beta},{\boldsymbol P})$ for the cost to the tagged player.

In this setting, a mean field equilibrium is a path ${\boldsymbol P}=(P_{t})_{0 \le t \le T}$ as before for which we can find 
an optimal control $(\beta_{t}^\star)_{0 \le t \le T}$ to ${\mathcal J}(\cdot,{\boldsymbol P})$
such that the corresponding solution to \eqref{eq:FPK:0noise} is $(P_{t})_{0 \le t \le T}$ itself. We stress the fact that here  ${\boldsymbol P}$ and ${\boldsymbol Q}$ are deterministic paths.

\subsubsection{Stochastic Fokker--Planck equation}\label{sec:fok-pla}
We now introduce a special form of common noise in order to force the equilibria to satisfy a relevant form of 
diffusion processes with values in the simplex ${\mathcal S}_{d-1}$. To make it clear, our aim is to force equilibria to satisfy the following stochastic variant of equation \eqref{eq:FPK:0noise}:
\begin{align}
\label{eq:weak:sde:1}
&\ud P_{t}^i = \sum_{j \in \ES}  P_{t}^j 
\alpha_{t}^{j,i}
\ud t + \frac{\varepsilon}{\sqrt 2} 
\sum_{j \in \ES}
\sqrt{P_{t}^i P_{t}^j}  
\ud \bigl[ W_{t}^{i,j} - W_{t}^{j,i} \bigr],
\end{align}
for $t \in [ 0,T]$, 
where 
$((W_{t}^{i,j})_{0 \leq t \leq T})_{i,j \in \ES : i \not = j}$
is a collection of 
independent $1d$ Brownian motions, referred to as the common noise, and 
${\boldsymbol \alpha}=(\alpha_{t})_{0 \leq t \leq T}$ is a progressively measurable process 
(with respect to the augmented filtration ${\mathbb F}^{{\boldsymbol W}}=({\mathcal F}_{t}^{\boldsymbol W})_{0 \leq t \leq T}$ 
generated by ${\boldsymbol W}=((W_{t}^{i,j})_{i,j \in \ES : i \not = j})_{0 \leq t \leq T}$) 
satisfying \eqref{eq:Q-matrix}. 
{All these processes are constructed on 
some probability space $(\Omega,{\mathcal A},{\mathbb P})$.}
Throughout, we use the convention ${\boldsymbol W}^{i,i}=(W^{i,i}_{t})_{0 \le t \le T} \equiv 0$, for any $i \in \ES$. Above, the parameter 
$\varepsilon$ reads as the intensity of the common noise. Accordingly, 
the collection $((\overline W^{i,j}_{t} := (W_{t}^{i,j}-W_{t}^{j,i})/\sqrt{2})_{0 \leq t \leq T})_{i,j \in \ES : i \not =j}$ 
forms an antisymmetric Brownian motion. 

Although it looks rather unusual, the form of the stochastic 
integration 
in  \eqref{eq:weak:sde:1} is in fact directly inspired by stochastic models of population genetics. To wit, for $i,j \in \ES$, the $(i,j)$-bracket writes
(with a somewhat abusive but quite useful 
notation in the first term in the right-hand side below)
\begin{equation}
\label{eq:bracket:weak:sde}
\begin{split}
\frac{\ud}{\ud t} \langle P^i,P^j \rangle_{t}
&=
\Bigl\langle \frac{\varepsilon}{\sqrt 2} 
\sum_{k \in \ES}
\sqrt{P_{t}^i P_{t}^k}  
\ud \bigl[ W_{t}^{i,k} - W_{t}^{k,i} \bigr],
\frac{\varepsilon}{\sqrt 2} 
\sum_{l \in \ES}
\sqrt{P_{t}^j P_{t}^l}  
\ud \bigl[ W_{t}^{j,l} - W_{t}^{l,j} \bigr] \Bigr\rangle
\\
&=
\varepsilon^2
\sum_{k,l \in \ES}
\sqrt{P_{t}^i P_{t}^j P_{t}^k
P_{t}^l}  \Bigl( \delta_{i,j}
\delta_{k,l}
- \delta_{i,l} \delta_{k,j} 
\Bigr)
=
\varepsilon^2 \Bigl( P_{t}^i \delta_{i,j}
-  P_{t}^i P_{t}^j
\Bigr) .
\end{split}
\end{equation}
The last term on the right-hand side is known as being the diffusion matrix of the Wright--Fisher model, see for instance 
 \cite{Feller,Ethier,Sato}. It is also the leading part of so-called Kimura operators,  see Subsection \ref{subse:weak:solvability}. 

Below, we will be specifically interested in cases when the equilibrium strategies are in feedback form, meaning that $\alpha_{t}^{i,j}=\upalpha(t,i,P_{t})(j)$ for a function $\upalpha : [0,T] \times \ES \times {\mathcal S}_{d-1} \times \ES
\ni (t,i,p,j) \mapsto \upalpha(t,i,p)(j) \in  \RR$ such that, for any $(t,p) \in [0,T] \times {\mathcal S}_{d-1}$ and any 
$i \in \ES$, 
\begin{equation}
\label{eq:constraint:B:drift}
\upalpha(t,i,p)(j) \geq 0, \quad j \in \ES \setminus \{i\}, \quad \upalpha(t,i,p)(i) = - \sum_{j \not = i} \upalpha(t,i,p)(j),
\end{equation} 
in which case 
\eqref{eq:weak:sde:1} becomes a stochastic differential equation, the well-posedness of which is addressed in the next section, at least in a setting that is relevant to us, see Proposition 
\ref{thm:approximation:diffusion:2}. 
The function $\upalpha$ is said to be a {\it feedback strategy}.
One of the key point in the latter statement is that
the solution takes values in ${\mathcal S}_{d-1}$ itself. Another key point is that, whenever each $p^i_{0}$ is in $(0,+\infty)$ and  each $\upalpha(t,j,p)(i)$ remains away from zero for $p_{i}$ is in the right neighborhood of $0$,
the coordinates of the solution are shown to remain almost surely (strictly) positive, which plays a crucial role in the definition of 
our mean field game with common noise. 
Below, we ensure strict positivity of the rate transition from $j$ to $i$ for $p_{i}$ small enough by forcing accordingly the dynamics   at the boundary of the simplex\footnote{\ We recall the convention introduced in the very beginning of the paper according to which the boundary is here understood as the boundary of $\hat{\mathcal S}_{d-1}$ under the identification of ${\mathcal S}_{d-1}$ and $\hat{\mathcal S}_{d-1}$ (and similarly for the interior). We take this convention for granted in the rest of the paper.} of the $(d-1)$-dimensional simplex (where $d$ is the cardinality of the state space); we make this point clear in \S \ref{subsub:repelled:from:boundary}.
Importantly, if this additional forcing at the boundary is strong enough, it also ensures that $\int_0^T 1/P^i_t dt$ has exponential moments of sufficiently high order, see Proposition \ref{expfin} 
 For the time being,  
we observe that the
strict positivity of the solution (provided that we take it for granted)  permits to rewrite the equation \eqref{eq:weak:sde:bis} in the form:
\begin{align}
\label{eq:weak:sde:bis}
&\ud P_{t}^i = \sum_{j \in \ES}  P_{t}^j 
\upalpha(t,j,P_{t})(i)
\ud t +  \varepsilon  
P_{t}^i \sum_{j \in \ES}
\sqrt{ \frac{P_{t}^j}{P_{t}^i}}  
\ud \overline W_{t}^{i,j}, \quad t \in [0,T],
\end{align}
 where, for consistency, we have replaced $\alpha_{t}^{j,i}$ by $\upalpha(t,j,P_{t})(i)$.

Now that we have equation \eqref{eq:weak:sde:bis},
we can formulate our mean field game. 
As we already accounted for, 
the first observation is that the Brownian motions 
$((W^{i,j})_{0 \le t \le T})_{i,j \in \ES : i \not = j}$ in 
\eqref{eq:weak:sde:bis} should be regarded as 
common noises (or the whole collection should be regarded as a common noise).
The second key point is that 
equation \eqref{eq:weak:sde:bis}
should be understood as the equation 
for an environment $(P_{t})_{0 \leq t \leq T}$, candidate for being a 
solution of the mean field game.
It thus remains to introduce the equation 
for a tagged player evolving within the environment $(P_{t})_{0\leq t \leq T}$. 
Our key idea in this respect is to linearize 
\eqref{eq:weak:sde:bis} in order to describe the 
statistical marginal states of the tagged player, provided $\int_0^T 1/P^i_t dt$ is enough exponentially integrable,
namely
\begin{align}
\label{eq:weak:sde:2}
&\ud Q_{t}^i = \sum_{j \in \ES}  Q_{t}^j 
\upbeta(t,j,P_{t})(i)
\ud t +  {\varepsilon} 
Q_{t}^i \sum_{j \in \ES}
\sqrt{ \frac{P_{t}^j}{P_{t}^i}}  
\ud  \overline W_{t}^{i,j}, \quad t \in [0,T],
\end{align}
where $\upbeta$ stands for the feedback function (hence satisfying 
\eqref{eq:constraint:B:drift})
used by the tagged player to implement her own strategy 
 in the form of a progressively-measurable (with respect to the filtration ${\mathbb F}^{\boldsymbol W}$ ) 
 process ${\boldsymbol \beta} = ((\beta_{t}^{i,j}=\upbeta(t,i,P_{t})(j))_{i,j \in \ES})_{0 \le t \le T}$. The main difficulty here is to interpret \eqref{eq:weak:sde:2} 
 in a convenient manner. 
 Notice in this regard that our choice to take here 
 $(\beta_{t})_{0 \le t \le T}$ in a closed feedback form (or semi-closed since it depends on the environment 
 $(P_{t})_{0 \le t\le T}$) 
 is only for consistency with 
 \eqref{eq:weak:sde:bis}
 and
just plays a little role in our interpretation.
Actually, one important fact in this respect is that  
the variable $Q_{t}^i$ in \eqref{eq:weak:sde:2} should read as a \textit{conditional expected mass} when the tagged player
is in state $i$ at time $t$. 
Here, the reader must be aware of the terminology that we use: We say conditional expected mass instead of conditional 
probability because, although  
$Q_{t}$ is shown below to have non-negative entries and to satisfy $\E[\sum_{i \in \ES} Q_{t}^i] =1$, it may not be a probability measure, meaning that 
$\sum_{i \in \ES} Q_{t}^i$ may differ from 1 with a positive probability, which is the whole subtlety of our model.

In order to clarify the equation \eqref{eq:weak:sde:2}, we may {indeed associate} a Lagrangian or particle representation with it. 
In the mean field game, we hence assume that the representative agent, at time $t$, has {not only a} position $X_t\in \ES$, but also 
{another feature $Y_t\in \R_+$, which we call a \textit{mass}.}
To state the dynamics more precisely, it is convenient 
{to enlarge the current probability space in the form of a product space 
$(\Omega \times \Xi,{\mathcal A} \otimes {\mathcal G}, {\mathbb P} \otimes {\mathbf P})$,
where 
$(\Xi,{\mathcal G}, {\mathbf P})$ denotes another probability space that is
just used here (and nowhere else in the text). 
Whilst 
$(\Omega,{\mathcal A}, {\mathbb P})$
is still equipped with the process ${\boldsymbol W}$, 
$(\Xi,{\mathcal G}, {\mathbf P})$ is now intended to carry the additional idiosyncratic noise 
to which the representative player is subjected. Accordingly, 
both processes $\boldsymbol{X}=(X_{t})_{0 \le t \le T}$ and $\boldsymbol{Y}=(Y_{t})_{0 \le t \le T}$
are constructed on the product space $\Omega \times \Xi$.} 
Our aim is then to show (at least informally) that, when the environment $\boldsymbol{P}$ is given {on the original space $(\Omega,{\mathcal A},{\mathbb P})$}, the solution $\boldsymbol{Q}$ of \eqref{eq:weak:sde:2} satisfies
\be
\label{Q=}
Q^i_t := 
{\mathbf E}\bigl[Y_t \mathbf{1}_{\{X_t=i\}} \bigr] , \qquad i\in \ES,
\ee
which means that $\boldsymbol{Q}$ is in fact the conditional expected mass of the reference player, {\textit{conditional} here being understood as
\textit{conditional on the common noise}}.
{For a given realization of the
common noise,  
the state 
process $\boldsymbol{X}$ is then
required to obey standard Markovian dynamics of the form 
\begin{equation}
\label{eq:markov:dyn}
{\mathbf P}\bigl(X_{t+h}=j |X_t=i \bigr) = \beta^{i,j}_t h +o(h), \quad t \in [0,T), \ h >0, \quad i,j \in \ES, \quad i \not =j,
\end{equation}
the transitions ${\boldsymbol \beta}$ 
being as in 
\eqref{eq:weak:sde:2}, namely ${\boldsymbol \beta} = ((\beta_{t}^{i,j}=\upbeta(t,i,P_{t})(j))_{i,j \in \ES})_{0 \le t \le T}$. Provided $\int_0^T 1/P^i_t \ud t$ is sufficiently exponentially integrable,
the dynamics of $\boldsymbol{Y}$
are instead given by an ${\boldsymbol X}$-dependent equation:
\be
\label{dynY}
\ud Y_t = \frac{\varepsilon}{\sqrt{2}}  Y_t \sum_{j\in\ES} \sqrt{\frac{P^j_t}{P^{X_t}_t}}\ud(W^{X_t,j}-W^{j,X_t}) =  \frac{\varepsilon}{\sqrt{2}} 
Y_t \sum_{i\in\ES} \mathbf{1}_{\{X_t=i\}}\sum_{j\in\ES} \sqrt{\frac{P^j_t}{P^{i}_t}}\ud(W^{i,j}-W^{j,i}).
\ee
Writing informally $\ud Q_{t}^i$ as 
\begin{equation*}
\ud Q_{t}^i = {\mathbf E}\bigl[{\mathbf 1}_{\{X_{t}=i\}} \ud Y_{t}\bigr] + 
\sum_{j \in \ES} Q_{t}^j \beta^{j,i}_t \ud t, \quad t \in [0,T], \quad i \in \ES,
\end{equation*}
we then recover 
\eqref{eq:weak:sde:2}. For sure, this argument may be made rigorous. In particular, it may be very convenient to represent the 
Markov dynamics of ${\boldsymbol X}$ 
in 
\eqref{eq:markov:dyn}
by means of an extra Poisson random measure constructed on $(\Xi,{\mathcal G},{\mathbf P})$, very much in the spirit of \cite{Cecchin2017}.}

\begin{remark}
{The very mechanism underpinning   
\eqref{eq:markov:dyn} and \eqref{dynY} is in fact different
from the true Wright--Fisher model even though it shares some similarities. Instead, when working with a finite version of the game, 
we can force the players with a pure Wright--Fisher noise by resampling their states (in $\ES$) independently, according to their instantaneous empirical distribution at the times of a Poisson process. Such an approach is simpler as it bypasses any additional mass process ${\boldsymbol Y}$, but it has a serious drawback for the game: The empirical measure is then strongly attractive since the system is resampled recurrently. This feature precludes any interesting deviating phenomenon, whilst, in our model, the tagged player may really deviate (in law) from the  population. In the end, the macroscopic equation \eqref{eq:weak:sde:bis} for the population
is a Wright--Fisher diffusion,
but the microscopic behavior of one particle in 
\eqref{eq:markov:dyn}
and \eqref{dynY}
is different. Noticeably, 
  the state $({\boldsymbol X},{\boldsymbol Y})$ of the tagged player is hence encoded, at any time $t \in [0,T]$, in the form 
  $(Y_{t} {\mathbf 1}_{\{X_{t}=i\}})_{i \in \ES}$. 
  Since $Q_{t}$ in 
  \eqref{Q=}
  is just the expectation of the latter quantity, the state variable of the tagged player thus takes values within the same 
  space as the feature $Q_{t}$ that is used to describe 
  its statistical behavior. 
  }   
\end{remark}

\subsubsection{Cost functional and first formulation of the game} It now remains to associate a cost functional with the tagged player. 
Consistently with 
\eqref{eq:limit:cost:functional:0:noise} 
we here let 
\begin{equation}
\label{eq:limit:cost:functional}
{\mathcal J}\bigl(\upbeta,(P_{t})_{0 \le t \le T}\bigr)
:= \sum_{i \in \ES}
{\mathbb E}\biggl[ Q_{T}^i g (i,P_{T}) 
+ \int_{0}^T Q_{t}^i\Bigl(  f \bigl(t,i,P_{t} \bigr) + \frac12
 \sum_{j \not = i}
\bigl\vert \bigl[ \upbeta(t,i,P_{t} \bigr) \bigr](j) \bigr\vert^2
\Bigr) \ud t
\biggr].
\end{equation}
In the above left-hand side, $\upbeta$ stands for the strategy used in the equation for ${\boldsymbol Q}=(Q_{t})_{0 \le t \le T}$ in 
\eqref{eq:weak:sde:2}; also, ${\boldsymbol P}=(P_{t})_{0 \le t \le T}$ denotes the environment (as  the cost functional does depend
upon the environment), defined as the solution of 
\eqref{eq:weak:sde:bis}.
\vskip 5pt

Hence, 
for an initial condition 
$p_{0}=(p_{0,i})_{i \in \ES} \in {\mathcal S}_{d-1}$ with positive entries (that is $p_{0,i} >0$ for each $i \in \ES$),
our definition of a mean field game solution  comes in the following three steps:
\begin{enumerate}
\item Consider a feedback function $\upalpha$
as in 
\eqref{eq:constraint:B:drift}
 such that 
\eqref{eq:weak:sde:bis}, with $p_{0}$ as initial condition, has a unique solution 
$(P_{t})_{0 \leq t \leq T}$ (say on {the} probability space $(\Omega,{\mathcal A},{\mathbb P})$ equipped with 
a collection of $1d$ Brownian motions $((W_{t}^{i,j})_{0 \le t \le  T})_{i,j \in \ES : i \not =j}$,
with the same convention as before that 
${\boldsymbol W}^{i,i}=(W^{i,i}_{t})_{0 \le t \le T} \equiv 0$ for $i \in \ES$), which 
remains positive with probability 1;
the process $(P_{t})_{0 \leq t \leq T}$ is then called an environment;
\item On the same space $(\Omega,{\mathcal A},{\mathbb P})$, solve, for any bounded and measurable feedback function 
$\upbeta$, equation \eqref{eq:weak:sde:2} for 
$(Q_{t})_{0 \leq t \le T}$   with $p_{0}$ as initial condition, and then find the optimal trajectories (if they exist) of the minimization problem
\begin{equation}
\label{eq:inf:B'}
\inf_{\upbeta}{\mathcal J}\bigl(\upbeta,{\boldsymbol P}\bigr). 
\end{equation}
\item Find an environment $(P_{t})_{0 \leq t \leq T}$
such that $(P_{t})_{0 \leq t \leq T}$ is an optimal trajectory of 
\eqref{eq:inf:B'}. 
Such a $(P_{t})_{0 \le t \le T}$ is called an MFG equilibrium or a solution to the MFG. 
\end{enumerate}
\vskip 5pt

The precise definition is given in the next section (Definition \ref{defmfg}). 
Let us remark that an equivalent definition could be also given in terms of the particle representation described above. Indeed, for a given environment $\boldsymbol{P}$ and control $\upbeta$, if $\boldsymbol{X}$ denotes the corresponding position of the reference player and $\boldsymbol{Y}$ its mass, the cost \eqref{eq:limit:cost:functional} rewrites, thanks to \eqref{Q=}, as
\begin{equation*}
{\mathcal J}\bigl(\upbeta,(P_{t})_{0 \le t \le T}\bigr):= 
{\mathbb E}\biggl[ Y_{T} g (X_T,P_{T}) 
+ \int_{0}^T Y_{t}\Bigl(  f \bigl(t,X_t,P_{t} \bigr) + \frac12
 \sum_{j \not = X_t}
\bigl\vert \bigl[ \upbeta(t,X_t,P_{t} \bigr) \bigr](j) \bigr\vert^2
\Bigr) \ud t
\biggr].
\end{equation*}
Then, a mean field game solution can be defined as a couple $(\upalpha, \boldsymbol{P})$ such that $\upalpha$ is optimal for \eqref{eq:inf:B'} and 
$P_t^i = \E[Y_t \mathbf{1}_{\{X_t=i\}} | \mathcal{F}_t^{\boldsymbol W}]$, where $\boldsymbol{X}$ and $\boldsymbol{Y}$ satisfy \eqref{eq:markov:dyn} and \eqref{dynY} for the given $\upalpha$ and $\boldsymbol{P}$.

It is worth noticing that, whenever $\varepsilon=0$ in 
\eqref{eq:weak:sde:bis} and \eqref{eq:weak:sde:2}, the system 
\eqref{eq:weak:sde:bis}--\eqref{eq:weak:sde:2}
becomes a simpler system of two decoupled Fokker--Planck (or Kolmogorov) equations
that are similar to \eqref{eq:FPK:0noise}. 
While existence of a mean field game solution  (in the case $\varepsilon=0$) is by now well-understood, uniqueness remains 
a difficult issue. In fact, there are few generic conditions that ensure uniqueness.
Generally speaking, the two known instances of uniqueness are (besides some specific examples that can be treated case by case) the short time horizon case (namely $T$
is small enough in comparison with the regularity properties of the underlying cost coefficients)
and the so-called monotonous case due to Lasry and Lions \cite{LasryLions,LasryLions2}
 (which does not require $T$ to be small enough). 
In short, the cost coefficients $f$ and $g$ are said to be monotonous (in the sense of Lasry and Lions) if, for any 
$p,p' \in {\mathcal S}_{d-1}$ and for any $t \in [0,T]$,  
\begin{equation}
\label{eq:monotonous}
\sum_{i \in \ES} \bigl( g(i,p) - g(i,p') \bigr) \bigl( p_{i}- p_{i}' \bigr) \geq 0,
\quad
\sum_{i \in \ES} \bigl( f(t,i,p) - f(t,i,p') \bigr) \bigl( p_{i} - p_{i}' \bigr) \geq 0.
\end{equation}
The main goal of the rest of the paper is precisely to prove that, whenever $\varepsilon$ in 
\eqref{eq:weak:sde:bis}--\eqref{eq:weak:sde:2} is strictly positive,
 uniqueness may hold true for our MFG under quite mild regularity conditions 
on the coefficients and in particular without requiring any monotonicity properties; in fact, the main constraint that we ask is that 
the coordinates of the solutions 
of \eqref{eq:weak:sde:bis} stay sufficiently far away from zero (provided that the coordinates of the initial condition 
themselves are not zero). We address this requirement in the next subsection: Basically, it will prompt us
to introduce a new term in the dynamics  of both $(P_{t})_{0 \leq t \le T}$
	and 
	$(Q_{t})_{0 \le t \le T}$
to force the coordinates to stay positive.	
	
\subsection{New MFG and first meta-statement}
As we already alluded to, an important observation is that, for any solution $((P_{t}^i)_{i \in \ES})_{0 \le t \le T}$
to
\eqref{eq:weak:sde:bis}, it holds
\begin{equation}
\label{eq:sum=1}
\ud \Bigl( \sum_{i \in \ES} P_{t}^i \Bigr) =0,
\end{equation}
which can be easily proved by summing over the coordinates in 
\eqref{eq:weak:sde:bis}.
In particular, since the initial condition is taken in ${\mathcal S}_{d-1}$, 
the mass remains constant, equal to 1. Subsequently, if the coordinates of $(P_{t})_{0 \le t \le T}$ 
remain non-negative (which we discuss right below), 
the process $(P_{t})_{0 \le t \le T}$ lives in ${\mathcal S}_{d-1}$, which is of a special interest for us.
In fact, non-negativity of the coordinates may be easily seen by rewriting 
\eqref{eq:weak:sde:bis} in the form
\begin{equation}
\label{eq:pti:marginal}
\ud P_{t}^i = 
a_{i}(t,P_{t})\ud t + 
 \varepsilon \sqrt{ P_{t}^i ( 1-P_{t}^i)}
\ud \widetilde W_{t}^i,
\end{equation}
for a new Brownian motion $(\widetilde W_{t}^i)_{0 \leq t \leq T}$, 
where ${a_{i}(t,p)} := \sum_{j \in \ES} [p_{j} \upalpha(t,j,p)(i) - p_{i} \upalpha(t,i,p)(j)]$, for 
$i \in \ES$,
 and 
where the form of the stochastic integral follows from 
\eqref{eq:bracket:weak:sde}
with $i=j$ therein.
(Notice that the form of $a_{i}$ differs from 
the writing used in 
\eqref{eq:weak:sde:bis}, but both are obviously equivalent since $\sum_{j \in \ES} \upalpha(t,i,p)(j)=0$.) We have that 
$a_{i}(t,p) \geq - C p_{i}$, for a constant $C>0$, since 
$\upalpha(t,j,p)(i) \geq 0$ for $j \not =i$ and we assume $\upalpha$ bounded. 
By stochastic comparison with Feller's ($1d$) branching diffusion \cite[Exercise 5.1]{Durrett}, we easily deduce 
that the coordinates of $(P_{t})_{0 \leq t \leq T}$ should remain 
non-negative (the details are left to the reader and a rigorous statement, tailored to our framework, is given below). 

\subsubsection{Equations that are repelled from the boundary}
\label{subsub:repelled:from:boundary}
In the sequel, we are interested in solutions
to \eqref{eq:weak:sde:bis} 
that stay sufficiently far away from the boundary. 
As we already explained several times, the reason is that our uniqueness result is based upon the smoothing properties of the operator generated by \eqref{eq:weak:sde:bis}. Since the latter degenerates at the boundary of the simplex, we want to keep the 
solutions to \eqref{eq:weak:sde:bis} as long as possible within the relative 
interior of ${\mathcal S}_{d-1}$.  
In this regard, 
it is worth observing from \cite[Exercise 5.1]{Durrett}  that the sole condition
\eqref{eq:constraint:B:drift} is not enough to prevent solutions 
to 
\eqref{eq:weak:sde:bis}
to touch the boundary of the simplex. 
To guarantee that no coordinate vanishes, more is needed. For instance, in
 Feller's branching diffusions,
 the solution does not vanish if the drift is sufficiently positive in the neighborhood 
 of $0$. This prompts us to revisit the 
two equations \eqref{eq:weak:sde:bis} and
\eqref{eq:weak:sde:2}
and to consider instead (notice that, in the two formulas below, the value of 
$\upalpha(t,i,P_{t})(i)$ is in fact useless)
\begin{equation}
\label{eq:weak:sde:final}
\begin{split}
\ud P_{t}^i &=  \sum_{j \in \ES} \Bigl( P_{t}^j \bigl( \varphi(P_{t}^i) + \upalpha(t,j,P_{t})(i) \bigr) - P_{t}^i \bigl( \varphi(P_{t}^j) 
+ \upalpha(t,i,P_{t})(j) \bigr)
\Bigr) \ud t
 + {\varepsilon}  
\sum_{j \in \ES}
\sqrt{P_{t}^i P_{t}^j}  
\ud \overline W_{t}^{i,j},
\end{split}
\end{equation}
and
\begin{equation}
\label{eq:weak:sde:final:2:b}
\begin{split} 
\ud Q_{t}^i &=   \sum_{j \in \ES} \Bigl( Q_{t}^j \bigl( \varphi(P_{t}^i) + \upbeta(t,j,P_{t})(i) \bigr) - Q_{t}^i 
\bigl( \varphi(P_{t}^j) 
+  \upbeta(t,i,P_{t})(j) \bigr)
\Bigr) \ud t
+  {\varepsilon}  
Q_{t}^i \sum_{j \in \ES}
\sqrt{\frac{P_{t}^j}{P_{t}^i}}  
\ud \overline W_{t}^{i,j}, 
\end{split}
\end{equation}
for $t \in [0,T]$, with the same deterministic initial condition $P_0=(P_{0}^i=p_{0,i})_{i \in \ES}$. Here the function $\varphi$ is 
 a non-increasing Lipschitz function from $[0,\infty)$ into itself such that 
\begin{equation}
\label{eq:varphi}
\varphi(r) := \left\{ \begin{array}{l}
\kappa \quad r \leq \delta,
\\
0 \quad r >2 \delta,
\end{array}
\right.
\end{equation}
$\delta$ being a positive parameter whose value next is somewhat arbitrary. As for $\kappa$, we clarify its {main role in the statements of 
Meta-Theorems 
\ref{main:metathm}
and 
\ref{main:metathm-PDE}
and of
Theorems \ref{main:holder}, 
\ref{main:thm} and \ref{existence:master}; the reader may also find a taste of it 
in the three propositions right below}. 
In the two equations
\eqref{eq:weak:sde:final}
and
\eqref{eq:weak:sde:final:2:b}, 
$\upalpha$ and $\upbeta$ are the same as in 
\eqref{eq:weak:sde:bis}
and
\eqref{eq:weak:sde:2}. 
Hence, the drift in the first equation now reads 
\begin{equation}
\label{eq:bi:new}
a_{i}(t,p) := \sum_{j \in \ES}
\Bigl( p_{j}
\bigl[ \varphi(p_{i}) + \upalpha\bigl( t,j,  p \bigr)(i) 
\bigr]
 -
p_{i} \bigl[ \varphi(p_{j}) + 
 \upalpha\bigl( t,i,p \bigr)(j) 
 \bigr]
 \Bigr). 
 \end{equation}
 It still satisfies $\sum_{i \in \ES} a_{i}(t,p)=0$. And, importantly, 
whenever $p_{i}=0$ (with $p=(p_{1},\cdots,p_{d}) \in {\mathcal S}_{d-1}$), it satisfies $a_{i}(t,p) \geq \kappa$.
In this framework, we have the following three statements, the proofs of which are postponed to  
Subsection \ref{subse:proofs}.

\begin{proposition}
\label{thm:approximation:diffusion:2}
Consider
$\varphi$ as in  
\eqref{eq:varphi}
with $\delta \in (0,1)$ and $\kappa \geq \varepsilon^2/2$, for $\varepsilon >0$. Then,
for a bounded (measurable) feedback function $\upalpha$ as in \eqref{eq:constraint:B:drift},
the stochastic differential equation
\eqref{eq:weak:sde:final}
 has a unique (strong) solution 
whenever the initial condition is prescribed and satisfies $p_{0,i}>0$ for each $i \in \ES$ and $\sum_{i \in \ES} p_{0,i}=1$. 
Moreover, the coordinates of the solution remain almost surely (strictly) positive and satisfy $\sum_{i\in\ES} P^i_t=1$, for any time. 
\end{proposition}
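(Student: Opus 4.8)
The guiding principle is that, although the diffusion coefficient in \eqref{eq:weak:sde:final} degenerates on the boundary of ${\mathcal S}_{d-1}$, the forcing created by $\varphi$ pushes the solution so strongly inward that no coordinate ever vanishes; once this is known, existence and uniqueness reduce to classical facts about non-degenerate diffusions. To begin with, I would record that any (local, strong) solution of \eqref{eq:weak:sde:final} satisfies $\sum_{i\in\ES}P^i_t=1$ for all $t$: summing \eqref{eq:weak:sde:final} over $i$, the drift term disappears because $\sum_{i\in\ES}a_i(t,p)=0$ (see \eqref{eq:bi:new}), and the stochastic term disappears because $\sqrt{P^i_tP^j_t}$ is symmetric in $(i,j)$ whereas $\overline W^{i,j}_t=-\overline W^{j,i}_t$, so that $\ud\bigl(\sum_{i\in\ES}P^i_t\bigr)=0$ and the value $1$ is propagated from the initial condition. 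Working with the coefficients extended off the simplex in the usual way (for instance replacing $p_i$ by $(p_i)_+$ inside the square roots), a routine comparison argument---at $p_i=0$ the diffusion vanishes and $a_i(t,p)\ge\kappa>0$ by \eqref{eq:bi:new}---shows that the coordinates stay non-negative, so that in fact $(P_t)_{0\le t\le T}$ takes values in ${\mathcal S}_{d-1}$ and the extension plays no role.

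The heart of the matter, and the one place where the assumption $\kappa\ge\varepsilon^2/2$ is used, is the strict positivity of the coordinates. Granted $\sum_jP^j_t=1$, the bracket computation \eqref{eq:bracket:weak:sde} with $i=j$ shows that the $i$-th coordinate solves the scalar equation \eqref{eq:pti:marginal}; moreover, from \eqref{eq:bi:new} together with $\sum_jp_j=1$ one checks that $a_i(t,p)\ge\varphi(p_i)-Cp_i$ on ${\mathcal S}_{d-1}$, for a constant $C$ depending only on $\|\varphi\|_\infty$ and $\|\upalpha\|_\infty$. By the one-dimensional comparison theorem---applicable here since $\varepsilon\sqrt{x(1-x)}$ satisfies Yamada's condition, whence pathwise uniqueness for the scalar equations at play---the coordinate $P^i$ therefore dominates from below the scalar diffusion $\ud R_t=\bigl(\varphi(R_t)-CR_t\bigr)\ud t+\varepsilon\sqrt{R_t(1-R_t)}\,\ud\widetilde W^i_t$ started from $p_{0,i}>0$. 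Near $0$ one has $\varphi\equiv\kappa$, so the scale density of $R$ behaves like $r\mapsto r^{-2\kappa/\varepsilon^2}$, which fails to be integrable at $0$ precisely when $\kappa\ge\varepsilon^2/2$; Feller's test then shows that $0$ is inaccessible for $R$, and hence $\inf_{0\le t\le T}P^i_t>0$ almost surely for every $i\in\ES$. In particular the solution stays in the relative interior of ${\mathcal S}_{d-1}$ at all times, which is exactly the property for which the term $\varphi$ was introduced.

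For existence, I would regularize $\upalpha$ into smooth bounded feedback functions $\upalpha^{(n)}$ still satisfying \eqref{eq:constraint:B:drift} and converging to $\upalpha$ Lebesgue-almost everywhere, construct weak solutions of the corresponding equations (which take values in the compact set ${\mathcal S}_{d-1}$, hence form a tight family), and pass to a weak limit along a subsequence; the limit is a weak solution of \eqref{eq:weak:sde:final}, to which the arguments of the preceding paragraph apply again, so that it stays in the relative interior of ${\mathcal S}_{d-1}$. (Alternatively one could invoke known well-posedness results for Wright--Fisher-type systems; see \cite{Ethier}.) For pathwise uniqueness, let $\tau_n$ be the first time at which $\min_{i\in\ES}P^i_t\le 1/n$; by strict positivity, $\tau_n\uparrow T$ almost surely. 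Writing the system in the $(d-1)$-dimensional chart $\hat{\mathcal S}_{d-1}$, on each stochastic interval $[0,\tau_n]$ the diffusion coefficient is uniformly elliptic with smooth (in particular Lipschitz) square root, while the drift is merely bounded and measurable; pathwise uniqueness up to $\tau_n$ then follows from the classical theory of strong solutions of stochastic differential equations with bounded measurable drift and non-degenerate Lipschitz diffusion, and letting $n\to\infty$ gives pathwise uniqueness on $[0,T]$. Together with weak existence, the Yamada--Watanabe theorem yields the unique strong solution, and the a priori estimates established above deliver the two remaining assertions of the statement.

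I expect the strict-positivity step to be the main obstacle: one has to pin down the sharp threshold $\kappa\ge\varepsilon^2/2$ through Feller's test on the appropriate scalar comparison equation, and then, in the uniqueness argument, combine this positivity with a localization in order to handle simultaneously the degeneracy of the diffusion at the boundary of the simplex and the mere measurability of the feedback $\upalpha$.
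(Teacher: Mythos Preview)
Your core ingredients coincide with the paper's: scalar comparison plus Feller's test for strict positivity, and localization to the interior combined with the classical theory for non-degenerate diffusions with bounded measurable drift (Veretennikov) for well-posedness. The paper, however, organizes these in the opposite order, which turns out to be more economical. It first invokes Veretennikov directly on the $(d-1)$-dimensional chart $\hat{\mathcal S}_{d-1}$, localized away from the boundary, to obtain strong existence \emph{and} uniqueness up to the first hitting time of $\partial\hat{\mathcal S}_{d-1}$; only then does it run the comparison/Feller argument to show this hitting time is $+\infty$. This bypasses your weak-existence step via regularization entirely: since you already invoke, in your uniqueness paragraph, the very result that delivers strong solvability for bounded measurable drift and non-degenerate Lipschitz diffusion, your separate construction of a weak solution through tightness of $P^{(n)}$ is redundant, and in fact identifying the limiting drift when $\upalpha$ is merely measurable would require additional work (e.g.\ Krylov-type estimates) that you do not spell out.

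There is also a small gap in your positivity step. The comparison $P^i\ge R$ only holds up to the first time $R$ reaches $\{0,1\}$, and Feller's test, as you apply it, rules out $R$ reaching $0$ but says nothing about $R$ reaching $1$. The paper closes this as follows: the comparison shows each $P^i$ cannot touch $0$ before it touches $1$; hence if $\tau$ denotes the first time some coordinate reaches $\{0,1\}$, then at time $\tau$ any such coordinate must equal $1$, which forces all the others to equal $0$ by the constraint $\sum_j P^j_\tau=1$---contradicting the same analysis applied to those coordinates. Adding this short argument completes your approach.
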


The following statement provides a stronger version.

\begin{proposition}
\label{expfin}
Under the assumptions and notation of Proposition 
\ref{thm:approximation:diffusion:2},
for  $\kappa$ as in 
\eqref{eq:varphi}
and for $\lambda >0$,
let 
$\gamma : = \kappa - {(1+\lambda)\varepsilon^2}/2 $. Then,
 the solution to 
 \eqref{eq:weak:sde:final}
 satisfies
\begin{equation}
\label{eq:expfin2}
{\mathbb E}\left[ \exp \left(\lambda \gamma \int_{0}^T  \frac{1}{P^{i}_{s}} \ud s\right)\right] \leq 
C p_{0,i}^{-\lambda},
\quad \mbox{for each } i \in \ES,
\end{equation}
together 
with
\begin{equation}
\label{eq:expfin2:bis}
\sup_{0 \le t \le T} {\mathbb E}\left[ 
\bigl( P^i_{t} \bigr)^{-\lambda}
\right] \leq 
C p_{0,i}^{-\lambda}, \quad \mbox{for each } i \in \ES,
\end{equation}
for a constant $C$ that only depends on $\delta$, $\kappa$, $\lambda$, $T$ and on the supremum norm of $\upalpha$.

\end{proposition}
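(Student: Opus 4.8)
The plan is to work coordinate by coordinate, exploiting the one–dimensional reduction \eqref{eq:pti:marginal}. Fixing $i\in\ES$ and writing $X_t:=P^i_t$, this equation expresses $X$ as the diffusion
\begin{equation*}
\ud X_t = a_i(t,P_t)\,\ud t + \varepsilon\sqrt{X_t(1-X_t)}\,\ud\widetilde W^i_t,
\end{equation*}
with $a_i$ as in \eqref{eq:bi:new}; by Proposition \ref{thm:approximation:diffusion:2}, $X$ is continuous, $(0,1]$-valued and a.s.\ bounded away from $0$ on $[0,T]$. The preliminary step is a refined lower bound on the drift near the boundary: exploiting that $\varphi\equiv\kappa$ on $[0,\delta]$, that $\sum_j p_j=1$, the sign conditions $\upalpha(t,j,p)(i)\ge0$ for $j\ne i$ together with $\sum_j\upalpha(t,i,p)(j)=0$, and the boundedness of $\varphi$ and $\upalpha$, one obtains constants $c,C_0$ depending only on $d,\kappa,\|\upalpha\|_\infty$ such that
\begin{equation*}
a_i(t,p)\ \ge\ \kappa - c\,p_i\quad\text{whenever }p_i\le\delta,\qquad |a_i(t,p)|\le C_0\ \text{ for all }(t,p).
\end{equation*}
What matters is retaining the linear error $c\,p_i$, not merely the boundary value $a_i\ge\kappa$ on $\{p_i=0\}$ used in Proposition \ref{thm:approximation:diffusion:2}.

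Assume $\gamma>0$; this is the only regime needed below (if $\gamma\le0$ then \eqref{eq:expfin2} is trivial, its left-hand side being $\le1$). Put $A_t:=\int_0^t X_s^{-1}\,\ud s$ and introduce the Lyapunov functional
\begin{equation*}
M_t\ :=\ X_t^{-\lambda}\exp\!\big(\lambda\gamma A_t\big).
\end{equation*}
Since $A$ has finite variation, Itô's formula gives $\ud M_t=e^{\lambda\gamma A_t}\big(\ud(X_t^{-\lambda})+\lambda\gamma X_t^{-\lambda-1}\ud t\big)$, and, using $\ud\langle X\rangle_t=\varepsilon^2 X_t(1-X_t)\ud t$, the finite-variation part of $M$ equals $e^{\lambda\gamma A_t}$ times
\begin{equation*}
\lambda X_t^{-\lambda-1}\Big(\gamma - a_i(t,P_t) + \tfrac{(\lambda+1)\varepsilon^2}{2}\,(1-X_t)\Big).
\end{equation*}
On $\{X_t\le\delta\}$, inserting the drift bound and the definition $\gamma=\kappa-(1+\lambda)\varepsilon^2/2$, the $\gamma$-contributions cancel and the bracket becomes $\le c\,X_t$, so the expression is $\le\lambda c\,X_t^{-\lambda}$; on $\{X_t>\delta\}$ one has $X_t^{-\lambda-1}\le\delta^{-\lambda-1}$ and $a_i$ bounded, so it is at most a constant $C_1=C_1(\delta,\kappa,\lambda,\|\upalpha\|_\infty)$ (here and below $\varepsilon$ is absorbed via $\varepsilon^2\le 2\kappa$, which is part of the hypothesis of Proposition \ref{thm:approximation:diffusion:2}). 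Since $X_t\le1$ forces $e^{\lambda\gamma A_t}\le X_t^{-\lambda}e^{\lambda\gamma A_t}=M_t$, the finite-variation part of $M$ is bounded above by $(\lambda c+C_1)\,M_t$.

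To make this rigorous, set $\tau_n:=\inf\{t\ge0:X_t\le1/n\}$; on $[0,t\wedge\tau_n]$ all integrands are bounded, so the local-martingale part of $M$ is a genuine martingale and Gronwall's lemma yields $\E[M_{t\wedge\tau_n}]\le M_0\,e^{(\lambda c+C_1)t}$ with $M_0=p_{0,i}^{-\lambda}$. Since $X$ is a.s.\ continuous and strictly positive on $[0,T]$ by Proposition \ref{thm:approximation:diffusion:2}, $\tau_n\uparrow\infty$ a.s.\ and $M_{t\wedge\tau_n}\to M_t$ a.s., so Fatou's lemma gives $\E[M_t]\le p_{0,i}^{-\lambda}e^{(\lambda c+C_1)T}=:C\,p_{0,i}^{-\lambda}$ for every $t\le T$, with $C$ depending only on $\delta,\kappa,\lambda,T,\|\upalpha\|_\infty$. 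Now \eqref{eq:expfin2:bis} follows at once from $(P^i_t)^{-\lambda}=X_t^{-\lambda}\le M_t$ (using $\gamma\ge0$), and \eqref{eq:expfin2} from $\exp(\lambda\gamma A_T)\le M_T$, taken at $t=T$.

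The one delicate point will be the exact algebraic cancellation in the drift of $M$: it is precisely the weight $\exp(\lambda\gamma A_t)$ with $\gamma=\kappa-(1+\lambda)\varepsilon^2/2$ that converts the a priori singular term $\lambda X_t^{-\lambda-1}\big(\tfrac{(\lambda+1)\varepsilon^2}{2}-a_i(t,P_t)\big)$ — which is of order $X_t^{-\lambda-1}$ near the boundary — into the Gronwall-integrable quantity $\lambda c\,X_t^{-\lambda}$; this in turn relies on having the boundary drift estimate in the refined form $a_i(t,p)\ge\kappa-c\,p_i$ rather than just $a_i\ge\kappa$ at $p_i=0$.
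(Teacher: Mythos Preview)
Your proof is correct and follows essentially the same approach as the paper. The paper applies It\^o's formula to $\ln P_t^i$, integrates, multiplies by $\lambda$ and exponentiates, then recognises a Dol\'eans--Dade exponential (with expectation $\le 1$) to conclude; you instead apply It\^o directly to $M_t=(P_t^i)^{-\lambda}\exp(\lambda\gamma A_t)$ and close with a Gronwall bound---these are two packagings of the same computation, resting on the same refined drift estimate $a_i(t,p)\ge\kappa-c\,p_i$ near the boundary and the same cancellation dictated by the choice $\gamma=\kappa-(1+\lambda)\varepsilon^2/2$.
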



\begin{proposition}
\label{expfin:2}
Under the assumption and notation of Proposition 
\ref{thm:approximation:diffusion:2}, 
assume that $\upbeta$ is bounded and measurable and that $\kappa$ in 
\eqref{eq:varphi}
satisfies 
$\kappa \geq {(61+d)} \varepsilon^2$. 
Assume also that 
the initial condition  
$p_{0}=(p_{0,i})_{i \in \ES} \in {\mathcal S}_{d-1}$ 
of 
\eqref{eq:weak:sde:final}
has positive entries (that is $p_{0,i} >0$ for each $i \in \ES$). 
Then, for any initial condition $q_{0}=(q_{0,i})_{i \in \ES} \in {\mathcal S}_{d-1}$ (possibly different from $p_{0}$), 
 \eqref{eq:weak:sde:final:2:b}  
has a unique pathwise solution $((Q_{t}^i)_{0 \leq t \leq T})_{i \in \ES}$.
It satisfies ${\mathbb E}[\sup_{0 \leq t \leq T} \vert Q_{t}^i \vert^{4}] < \infty$ for any $i \in \ES$. The coordinates of the solution  are (strictly) positive and satisfy
\begin{equation}
\label{eq:mean:q:1}
{\mathbb E} \biggl[ \sum_{i \in \ES} Q_{t}^i \biggr] = 1.
\end{equation}
\end{proposition}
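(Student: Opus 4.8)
The plan is to treat \eqref{eq:weak:sde:final:2:b} as a \emph{linear} stochastic differential equation for the unknown $(Q^i)_{i\in\ES}$, whose (random) coefficients are driven by the environment $(P_t)_{0\le t\le T}$ furnished by Proposition~\ref{thm:approximation:diffusion:2}. That proposition guarantees that, almost surely, the paths $t\mapsto P^i_t$ are continuous and strictly positive on $[0,T]$, hence $\inf_{0\le t\le T}\min_{i\in\ES}P^i_t>0$ almost surely; consequently the coefficients $\varphi(P^i_\cdot)$, $\upbeta(\cdot,j,P_\cdot)(i)$ and $\sqrt{P^j_\cdot/P^i_\cdot}$ are bounded on $[0,T]$ by an a.s.\ finite random constant and the equation is affine in $Q$. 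First I would invoke the standard existence and uniqueness theory for linear SDEs with adapted, a.s.\ locally bounded coefficients — localising, if needed, by the stopping times $\tau_k:=\inf\{t\le T:\min_i P^i_t\le 1/k\}$, which increase to $T$ a.s. — to obtain a unique $\mathbb F^{\boldsymbol W}$-adapted continuous solution $((Q^i_t)_{0\le t\le T})_{i\in\ES}$; pathwise uniqueness follows from a pathwise Gr\"onwall estimate applied to $\sum_i|Q^i_t-\widetilde Q^i_t|^2$ for two solutions $Q,\widetilde Q$.

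Positivity I would extract from the \emph{cooperative} structure of the drift: for $j\ne i$, the coefficient of $Q^j_t$ in the $i$-th equation is $\varphi(P^i_t)+\upbeta(t,j,P_t)(i)\ge0$. Writing that equation as $\ud Q^i_t=b_i(t)Q^i_t\,\ud t+Q^i_t\,\ud M^i_t+h_i(t)\,\ud t$, where $M^i_t:=\varepsilon\int_0^t\sum_j\sqrt{P^j_s/P^i_s}\,\ud\overline W^{i,j}_s$ (so $\ud\langle M^i\rangle_t=\varepsilon^2\tfrac{1-P^i_t}{P^i_t}\,\ud t$), $b_i$ is bounded (as $\varphi\le\kappa$ and $\upbeta$ is bounded), and $h_i(t):=\sum_{j\ne i}Q^j_t(\varphi(P^i_t)+\upbeta(t,j,P_t)(i))$, the variation-of-constants formula reads $Q^i_t=\mathcal E^i_t\,p_{0,i}+\int_0^t(\mathcal E^i_t/\mathcal E^i_s)\,h_i(s)\,\ud s$ with the strictly positive factor $\mathcal E^i_t:=\exp(\int_0^t b_i\,\ud s+M^i_t-\tfrac12\langle M^i\rangle_t)$. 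A Picard iteration started from $Q^{(0)}\equiv0$ keeps all coordinates nonnegative at each step, so its limit — the unique solution — is nonnegative; then $Q^i_t\ge\mathcal E^i_t p_{0,i}>0$ because $p_{0,i}>0$, and in particular $0\le Q^i_t\le\sum_j Q^j_t=:\Sigma_t$.

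The crux — and the place where the assumption $\kappa\ge61\varepsilon^2$ is used — is the bound $\mathbb E[\sup_{t\le T}|Q^i_t|^4]<\infty$. Applying It\^o's formula to $(Q^i_t)^4$ yields, besides a local martingale, a drift dominated by $4b_i(t)(Q^i_t)^4+4h_i(t)(Q^i_t)^3+6\varepsilon^2\tfrac{1-P^i_t}{P^i_t}(Q^i_t)^4$: the first summand is controlled by the boundedness of $b_i$, the middle one is absorbed by Young's inequality at the cost of the remaining coordinates, and the only genuinely singular contribution is $6\varepsilon^2(Q^i_t)^4/P^i_t$, which is the price paid for the degeneracy of the equation at $\{p_i=0\}$. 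I would neutralise it by passing to the weighted process $(Q^i_t)^4\exp(-6\varepsilon^2\int_0^t(P^i_s)^{-1}\,\ud s)$ — equivalently, by splitting an exponential supermartingale off $(Q^i_t)^4$ via the Cauchy--Schwarz inequality and estimating the leftover factor $\exp(c\varepsilon^2\int_0^T(P^i_s)^{-1}\,\ud s)$: the singular term then disappears and one is left with a genuinely linear stochastic Gr\"onwall inequality, which, combined with a localisation and the Burkholder--Davis--Gundy inequality to pass to the supremum in time, yields an $L^4$ bound for the weighted process. Undoing the weight by H\"older's inequality, together with the exponential estimate $\mathbb E[\exp(\lambda\gamma\int_0^T(P^i_s)^{-1}\,\ud s)]\le C p_{0,i}^{-\lambda}$ of Proposition~\ref{expfin} with $\gamma=\kappa-(1+\lambda)\varepsilon^2/2$, closes the argument: $\kappa\ge61\varepsilon^2$ is precisely what makes $\gamma$ positive for a value of $\lambda$ large enough that $\lambda\gamma$ exceeds the constant $c$ produced by the splitting. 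The coupling between the coordinates is harmless, since the off-diagonal terms are nonnegative and bounded and can therefore be absorbed by Young's inequality after using $Q^j_t\le\Sigma_t$. I expect this moment estimate to be the main obstacle, because of the tension between the $1/P^i$ singularity and the need to stay within the range of exponents for which Proposition~\ref{expfin} is applicable.

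Finally, for \eqref{eq:mean:q:1} I would sum the equations over $i\in\ES$: using $\sum_j\upbeta(t,i,P_t)(j)=0$, $\sum_i\upbeta(t,j,P_t)(i)=0$ and the cancellation of the $\varphi$-terms by symmetry, the drift of $\Sigma_t=\sum_i Q^i_t$ vanishes, so $\Sigma$ is a continuous local martingale. The $L^4$ bound just obtained, combined with $\langle\Sigma\rangle_T\le\varepsilon^2\int_0^T\Sigma_t^2\max_i(P^i_t)^{-1}\,\ud t$ and the estimate $\sup_{t\le T}\mathbb E[(P^i_t)^{-\lambda}]\le C p_{0,i}^{-\lambda}$ from Proposition~\ref{expfin}, shows $\mathbb E[\langle\Sigma\rangle_T^{1/2}]<\infty$, so $\Sigma$ is in fact a true (uniformly integrable) martingale; evaluating at $t=0$ gives $\mathbb E[\Sigma_t]=\Sigma_0=\sum_i p_{0,i}=1$.
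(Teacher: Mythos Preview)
Your proposal is correct and follows essentially the same strategy as the paper; two differences in execution are worth flagging.

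For existence, uniqueness, and positivity, the paper factors out the Dol\'eans--Dade exponential $\mathcal E^i_t=\exp\bigl(M^i_t-\tfrac12\langle M^i\rangle_t\bigr)$ (with $M^i$ as you defined it) and observes that $\widetilde Q^i_t:=(\mathcal E^i_t)^{-1}Q^i_t$ solves a \emph{pathwise linear ODE} with nonnegative off-diagonal coefficients; this gives well-posedness and strict positivity in one stroke, without localisation or Picard iteration. Your variation-of-constants argument is of course equivalent, but the ODE reduction is cleaner.

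For the moment bound, the paper applies It\^o's formula to the \emph{sum} $\sum_i(Q^i_t)^l$ and uses the \emph{common} weight $\exp\bigl(-\tfrac{l(l-1)}{2}\varepsilon^2\int_0^t\sum_j\frac{1-P^j_s}{P^j_s}\,\ud s\bigr)$; the cross terms $(Q^i)^{l-1}Q^j$ are then absorbed directly by Young's inequality into $\sum_k(Q^k)^l$, and the Gr\"onwall closes. Your suggestion to use an \emph{individual} weight $\exp(-6\varepsilon^2\int_0^t(P^i_s)^{-1}\,\ud s)$ and then control the coupling via $Q^j_t\le\Sigma_t$ is slightly circular --- you do not yet have a moment bound on $\Sigma_t$ --- but replacing the individual weight by the common one fixes this immediately. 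The paper then takes $l=8$, undoes the weight by Cauchy--Schwarz against the bound with exponent $2l=16$, and invokes Proposition~\ref{expfin} with $\lambda=120$, $\gamma=\varepsilon^2$; this is precisely where $\kappa\ge 61\varepsilon^2$ enters, exactly as you anticipated. The passage to $\sup_{t\le T}$ in $L^4$ is done afterwards by BDG, using both $\sup_t\mathbb E[(Q^i_t)^8]$ and $\sup_t\mathbb E[(P^i_t)^{-2}]$ from \eqref{eq:expfin2:bis}.
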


\subsubsection{Reformulation of the game}
\label{subse:new:MFG}
We now have most of the needed ingredients to formulate the setting to which our main result applies. 
Roughly speaking, our result addresses the mean field game associated with the
pair  
\eqref{eq:weak:sde:final}--\eqref{eq:weak:sde:final:2:b}
instead of 
\eqref{eq:weak:sde:bis}--\eqref{eq:weak:sde:2} 
and with the cost functional 
\eqref{eq:limit:cost:functional}; 
so this is the same MFG as the one described in Subsection 
\ref{subse:3:1} except for the fact that 
we included the forcing $\varphi$
in the state equations and that, in the equation \eqref{eq:weak:sde:final:2:b}, we will
allow for a more general (random) rate function instead of 
the feedback function $\upbeta$.
So, in lieu of 
\eqref{eq:weak:sde:final:2:b}, we will consider (in the mean field game)
\begin{equation}
\label{eq:weak:sde:final:2}
\begin{split} 
\ud Q_{t}^i &=   \sum_{j \in \ES} \Bigl( Q_{t}^j \bigl( \varphi(P_{t}^i) + \beta_{t}^{j,i} \bigr) - Q_{t}^i 
\bigl( \varphi(P_{t}^j) 
+  \beta_{t}^{i,j} \bigr)
\Bigr) \ud t
+  {\varepsilon}  
Q_{t}^i \sum_{j \in \ES}
\sqrt{\frac{P_{t}^j}{P_{t}^i}}  
\ud \overline W_{t}^{i,j}, 
\quad t \in [0,T]. 
\end{split}
\end{equation}
Here, 
$((\beta_{t}^{i,j})_{0 \leq t \leq T})_{i,j \in \ES}$ is a collection of bounded ${\mathbb F}^{\boldsymbol W}$-progressively-measurable processes that are required to satisfy
\begin{equation}
\label{eq:condition:Q-matrix}
\beta_{t}^{i,j} \geq 0 \quad i,j \in \ES, \, i \not = j.
\end{equation}
(Notice that the diagonal terms $((\beta_{t}^{i,i})_{0 \leq t \leq T})_{i \in \ES}$
play no role.)
Such processes are called {\it admissible open-loop strategies}.
Somehow, this is to say that we can work 
(at least for 
\eqref{eq:weak:sde:final:2:b})
with strategies that may depend upon the whole past of the environment
${\boldsymbol P}=(P_{t})_{0 \leq t \leq T}$, which is in contrast to strategies of the form $(\upbeta(t,i,P_{t})(j))_{0 \leq t \leq T}$ in 
\eqref{eq:weak:sde:final:2:b} which depend, at time $t$, on the environment
through its current state only. 
Latter strategies are said to be semi-closed. We explain below how such semi-closed strategies manifest 
through the master equation. 

\begin{remark} 
\label{foot:extension:alpha:open loop}
We let the reader check that Propositions 
\ref{thm:approximation:diffusion:2}
and
\ref{expfin} given {above}, see Subsection 
\ref{subse:proofs} for their proofs, can be also extended to the case where the 
process
$((\upalpha(t,i,P_{t})(j))_{i,j \in \ES})_{0 \le t \le T}$ 
in 
\eqref{eq:weak:sde:final}
is replaced by a more general bounded progressively measurable process 
$((\alpha_{t}^{i,j})_{i,j \in \ES})_{0 \le t \le T}$
satisfying
the analogue of 
\eqref{eq:condition:Q-matrix}.
The proof of 
the solvability of 
\eqref{eq:weak:sde:final}
in the statement of 
Proposition \ref{thm:approximation:diffusion:2}
is even simpler since 
$((\alpha_{t}^{i,j})_{i,j \in \ES})_{0 \le t \le T}$
is then in open-loop form.
Similarly, we can replace 
\eqref{eq:weak:sde:final:2:b}
by
\eqref{eq:weak:sde:final:2}
in the statement of 
\eqref{expfin:2}. 
\end{remark}

Importantly, 
we regard the two cost coefficients $f$ and $g$
in 
\eqref{eq:limit:cost:functional}
as being defined on $[0,T] \times  \ES \times {\mathcal S}_{d-1}$ and 
$\ES \times {\mathcal S}_{d-1}$ respectively. 
To make it simpler, we write $g^{i}(p)$ for $g(i,p)$ and $f^{i}(t,p)$ for $f(t,i,p)$. Accordingly, 
for a progressively-measurable
strategy
${\boldsymbol \beta}=((\beta_{t}^{i,j})_{i,j \in \ES})_{0 \leq t \leq T}$, the cost functional becomes
\begin{equation}
\label{eq:limit:cost:functional:new}
\begin{split}
{\mathcal J}\bigl({\boldsymbol \beta},{\boldsymbol P}\bigr)
&:= \sum_{i \in \ES}
{\mathbb E}\biggl[ Q_{T}^i g^{i} (P_{T}) 
+ \int_{0}^TQ_{t}^i
 \Bigl(   f^{i} \bigl(t,P_{t} \bigr) + \frac12
 \sum_{j \not = i}
\bigl\vert \beta_{t}^{i,j}  \bigr\vert^2
\Bigr) \ud t
\biggr],
\end{split}
\end{equation}
where 
$(Q_{t})_{0 \le t \le T}$ 
solves
\eqref{eq:weak:sde:final:2}, 
with $Q_0=P_0$ (the latter being equal to some deterministic $p_{0} \in {\mathcal S}_{d-1}$).

\begin{definition}
\label{defmfg}
Given a deterministic initial condition $p_0$, a \emph{solution of the mean field game (with common noise)} is a pair $({\boldsymbol P}, \upalpha)$ 
such that
\begin{itemize}
\item[(i)] ${\boldsymbol P}=(P_{t})_{0 \leq t \leq T}$ is an $\mathcal{S}_{d-1}$-valued process, progressively measurable with respect to $\mathbb{F}^{\boldsymbol W}$, with $p_{0}$ as initial condition, and 
$\upalpha : [0,T] \times \ES \times {\mathcal S}_{d-1} \times \ES \rightarrow \RR$ is a bounded feedback strategy;
\item[(ii)] ${\boldsymbol P}$ and $\upalpha$ satisfy Equation \eqref{eq:weak:sde:final} in the strong sense;
\item[(iii)] ${\mathcal J}\bigl(\upalpha,{\boldsymbol P}\bigr)\leq{\mathcal J} \bigl({\boldsymbol \beta},{\boldsymbol P}\bigr)$ for any admissible open-loop strategy ${\boldsymbol \beta}$.
\end{itemize}
We say that the solution $({\boldsymbol P},\upalpha)$ is unique if given another solution $(\widetilde{\boldsymbol P}, 
 \widetilde{\upalpha})$, we have 
$P_t =\widetilde{P}_t$  for any $t\in [0,T]$, $\mathbb{P}$-a.s., and 
$\upalpha(t,i,P_{t})(j)=\widetilde \upalpha(t,i,P_{t})(j)$
 $dt\otimes\mathbb{P}$-a.e., for each $i,j\in\ES$.

\end{definition}

We recall that the probability space and the Brownian motion ${\boldsymbol W}$ are fixed and then Equations \eqref{eq:weak:sde:final} and \eqref{eq:weak:sde:final:2} have unique strong solutions. The above hence defines {\it strong} mean field game solutions, in the sence that ${\boldsymbol P}$ is adapted to $\mathbb{F}^{\boldsymbol W}$. For a comparison between strong and weak MFG solutions, in the diffusion case, we refer to \cite[Chapter 2]{CarmonaDelarue_book_II}.

\vspace{5pt}
Here is now a meta form of our main statement.

\begin{metatheorem}
\label{main:metathm}
Assume that
the coefficients $f$ and $g$ are \emph{sufficiently regular}.
Then,
for any $\varepsilon \in (0,1)$,
there exists a threshold $\kappa_{0}>0$, 
only depending 
on $\varepsilon$, $\|f\|_{\infty}$, $\| g \|_{\infty}$ and $T$, such that, for any $\kappa \geq \kappa_{0}$
and $\delta \in (0,1/(4\sqrt{d}))$,
and 
for any (deterministic) initial condition $(p_{0,i})_{i \in \ES} \in {\mathcal S}_{d-1}$ with positive entries, the mean field game 
has a unique solution as defined by Definition \ref{defmfg}.
\end{metatheorem}

{The statement is said to be in meta-form since the assumptions on $f$ and $g$ are not clear. 
The definitive version is given in Theorem \ref{main:thm} below
and the proof is completed in  Section \ref{subse:MFG:proof:existence:!}. }

\begin{remark}
\label{rem:meta-thm}
At this stage, it is worth mentioning that our notion of solution, as defined in 
Definition \ref{defmfg}, could be relaxed: Instead of requiring the strategies to be in feedback form (namely, 
in the form $((\upalpha(t,i,P_t)(j))_{i,j \in \ES})_{0 \le t \le T}$), we could allow them to be 
in open-loop form (namely, to be given by more general bounded progressively-measurable processes
$((\alpha_{t}^{i,j})_{i,j \in \ES})_{0 \le t \le T}$). Our claim is that Meta-Theorem \ref{main:metathm}, and in fact  
Theorem \ref{main:thm} as well, extend to this case: The solution given by 
Meta-Theorem \ref{main:metathm} and 
Theorem \ref{main:thm} remains unique within the larger class of open-loop solutions. The proof is exactly the same. 
Actually, 
our choice to use feedback strategies is for convenience only since we feel better to keep, in our main statements, 
  the same framework as the one used in the exposition of the problem.

Moreover, we are confident that our result also extends to random initial conditions, but the proof would certainly require an additional effort since 
the initial conditions should then satisfy suitable integrability properties. To wit, 
an expectation must be added to the right-hand side of both 
\eqref{eq:expfin2}
and 
\eqref{eq:expfin2:bis} when $(p_{0,i})_{i \in \ES}$ becomes random: The resulting expectations might be infinite unless some integrability properties are indeed satisfied.
\end{remark}

\subsection{Overview of the PDE results}
 \label{subsec:new:meta:PDE}

The proof of Meta-Theorem 
\ref{main:metathm}
(or equivalently of the definitive version Theorem \ref{main:thm})
is highly based upon the so-called master equation associated with the mean field game. We refer to
\cite{Cardaliaguet,CardaliaguetDelarueLasryLions,CarmonaDelarue_book_II,Lionscollege1,Lionsvideo} for foundations of the topics for mean field games set on 
${\mathbb R}^d$ and to 
\cite{bay-coh2019, cec-pel2019, BertucciLasryLions,Lionscollege2} for related issues for mean field games with a finite state space. 

\subsubsection{Well-posedness of the master equation}

Generally speaking, the master equation here takes the form of a system of nonlinear parabolic equations driven by a
so-called Kimura operator, the latter being carefully described in Section 
\ref{sec:new:Kimura}. This system is nothing but 
the system 
of second order PDEs 
\eqref{eq:master:equation:introl}
stated on the 
($d-1$)-dimensional simplex.  Although the formulation 
of 
\eqref{eq:master:equation:introl}
looks fine, it is in fact rather abusive since ${\mathcal S}_{d-1}$ has an empty interior in $\RR^d$: In other words, except if the unknown $U^i$ therein is defined on a neighborhood of the simplex, 
the derivatives that appear in the equations are not properly defined. 
Although we just clarify this point in the next section, we feel useful to state 
a preliminary version 
of our main result on the well-posedness of 
\eqref{eq:master:equation:introl}. Very much in the spirit of 
Meta-Theorem 
\ref{main:metathm}, we have it in the form of a meta-statement. 
  \begin{metatheorem}
\label{main:metathm-PDE}
Assume that
the coefficients $f$ and $g$ are \emph{sufficiently regular}.
Then,
for any $\varepsilon \in (0,1)$,
there exists a threshold $\kappa_{0}>0$, 
only depending 
on $\varepsilon$, $\|f\|_{\infty}$, $\| g \|_{\infty}$ and $T$, such that, for any $\kappa \geq \kappa_{0}$
and $\delta \in (0,1/(4\sqrt{d}))$,
the master equation \eqref{eq:master:equation:introl}
has a unique smooth solution $U=(U^1,\cdots,U^d)$.  
\end{metatheorem}
Similar to Meta-Theorem \ref{main:metathm}, 
Meta-Theorem \ref{main:metathm-PDE}
leaves unclear 
 the assumptions on $f$ and $g$ as well as the smoothness of the solution of the master equation. 
The definitive version is given in Theorem \ref{existence:master} below; this includes a clear definition of the 
various derivatives
that appear in 
\eqref{eq:master:equation:introl}. 
We refer to Section 
\ref{subse:master:e!} for the proof.

\subsubsection{Smoothing estimates for PDEs set on the simplex}
Solvability of the master equation 
\eqref{eq:master:equation:introl}
is clearly the main issue in our analysis. 
In fact, the key step in our approach is 
to prove a priori estimates for 
a so-called \textit{linear} version of the master equation, the latter being obtained by 
freezing the nonlinear component $U$ therein. To make it clear, 
the linear analogue of 
each equation in the system
\eqref{eq:master:equation:introl}
may be written in the generic form
\begin{equation}
\label{eq:pde:apriori:1:intro}
\begin{split}
&\partial_{t} u(t,p) + \sum_{j \in \ES} \Bigl( \varphi(p_{j}) + b_{j}(t,p) + p_{j} b_{j}^{\circ}(t,p)  \Bigr) \partial_{p_{j}} u(t,p)
\\ 
&\hspace{15pt} +\frac{\varepsilon^2}{2} \sum_{j,k \in \ES}\bigl(p_j \delta_{jk}-p_{j} p_{k}\bigr) \partial^2_{p_{j} p_{k}} u(t,p)
+ h(t,p) = 0,
\\
&u(T,p) = \ell(p),
\end{split}
\end{equation}
for $(t,p) \in [0,T] \times \mathrm{Int}(\hat{{\mathcal S}}_{d-1})$ ($\mathrm{Int}(\hat{{\mathcal S}}_{d-1})$ being here regarded as a subset of ${\mathcal S}_{d-1}$), $b=(b_{j})_{j \in \ES} : [0,T] \times {\mathcal S}_{d-1} \rightarrow (\RR_{+})^d$, $b^{\circ} =(b_{j}^{\circ})_{j \in \ES} : 
[0,T] \times {\mathcal S}_{d-1} \rightarrow {\mathbb R}^{d}$,
 $h : [0,T] \times {\mathcal S}_{d-1} \rightarrow \RR$ 
 and
 $\ell : {\mathcal S}_{d-1} \rightarrow \RR$ 
 are bounded and satisfy
\begin{equation}
\label{eq:zero:sum:intro}
\begin{split}
&\sum_{j \in \ES} \Bigl( \varphi(p_{j}) + b_{j}(t,p) + p_{j} b_j^{\circ}(t,p) \Bigr)= 0, \quad t \in [0,T], \quad p \in {\mathcal S}_{d-1}, 
\end{split}
\end{equation}
the unknown $u$ in 
\eqref{eq:pde:apriori:1:intro} being here real-valued (in words, it is an equation and not a system of equations). Obviously, the function $\varphi$ is 
the same as in 
\eqref{eq:varphi}
and the constraint 
\eqref{eq:zero:sum:intro}
is reminiscent of 
\eqref{eq:constraint:B:drift}
as it guarantees that the diffusion process associated with 
\eqref{eq:pde:apriori:1:intro} leaves the simplex invariant.

The
technical result below
is the 
core of our paper. 
It provides an a priori H\"older estimate to solutions of 
\eqref{eq:pde:apriori:1:intro} under the rather weak assumption that
$b$ and $b^\circ$ are merely continuous. This is made possible under the constraint that 
$\varphi$ points inward the simplex at the boundary in a sufficiently strong manner, whence our need 
to have
$\kappa$ large enough in Meta-Theorems 
 \ref{main:metathm} 
and \ref{main:metathm-PDE}. 
Differently from the latter two, 
this new statement 
must be regarded as being complete
except for the fact that the notion of derivatives in the PDE 
\eqref{eq:pde:apriori:1:intro} must be clarified.

\begin{theorem}
\label{main:holder}
Assume that $(b_{j})_{j \in \ES}$, $(b_{j}^\circ)_{j \in \ES}$ and $h$ are time-space continuous
and that $\ell$ is Lipschitz continuous (we let $\| \ell\|_{1,\infty}=\| \ell\|_{\infty}+ \sup_{p \not =q}
\vert \ell(p)-\ell(q)\vert/\vert p-q\vert$). Then,
there exists an exponent $\eta \in (0,1)$ such that, 
for any given $\delta \in (0,1/(4\sqrt{d}))$ and $\varepsilon \in (0,1)$, we can find a threshold $\kappa_{0} > 0$, depending 
on $\varepsilon$ and $(\| b_{j} \|_{\infty})_{j \in \ES}$,  such that, for any $\kappa \geq \kappa_{0}$, we can find another constant $C$, only depending on 
$\delta$, $\varepsilon$, $\kappa$, $(\| b_{j} \|_{\infty})_{j \in \ES}$, $(\| b_{j}^\circ \|_{\infty})_{j \in \ES}$, $\| h \|_{\infty}$, $\| \ell \|_{1,\infty}$ and $T$, such that any solution $u \in {\mathcal C}^{1,2}([0,T] \times \textrm{\rm Int}(\hat{\mathcal S}_{d-1}),\RR) \cap {\mathcal C}^{0}([0,T] \times {\mathcal S}_{d-1},\RR)$ of \eqref{eq:pde:apriori:1:intro}
satisfies
\begin{equation*}
\bigl\vert u(t,p) - u(s,q) \vert \leq C \bigl( \vert t-s  \vert^{\eta/2} + \vert p-q \vert^\eta \bigr), 
\quad (s,t) \in [0,T], \ (p,q) \in {\mathcal S}_{d-1}. 
\end{equation*}
Moreover,
$\| u \|_{\infty}$ is less than $\| \ell \|_{\infty} + T \| h \|_{\infty}$.
\end{theorem}
{The proof of the theorem is provided in Section \ref{sec:apriori}.}
\begin{remark} 
\label{rem:thm47}
{\ }
\begin{enumerate}
\item 
The
notation ${\mathcal C}^{1,2}$ in the statement is here understood in the usual sense: $u$ is required to be once continuously differentiable in $t$ and twice continuously differentiable in $p$ on the interior of the simplex, the notion of derivative being fully clarified in Subsections
\ref{subsub:derivatives:simplex} and \ref{subse:weak:solvability}.
As for the notation ${\mathcal C}^0$, it refers to functions that are continuous in $(t,p)$. 
\item 
We stress that we are not aware of any similar a priori H\"older estimate in the literature. There are some papers about the H\"older regularity of elliptic equations with degeneracies near the boundary, but they do not fit our framework (besides the obvious fact that the underlying equations are elliptic whist ours is parabolic): We refer for instance to \cite{GoulaouicShimakura} for a case with a  specific instance of drift that does not cover our needs.
We also emphasize that the H\"older estimate in Theorem \ref{main:holder} does not depend on the modulus of continuity of the coefficients 
$(b_{j})_{j \in \ES}$, $(b_{j}^\circ)_{j \in \ES}$ and $h$. In fact, we here assume the latter to be continuous for convenience only as it suffices for our own purposes. We believe that the result would remain true if 
$(b_{j})_{j \in \ES}$, $(b_{j}^\circ)_{j \in \ES}$ and $h$ were merely bounded and measurable; this would require to adapt accordingly the proof, which consists in mollifying the coefficients, as explained in the 
introduction 
of 
Section \ref{sec:apriori}, see in particular footnote \eqref{foo:mollif}.
\item
On another matter, it is worth noticing that we may trace back explicitly the dependence of 
$\kappa_{0}$ over $\varepsilon$. The key point in the proof is inequality \eqref{eq:condition:kappa0:epsilon}, which shows that 
$\kappa_{0}$ may be taken of the form $\kappa_{0}= \varepsilon^{-2} \kappa_{00}$, for 
$\kappa_{00}$ only depending on
$(\| b_{j} \|_{\infty})_{j \in \ES}$.
The parameter $\eta$ therein is a free parameter that is eventually chosen as $1/2$, see 
the discussion after 
Proposition \ref{prop:induction}.
\end{enumerate}
\end{remark}

\color{black}
\section{From Wright-Fischer spaces to complete statements}
 \label{sec:new:Kimura}
 
 {The purpose of this section is twofold. 
First, we introduce useful material
 for studying the PDEs  
 addressed in the article. By the way, we  
   provide a more rigorous formulation of the two equations 
      \eqref{eq:master:equation:introl} and 
\eqref{eq:pde:apriori:1:intro}, using appropriate systems of derivatives. 
This allows us to obtain 
complete and definitive versions of the two Meta-Theorems
\ref{main:metathm} and \ref{main:metathm-PDE}. In a second step of the section, we provide the proofs of 
Propositions 
\ref{thm:approximation:diffusion:2}, 
\ref{expfin} and
\ref{expfin:2}
that we used in the previous section to formulate our MFG. }

\subsection{Reformulation of the master equation}
\label{subsub:derivatives:simplex} 
{On the road towards a complete version of 
Meta-Theorem
\ref{main:metathm-PDE}, we first provide a more rigorous formulation of the master equation 
\eqref{eq:master:equation:introl} since the latter should be regarded as a PDE stated on a $(d-1)$-dimensional manifold}.

\subsubsection{Local coordinates}
The first way is to reformulate 
the master equation in so-called 
local coordinates, using 
the identification of 
${\mathcal S}_{d-1}$ 
with $\hat{\mathcal S}_{d-1}=\{ (x_{1},\cdots,x_{d-1}) \in (\RR_{+})^{d-1} : \sum_{i \in \llbracket d-1 \rrbracket} x_{i} \leq 1\}$  (see the introduction for the notation).  
For a real-valued function $h$ defined on ${\mathcal S}_{d-1}$, we may indeed define the function
\color{black}
\begin{equation}
\label{eq:check:x}
\begin{split}
\hat{h}^i \bigl(p^{-i}\bigr) &:= h (p)
=  h
\Bigl( p_{1},\cdots,p_{i-1},1- \sum_{k \not = i} p_{k},p_{i+1},\cdots,p_{d} \Bigr),
\\
&\textrm{\rm with}
\ p^{-i} = \Bigl(p_{1},\cdots,p_{i-1},p_{i+1},\cdots,p_{d}\Bigr),
\end{split}
\end{equation}
for $p \in {\mathcal S}_{d-1}$ and hence $p^{-i} \in \hat{\mathcal S}_{d-1}$, and for $i \in \ES$. 
We then say that $h$ is differentiable on ${\mathcal S}_{d-1}$ if 
$\hat{h}^i$ is differentiable on $\hat{\mathcal S}_{d-1}$ for some (and hence for any\footnote{In order to prove the differentiability of $\hat h^j$ for any $j \in \ES \setminus \{i\}$, it suffices to see that (say that $j>i$ to simplify) 
$\hat{h}^j(p^{-j})=\hat{h}^i(p_{1},\cdots,p_{i-1},p_{i+1},\cdots,p_{j-1},1-\sum_{k \not =j}p_{k},p_{j+1},\cdots)$.}) $i \in \ES$.
 In case when $h$
is defined on a ($d$-dimensional) neighborhood of ${\mathcal S}_{d-1}$, we then have 
$\partial_{p_{j}} \hat{h}^i(p^{-i}) = \partial_{p_{j}} h(p) - \partial_{p_{i}}h(p)$, for $j\not =i$.
We then end up with 
\begin{equation}\label{first:der}
\partial_{p_{j}} \hat{h}^i\bigl(p^{-i}\bigr) 
-
\partial_{p_{k}} \hat{h}^i\bigl(p^{-i}\bigr) 
= \partial_{p_{j}} h(p) - \partial_{p_{k}}h(p),
\end{equation}
for any $j,k \in \ES \setminus \{i\}$ and $p \in {\mathcal S}_{d-1}$. 
Similarly, the second order derivative may be written as 
\begin{equation}\label{second:der}
\partial^2_{p_{j} p_{k}} \hat{h}^i\bigl(p^{-i}\bigr) =  \partial_{p_{j} p_{k}}^2 h(p) - \partial_{p_{i}p_{j}}^2 h(p) - \partial^2_{p_{i} p_{k}}h(p) +\partial_{p_{i} p_{i}}^2 h(p),
\end{equation}
and the second order term in 
\eqref{eq:master:equation:introl} with $h=U^i$ (the reader should not make any confusion between $U^i$ and $\hat{h}^i$: $U^i$ is the $i$th coordinate of the solution to the master equation whilst $\hat{h}^i$ is the projection of $h$, whenever the latter is real-valued, onto a real-valued function on $\hat{\mathcal S}_{d-1}$) becomes
\begin{equation}
\label{eq:generator:local:coordinates}
\frac{\varepsilon^2}{2} \sum_{j,k \in \ES}\bigl(p_j  \delta_{jk}- p_{j} p_{k} \bigr) \partial^2_{p_{j} p_{k}} h (p)
= 
\frac{\varepsilon^2}{2} \sum_{  j,k  \not =i} ( p_j \delta_{jk}-  p_{j} p_{k}) \partial^2_{p_{j} p_{k}} \hat{h}^i\bigl(p^{-i}\bigr),
\end{equation}
for $p \in {\mathcal S}_{d-1}$.
In fact, according to the definition 
of the Wright--Fisher spaces in {the forthcoming}
\S
\ref{subse:space:Wright--Fisher}, we will consider functions that are just twice-differentiable on the interior $\textrm{\rm Int}(\hat{\mathcal S}_{d-1})$ of the simplex and for which the second-order derivatives may not extend to the boundary of the simplex.

\subsubsection{Intrinsic derivatives} 
Equivalently,
we may formulate the derivatives in \color{black} 
\eqref{eq:master:equation:introl} in terms of the intrinsic gradient on ${\mathcal S}_{d-1}$, regarded as a $(d-1)$-dimensional manifold. 
Indeed, whenever $h$ is defined on a neighborhood of ${\mathcal S}_{d-1}$, we may denote by $\nabla h = (\partial_{p_{1}}h,\dots, \partial_{p_{d}}h)$ the standard gradient in $\R^d$.
Identifying the tangent space ${\mathcal T}_{d-1}$ to the simplex at a given point $p \in {\mathcal S}_{d-1}$ with the 
orthogonal space to the $d$-dimensional vector $\bm{1}=(1,\cdots,1)$, 
the intrinsic gradient of $h$, seen as a function defined on the simplex, at $p$ identifies with the orthogonal projection of 
$\nabla h$ on ${\mathcal T}_{d-1}$. We denote it by  $\mathfrak{D} h := (\mathfrak{d}_1 h, \dots, \mathfrak{d}_d h)$, that is
\begin{equation*}
\mathfrak{D} h := \nabla h - \frac1d(\bm{1} \cdot \nabla h)\bm{1} \ ; \quad 
\mathfrak{d}_{p_i} h := \partial_{p_{i}} h - \frac1d \sum_{j \in \ES} \partial_{p_{j}} h, \quad i \in \ES, 
\end{equation*}
or, equivalently (which allows to define ${\mathfrak D} h$ when $h$ is just defined on ${\mathcal S}_{d-1}$),
\begin{equation*}
\mathfrak{d}_{p_i}h(p)= - \frac1d \sum_{j \not =i} \partial_{p_{j}} \hat{h}^i\bigl( p^{-i} \bigr), \quad p \in {\mathcal S}_{d-1}.
\end{equation*}
Of course we have $\sum_j \mathfrak{d}_{p_j}h = \bm{1} \cdot\mathfrak{D} h =0$ by construction\footnote{When 
$h$ is just defined on the simplex, the proof is slightly less obvious, but it may be achieved by checking that 
$\partial_{p_{j}} \hat{h}^i(p^{-i}) 
= - \partial_{p_{i}} \hat{h}^j(p^{-j})$, for $i \not =j$. And then,
$\sum_{i \in \ES} \sum_{j \not =i}
\partial_{p_{j}} \hat{h}^i(p^{-i})  = 
- \sum_{i \in \ES} \sum_{j \not =i}
\partial_{p_{i}} \hat{h}^j(p^{-j})$. By Fubini's theorem, the latter is also equal to 
$- \sum_{j \in \ES} \sum_{i \not =j}
\partial_{p_{j}} \hat{h}^i(p^{-i})=
- \sum_{i \in \ES} \sum_{j \not =i}
\partial_{p_{i}} \hat{h}^j(p^{-j})$, from which we deduce that it is indeed equal to $0$.}. And the following holds true
\begin{equation*}
\mathfrak{d}_{p_i} h(p) - \mathfrak{d}_{p_j}h(p) =  \partial_{p_{i}} h(p) - \partial_{p_{j}}h(p), 
\end{equation*}
for $i,j \in \ES$ and $p \in {\mathcal S}_{d-1}$. 
As for the second order derivatives, we have\footnote{The second line follows from the identity 
$\partial_{p_{l}} \hat{h}^{j}(p^{-j})
=  \partial_{p_{l}} \hat{h}^{i}(p^{-i}) - 
 \partial_{p_{j}} \hat{h}^{i}(p^{-i})$ if $l \not =i$ and 
$\partial_{p_{l}} \hat{h}^{j}(p^{-j})
= - 
 \partial_{p_{j}} \hat{h}^{i}(p^{-i})$ if $l=i$, which, in turn, implies  
\begin{equation*}
\begin{split}
{\mathfrak d}_{p_i}({\mathfrak d}_{p_j} h) &=
-\frac1d \sum_{k \not =i} \partial_{p_{k}} \widehat{{\mathfrak d}_{p_j} h}^i\bigl(p^{-i}\bigr)
=
-\frac1d \sum_{k \not =i} \partial_{p_{k}} \biggl( - \frac1d \sum_{l \not = i,j} \Bigl[ \partial_{p_{l}} \hat{h}^i\bigl(p^{-i}\bigr)
- \partial_{p_{j}}\hat{h}^i\bigl(p^{-i}\bigr) \Bigr] + \frac1d  \partial_{p_{j}} \hat{h}^i\bigl(p^{-i}\bigr) \biggr)
\\
&=
-\frac1d \sum_{k \not =i} \partial_{p_{k}} \biggl( - \frac1d \sum_{l \not = i }
 \partial_{p_{l}} \hat{h}^i\bigl(p^{-i}\bigr)
 +   \partial_{p_{j}} \hat{h}^i\bigl(p^{-i}\bigr) \biggr).
\end{split}
\end{equation*}
}
\begin{equation}
\label{eq:mathfrakd:partiald}
\begin{split}
\fd^2_{p_ip_j}h(p) &= \partial_{p_{i} p_{j}}^2 h(p) - \frac1d \sum_{k \in \ES} \bigl(\partial_{p_{i} p_{k}}^2 h(p) + \partial_{p_{j} p_{k}}^2 h(p)\bigr) + \frac{1}{d^2}  \sum_{k,l \in \ES} \partial_{p_{k} p_{l}}^2 h(p)
\\
&= \frac1{d^2} \sum_{k,l \not = i} \partial^2_{p_{k} p_{l}} \hat{h}^i\bigl(p^{-i} \bigr) 
- \frac1{d} \sum_{k \not = i } \partial^2_{p_{k} p_{j}} \hat{h}^i\bigl(p^{-i} \bigr),
\end{split}
\end{equation}
and then the second order term in \eqref{eq:master:equation:introl} 
(with $h=U^i$, see \eqref{eq:generator:local:coordinates}) 
becomes\footnote{The result may be also proved by combining 
\eqref{eq:generator:local:coordinates}
and 
\eqref{eq:mathfrakd:partiald}, hence bypassing the derivatives of $h$ themselves. Indeed, 
\begin{equation*}
\begin{split}
\sum_{  j,k  \in \ES} ( p_j \delta_{jk}-  p_{j} p_{k}) \fd^2_{p_jp_k} h
&= 
- 
\frac1d 
\sum_{  j,k  \in \ES} ( p_j \delta_{jk}-  p_{j} p_{k}) \sum_{l \not =k} \partial^2_{p_{l} p_{j}} \hat{h}^{k}(p^{-k})
\\
&= - 
\frac1d 
\sum_{  j,k  \in \ES} ( p_j \delta_{jk}-  p_{j} p_{k}) \Bigl( 
\sum_{l \not =k,i} \bigl[ 
\partial^2_{p_{l}p_{j}} \hat{h}^{i}(p^{-i})
-
\partial^2_{p_{k}p_{j}} \hat{h}^{i}(p^{-i})\bigr]
- 
\partial^2_{p_{k}p_{j}} \hat{h}^{i}(p^{-i}) \Bigr)
\\
&= - 
\frac1d 
\sum_{  j,k  \in \ES} ( p_j \delta_{jk}-  p_{j} p_{k}) \Bigl( 
\sum_{l \not =i} 
\partial^2_{p_{l}p_{j}} \hat{h}^{i}(p^{-i})
- 
d \cdot \partial^2_{p_{k}p_{j}} \hat{h}^{i}(p^{-i}) \Bigr)
\\
&=  \sum_{  j,k  \in \ES} ( p_j \delta_{jk}-  p_{j} p_{k}) \partial^2_{p_{k}p_{j}} \hat{h}^{i}(p^{-i}).
\end{split}
\end{equation*}

}
\[\sum_{j,k \in \ES}(p_j \delta_{jk}-p_{j} p_{k}) \partial_{p_{j} p_{k}}^2 h(p) = \sum_{ j,k \in \ES}(p_j \delta_{jk}-p_{j} p_{k})\fd^2_{p_jp_k} h,\]
since $\sum_{k \in \ES} (p_j \delta_{jk}-p_{j} p_{k}) =0$ for any $j \in \ES$.
 
\subsubsection{Two equivalent formulations} In the end, the master equation \color{black}
\eqref{eq:master:equation:introl} may be written in two equivalent forms. The first one may be written in terms of the derivatives in \eqref{first:der}--\eqref{second:der}:
\begin{equation}
\label{eq:master:equation:local}
\begin{split}
&\partial_t U^i(t,p)  -\frac{1}{2}\sum_{j=1}^d\bigl(U^i(t,p)-U^j(t,p)\bigr)^2_++ f^i(t,p) 
+
\sum_{j \in \ES} \varphi(p_{j})
\bigl[ U^j(t,p) - U^i(t,p) \bigr]
\\
&\quad
+ 
\sum_{j\not = i} \sum_{k \in \ES} \Bigl( 
p_{k}\bigl[ \varphi(p_{j}) + \bigl(U^k(t,p)- U^j(t,p)\bigr)_+ \bigr]
\\
&\hspace{150pt}-
p_{j}\bigl[ \varphi(p_{k}) + \bigl(U^j(t,p)- U^k(t,p)\bigr)_+ \bigr]
\Bigr)
 \partial_{p_{j}} {\widehat {\{ U^i\}}}{}^{i}\bigl(t,p^{-i}\bigr)
\\
&\quad - \varepsilon^2\sum_{ j\neq i} p_{j}  \partial_{p_{j}}{\widehat {\{ U^i\}}}{}^{i}\bigl(t,p^{-i}\bigr)
+\frac{\varepsilon^2}{2} \sum_{ j,k \not =i}(p_j \delta_{jk}-p_{j} p_{k}) \partial^2_{p_{j} p_{k}} {\widehat {\{ U^i\}}}{}^{i}\bigl(t,p^{-i}\bigr) =0,
\\
&U^i(T,p)= g^i(p),
\end{split}
\end{equation}
for $(t,p) \in [0,T] \times \textrm{\rm Int}(\hat{\mathcal S}_{d-1})$.
Above, the function 
$\widehat {\{ U^i\}}{}^{i}$ is defined on $\hat{\mathcal S}_{d-1}$ in the same way as before, namely
$\widehat {\{ U^i\}}{}^{i}(t,p^{-i})= U^i(t, p_{1},\cdots,p_{i-1},1-\sum_{j\ne i}p_j, p_{i+1},\cdots,p_{d})$, for $p \in {\mathcal S}_{d-1}$. For sure, we could rewrite the equation for $U^i$ in terms of the variable $p^{-j}$ (instead of $p^{-i}$) for another index $j \not =i$, but this would be of little interest for us. In fact, we will make greater use of a
second form 
of \eqref{eq:master:equation:introl} that may be written in terms of the intrinsic derivative:
\begin{equation}
\label{eq:master:equation:intrinsic}
\begin{split}
&\partial_t U^i(t,p)  -\frac{1}{2}\sum_{j=1}^d\bigl(U^i(t,p)-U^j(t,p)\bigr)^2_+ +f^i(t,p) 
+
\sum_{j \in \ES} \varphi(p_{j})
\bigl[ U^j(t,p) -   U^i(t,p) \bigr]
\\
&\qquad + 
\sum_{j,k \in \ES} p_{k}\bigl[ \varphi(p_{j}) + \bigl(U^k(t,p-U^j(t,p)\bigr)_+ \bigr] \left( \fd_{p_{j}} U^i(t,p) - \fd_{p_{k}} U^i(t,p)\right) \\
&\qquad+ \varepsilon^2\sum_{j  \neq i } p_{j} \left( \fd_{p_{i}} U^i(t,p) - \fd_{p_j} U^i(t,p)\right)
+\frac{\varepsilon^2}{2} \sum_{j,k \in \ES}(p_j \delta_{jk}-p_{j} p_{k}) \fd^2_{p_{j} p_{k}} U^i(t,p) =0,\\
&U^i(T,p)= g^i(p),
\end{split}
\end{equation}
for $(t,p) \in [0,T] \times \textrm{\rm Int}(\hat{\mathcal S}_{d-1})$. 

\subsection{Kimura operators}
\label{subse:weak:solvability}
We {now clarify} the choice of the functional spaces for $f$ and $g$ in Meta-Theorems \ref{main:metathm} and 
\ref{main:metathm-PDE} and for $U$ in 
Meta-Theorem \ref{main:metathm-PDE}. Basically, 
we take those spaces from a recent Schauder like theory due to 
Epstein and Mazzeo \cite{EpsteinMazzeo}
for what we called Kimura operators, the latter being operators of the very same structure as the second order generator
of
\eqref{eq:weak:sde:bis}, which we will denote by 
$({\mathcal L}_{t})_{0 \le t \le T}$.



\subsubsection{{Normal form}}
\label{subse:Wright--Fisher:model}
Following 
\eqref{eq:bracket:weak:sde},
we get (at least informally) 
that,  
for any twice differentiable real-valued function $h$ on $\RR^d$,
\begin{equation}
\label{eq:generator:kimura}
\begin{split}
{\mathcal L}_{t}  h(p)  &= 
\sum_{i \in \ES} a_{i}(t,p) \partial_{p_{i}} h(p)
+ 
 \frac{\varepsilon^2}{2} \sum_{i,j \in \ES}  \bigl( p_{i} \delta_{i,j} - {p_{i} p_{j}} \bigr)
\partial^2_{p_{i} p_{j}} h(p).
\end{split} 
\end{equation}
{Obviously, we here recover the same second-order operator  as in the master equation 
\eqref{eq:master:equation:introl}, but as we have just explained, this writing is rather abusive.
For sure, we have introduced in the previous subsection convenient systems of coordinates and of derivatives that permit to get 
a more rigorous form. However we need here a more refined formulation of 
\eqref{eq:generator:kimura} that brings out the degeneracies of the operator and, most of all, that fits the framework of Definition 2.2.1 for Kimura operators in the aforementioned monograph \cite{EpsteinMazzeo}.} In this regard, it is worth noticing that the second-order term in ${\mathcal L}_{t}$ 
is {obviously degenerate when it is acting on functions defined on ${\mathbb R}^d$. This follows from the fact that the matrix 
$(p_{i} \delta_{i,j} - p_{i} p_{j})_{i,j \in \ES}$
has $(1,\cdots,1)$ in its kernel whenever $(p_{1},\cdots,p_{d})$ is in ${\mathcal S}_{d-1}$; this is somehow the price to pay for forcing the solutions to  
\eqref{eq:weak:sde:bis} to stay within  ${\mathcal S}_{d-1}$. More subtly, we are concerned in the rest of this section with the degeneracies of 
${\mathcal L}_{t}$ when it is truly acting on functions defined on the simplex and hence when it is expressed in local coordinates.}

In case when the drift $a$ in 
\eqref{eq:generator:kimura} is zero, 
${\mathcal L}_{t}$ becomes time independent and coincides with the generator of the  $d$-dimensional Wright--Fisher model. 
In case when $a$ is non-zero but is time-independent and satisfies
\begin{equation}
\label{eq:bi:condition:sign}
a_{i}(p) \geq 0 \quad \textrm{if} \ p_{i} =0,
\end{equation}
(which means that $a$ points inward at points $p$ that belong to the boundary of $\hat {\mathcal S}_{d-1}$), the operator 
${\mathcal L}_{t}$ itself becomes a time-homogeneous Kimura diffusion operator. Below, we make an intense use of the recent monograph of
Epstein and Mazzeo \cite{EpsteinMazzeo} on those types of operators, see also 
\cite{ChenStroock,Kimura,Shimakura} for earlier results.
The
key feature is that, under the identification of 
${\mathcal S}_{d-1}$
with $\hat{\mathcal S}_{d-1}$,
we may regard the simplex 
as a $d-1$ dimensional \textit{manifold with corners}, the corners being obtained by intersecting at most $d-1$ of the hyperplanes 
$\{x \in \RR^{d-1} : x_{1}=0\}$, $\dots$, 
$\{x \in \RR^{d-1} : x_{d-1}=0\}$,
$\{x \in \RR^{d-1} : x_{1}+\cdots+x_{d-1}=1\}$ with $\hat{\mathcal S}_{d-1}$ (we then call the codimension of the corner the number of hyperplanes showing up in the intersection). 
Accordingly {and consistently with the analysis performed in Subsection \ref{subsub:derivatives:simplex}}, we can rewrite 
\eqref{eq:generator:kimura}
as an
operator acting on functions from $\hat{\mathcal S}_{d-1}$ to ${\mathbb R}$,  
the resulting operator being then a Kimura diffusion  
operator on $\hat{\mathcal S}_{d-1}$.
{It suffices to}
reformulate 
\eqref{eq:generator:kimura}
in terms of the sole $d-1$ first coordinates $(p_{1},\cdots,p_{d-1})$
or, more generally, in terms of $(p_{i})_{i \in \ES \setminus \{ l\}}$ for any 
given coordinate $l \in \ES$. 
Somehow, choosing the coordinate $l$ amounts to choosing a system of local coordinates and, as we explain below, 
the choice of $l$ is mostly dictated by the position of $(p_{1},\cdots,p_{d})$ inside the simplex. 
For instance, 
whenever all the entries of $p=(p_{1},\cdots,p_{d})$ are positive, meaning that 
$(p_{1},\cdots,p_{d-1})$ belongs to the interior 
of $\hat{\mathcal S}_{d-1}$, the choice of $l$ does not really matter and we may work, for convenience, 
with $l=d$. 
We then rewrite the generator ${\mathcal L}_{t}$, as given by \eqref{eq:generator:kimura},
in the form
\begin{equation}
\label{eq:reduced:L:d-1}
\hat {\mathcal L}_{t}  \hat h(x)  
=   \sum_{i=1}^{d-1} \hat a_{i}(t,x) \partial_{x_{i}} \hat h(x)
+ \frac{\varepsilon^2}{2} \sum_{i,j=1}^{d-1}  \bigl(x_{i} \delta_{i,j} - {x_{i} x_{j}} \bigr)
\partial^2_{x_{i} x_{j}} \hat h(x),
\end{equation}
where now $x \in \hat{\mathcal S}_{d-1}$, $\hat{h}$ is a smooth function 
on $\RR^{d-1}$ and $\hat{a}_{i}(t,x) = a_{i}(t,\check{x})$, with $\check{x}=(x_{1},\cdots,x_{d-1},1-x_{1}-\cdots-x_{d-1})$. 
{Following the computations of Subsection \ref{subsub:derivatives:simplex}, the connection}
between 
\eqref{eq:generator:kimura}
and 
\eqref{eq:reduced:L:d-1} is that 
$\hat {\mathcal L}_{t}  \hat h(x)
={\mathcal L}_{t} \{ h(\check x)\}$, whenever 
$\hat h$ is defined 
from $h$ through the identity 
$\hat h(x)=h(\check x)$.    

Importantly, 
we then notice that, in the interior of $\hat{\mathcal S}_{d-1}$, $\hat {\mathcal L}_t$ is elliptic. Indeed, for any 
$(x_{1},\cdots,x_{d-1}) \in \hat{\mathcal S}_{d-1}$ 
and
$(\xi_{1},\cdots,\xi_{d-1}) \in \RR^{d-1}$,
\begin{equation}
\label{eq:ellipticity}
\begin{split}
\sum_{i,j=1}^{d-1} \xi_{i} \bigl(x_{i} \delta_{i,j} - {x_{i} x_{j}} \bigr) \xi_{j}
= \sum_{i=1}^{d-1} \xi_{i}^2 x_{i} - \biggl( \sum_{i=1}^{d-1} \xi_{i} x_{i} \biggr)^2
&\geq \sum_{i=1}^{d-1} \xi_{i}^2 x_{i} - \sum_{j=1}^{d-1} x_{j} \sum_{i=1}^{d-1} \xi_{i}^2 x_{i}
\\
&= \sum_{i=1}^{d-1} \xi_{i}^2 x_{i}\Bigl( 1 -
\sum_{j=1}^{d-1} x_{j} \Bigr), 
\end{split}
\end{equation}
which suffices to prove ellipticity whenever $x_{1},\cdots,x_{d-1}>0$ and 
$\sum_{j=1}^{d-1} x_{j} <1$.

Take now a corner of codimension $l \in \{1,\cdots,d-1\}$. 
 If the $l$ hyperplanes entering the definition of the corner are of the form
 ${\mathcal H}_{i_{1}}=\{x \in \RR^{d-1} : x_{i_{1}} = 0\}$, $\dots$, ${\mathcal H}_{i_{l}}= \{x \in \RR^{d-1} : x_{i_{l}}=0\}$, 
 for $1 \leq i_{1} < i_{2} < \cdots <i_{l} \leq d-1$, then we can rewrite 
\eqref{eq:reduced:L:d-1} in the form
 \begin{equation}
 \label{eq:canonical:form:kimura}
\begin{split}
\hat {\mathcal L}_{t}  \hat h(x)  
&= \frac{\varepsilon^2}{2} \sum_{i \in I} (1-x_{i})
x_{i}
\partial^2_{x_{i}} \hat h(x)
+ \frac{\varepsilon^2}{2} \sum_{j,k \not \in I}
x_{j} (\delta_{j,k}-x_{k}) 
\partial^2_{x_{j} x_{k}} \hat h(x)
+   \sum_{i \in I} 
\hat a_{i}(t,x) \partial_{x_{i}} \hat h(x)
\\
&\hspace{15pt}
 - \frac{\varepsilon^2}{2} \sum_{i,j \in I : i \not =j}   {x_{i} x_{j}} 
\partial^2_{x_{i} x_{j}} \hat h(x)
 - \varepsilon^2 \sum_{i \in I,j \not \in I} x_{i} x_{j}
\partial^2_{x_{i} x_{j}} \hat h(x)
 +   \sum_{i \not \in I} 
\hat a_{i}(t,x) \partial_{x_{i}} \hat h(x),
\end{split}
\end{equation}
with $I=\{i_{1},\cdots,i_{l}\}$. Here, we observe from 
\eqref{eq:bi:condition:sign} that $\hat{a}_{i}(t,x) \geq 0$ if 
$i \in I$ and $x \in \cap_{j \in I} {\mathcal H}_{j}$. Also, as long as 
$x$ belongs to 
$\cap_{j \in I} {\mathcal H}_{j}$
but
$(x_{j})_{j \not \in I}$ and $1- \sum_{j \not \in I} x_{j}$ remain positive (which is necessarily true in the relative interior of 
$\cap_{j \in I} {\mathcal H}_{j} \cap \hat{\mathcal S}_{d-1}$), then, by the same argument as in \eqref{eq:ellipticity}, the matrix 
$(x_{j} (\delta_{j,k}-x_{k}))_{j,k \not \in I}$ is non degenerate. Hence, up to the intensity factor $\varepsilon$, the above decomposition fits (2.4) in \cite[Definition 2.2.1]{EpsteinMazzeo}, which is of crucial interest for us\footnote{\label{foo:normal:form}The reader may also notice that, in \cite{EpsteinMazzeo}, the operator ${\mathcal L}_{t}$ is passed in a normal form, see Proposition 2.2.3 therein which guarantees that such a normal form indeed exists.
Here the change of variable to get the normal form may be easily {clarified} by adapting the 1d case accordingly, see \cite{EpsteinMazzeo:1d}: It suffices to change 
$x_{i}$ into the new coordinate $\arcsin^2(\sqrt{x_{i}})$. In fact, the normal form in \cite{EpsteinMazzeo} plays a key role in the definition of 
the Wright-Fischer H\"older spaces that we recall in the next paragraph. }. 

Assume now that the corner of codimension $l$ is given by the intersection of the hyperplanes 
$\{x \in \RR^{d-1} : x_{i_{1}} = 0\}$, $\dots$, $\{ x \in \RR^{d-1} : x_{i_{l-1}}=0\}$
and $\{x \in \RR^{d-1} : x_{1} + \cdots + x_{d-1} = 1\}$. In order to recover the same form as in 
\cite[(2.4)]{EpsteinMazzeo} (or equivalently in \eqref{eq:canonical:form:kimura}), we 
perform the following change of variables:
consider 
$(y_{1},\cdots,y_{d-1}):=(p_{1},\cdots,p_{i_{l}-1},p_{i_{l}+1},\cdots,p_{d})$
as 
a new system of local coordinates, for some given index $i_{l} \in \{i_{l-1}+1,\cdots,d-1\}$ (which is indeed possible if 
$i_{l-1}<d-1$; if not, $i_{l}$ must be chosen as the largest index that is different from $i_{1},\cdots,i_{l-1},d$ and hence is lower than $i_{l-1}$, which asks for reordering the coordinates in the change of variables).
For a test function $h$ as before, we then expand ${\mathcal L}_{t} [h(p_{1},\cdots,p_{i_{l}-1},
1-\sum_{j \not= i_{l}}
p_{j},p_{i_{l-1}+1},\cdots,p_{d})]$ as before and check that 
we recover the same structure as in \cite[(2.4)]{EpsteinMazzeo}, but in the new 
coordinates. This demonstrates how to check the setting of 
\cite[Definition 2.2.1]{EpsteinMazzeo}.

\subsubsection{Wright--Fisher spaces}
\label{subse:space:Wright--Fisher}
The rationale for double-checking 
\cite[Definition 2.2.1]{EpsteinMazzeo}
is that we want to use next the 
Schauder theory developed in 
\cite{EpsteinMazzeo}
for Kimura operators. 
This prompts us to introduce here the functional spaces that underpin 
the corresponding Schauder estimates.

For a point $x^{0} \in \hat{\mathcal S}_{d-1}$ in the relative interior of a corner ${\mathcal C}$ of $\hat{\mathcal S}_{d-1}$ of codimension $l \in \{0,\cdots,d-1\}$ (if 
$l=0$, then $x^{0}$ is in the interior of $\hat{\mathcal S}_{d-1}$), we may consider a 
new system of coordinates $(y_{1},\cdots,y_{d-1})$ (of the same type as in the previous paragraph) such that ${\mathcal C} = \{y \in \hat{\mathcal S}_{d-1} : y_{i_{1}}= \cdots = 
y_{i_{l}}=0\}$, for $1 \leq i_{1} < \cdots < i_{l}$. 
Letting 
$I:=\{i_{1},\cdots,i_{l}\}$ and 
denoting by 
$(y^0_{1},\cdots,y^0_{d-1})$ the coordinates of $x^0$ in the new system (for sure $y^0_{i_{j}}=0$ for 
$j=1,\cdots,l$), we may find a $\delta^0 >0$ such that:
\begin{enumerate}
\item 
the closure 
$\overline{\mathcal U}(\delta^0,x^0)$ of ${\mathcal U}(\delta^0,x^0):= \{ y \in (\RR_{+})^{d-1} : \sup_{i=1,\cdots,d-1} \vert y_{i} - y^0_{i} \vert < 
\delta^0 \}$ is included in $\hat{\mathcal S}_{d-1}$,
\item for $y$ in $\overline{\mathcal U}(\delta^0,x^0)$, for $j \not \in I$, $y_{j}>0$,
\item for $y$ in $\overline{\mathcal U}(\delta^0,x^0)$, for $y_{1}+\cdots+y_{d-1}<1-\delta^0$. 
\end{enumerate}
A function $\hat{h}$ defined on $\overline{\mathcal U}(\delta^0,x^0)$ 
is then said to belong to ${\mathscr C}^{\gamma}_{\textrm{\rm WF}}(\overline{\mathcal U}(\delta^0,x^0))$, for some $\gamma \in (0,1)$, if, in the new system of coordinates, $\hat{h}$ is H\"older continuous on $\overline{\mathcal U}(\delta^0,x^0)$ with respect to the distance\footnote{\label{foo:subtlety}
There is a subtlety here: In fact, the distance used in \cite[(1.32), (5.42)]{EpsteinMazzeo} is defined in terms of the coordinates that are used in the normal form of the operator, see footnote \ref{foo:normal:form}. So rigorously, we should not use the variables 
$(y_{1},\cdots,y_{d-1})$ but the variables 
$(\textrm{arcsin}^2(\sqrt{y_{1}}),\cdots,\textrm{arcsin}^2(\sqrt{y_{d-1}}))$
in the definition of the distance. Fortunately, since we have the condition $y_{1}+\cdots+y_{d-1} < 1-\delta^0$, the change of variable $y_{i} \mapsto 
\textrm{arcsin}^2(\sqrt{y_{i}})$ is a smooth diffeomorphism, from which we deduce that the distance 
\eqref{eq:distance:d}
is equivalent to the same distance but with the new variables. And, in fact, once we have made the change of variables, there is another subtlety:
The reader may indeed notice that the distance defined in 
\eqref{eq:distance:d} does not match the distance defined in \cite[(1.32), (5.42)]{EpsteinMazzeo} since, for $j \not \in I$, 
we should consider $\vert y_{j} - y_{j}' \vert$ instead of $\vert \sqrt{y_{j}} - \sqrt{y_{j}'}\vert$. Anyhow, since 
$y_{j}$ and $y_{j}'$ are here required to be away from $0$, the distance used in \cite{EpsteinMazzeo} is equivalent to ours.}
\begin{equation}
\label{eq:distance:d}
d(y,y') := \sum_{i =1}^{d-1}  \bigl\vert \sqrt{ y_{i}} -\sqrt{ y_{i}'} \bigr\vert. 
\end{equation}
We then let
\begin{equation*}
\bigl\| \hat h \bigr\|_{\gamma;{\mathcal U}(\delta^0,x^0)}
:= \sup_{y \in \overline{\mathcal U}(\delta^0,x^0)}
\bigl\vert \hat h(y) \bigr\vert
+ \sup_{y,y' \in \overline{\mathcal U}(\delta^0,x^0)}
\frac{\vert \hat h(y) - \hat h(y') \vert}{d(y,y')^{\gamma}}.
\end{equation*}
Following \cite[Lemma 5.2.5 and Definition 10.1.1]{EpsteinMazzeo}, 
we say that a function $\hat h$ defined on ${\mathcal U}(\delta^0,x^0)$
belongs to ${\mathscr C}^{2+\gamma}_{\textrm{\rm WF}}({\mathcal U}(\delta^0,x^0))$ if, in the new system of coordinates,
\begin{enumerate}
\item $\hat{h}$ is continuously differentiable on
${\mathcal U}(\delta^0,x^0)$ and $\hat{h}$ and its derivatives extend continuously to 
$\overline{\mathcal U}(\delta^0,x^0)$ and the resulting extensions 
 belong to 
${\mathscr C}^{\gamma}_{\textrm{\rm WF}}(\overline{\mathcal U}(\delta^0,x^0))$;
\item The function $\hat{h}$ is twice continuously differentiable on 
${\mathcal U}_{+}(\delta^0,x^0) = {\mathcal U}(\delta^0,x^0) \cap \{ (y_{1},\cdots,y_{d-1}) \in (\RR_{+})^{d-1} : \forall i \in I, y_{i} >0
\}$. Moreover\footnote{Similar to footnote
\ref{foo:subtlety}, the reader should observe that, in 
\cite[Lemma 5.2.5 and Definition 10.1.1]{EpsteinMazzeo}, the two limits in \eqref{eq:derivatives:boundary:WF} are in fact regarded in
terms of the coordinates used in the normal form of the operator. To make it clear, we should here require
$\lim_{\min(z_{i},z_{j}) \rightarrow 0_{+}}
\sqrt{z_{i} z_{j}} \partial^2_{z_{i}z_{j}}
\hat{\ell}(z)=0$ with $\hat{\ell}(z_{1},\cdots,z_{d-1}) = \hat{h}(\sin^2(\sqrt{z_{1}}),\cdots,\sin^2(\sqrt{z_{d-1}}))$  (and similarly for the second limit). It is an easy exercise to check that \eqref{eq:derivatives:boundary:WF} would then follow.}, for any $i,j \in I$ and any $k,l \not \in I$,
\begin{equation}
\label{eq:derivatives:boundary:WF}
\begin{split}
&\lim_{\min(y_{i},y_{j}) \rightarrow 0_{+}}
\sqrt{y_{i} y_{j}} \partial^2_{y_{i}y_{j}}
\hat{h}(y) = 0,
\quad \lim_{y_{i} \rightarrow 0_{+}}
\sqrt{y_{i}} \partial^2_{y_{i}y_{k}}
\hat{h}(y) = 0,
\end{split}
\end{equation}
and the functions
$y \mapsto  
\sqrt{y_{i} y_{j}} \partial^2_{y_{i}y_{j}}
\hat{h}(y)$, 
$y \mapsto  
\sqrt{y_{i}} \partial^2_{y_{i}y_{k}}
\hat{h}(y)$
and
$y \mapsto  
\partial^2_{y_{k}y_{l}}
\hat{h}(y)$
belong to ${\mathscr C}^{\gamma}_{\textrm{\rm WF}}(\overline{\mathcal U}(\delta^0,x^0))$ (meaning in particular that they can be extended by continuity to 
$\overline{\mathcal U}(\delta^0,x^0)$).
\end{enumerate} 
We then call 
\begin{equation*}
\begin{split}
\| \hat{h} \|_{2+\gamma;{\mathcal U}(\delta^0,x^0)}
&:= 
\| \hat{h} \|_{\gamma;{\mathcal U}(\delta^0,x^0)}
+
\sum_{i=1}^{d-1} 
\| \partial_{y_{i}} \hat{h} \|_{\gamma;{\mathcal U}(\delta^0,x^0)}
 +
\sum_{i,j \in I}
\|
\sqrt{y_{i} y_{j}} \partial^2_{y_{i}y_{j}}
\hat{h}
 \|_{\gamma;{\mathcal U}(\delta^0,x^0)}
 \\
&\hspace{15pt} +
 \sum_{k,l \not \in I}
\|
 \partial^2_{y_{k}y_{l}}
\hat{h}
 \|_{\gamma;{\mathcal U}(\delta^0,x^0)}
 +
 \sum_{i \in I}
 \sum_{k  \not \in I}
\|
\sqrt{y_{i}} \partial^2_{y_{i}y_{k}}
\hat{h}
 \|_{\gamma;{\mathcal U}(\delta^0,x^0)}, 
\end{split}
\end{equation*}
where 
$\sqrt{y_{i} y_{j}} \partial^2_{y_{i}y_{j}}
\hat{h}$ is a shorten notation for 
$y \mapsto 
\sqrt{y_{i} y_{j}} \partial^2_{y_{i}y_{j}}
\hat{h}(y)$  (and similarly for the others). 
For a given finite covering $\cup_{i=1}^K {\mathcal U}(\delta^0,x^{0,i})$ of $\hat {\mathcal S}_{d-1}$, a function 
$\hat{h}$ (or equivalently the associated function $h$ defined on ${\mathcal S}_{d-1}$) is said to be in ${\mathscr C}^{2+\gamma}_{\textrm{\rm WF}}({\mathcal S}_{d-1})$ if 
$\hat{h}$ belongs to each ${\mathscr C}^{2+\gamma}_{\textrm{\rm WF}}({\mathcal U}(\delta^0,x^{0,i}))$.
We then let
\begin{equation*}
\| \hat{h} \|_{2+\gamma} := \sum_{i=1}^K \| \hat{h} \|_{2+\gamma;{\mathcal U}(\delta^0,x^{0,i})}. 
\end{equation*} 
We refer to 
 \cite[Chapter 10]{EpsteinMazzeo} for more details. 
 
 A similar definition holds for the space ${\mathscr C}_{\textrm{\rm WF}}^{1+\gamma/2,2+\gamma}([0,T] \times {\mathcal S}_{d-1})$ of functions 
 that are once continuously differentiable in time and twice continuously differentiable in space, with derivatives that are locally $\gamma$-H\"older continuous with respect to the time-space distance (in the local system of coordinates)
\begin{equation}
\label{eq:distance:D}
D\bigl((t,y),(t',y')\bigr) := \vert t-t'\vert^{1/2} + d(y,y').
\end{equation}
To make it clear, a function $\hat{h}$ defined on $[0,T] \times \overline{\mathcal U}(\delta^0,x^0)$ 
is then said to belong to ${\mathscr C}_{\textrm{\rm WF}}^{\gamma/2,\gamma}([0,T] \times \overline{\mathcal U}(\delta^0,x^0))$, for some $\gamma \in (0,1)$, if, in the new system of coordinates, $\hat{h}$ is H\"older continuous on $[0,T] \times \overline{\mathcal U}(\delta^0,x^0)$ with respect to the distance
$D$.
We then let
\begin{equation*}
\bigl\| \hat h \bigr\|_{\gamma/2,\gamma;[0,T] \times {\mathcal U}(\delta^0,x^0)}
:= \sup_{(t,y)  \in [0,T] \times \overline{\mathcal U}(\delta^0,x^0)}
\bigl\vert \hat h(t,y) \bigr\vert
+ \sup_{t,t' \in [0,T],\ y,y' \in \overline{\mathcal U}(\delta^0,x^0)}
\frac{\vert \hat h(t,y) - \hat h(t',y') \vert}{D((t,y),(t',y'))^{\gamma}}.
\end{equation*}
Following \cite[Lemma 5.2.7]{EpsteinMazzeo}, we
say that  
a function $\hat h$ defined on ${\mathcal U}(\delta^0,x^0)$
belongs to the space ${\mathscr C}_{\textrm{\rm WF}}^{1+\gamma/2,2+\gamma}([0,T] \times {\mathcal U}(\delta^0,x^0))$ if, in the new system of coordinates,
\begin{enumerate}
\item $\hat{h}$ is continuously differentiable on
$[0,T] \times {\mathcal U}(\delta^0,x^0)$; $\hat{h}$ and its time and space derivatives extend continuously to 
$[0,T] \times \overline{\mathcal U}(\delta^0,x^0)$ and the resulting extensions 
 belong to 
${\mathscr C}^{\gamma/2,\gamma}_{\textrm{\rm WF}}([0,T] \times \overline{\mathcal U}(\delta^0,x^0))$;
\item The function $\hat{h}$ is twice continuously differentiable in space on 
$[0,T] \times {\mathcal U}_{+}(\delta^0,x^0)$. Moreover, for any $i,j \in I$ and any $k,l \not \in I$,
\begin{equation}
\label{eq:rate:2nd:order}
\begin{split}
&\lim_{\min(y_{i},y_{j}) \rightarrow 0_{+}}
\sqrt{y_{i} y_{j}} \partial^2_{y_{i}y_{j}}
\hat{h}(t,y) = 0,
\quad \lim_{y_{i} \rightarrow 0_{+}}
\sqrt{y_{i}} \partial^2_{y_{i}y_{k}}
\hat{h}(t,y) = 0,
\end{split}
\end{equation}
and the functions
$(t,y) \mapsto  
\sqrt{y_{i} y_{j}} \partial^2_{y_{i}y_{j}}
\hat{h}(t,y)$, 
$(t,y) \mapsto  
\sqrt{y_{i}} \partial^2_{y_{i}y_{k}}
\hat{h}(y)$
and
$(t,y) \mapsto  
\partial^2_{y_{k}y_{l}}
\hat{h}(t,y)$
belong to ${\mathscr C}_{\textrm{\rm WF}}^{\gamma/2,\gamma}([0,T] \times \overline{\mathcal U}(\delta^0,x^0))$.
\end{enumerate} 
We then call 
\begin{equation*}
\begin{split}
\| \hat{h} \|_{1+\gamma/2,2+\gamma;  {\mathcal U}(\delta^0,x^0)}
&:= 
\| \hat{h} \|_{\gamma/2,\gamma;  {\mathcal U}(\delta^0,x^0)}
+
\| \partial_{t} \hat{h} \|_{ \gamma/2, \gamma;  {\mathcal U}(\delta^0,x^0)}
+
\sum_{i=1}^{d-1} 
\| \partial_{y_{i}} \hat{h} \|_{ \gamma/2, \gamma;  {\mathcal U}(\delta^0,x^0)}
\\
&\hspace{15pt} +
\sum_{i,j \in I}
\|
\sqrt{y_{i} y_{j}} \partial^2_{y_{i}y_{j}}
\hat{h} 
 \|_{\gamma/2, \gamma;  {\mathcal U}(\delta^0,x^0)}
 +
 \sum_{k,l \not \in I}
\|
 \partial^2_{y_{k}y_{l}}
\hat{h}
 \|_{ \gamma/2, \gamma;  {\mathcal U}(\delta^0,x^0)}
   \\
&\hspace{15pt}+
 \sum_{i \in I}
 \sum_{k  \not \in I}
\|
\sqrt{y_{i}} \partial^2_{y_{i}y_{k}}
\hat{h}
 \|_{\gamma/2, \gamma;  {\mathcal U}(\delta^0,x^0)}. 
\end{split}
\end{equation*}
For a given finite covering $\cup_{i=1}^K {\mathcal U}(\delta^0,x^{0,i})$ of $\hat{\mathcal S}_{d-1}$, a function 
$\hat{h}$ (or equivalently the associated function $h$ defined on $[0,T] \times {\mathcal S}_{d-1}$) is said to be in
${\mathscr C}_{\textrm{\rm WF}}^{1+\gamma/2,2+\gamma}([0,T] \times {\mathcal S}_{d-1})$ if 
$\hat{h}$ belongs to each ${\mathscr C}_{\textrm{\rm WF}}^{2+\gamma}([0,T] \times {\mathcal U}(\delta^0,x^{0,i}))$.
We then let
\begin{equation*}
\| \hat{h} \|_{1+\gamma/2,2+\gamma} := \sum_{i=1}^K \| \hat{h} \|_{1+\gamma/2,2+\gamma;{\mathcal U}(\delta^0,x^{0,i})}. 
\end{equation*} 
We stress the fact that the finite covering that we use in the sequel is fixed once for all. There is no need to change it.

\begin{remark}
Importantly, 
for $y,y' \in \bar{\mathcal U}(\delta^0,x^0)$, 
and
for a constant $c \geq 1$, $c$ depending on $\delta_0$,
\begin{equation*}
\begin{split} 
\biggl\vert \biggl( 1-\sum_{i=1}^{d-1} y_{i} \biggr)^{1/2} - 
\biggl( 1-\sum_{i=1}^{d-1} y_{i}' \biggr)^{1/2} \biggr\vert
&\leq \frac{c-1}{2}
\biggl\vert 
\sum_{i=1}^{d-1} \bigl( y_{i} - y_{i}'\bigr)
  \biggr\vert
\leq (c-1)
\sum_{i=1}^{d-1} \Bigl\vert \sqrt{y_{i}} - \sqrt{y_{i}'}\Bigr\vert.
\end{split}
\end{equation*}
Recalling that the vector $y=(y_{1},\cdots,y_{d-1})$ (resp. $y'=(y_{1}',\cdots,y_{d-1}')$) stand for the new coordinates of 
an element $x \in \hat{\mathcal S}_{d-1}$ (resp. $x'$) and that $x$ (resp. 
$x'$) itself 
is canonically associated with 
$\check x = (x_{1},\cdots,x_{d-1},1-x_{1}-\cdots-x_{d-1}) \in {\mathcal S}_{d-1}$
(resp. $\check x'$),
we deduce that
\begin{equation*}
c^{-1} 
\sum_{i=1}^{d} \Bigl\vert \sqrt{\check x_{i}} - \sqrt{\check x_{i}'} \Bigr\vert
\leq d(y,y') \leq c 
\sum_{i=1}^{d} \Bigl\vert \sqrt{\check x_{i}} - \sqrt{\check x_{i}'} \Bigr\vert,
\end{equation*} 
which permits to reformulate the modulus of continuity showing up in the H\"older condition of 
the Wright--Fisher space in an intrinsic manner. 
Since the number of neighborhoods of the form 
${\mathcal U}(\delta^0,x^0)$  used to cover $\hat{\mathcal S}_{d-1}$ is finite, 
we can choose the same $c$ for all those neighborhoods. 
\end{remark}


\subsubsection{Back to the master equation} Actually, we must point out that, in order to apply Theorem 10.0.2 in \cite{EpsteinMazzeo} (existence of classical solutions to Kimura type PDEs together with related Schauder's estimates), as we do later,  the master equation should be satisfied also in boundary points, under the appropriate local chart; see (10.1) therein.
\color{black}
This would require to formulate \eqref{eq:master:equation:local} for each local chart used in the construction of the Wright--Fischer spaces, that is, for any projection $p^{-i}$ with $i\in\ES$, since we verified in \eqref{eq:canonical:form:kimura} that those changes of variables make the second order operator fit the setup of \cite{EpsteinMazzeo}. 
In this respect, we observe that there is no hindrance for us to restate \eqref{eq:master:equation:local} in the right system of coordinates. Also, we already noticed that, as long as we look for solutions $U^1,\cdots,U^d$ in the space ${\mathscr C}_{\textrm{\rm WF}}^{1+\gamma/2,2+\gamma}([0,T] \times {\mathcal S}_{d-1})$ for some $\gamma \in (0,1)$, 
the first order derivatives always extend by continuity up to the boundary. Moreover,
even though 
the second order derivatives are defined only in the interior of the simplex
and are allowed to blow-up at the boundary, the rate of explosion of those second order derivatives, as prescribed by 
\eqref{eq:rate:2nd:order},
combine well with 
the degeneracy property of the operator on the boundary. Hence, by a standard continuity argument, it
is enough to require that \eqref{eq:master:equation:intrinsic}, which is written in terms of the intrinsic derivative, holds in $\textrm{\rm Int}(\hat{\mathcal S}_{d-1})$.

\subsection{Complete versions of the main statements}
 
We now provide refined versions of 
Meta-Theorems
\ref{main:metathm}
and 
\ref{main:metathm-PDE}.

\subsubsection{Existence and uniqueness of an MFG equilibrium}
We have indeed all the ingredients to 
clarify Meta-Theorem
\ref{main:metathm}
 and to 
formulate the statement in a rigorous manner. 
We recall that the proof is given in Section \ref{subse:MFG:proof:existence:!}. 

\begin{theorem}
\label{main:thm}
Assume that, for some $\gamma >0$,  each $f^{i}$, for $i \in \ES$, 
belongs to 
${\mathscr C}_{\textrm{\rm WF}}^{\gamma/2,\gamma}([0,T] \times {\mathcal S}_{d-1})$,
and
each $g^{i}$, for $i \in \ES$, belongs to 
${\mathscr C}_{\textrm{\rm WF}}^{2+\gamma}({\mathcal S}_{d-1})$.
Then,
for any $\varepsilon \in (0,1)$,
there exists a threshold $\kappa_{0}>0$, 
only depending 
on $\varepsilon$, $\|f\|_{\infty}$, $\| g \|_{\infty}$ and $T$, such that, for any $\kappa \geq \kappa_{0}$
and $\delta \in (0,1/(4\sqrt{d}))$,
and 
for any (deterministic) initial condition $(p_{0,i})_{i \in \ES} \in {\mathcal S}_{d-1}$ with positive entries, the mean field game  has a unique solution, in the sense of Definition \ref{defmfg}. 
\end{theorem}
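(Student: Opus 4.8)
The plan is to obtain Theorem \ref{main:thm} as the combination of two ingredients proved in the body of the paper: the existence (and uniqueness) of a classical solution of the master equation \eqref{eq:master:equation:introl} in the Wright--Fisher space (Theorem \ref{existence:master}), and the verification argument that turns such a solution into the unique solution of the stochastic mean field game system (Theorem \ref{lem:existence:mfg:from:master}). Accordingly, at this final stage there is essentially nothing to do beyond assembling these two results and checking the compatibility of the various thresholds on $\kappa$.

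First, I would fix $\varepsilon\in(0,1)$ and use the regularity hypotheses $f^i\in{\mathscr C}^{\gamma/2,\gamma}_{\textrm{\rm WF}}([0,T]\times{\mathcal S}_{d-1})$ and $g^i\in{\mathscr C}^{2+\gamma}_{\textrm{\rm WF}}({\mathcal S}_{d-1})$ to invoke Theorem \ref{existence:master}: this produces a threshold $\kappa_0=\kappa_0(\varepsilon,\|f\|_\infty,\|g\|_\infty,T)$ so that, for every $\kappa\geq\kappa_0$ and $\delta\in(0,1/(4\sqrt d))$, the system \eqref{eq:master:equation:introl} admits a solution $(U^i)_{i\in\ES}$ in ${\mathscr C}_{\textrm{\rm WF}}^{1+\gamma/2,2+\gamma}([0,T]\times{\mathcal S}_{d-1})$. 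I would then enlarge $\kappa_0$ if necessary so that it also dominates the explicit thresholds of Propositions \ref{thm:approximation:diffusion:2}, \ref{expfin} and \ref{expfin:2} (in particular $\kappa_0\geq 61\varepsilon^2$), which guarantees that the forward state equation \eqref{eq:weak:sde:final} is well posed, stays strictly inside ${\mathcal S}_{d-1}$, and satisfies the negative-moment bounds \eqref{eq:expfin2}--\eqref{eq:expfin2:bis}. This last point is the only delicate bookkeeping at this stage: the spatial derivatives of $U$ blow up at the boundary like $1/\sqrt{p_jp_k}$, so the integrability $\mathbb E[\int_0^T 1/P^i_s\,ds]<\infty$ is precisely what is needed to legitimize the It\^o expansion of $U^i(t,P_t)$ in the verification argument.

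With $U$ in hand I would carry out the verification of Theorem \ref{lem:existence:mfg:from:master}: define the feedback $\upalpha^\star(t,i,p)(j):=(U^i(t,p)-U^j(t,p))_+$ for $j\neq i$, which is the pointwise minimizer of the Hamiltonian $\tfrac12\sum_{j\neq i}|\beta^{i,j}|^2+\sum_{j}\beta^{i,j}(U^j-U^i)$ that produced the first line of \eqref{eq:master:equation:introl}, let ${\boldsymbol P}$ be the unique strong solution of \eqref{eq:weak:sde:final} driven by $\upalpha^\star$ with initial datum $p_0$, and set $u^i_t=U^i(t,P_t)$ and $\nu^{i,j,k}_t=V^{i,j,k}(t,P_t)$. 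Applying It\^o's formula and using that $U$ solves \eqref{eq:master:equation:introl} shows that $({\boldsymbol P},{\boldsymbol u},{\boldsymbol\nu})$ solves \eqref{eq12}; expanding $\sum_i Q^i_t u^i_t$ for the $Q$ attached via \eqref{eq:weak:sde:final:2} to an arbitrary admissible open-loop ${\boldsymbol\beta}$ yields $\mathcal J({\boldsymbol\beta},{\boldsymbol P})=\sum_{i}p_{0,i}U^i(0,p_0)+\tfrac12\,{\mathbb E}\int_0^T\sum_i Q^i_t\sum_{j\neq i}|\beta^{i,j}_t-\upalpha^\star(t,i,P_t)(j)|^2\,dt$, so that $\mathcal J({\boldsymbol\beta},{\boldsymbol P})\geq\mathcal J(\upalpha^\star,{\boldsymbol P})$ with equality iff ${\boldsymbol\beta}=\upalpha^\star$; this is exactly Definition \ref{defmfg}, so $({\boldsymbol P},\upalpha^\star)$ is a solution. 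For uniqueness, given any other solution $(\widetilde{\boldsymbol P},\widetilde\upalpha)$, the same computation forces $\widetilde\upalpha(t,i,\widetilde P_t)(j)=(U^i(t,\widetilde P_t)-U^j(t,\widetilde P_t))_+$ $dt\otimes\mathbb P$-a.e., hence $\widetilde{\boldsymbol P}$ solves \eqref{eq:weak:sde:final} with feedback $\upalpha^\star$, and strong uniqueness for that equation (Proposition \ref{thm:approximation:diffusion:2}) gives $\widetilde{\boldsymbol P}={\boldsymbol P}$ and then $\widetilde\upalpha=\upalpha^\star$ along the trajectory.

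The genuinely hard part lies entirely upstream of this assembly, in the construction of the smooth solution $U$ (Theorem \ref{existence:master}): it rests on the a priori H\"older estimate for linear Kimura operators with merely continuous, strongly inward-pointing drift (Theorem \ref{main:holder}), itself obtained through a tailor-made coupling-by-reflection argument (Proposition \ref{prop:coupling:2}) combined with an induction over the dimension to absorb the boundary degeneracy (Proposition \ref{prop:induction}), together with the Schauder estimates of \cite{EpsteinMazzeo}, the time-dependence Lemma \ref{lem:4.6}, and a Schauder fixed-point argument on the Wright--Fisher space. Once these are granted, Theorem \ref{main:thm} is the routine verification argument above, the only remaining care being the single choice of $\kappa_0$ that simultaneously yields the smoothness of $U$ and the negative-moment control on ${\boldsymbol P}$.
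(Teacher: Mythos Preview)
Your proposal is correct and follows precisely the paper's assembly: Theorem \ref{main:thm} is deduced from Theorem \ref{existence:master} (classical solvability of the master equation) together with Theorem \ref{lem:existence:mfg:from:master} (the verification argument for the stochastic MFG system \eqref{eq:MFG:system:epsilon}), the link between Definition \ref{defmfg} and that system being supplied by Lemmas \ref{lem:verification:1}--\ref{lem:verification:2}. The only minor slip is that your displayed cost identity should be an inequality rather than an equality (see \eqref{eq:4:0}: since $\beta^{i,j}\geq 0$ is constrained while $u^i-u^j$ may be negative, one only gets $\mathcal J({\boldsymbol\beta},{\boldsymbol P})\ge\sum_i p_{0,i}U^i(0,p_0)+\tfrac12\,\mathbb E\int_0^T\sum_i Q^i_t\sum_{j\neq i}|\beta^{i,j}_t-(u^i_t-u^j_t)_+|^2\,dt$), but this does not affect your conclusion.
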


\begin{remark}
As already emphasized in Remark 
\ref{rem:meta-thm}, 
we could certainly extend the 
uniqueness
result to the larger class of open-loop strategies and also to the case when the initial condition is random.

As for the assumptions on the coefficients, the key fact is that there is no need for any monotonicity condition in the statement. Still, it would be interesting to see whether the result remains true under lower regularity conditions on the function $g$. 
Assuming $g$ to have two H\"older continuous derivatives (as we do here) is quite convenient since it allows to find a solution to the
master equation (see the next section) that remains smooth up to the boundary at time $T$. More effort would be needed to allow for more general (and hence less regular) terminal costs; accordingly, it would require to address with care the rate at which the derivatives of the solution to the master equation would blow up at terminal time. We leave this problem for future work. 
\end{remark}

\subsubsection{Existence and uniqueness of a solution to the master equation}

Here is now the rigorous version of 
Meta-Theorem
\ref{main:metathm-PDE}, the proof of which is given in 
Section 
\ref{subse:master:e!}.

\begin{theorem}
\label{existence:master}
Assume that, for some $\gamma \in (0,1/2]$,  each $f_{i}$, for $i \in \ES$, 
belongs to 
${\mathscr C}_{\textrm{\rm WF}}^{\gamma/2,\gamma}([0,T] \times {\mathcal S}_{d-1})$,
and
each $g_{i}$, for $i \in \ES$, belongs to 
${\mathscr C}_{\textrm{\rm WF}}^{2+\gamma}({\mathcal S}_{d-1})$.
Then,
for any $\varepsilon \in (0,1)$,
there exist a universal exponent $\eta \in (0,1)$ (hence independent of $\varepsilon$)
and a threshold $\kappa_{0}>0$, 
only depending 
on $\varepsilon$, $\|f\|_{\infty}$, $\| g \|_{\infty}$ and $T$, such that, for any $\kappa \geq \kappa_{0}$
and $\delta \in (0,1/(4\sqrt{d}))$, the master equation 
\eqref{eq:master:equation:local}--\eqref{eq:master:equation:intrinsic}
has a solution in  
$[{\mathscr C}_{\textrm{\rm WF}}^{1+\gamma'/2,2+\gamma'}([0,T] \times {\mathcal S}_{d-1})]^d$, 
for $\gamma' = \min(\gamma,\eta)/2$.  

Moreover, the solution is unique in the sense that 
$(U^1,\cdots,U^d)$ 
is the unique tuple
such that, for any 
$i \in \ES$, $U^i$ belongs to the Wright--Fisher space 
${\mathscr C}_{\textrm{\rm WF}}^{1+\gamma'/2,2+\gamma'}([0,T] \times {\mathcal S}_{d-1})$ 
for some $\gamma' >0$, and
\eqref{eq:master:equation:local}--\eqref{eq:master:equation:intrinsic}
hold
at any $(t,p) \in [0,T] \times \textrm{\rm Int}(\hat{\mathcal S}_{d-1})$.
\end{theorem}

\begin{remark}
We do not provide any new version of Theorem \ref{main:holder}. 
The reader 
will find in 
\eqref{eq:pde:apriori:1}
a more suitable version of equation
\eqref{eq:pde:apriori:1:intro}, when expressed in intrinsic derivatives. 
We let her/him adapt accordingly the definition of the space 
 ${\mathcal C}^{1,2}([0,T] \times \textrm{\rm Int}(\hat{\mathcal S}_{d-1}),\RR)$ that is used in the statement. 
\end{remark}

\color{black}

\subsection{Proofs of auxiliary results from Section 2}
\label{subse:proofs}

We now prove {the auxiliary results stated in 
\S \ref{subsub:repelled:from:boundary}}. 
\subsubsection{Proof of Proposition \ref{thm:approximation:diffusion:2}}
The proof holds in two steps. We give a sketch of it only. 
\vskip 4pt

\textit{First Step.} By \eqref{eq:sum=1}, it suffices to solve the equation for $(P_{t}^1,\cdots,P_{t}^{d-1})_{0 \leq t \leq T}$. 
As long as the latter stays in the interior of $\hat{\mathcal S}_{d-1}$, the equation satisfied by the process is 
non-degenerate, see 
\eqref{eq:ellipticity}. Moreover, the diffusion matrix is Lipschitz away from the boundary of the simplex and the drift is bounded. 
Therefore, by a standard localization argument, we can easily adapt the strong existence and uniqueness result 
of Veretennikov \cite{Veretennikov} (see Remark 3 therein for the case when the initial condition is random and Remark 4
therein for the case when the state variable and the underpinning Brownian motion do not have the same dimension) and then deduce that, up until one coordinate (including 
the $d$th coordinate, as given by $(P_{t}^d = 1- (P_{t}^1+\cdots+P_{t}^{d-1}))_{0 \leq t \leq T}$) reaches a given 
positive threshold $\upepsilon$, a (strong) solution exists and is pathwise unique. 
By letting $\upepsilon$ tend to $0$, we deduce that there exists a solution up to the first time it reaches the boundary of the simplex (or, equivalently, one of the coordinates vanishes) and this  solution is pathwise unique (once again, up to the first time it reaches the boundary). 
\vskip 4pt

\textit{Second Step.}
The goal in this step is to prove that, for $\kappa \geq \varepsilon^2/2$, the solution of  
\eqref{eq:weak:sde:final} until {the first time} it reaches the boundary of $\hat{\mathcal S}_{d-1}$
stays in fact away from the boundary of $\hat{\mathcal S}_{d-1}$. {As a consequence, the} solution of 
\eqref{eq:weak:sde:final} {is well defined} on the entire $[0,T]$. 
To do so, we come back to 
\eqref{eq:pti:marginal}, namely, we write the dynamics of the $i$th coordinate (for $i=1,\cdots,d-1$) in the form 
\begin{equation*}
\ud P_{t}^i = 
  a_{i}\bigl(t,(P^1_{t},\cdots,P_{t}^d)\bigr) \ud t + 
 \varepsilon \sqrt{  P_{t}^i ( 1-P_{t}^i)}
\ud \widetilde W_{t}^i,
\end{equation*}
with $a_{i}$ as in 
\eqref{eq:bi:new}.  
The above holds true up until the first time $\tau :=\inf\{ t \in [0,T] : (P^1_{t},\cdots,P_{t}^{d-1}) \in \partial \hat{\mathcal S}_{d-1} \} \wedge T$. 
Then, 
using the fact that $\upalpha$ is bounded and $\upalpha(t,j,x)(i) \geq 0$ for $i \not = j$ in 
\eqref{eq:bi:new}, 
we can easily compare the process $(P_{t}^i)_{0 \leq t \leq \tau}$ with the solution of the equation
\begin{equation*}
\ud \overline P_{t}^i = 
\Bigl(    \varphi(\overline P_{t}^i) - C \overline P_{t}^i \Bigr) \ud t + 
 \varepsilon \sqrt{   (\overline P_{t}^i)_{+} ( 1-\overline P_{t}^i)_{+}}
\ud \widetilde W_{t}^i,
\end{equation*}
with $\bar p_{0}^i = p_{0}^i$ as initial condition, for a constant $C \geq 0$.  
Above, $(\cdot)_{+}$ stands for the positive part. 
By \cite[Chapter 5, Proposition 2.13]{KaratzasShreve}, the above equation has a unique strong solution. 
Letting $\bar \tau^i :=\inf\{ t \in [0,T] : \overline P_{t}^i \in \{0,1\} \} \wedge T$ and choosing $C$ large enough, we have 
$\overline P_{t}^i \leq P_{t}^i$, for all $t \leq \tau \wedge \bar \tau^i$. 
We then apply Feller's test (see \cite[Chapter 5, Proposition 5.22]{KaratzasShreve}) to 
$(\overline{P}_{t}^i)_{0 \leq t \leq T}$ (the reader may notice that the fact that the initial condition is  random 
is not a hindrance since it belongs to $(0,1)$ with probability 1). The natural scale
(see \cite[Chapter 5, (5.42)]{KaratzasShreve})
 is here given by
\begin{equation*} 
\Phi(r) := \int_{\delta}^r \exp \Bigl(- 2 \int_{\delta}^s  \frac{ \varphi(u) - Cu}{\varepsilon^2 u (1-u)} \ud u \Bigr) \ud s, \quad r \in (0,1). 
\end{equation*}
For $r \in (0, \delta]$,
\begin{equation*} 
\begin{split}
- \Phi(r) &\geq  \int_r^{\delta} \exp \Bigl( 2  \frac{ \kappa}{\varepsilon^2} \ln( \frac{\delta}{s} ) - \frac{2 C \delta}{\varepsilon (1-\delta)}\Bigr) \ud s,
\end{split}
\end{equation*}
from which we get that $\Phi(0+)=-\infty$ if $2 \kappa/\varepsilon^2 \geq 1$. We deduce that, if the latter is true, 
$(\overline P_{t}^i)_{0 \leq t \leq T}$ does not touch $0$. By comparison, 
we deduce that
$( P_{t}^i)_{0 \leq t \leq \tau}$ cannot touch $0$ before it touches $1$, that is $P_{\tau}^i =1$ if the set $\{t \in [0,\tau] : 
P_{t}^i \in \{0,1\}\}$ is not empty.
This holds true for $i =1,\cdots,d-1$
, but 
by 
choosing another system of coordinates, we get the same result for 
the coordinate $i=d$. 
Assume now that we can find some coordinate $i \in \ES$
such that 
$P_{\tau}^i \in \{0,1\}$,
in which case our analysis says that  
$P_{\tau}^i = 1$.
Since $\sum_{j=1}^d P_{\tau}^j=1$, we deduce that 
$P_{\tau}^j = 0$ for all $j \in \ES \setminus \{i\}$, which is a contradiction with our analysis. 
So, the conclusion is that, at time $\tau$, we must have $P_{\tau}^i \in (0,1)$
for all $i \in \ES$. That is, $\tau=T$ and the process $(P_{t}=(P_{t}^1,\cdots,P_{t}^d))_{0 \leq t \leq T}$ remains in the ($(d-1)$-dimensional) interior of 
${\mathcal S}_{d-1}$.  \qed

\subsubsection{Proof of
Proposition \ref{expfin}}

As in the proof of Proposition \ref{thm:approximation:diffusion:2}, 
we write the equation for $(P_{t}^i)_{0 \leq t \leq T}$ (for a given $i \in \ES$, $i$ being possibly equal to $d$) in the form 
\begin{equation*}
\ud P_{t}^i = 
  a_{i}(t,P_{t}) \ud t + 
 \varepsilon \sqrt{  P_{t}^i ( 1-P_{t}^i)}
\ud \widetilde W_{t}^i,
\end{equation*}
with $a_{i}$ as in 
\eqref{eq:bi:new}. Then, we get, 
by It\^o's formula (recall that the left-hand side below is well-defined since 
$(P_{t}^i)_{0 \leq t \leq T}$ does not vanish),
\begin{equation}
\label{ito:log}
\begin{split}
\ud \bigl[ \ln P_{t}^i \bigr] &=  
\sum_{j \in \ES} \biggl[ \frac{P^j_{t}}{P^i_{t}}\Bigl(\varphi(P^i_t)  + \upalpha(t,j,P_{t})(i) \Bigr)  
-\Bigl(\varphi(P^j_t)+ \upalpha(t,i,P_{t})(j) \Bigr)
\biggr]
\ud t
 - \frac{\varepsilon^2}{2} \frac{1- P_{t}^i}{P_{t}^i} \ud t
\\
&\hspace{15pt}
+   \varepsilon \sqrt{\frac{1-P_{t}^i}{P_{t}^i}} \ud \widetilde W_{t}^{i}.
\end{split}
\end{equation}
For a constant 
 $C$ depending on the same parameters as those quoted in the statement,  
we can lower bound the drift
in \eqref{ito:log} as follows  (using the definition of $\varphi$ in \eqref{eq:varphi} together with the fact that 
$\upalpha(t,j,P_{t})(i)$ is non-negative if $j \not = i$)
\begin{equation*}
\begin{split}
&\sum_{j \in \ES} \biggl[ \frac{P^j_t}{P^i_t}\Bigl(\varphi(P^i_t)  + \upalpha(t,j,P_{t})(i) \Bigr) 
- \Bigl(\varphi(P^j_t)+ \upalpha(t,i,P_{t})(j) \Bigr) \biggr]
 - \frac{\varepsilon^2}{2} \frac{1- P_{t}^i}{P_{t}^i} 
 \\
&\hspace{5pt} \geq 
 \frac{1-P^i_t}{P_{t}^i}  \kappa  
{\mathbf 1}_{\{P_{t}^i \leq \delta\}} -\frac{\varepsilon^2}{2}\frac{1-P^i_t}{P_{t}^i} - C. 
\end{split}
\end{equation*}
Allowing the value of $C$ to change from line to line and recalling that $\kappa \geq \varepsilon^2/2$, we get 
\begin{equation*}
\begin{split}
&\sum_{j \in \ES} \biggl[ \frac{P^j_t}{P^i_t}\Bigl(\varphi(P^i_t)  + \upalpha(t,j,P_{t})(i) \Bigr) 
- \Bigl(\varphi(P^j_t)+ \upalpha(t,i,P_{t})(j) \Bigr) \biggr]
 - \frac{\varepsilon^2}{2} \frac{1- P_{t}^i}{P_{t}^i} 
 \\
&\geq  \frac{1-P_{t}^i}{P^i_t}\Bigl(\kappa - \frac{\varepsilon^2}{2}\Bigr)  -C.  
\end{split}
\end{equation*}
Hence, integrating \eqref{ito:log} from $0$ to some stopping time $\tau$ (with values in $[0,T]$),
adding and subtracting the compensator $(\lambda \varepsilon^2/2)\int_{0}^{\tau} (1-P_{t}^i)/P_{t}^i \ud t$,
 multiplying by $\lambda$ and then taking the exponential, we get
\begin{equation}
\label{eq:E:Pt:eta:2}
\begin{split}
&(P_{ \tau}^i)^{\lambda} \exp \biggl( - \lambda \varepsilon \int_{0}^{ \tau}  \sqrt{\frac{1-P_{t}^i}{P_{t}^i}}
 \ud \widetilde W_{t}^{i} - \frac{\lambda^2 \varepsilon^2}2 
\int_{0}^{\tau} \frac{1-P_{t}^i}{P_{t}^i} \ud t 
\biggr)
\\
&\geq (p_{0,i})^{\lambda}
\exp \biggl(   \lambda   \Bigl[ \kappa - \frac{\varepsilon^2 (1+\lambda)}{2} \Bigr] 
\int_{0}^{\tau} \frac{1}{P_{t}^i} \ud t  -C\biggr).
\end{split}
\end{equation}
Choosing $\tau=\inf\{ t \in [0,T] : P_{t}^i \leq \upepsilon\}\wedge T$, for $\upepsilon >0$ as small as needed, the left-hand side has conditional expectation less than 1.
So, taking expectation and letting $\upepsilon$ tend to $0$, we deduce that 
\begin{equation*}
\E\left[ \exp \biggl(   \lambda 
\Bigl[ \kappa - \frac{\varepsilon^2 (1+\lambda)}{2} \Bigr] 
\int_{0}^{T} \frac{1}{P_{t}^i} \ud t \biggr)\right] \leq C 
(p_{0,i})^{-\lambda} .
\end{equation*}
The bound \eqref{eq:expfin2} easily follows.

{It then remains to prove \eqref{eq:expfin2:bis}. To do so, we come back to \eqref{eq:E:Pt:eta:2}. Using the fact that 
$\gamma$ is positive and choosing $\tau = t \in [0,T]$, we rewrite it in the form 
\begin{equation*}
\begin{split}
&(P_{t}^i)^{-\lambda} \leq (p_{0,i})^{-\lambda}
 \exp \biggl(C - \lambda \varepsilon \int_{0}^{t}  \sqrt{\frac{1-P_{s}^i}{P_{s}^i}}
 \ud \widetilde W_{t}^{i} - \frac{\lambda^2 \varepsilon^2}2 
\int_{0}^{t} \frac{1-P_{s}^i}{P_{s}^i} \ud s 
\biggr).
\end{split}
\end{equation*}
Taking expectations on both sides, we easily complete the proof of \eqref{eq:expfin2:bis}.} \qed

\subsubsection{Proof of
Proposition 
\ref{expfin:2}}
For each $i \in \ES$,
we call $({\mathcal E}_{t}^i)_{0 \le t \le T}$
the Dol\'eans--Dade exponential
\begin{equation*}
{\mathcal E}_{t}^i
:= \exp 
\biggl(
 \varepsilon  \sum_{j \in \ES} \int_{0}^t \sqrt{\frac{P_{s}^j}{P_{s}^i}} 
 \ud  \overline W_{s}^{i,j}  
 - \frac{\varepsilon^2}{2}
 \int_{0}^t \frac{1-P_{s}^i}{P_{s}^i}
 \ud s \biggr), \quad t \in [0,T].
\end{equation*}
Then, $(Q_{t}^i)_{0 \leq t \leq T}$ is a solution to 
\eqref{eq:weak:sde:final:2:b} if and only if
\begin{equation*}
\begin{split} 
\ud \bigl[ \bigl({\mathcal E}_{t}^i \bigr)^{-1} Q_{t}^i  \bigr] &=   \sum_{j \in \ES} \bigl({\mathcal E}_{t}^i \bigr)^{-1} \Bigl( Q_{t}^j \bigl( \varphi(P_{t}^i) + \upbeta(t,j,P_{t})(i) \bigr) - Q_{t}^i 
\bigl( \varphi(P_{t}^j) 
+  \upbeta(t,i,P_{t})(j) \bigr)
\Bigr) \ud t, 
\end{split}
\end{equation*}
which may be rewritten in the form
\begin{equation}
\label{eq:linear:eq}
\begin{split} 
&\ud  \widetilde Q_{t}^i 
=   \sum_{j \in \ES}  \Bigl(
\bigl({\mathcal E}_{t}^i \bigr)^{-1}
{\mathcal E}_{t}^j 
 \widetilde Q_{t}^j  \bigl( \varphi(P_{t}^i) + \upbeta(t,j,P_{t})(i) \bigr) - 
 \widetilde Q_{t}^i 
\bigl( \varphi(P_{t}^j) 
+  \upbeta(t,i,P_{t})(j) \bigr)
\Bigr) \ud t, 
\end{split}
\end{equation}
for $t \in [0,T]$, 
with  
$(\widetilde Q_{0}^i=q_{0,i})_{i \in \ES} \in {\mathcal S}_{d-1}$ 
as initial condition
and
under the change of variable
\begin{equation}
\label{eq:change:variable}
\widetilde Q_{t}^i := \bigl({\mathcal E}_{t}^i \bigr)^{-1} Q_{t}^i, \quad t \in [0,T]. 
\end{equation}
Obviously, 
\eqref{eq:linear:eq}
has a unique pathwise solution. It is continuous and adapted to the filtration 
${\mathbb F}^{\boldsymbol W}$. 
Since $\upbeta(t,j,P_{t})(i) \geq 0$ for $j \not =i$, it is pretty easy to check that all the coordinates 
remain (strictly) positive. 

Given the solution to 
\eqref{eq:linear:eq}, we may reconstruct 
$((Q_{t}^i)_{0 \leq t \leq T})_{i \in \ES}$ from the change of variable \eqref{eq:change:variable}. 
Then, taking the power $l$ in \eqref{eq:weak:sde:final:2:b}, for an exponent $l \geq 1$, we get
\begin{equation}
\label{eq:moment:ell}
\begin{split}
\ud \bigl( Q_{t}^i \bigr)^l &=  l \sum_{j \in \ES} \bigl( Q_{t}^i \bigr)^{l-1}\Bigl( Q_{t}^j \bigl( \varphi(P_{t}^i) + \upbeta(t,j,P_{t})(i) \bigr) - Q_{t}^i 
\bigl( \varphi(P_{t}^j) 
+  \upbeta(t,i,P_{t})(j) \bigr)
\Bigr)  \ud t
\\
&\hspace{15pt}+ \frac{l (l-1)}{2} \varepsilon^2 \bigl( Q_{t}^i \bigr)^{l} \frac{1-P_{t}^i}{P_{t}^i} \ud t
+  \varepsilon  {l}
\bigl( Q_{t}^i \bigr)^l \sum_{j \in \ES}
\sqrt{\frac{P_{t}^j}{P_{t}^i}}  
\ud \overline W_{t}^{i,j}, 
\end{split}
\end{equation}
for $t \in [0,T]$. As a result,
we can find a constant $C$, only depending on $l$, $\kappa$ and on the supremum norm of 
$\upbeta$, such that 
\begin{equation*}
\begin{split}
&\ud \biggl[\sum_{i \in \ES} \bigl( Q_{t}^i \bigr)^l
 \biggr]
\leq  \biggl[ C +
  \varepsilon^2
  \frac{l (l-1)}2  
\sum_{j \in \ES}
\frac{1-P_{t}^j}{P_{t}^j}
 \biggr] \cdot 
\biggl[\sum_{i \in \ES} \bigl( Q_{t}^i \bigr)^l
 \biggr] \ud t
+
\ud m_{t}, 
\end{split}
\end{equation*}
where $(m_{t})_{0 \le t \le T}$ is a local martingale\footnote{\label{foot:notation:geq}Here and throughout, the notation 
$\ud X^1_{t} \geq \ud X^2_{t}$, $t \in [0,T]$, for two stochastic processes  
$((X_{t}^i)_{0 \le t \le T})_{i=1,2}$, is understood as $(X_{t}^2-X_{t}^1)_{0 \le t \le T}$
is a non-decreasing process.}.
By a standard localization argument, we end up with 
\begin{equation*}
\sup_{0 \leq t \leq T}
{\mathbb E}
\biggl[ 
\biggl(\sum_{i \in \ES} \bigl( Q_{t}^i \bigr)^l
 \biggr)
 \exp\biggl( -   \varepsilon^2
    \frac{l (l-1)}2  \int_{0}^T 
  \sum_{j \in \ES}
\frac{1-P_{s}^j}{P_{s}^j}
\ud s \biggr)
 \biggr] \leq C,
\end{equation*}
for a new value of $C$. And then, applying Cauchy-Schwarz inequality and invoking the above inequality with $2l$ instead of $l$,
we get 
\begin{equation*}
\sup_{0 \leq t \leq T}
{\mathbb E}
\biggl[ 
\sum_{i \in \ES} \bigl( Q_{t}^i \bigr)^l
 \biggr] 
 \leq C 
 \sup_{0 \leq t \leq T}
 {\mathbb E} \biggl[
 \exp\biggl(   \varepsilon^2
    l (2l-1)  \int_{0}^T 
  \sum_{j \in \ES}
\frac{1-P_{t}^j}{P_{t}^j}
\ud t \biggr)
 \biggr]^{1/2}.
\end{equation*}
Take $l=8$ and choose {$\gamma \geq 60\varepsilon^2$} and {$\lambda= 2d$} in the statement of Proposition 
\ref{expfin} ({which is indeed possible since $\kappa-(1+\lambda) \varepsilon^2/2 \geq (61+d) \varepsilon^2 - (1/2+d) \varepsilon^2 \geq 60 \varepsilon^2$}). Then, {by H\"older's inequality (in order to handle the sum inside the exponential)}, the right-hand side is upper bounded. 
Returning to 
\eqref{eq:moment:ell}, 
{invoking Burkholder--Davis--Gundy inequalities,  
\eqref{eq:expfin2:bis} and the bound 
$\sup_{0 \leq t \leq T}{\mathbb E}[ (Q_{t}^i)^{4}/P_{t}^i] \leq 
\sup_{0 \leq t \leq T}{\mathbb E}[ (Q_{t}^i)^{8}]^{1/2}
\sup_{0 \leq t \leq T}{\mathbb E}[ (P_{t}^i)^{-2}]^{1/2}$, for $i \in \ES$, we deduce that 
$\sup_{0 \leq t \leq T} \vert Q_{t}^i \vert$ has a finite fourth moment
for each $i \in \ES$}.
Equality 
\eqref{eq:mean:q:1} is easily proved by summing over 
$i \in \ES$ in 
 \eqref{eq:weak:sde:final:2:b}.
\qed

\section{From the MFG system to the master equation}\label{sec:MFG-master}

{This section is dedicated to the proofs of 
Theorems 
\ref{main:thm}
and 
\ref{existence:master}, taken for granted the statement of 
Theorem 
\ref{main:holder}. 
Throughout the section, we assume that the condition {$\kappa \geq (61+d) \varepsilon^2$} is in force.}

\subsection{MFG system}
With the optimization problem driven by the cost functional 
\eqref{eq:limit:cost:functional}
and the state equation 
\eqref{eq:weak:sde:final:2} (within the environment 
\eqref{eq:weak:sde:final}), we may associate a value function. Obviously, we may expect this value function to solve a 
stochastic (because of the common noise) variant of the usual Hamilton--Jacobi--Bellman equation for a stochastic optimal control problem on 
a discrete state space. 
The combination of this Stochastic Hamilton--Jacobi--Bellman (SHJB) equation 
with the equation 
\eqref{eq:weak:sde:final} for the environment will lead us to a relevant version of the so-called MFG system
(which is a key tool in the standard theory of mean field games, see for instance 
the  references \cite{Cardaliaguet,CardaliaguetDelarueLasryLions,CarmonaDelarue_book_II,Lionscollege2,Lionscollege1,Lionsvideo}). 

\subsubsection{Formulation of the system}
In order to proceed, we recall \eqref{eq:limit:cost:functional:new}. 
Importantly, $(P_{t})_{0 \le t \le T}$ therein
is regarded as a stochastic environment. Typically, it is the solution of an equation 
of the form 
\eqref{eq:weak:sde:final}. In any case, it is a continuous ${\mathcal S}_{d-1}$-valued process
that is  
progressively-measurable
with respect to the filtration ${\mathbb F}^{\boldsymbol W}$
and that satisfies the conclusion of Proposition \ref{expfin}, see \eqref{eq:expfin2}
and 
\eqref{eq:expfin2:bis}. 
In particular, it remains away from the boundary of the simplex. 

The related value function at time $t \in [0,T]$ is defined as 
\begin{equation}
 \label{mfgvaluef}
\begin{split}
&u^l\Bigl(t,(P_{s})_{t \le s \le T} \Bigr):=\underset{(\beta_{s})_{t \le s \le T}}{\textrm{\rm ess inf}} \ {\mathcal J}^l\Bigl(t,(\beta_{s})_{t \le s \le T},(P_{s})_{t \le s \le T}\Bigr), \quad l \in \ES, 
\\
&{\mathcal J}^l\Bigl(t,(\beta_{s})_{t \le s \le T},(P_{s})_{t \le s \le T} \Bigr)
\\
&\hspace{15pt} :=\sum_{i \in \ES}
{\mathbb E}\biggl[ Q_{T}^i[t,l] g^i(P_{T}) 
+ \int_{t}^T \Bigl( Q_{s}^i[t,l] \bigl[ f^i \bigl(s,P_{s} \bigr) + 
 \frac12 
 \sum_{j \not = i}
\bigl\vert \beta_{t}^{i,j} \bigr\vert^2
\bigr] \Bigr) \ud s \,  \Big| \, \mathcal{F}_t^{\boldsymbol W}\biggr], \quad l \in \ES, 
\end{split}
\end{equation}
whereas $(Q^i_s[t,l])_{t \leq s \leq T}$ is the solution to \eqref{eq:weak:sde:final:2} when the initial time is $t\in[0,T)$ and the initial distribution is $Q_t^i[t,l]=\delta_{l,i}$, for $i \in \ES$.
Importantly, 
the value function 
is random: Stochasticity accounts for the fact that the cost functionals $f$ and $g$ in the optimal control problem depend upon 
the environment $(P_{s})_{0 \leq s \leq T}$, which is random itself.
    Hence the corresponding HJB equation is a backward stochastic HJB equation (SHJB) that takes the form
    of a system of backward SDEs indexed by $i \in \ES$:
\begin{equation}
\label{shjbmfg}
\begin{split}
&\displaystyle \ud u_{t}^i = - \Bigl( 
\sum_{j \in \ES} \varphi(P^j_{t})
\bigl[ u_{t}^j - u_{t}^i \bigr]
+ H^i(u_{t}) + f^i(t,P_{t}) \Bigr) \ud t   - \frac{\varepsilon}{\sqrt{2}} \sum_{j \in \ES : j \not = i} 
\sqrt{\frac{{P_{t}^j}}{{P_{t}^i}}} \bigl( \nu_{t}^{i,i,j} - \nu_{t}^{i,j,i} \bigr) \ud t
\\
&\hspace{30pt}+ \sum_{j,k \in \ES : j \not = k} \nu_{t}^{i,j,k} \ud W_{t}^{j,k}, 
\\
&u^i_T=g^i(P_T),
\end{split}
\end{equation}
where $H^i$ is the Hamiltonian
\begin{equation}
\label{eq:Ha}
H^i(y) := -\frac12 \sum _{j \in \ES}(y_{i}-y_{j})_+^2,
\quad y=(y_{j})_{j \in \ES}.
\end{equation}
It is worth emphasizing that, in
the equation  
\eqref{shjbmfg}, the unknown is the larger family of processes 
$((u_{t}^i)_{i \in \ES},(\nu^{i,j,k}_{t})_{i,j,k \in \ES : j \not = k})_{0 \leq t \leq T}$, 
which are required to be progressively measurable with respect to 
${\mathbb F}^{\boldsymbol W}$. 
This is a standard fact in the theory of backward SDEs
and the role of 
the processes $((\nu^{i,j,k}_{t})_{i,j,k \in \ES : j \not = k})_{0 \leq t \leq T}$
is precisely to force the solution of the stochastic HJB equation 
to be non-anticipative. 
The reason why we here choose indices $(i,j,k)$ with $j \not =k$ is quite clear: 
there are no noises of the form $((W^{j,j}_{t})_{0 \le t \le T})_{j \in \ES}$ in the forward equation. 

\subsubsection{Verification argument}
\label{subver}
The 
following verification argument clarifies the 
connection between 
 \eqref{mfgvaluef}
 and \eqref{shjbmfg}.
 
 \begin{lemma}
\label{lem:verification:1}
For an environment ${\boldsymbol P}=(P_{t})_{0 \le t \le T}$ as 
 before (satisfying in particular \eqref{eq:expfin2}
 and 
\eqref{eq:expfin2:bis}), assume that there exists a solution  
 $((u_{t}^i)_{i \in \ES},(\nu^{i,j,k}_{t})_{i,j,k \in \ES : j \not =k})_{0 \leq t \leq T}$
 to 
\eqref{shjbmfg} such that 
 $((u_{t}^i)_{i \in \ES})_{0 \le t \le T}$ is bounded (by a deterministic constant) and 
 \begin{equation*}
\sum_{i,j,k \in \ES : j \not = k}{\mathbb E} \int_{0}^T \vert \nu^{i,j,k}_{t} \vert^2 \ud t < \infty.
\end{equation*}
 For $t \in [0,T]$,
let  $(\beta_{s}^\star)^{i,j}: = (u_{s}^i - u_{s}^j)_{+}$ for $i \not =j$ and $s \in [t,T]$.  
Then, 
${\mathcal J}^l\bigl(t,(\beta_{s}^\star)_{t \leq s \leq T},(P_{s})_{t \le s \le T} \bigr)=u_{t}^l$ and, for any other (bounded) strategy,
say $(\beta_{s})_{t \leq s \leq T}$, such that 
\begin{equation*}
\sum_{i,j \in \ES : i \not =j} \int_{t}^T {\mathbb P} \bigl( \beta_{s}^{i,j} \not = (\beta_{s}^\star)^{i,j} \bigr) \ud s >0, 
\end{equation*}
the cost 
${\mathcal J}^l\bigl(t,(\beta_{s})_{t \leq s \leq T},(P_{s})_{t \le s \le T} \bigr)$
is strictly higher than $u_{t}^l$.
 \end{lemma}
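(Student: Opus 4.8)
The plan is to run a verification (four-step-scheme) argument: apply It\^o's product rule to the pairing of the forward mass process with the backward value process, use the very structure of \eqref{shjbmfg} to eliminate all spurious terms, complete the square in $\beta$, and close the computation by a localization tuned to the moment bounds of Propositions \ref{expfin} and \ref{expfin:2}.

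Fix $l\in\ES$ and a bounded admissible open-loop strategy ${\boldsymbol\beta}$, and let $(Q^i_s)_{t\le s\le T}:=(Q^i_s[t,l])_{t\le s\le T}$ solve \eqref{eq:weak:sde:final:2} with $Q^i_t=\delta_{l,i}$. Setting $S_s:=\sum_{i\in\ES}Q^i_su^i_s$, I expand $S$ by It\^o's formula, plugging in \eqref{eq:weak:sde:final:2} and \eqref{shjbmfg}. The decisive point is that the cross-variation $\sum_i\ud\langle Q^i,u^i\rangle_s$ created by the two stochastic integrals equals $\tfrac{\varepsilon}{\sqrt2}\sum_{i\in\ES}Q^i_s\sum_{j\ne i}\sqrt{P^j_s/P^i_s}\,(\nu^{i,i,j}_s-\nu^{i,j,i}_s)\,\ud s$, which is exactly the opposite of the $\nu$-correction drift of \eqref{shjbmfg} once multiplied by $Q^i_s$ — this is precisely why that correction is present in the backward equation. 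After summing over $i$, the $\varphi$-drifts telescope to zero (relabel $i\leftrightarrow j$), so that
\[
\ud S_s=\sum_{i\in\ES}Q^i_s\Bigl(\sum_{j\ne i}\beta^{i,j}_s\,(u^j_s-u^i_s)-H^i(u_s)-f^i(s,P_s)\Bigr)\ud s+\ud M_s,
\]
where $(M_s)$ is a local martingale with pieces coming from $u^i_s\,\ud Q^i_s$ and $Q^i_s\,\ud u^i_s$.

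I then set $Y_s:=S_s+\int_t^s\sum_{i\in\ES}Q^i_r\bigl(f^i(r,P_r)+\tfrac12\sum_{j\ne i}|\beta^{i,j}_r|^2\bigr)\ud r$, which cancels the $f$-drift. Using $-H^i(u_s)=\tfrac12\sum_{j\ne i}\bigl((u^i_s-u^j_s)_+\bigr)^2$ and the elementary inequality $b\,(u^j-u^i)+\tfrac12 b^2\ge-\tfrac12\bigl((u^i-u^j)_+\bigr)^2$ for $b\ge0$, with equality iff $b=(u^i-u^j)_+$, the finite-variation part of $Y$ becomes $\sum_{i\in\ES}Q^i_s\,\Delta^i_s\,\ud s$ with $\Delta^i_s:=\sum_{j\ne i}\bigl(\beta^{i,j}_s(u^j_s-u^i_s)+\tfrac12|\beta^{i,j}_s|^2+\tfrac12((u^i_s-u^j_s)_+)^2\bigr)\ge0$, vanishing iff $\beta^{i,j}_s=(u^i_s-u^j_s)_+$ for all $j\ne i$; in particular it is non-decreasing. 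Since $Y_T=\sum_iQ^i_Tg^i(P_T)+\int_t^T\sum_iQ^i_r(f^i(r,P_r)+\tfrac12\sum_{j\ne i}|\beta^{i,j}_r|^2)\ud r$ gives $\mathbb E[Y_T\mid\mathcal F^{\boldsymbol W}_t]=\mathcal J^l(t,{\boldsymbol\beta},{\boldsymbol P})$ while $Y_t=S_t=u^l_t$, it remains to take conditional expectations in the SDE for $Y$. I do this by localizing along $\tau_n:=\inf\{s\ge t:\sum_{i\in\ES}(Q^i_s+(Q^i_s)^{-1}+(P^i_s)^{-1})\ge n\}\wedge T$: on $[t,\tau_n]$ both pieces of $M$ are genuine square-integrable martingales ($u$ bounded and $\sqrt{P^j/P^i}$ bounded there; $Q$ bounded there and $\nu\in L^2$), then $\tau_n\uparrow T$ a.s.\ (by Propositions \ref{expfin}--\ref{expfin:2} the coordinates of ${\boldsymbol Q}$ and of $1/{\boldsymbol P}$ do not explode), $Y_{\tau_n}\to Y_T$ in $L^1$ by dominated convergence (using $\sup_s|Q^i_s|\in L^4$ and boundedness of $f,g,{\boldsymbol\beta}$), and the drift integral converges by conditional monotone convergence. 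This yields
\[
\mathcal J^l\bigl(t,{\boldsymbol\beta},{\boldsymbol P}\bigr)=u^l_t+\mathbb E\Bigl[\int_t^T\sum_{i\in\ES}Q^i_s\,\Delta^i_s\,\ud s\ \Big|\ \mathcal F^{\boldsymbol W}_t\Bigr].
\]

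From this representation the two assertions follow: with $(\beta^\star_s)^{i,j}=(u^i_s-u^j_s)_+$ every $\Delta^i_s$ vanishes, so $\mathcal J^l(t,{\boldsymbol\beta}^\star,{\boldsymbol P})=u^l_t$; for any other bounded admissible ${\boldsymbol\beta}$, $\Delta^i_s\ge0$ gives $\mathcal J^l(t,{\boldsymbol\beta},{\boldsymbol P})\ge u^l_t$, and if $\sum_{i\ne j}\int_t^T\mathbb P(\beta^{i,j}_s\ne(\beta^\star_s)^{i,j})\,\ud s>0$ then $\Delta^i_s>0$ on a set of positive $\ud s\otimes\mathbb P$-measure on which $Q^i_s>0$ (strict positivity of $(Q^i_s[t,l])_{s>t}$ being read off from the linearized form \eqref{eq:linear:eq} and the forcing $\varphi$), whence $\mathcal J^l(t,{\boldsymbol\beta},{\boldsymbol P})>u^l_t$. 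I expect the main obstacle to be not the algebra, which is dictated by the construction of \eqref{shjbmfg}, but the integrability bookkeeping in the localization step — ensuring that both stochastic integrals in $M$ are true martingales after localization and that $Y_{\tau_n}\to Y_T$ in $L^1$ — which is exactly where the (exponential) moment estimates of Propositions \ref{expfin}--\ref{expfin:2}, hence the standing hypothesis $\kappa\ge(61+d)\varepsilon^2$, come into play.
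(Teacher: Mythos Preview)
Your proof is correct and follows essentially the same verification argument as the paper: expand $\sum_i Q^i_s u^i_s$ by It\^o's formula, observe that the cross-variation term cancels the $\nu$-correction drift in \eqref{shjbmfg}, that the $\varphi$-terms telescope, and then complete the square in $\beta$ to obtain the representation $\mathcal J^l(t,{\boldsymbol\beta},{\boldsymbol P})=u^l_t+\mathbb E[\int_t^T\sum_i Q^i_s\Delta^i_s\,\ud s\mid\mathcal F^{\boldsymbol W}_t]$. The only difference is presentational: where the paper simply asserts that the stochastic integral is a uniformly integrable martingale and takes conditional expectations directly, you spell out a localization via stopping times $\tau_n$ and pass to the limit using the moment bounds of Propositions \ref{expfin}--\ref{expfin:2}; this is more detailed but amounts to the same thing.
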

 
In words, the above says that  $(((\beta_{s}^\star)^{i,j} = (u^i_s-u^j_s)_+)_{i,j \in \ES : i \not = j})_{t \le s \le T}$
is the unique optimal control. 
In fact, the solvability of 
the equation 
\eqref{shjbmfg} is guaranteed by the following lemma. 

\begin{lemma}
\label{lem:verification:2}
For ${\boldsymbol P}=(P_{t})_{0 \le t \le T}$ as 
before (satisfying \eqref{eq:expfin2} and 
\eqref{eq:expfin2:bis}), 
\eqref{shjbmfg} has a unique (progressively-measurable) solution  
 $((u_{t}^i)_{i \in \ES},(\nu^{i,j,k}_{t})_{i,j,k \in \ES : j \not = k})_{0 \leq t \leq T}$
 such that 
 $((u_{t}^i)_{i \in \ES})_{0 \le t\le T}$
 is almost surely bounded by a deterministic constant and  
$((\nu^{i,j,k}_{t})_{i,j,k \in \ES : j \not = k})_{0 \leq t \leq T}$ 
 satisfies
 \begin{equation*}
\begin{split}
&\sum_{i,j,k \in \ES : j \not = k} {\mathbb E} \biggl[ 
\int_{0}^T
 \exp \biggl( \varepsilon^2
\sum_{l \in \ES} \int_{0}^t \frac1{P_{s}^l}
\ud s
\biggr)\vert \nu_{t}^{i,j,k} \vert^2
\ud t \biggr] <\infty.
\end{split}
\end{equation*}
Abusively, such a solution is said to be bounded.
\end{lemma}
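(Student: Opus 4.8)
The plan is to read \eqref{shjbmfg} as a backward SDE whose generator is \emph{quadratic} in the unknown $u$ — through the Hamiltonians $H^i(u)=-\tfrac12\sum_j(u^i-u^j)_+^2$ — but \emph{linear} in the martingale integrands $(\nu^{i,j,k})$, the offending coefficient being $\sqrt{P^j_t/P^i_t}$, which is unbounded near $\partial\hat{\mathcal S}_{d-1}$ but, by Proposition~\ref{expfin} and the standing assumption $\kappa\geq(61+d)\varepsilon^2$, has exponential moments of arbitrarily high order of $\int_0^T(P^l_s)^{-1}\,\ud s$. Since the terminal data $g^i(P_T)$ is bounded and $H^i\le 0$, the quadratic growth in $u$ is harmless and is removed by truncation once an a priori sup-bound is available; the genuine difficulty is the degenerate $\nu$-drift term.

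\emph{Step 1 (a priori estimates).} I would first show that any bounded solution obeys $\|u^i\|_\infty\le\|g\|_\infty+T\|f\|_\infty=:C_0$, uniformly in $\kappa$ and $\varepsilon$. For the upper bound, subtract the deterministic barrier $\psi_t:=\|g\|_\infty+(T-t)\|f\|_\infty$ and set $\eta^i_t:=u^i_t-\psi_t$; the barrier cancels in the $\varphi$-term, which sees only differences $u^j-u^i$, while $H^i\le 0$ and $f^i\le\|f\|_\infty$ enter with the favourable sign. Apply It\^o's formula to $\exp\bigl(\int_0^t\mathcal C_s\,\ud s\bigr)\sum_i(\eta^i_t)_+^2$ with the random integrating factor $\mathcal C_s:=C_0''+\kappa(d+1)+\varepsilon^2\sum_l(P^l_s)^{-1}$: the parasitic drift $-\tfrac{\varepsilon}{\sqrt2}\sum_{j\ne i}\sqrt{P^j_s/P^i_s}\,(\nu^{i,i,j}_s-\nu^{i,j,i}_s)$ is split by Young's inequality into a part $\le\varepsilon^2(\eta^i_s)_+^2/P^i_s$ (absorbed into $\mathcal C_s$) and a part $\le\sum_{j,k}(\nu^{i,j,k}_s)^2$ (absorbed into the second-order It\^o term $\mathbf 1_{\{\eta^i>0\}}\sum_{j,k}(\nu^{i,j,k}_s)^2\ud s$), whereas the $\varphi$- and $H^i$-contributions are quadratically dominated by $\sum_j(\eta^j_s)_+^2$ with a $\kappa$-dependent but $\tfrac1{P^l}$-free constant. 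Thus the drift of the weighted quantity is nonnegative; its stochastic part is a true martingale because $\mathbb E\int_0^T|\nu|^2<\infty$ and, by the weighted integrability postulated in the statement together with $\mathbb E\bigl[\exp(\varepsilon^2\sum_l\int_0^T(P^l_s)^{-1}\ud s)\bigr]<\infty$ (which follows from \eqref{eq:expfin2} under $\kappa\geq(61+d)\varepsilon^2$), the weight has enough integrability. Taking conditional expectations and using $(\eta^i_T)_+=0$ forces $(\eta^i_t)_+\equiv 0$. The lower bound is symmetric with barrier $-\psi_t$ and $(\eta^i_t)_-^2$: now $H^i$ has the wrong sign, but on $\{\eta^i<0\}$ one has $(u^i-u^j)_+^2\le 2\bigl((\eta^i)_-^2+(\eta^j)_-^2\bigr)$ (it vanishes unless $u^j$ is also below the barrier, and $u$ being bounded makes the term at worst quadratic in $\eta$), so it too is quadratically dominated and the same computation gives $u^i_t\ge-C_0$. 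This bound is of course consistent with the control representation \eqref{mfgvaluef} together with $\mathbb E[\sum_iQ^i_t]=1$.

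\emph{Step 2 (existence and uniqueness).} For existence, replace each $H^i$ by $H^i_N:=H^i\circ\chi_N$, with $\chi_N$ a $1$-Lipschitz bounded modification of the identity equal to it on $[-N,N]$; then $H^i_N\le 0$ and is globally Lipschitz in $u$. Solve the truncated system by a Picard iteration on the $u$-component: for a fixed bounded $u$-input the $d$ equations decouple, and each scalar equation carries a Lipschitz-in-$u$ generator and the single linear-$\nu$ term with coefficient $\theta^i_s$ satisfying $\int_0^T|\theta^i_s|^2\ud s\le\int_0^T(P^i_s)^{-1}\ud s$; passing to the measure with density $\exp\bigl(\int_0^T\theta^i_s\cdot\ud W_s-\tfrac12\int_0^T|\theta^i_s|^2\ud s\bigr)$ (Novikov applies by Proposition~\ref{expfin}) turns it into a standard bounded-data Lipschitz BSDE, uniquely solvable, and exponential-moment bounds let one return to $\mathbb P$ and check that the integrand meets the weighted bound of the statement. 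The Picard map contracts in a suitably exponentially weighted norm, yielding a bounded solution $(u^{(N)},\nu^{(N)})$; Step 1 applies verbatim to the truncated system (as $H^i_N\le 0$ and $|H^i_N(u)|\le|H^i(u)|$) and gives $\|u^{(N),i}\|_\infty\le C_0$ for all $N$, so for $N>C_0$ the truncation is inactive and $(u^{(N)},\nu^{(N)})$ solves \eqref{shjbmfg}. For uniqueness, the difference of two bounded solutions solves a linear BSDE (Hamiltonian increments are Lipschitz on the compact where both solutions live, the $\varphi$-part is linear, the $\nu$-part is linear with coefficient $\sqrt{P^j/P^i}$); rerunning the It\^o-with-integrating-factor computation of Step~1 on $\exp(\int_0^t\mathcal C_s\,\ud s)\sum_i(u^i_t-\tilde u^i_t)^2$ gives $u=\tilde u$, and then $\nu=\tilde\nu$ by identifying the integrands against the independent Brownian motions $W^{j,k}$. (Given the solution, Lemma~\ref{lem:verification:1} then identifies it with the value function in \eqref{mfgvaluef}.)

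The main obstacle is entirely the $\nu$-drift term $-\tfrac{\varepsilon}{\sqrt2}\sum_{j\ne i}\sqrt{P^j_t/P^i_t}(\nu^{i,i,j}_t-\nu^{i,j,i}_t)$: its coefficient blows up exactly where a simplex coordinate approaches $0$, so the naive Gronwall constant is not integrable, and a single Girsanov transform cannot remove it from all $d$ equations at once. The resolution — keeping $u$ in a fixed compact, Young-splitting so the $\nu^2$-part disappears into the second-order It\^o term, and absorbing the residual $\varepsilon^2(\eta^i)^2/P^i_s$ into the random integrating factor $\exp(\int_0^t\mathcal C_s\,\ud s)$ whose integrability is guaranteed by the high-order exponential moments of $\int_0^T(P^l_s)^{-1}\ud s$ from Proposition~\ref{expfin} — is precisely why the hypothesis imposes the weight $\exp\bigl(\varepsilon^2\sum_l\int_0^t(P^l_s)^{-1}\ud s\bigr)$ on $\nu$ and why the running assumption $\kappa\geq(61+d)\varepsilon^2$ is in force.
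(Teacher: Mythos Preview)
Your proposal is essentially correct but takes a noticeably different route from the paper, especially for the a priori $L^\infty$ bound.

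\textbf{A priori bound.} You obtain $\|u^i\|_\infty\le\|g\|_\infty+T\|f\|_\infty$ by a barrier comparison: subtract $\psi_t$, apply It\^o's formula to $e_t\sum_i(\eta^i_t)_\pm^2$ with the random integrating factor $e_t=\exp\bigl(\int_0^t\mathcal C_s\,\ud s\bigr)$, Young-split the parasitic $\nu$-drift so that its $\nu^2$-part is absorbed by the second-order term, and conclude by localisation and $(\eta^i_T)_\pm=0$. This works, but the lower bound is delicate: on $\{\eta^i<0\}$ one has $(u^i-u^j)_+\le(\eta^j)_-$, so $-H^i\le\tfrac12\sum_j(\eta^j)_-^2$ is controlled only after using the assumed boundedness of $u$ to reduce a cubic term to a quadratic one; the resulting constant in $\mathcal C_s$ then depends on the (possibly $N$-dependent) a priori bound on $u^{(N)}$, which is harmless for the conclusion $(\eta^i)_-\equiv 0$ but must be tracked. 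The paper avoids all of this by exploiting the verification Lemma~\ref{lem:verification:1}: the solution of the truncated equation is identified with the value $u^l_t=\mathrm{ess\,inf}_{|\beta|\le c}\,{\mathcal J}^l$, and the bounds follow in one line from $\mathbb E[\sum_i Q^i_t]=1$ (choose $\beta\equiv 0$ for the upper bound; use $Q^i\ge 0$ and drop the $|\beta|^2$ term for the lower bound). This is the chief economy of the paper's argument.

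\textbf{Existence for the truncated equation.} You freeze $u$ in the nonlinear terms, decouple the $d$ scalar equations, remove each parasitic $\nu$-drift by a per-coordinate Girsanov shift (Novikov holds by Proposition~\ref{expfin}), solve the resulting bounded-data BSDEs, and close a Picard loop in a weighted norm. The paper instead observes that the truncated driver is Lipschitz in $u$ with a deterministic constant and Lipschitz in $\nu$ with the stochastic constant $c_t=\tfrac{\varepsilon}{\sqrt2}\bigl(\sum_l (P^l_t)^{-1}\bigr)^{1/2}$, and invokes \cite[Theorem~2.1(i)]{GashiLi} for BSDEs with stochastic Lipschitz coefficients (the required exponential moment $\mathbb E\exp\bigl(2\varepsilon^2\sum_l\int_0^T(P^l_s)^{-1}\ud s\bigr)<\infty$ again coming from Proposition~\ref{expfin} and $\kappa\ge(61+d)\varepsilon^2$). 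Your route is more self-contained; the paper's is a one-line citation.

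\textbf{Removing the truncation and uniqueness.} Both arguments coincide from here: once $\|u^{(N)}\|_\infty\le C_0$ independently of $N$, the truncation is inactive for $N>2C_0$; and any bounded solution of \eqref{shjbmfg} solves the truncated equation for large $c$, whence uniqueness.

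In short: your strategy is sound, but the paper's control-theoretic shortcut for the $L^\infty$ bound, together with the black-box use of \cite{GashiLi}, makes the argument considerably shorter and sidesteps the sign and integrability bookkeeping that your barrier method requires for the lower bound.
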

The two lemmas will be proved in Subsection 
\ref{se:4:proofs} below. 
For the time being, we observe, by combining the two of them, 
that, for a given 
${\boldsymbol P}=(P_{t})_{0 \le t \le T}$ satisfying 
\eqref{eq:expfin2} and 
\eqref{eq:expfin2:bis},  
the solution to the optimal control problem
\eqref{eq:limit:cost:functional:new} 
is entirely described by the SHJB equation 
\eqref{shjbmfg}, as it suffices to solve 
the forward equation 
\eqref{eq:weak:sde:final:2}
with
$((\beta^{i,j}_{t} = (u^i_{ t}-u^j_{t})_+)_{i,j \in \ES : i \not =j})_{
0 \le t \le T}$
therein. 
Now, Definition \ref{defmfg} implies that 
an environment ${\boldsymbol P}$ is a solution to 
the MFG in hand if and only if it solves the forward equation in the 
forward-backward system:
\begin{align}
&\ud P_{t}^i =  \sum_{j \in \ES} \Bigl( P_{t}^j \bigl( \varphi(P_{t}^i) + (u^j_t-u^i_t)_+ \bigr) - P_{t}^i \bigl( \varphi(P_{t}^j) 
+ (u^i_t-u^j_t)_+ \bigr)
\Bigr) \ud t
  + {\varepsilon}  
\sum_{j \in \ES}
\sqrt{P_{t}^i P_{t}^j}  
\ud \overline W_{t}^{i,j}, \nonumber
\\
&\displaystyle \ud u_{t}^i = - \Bigl( 
\sum_{j \in \ES} \varphi(P^j_{t})
\bigl[ u_{t}^j - u_{t}^i \bigr]
+ H^i(u_{t}) + f^i(t,P_{t}) \Bigr) \ud t 
\label{eq:MFG:system:epsilon}
\\
&\hspace{30pt} - \frac{\varepsilon}{\sqrt{2}} \sum_{j \in \ES : j \not = i} 
\sqrt{\frac{{P_{t}^j}}{{P_{t}^i}}} \bigl( \nu_{t}^{i,i,j} - \nu_{t}^{i,j,i} \bigr) \ud t
+ \sum_{j,k \in \ES : j \not = k } \nu_{t}^{i,j,k} \ud W_{t}^{j,k}, \nonumber
\end{align}
with $(P_{0}^i = p_{0,i})_{i \in \ES} \in {\mathcal S}_{d-1}$ as deterministic initial condition for the forward equation and 
$(u^i_T=g^i(P_T))_{i \in \ES}$ as terminal boundary condition for the backward equation. 
System \eqref{eq:MFG:system:epsilon} is the (stochastic) MFG system that characterizes the solutions of the MFG
described in Definition \ref{defmfg}. 
Hence, proving Theorem 
\ref{main:thm} is here the same as proving that \eqref{eq:MFG:system:epsilon}
is uniquely solvable (within the space of processes that satisfy the conditions described in 
Proposition 
\ref{expfin:2}
and 
Lemma 
\ref{lem:verification:2}).

\subsubsection{Proofs of Lemmas \ref{lem:verification:1} and \ref{lem:verification:2}}
\label{se:4:proofs}
\begin{proof}[Proof of Lemma \ref{lem:verification:1}.] 
Call $((Q_{s}^i)_{i \in \ES})_{t \le s \le T}$ the solution to \eqref{eq:weak:sde:final:2}
with $Q_{t}^i = \delta_{i,l}$ for some $l \in \ES$
and expand
\begin{equation}
\label{eq:4}
\begin{split}
&\ud \biggl( \sum_{i \in \ES} Q_{s}^i u_{s}^i + \int_{t}^s \sum_{i \in \ES}  Q_{r}^i \bigl( f^i(r,P_{r}) + \frac12 \sum_{j \not = i} \vert \beta^{i,j}_{r} \vert^2
\bigr) \ud r \biggr)
\\
&= -  \sum_{i \in \ES} Q_{s}^i \Bigl( 
\sum_{j \in \ES} \varphi(P^j_{s})
\bigl[ u_{s}^j - u_{s}^i \bigr]
+ H^i(u_{s})   \Bigr) \ud s
- 
\frac{\varepsilon}{\sqrt{2}}
\sum_{i \in \ES} 
Q_{s}^i
 \sum_{j \not =i} 
\sqrt{\frac{{P_{s}^j}}{{P_{s}^i}}} \bigl( \nu_{s}^{i,i,j} - \nu_{s}^{i,j,i} \bigr) \ud s
\\
&\hspace{15pt} 
+
\sum_{i \in \ES} u_{s}^i
\sum_{j \in \ES} \Bigl( Q_s^j \bigl( \varphi(P_{s}^i) + \beta_{s}^{j,i} \bigr) - Q_{s}^i 
\bigl( \varphi(P_{s}^j) 
+  \beta_{s}^{i,j} \bigr)
\Bigr) \ud s
 + \frac12 
\sum_{i \in \ES}
Q_{s}^i
\sum_{j \not = i}
\vert \beta^{i,j}_{s} \vert^2 \ud s
\\
&\hspace{15pt} +
 \frac{{\varepsilon}}{\sqrt 2} 
\sum_{i \in \ES}
Q_{s}^i \sum_{j \in \ES}
\sqrt{\frac{P_{s}^j}{P_{s}^i}}  
d \bigl[ W_{s}^{i,j} - W_{s}^{j,i} \bigr]
\cdot 
\sum_{j,k \in \ES : j \not = k} \nu_{s}^{i,j,k} \ud W_{s}^{j,k}
  +
\ud m_{s},
\end{split}
\end{equation}
where $(m_{s})_{t \leq s \leq T}$ is a uniformly integrable martingale. 
On the last line, the dot in the first term is used to compute the underlying bracket. On the second line,
\begin{equation*}
\begin{split}
\sum_{i \in \ES} u_{s}^i \sum_{j \in \ES} \Bigl( Q_{s}^j \varphi (P_{s}^i) -
Q_{s}^i \varphi (P_{s}^j) \Bigr) =
\sum_{i \in \ES}  \sum_{j \in \ES} Q_{s}^j \varphi (P_{s}^i) \bigl( u_{s}^i - u_{s}^j \bigr)
= \sum_{i \in \ES}  \sum_{j \in \ES} Q_{s}^i \varphi (P_{s}^j) \bigl( u_{s}^j - u_{s}^i \bigr),
\end{split}
\end{equation*}
which cancels out with the first term on the first line. Moreover,
\begin{equation*}
\begin{split}
&-\sum_{i \in \ES} Q_{s}^i   H^i(u_{s})   
+ \sum_{i,j \in \ES} u_{s}^i \bigl( Q_{s}^j \beta_{s}^{j,i} - Q_{s}^i \beta_{s}^{i,j} \bigr) 
+ \frac12 \sum_{i \in \ES}
Q_{s}^i
\sum_{j \not = i}
\vert \beta_{s}^{i,j} \vert^2 
\\
&=\frac12 
\sum_{i \in \ES} Q_{s}^i   
\sum_{j \not =i}
( u_{s}^i - u_{s}^j )_{+}^2
- \sum_{i \in\ES} 
Q_{s}^i
\sum_{j \not = i}
 \beta_{s}^{i,j}
\bigl( u_{s}^i  - u_{s}^j \bigr)
+ \frac12 \sum_{i \in \ES}
Q_{s}^i
\sum_{j \not = i}
\vert \beta_{s}^{i,j} \vert^2 
\\
&\geq 
\frac12 
\sum_{i \in \ES} Q_{s}^i   
\sum_{j \not =i}
( u_{s}^i - u_{s}^j )_{+}^2
- \sum_{i \in \ES} 
Q_{s}^i
\sum_{j \not = i}
 \beta_{s}^{i,j}
\bigl( u_{s}^i  - u_{s}^j \bigr)_{+}
+ \frac12 \sum_{i \in \ES}
Q_{s}^i
\sum_{j \not = i}
\vert \beta_{s}^{i,j} \vert^2
\\
&= \frac12 
\sum_{i \in \ES} Q_{s}^i   
\sum_{j \not =i}
\bigl\vert 
\beta_{s}^{i,j}
-
( u_{s}^i - u_{s}^j )_{+}
\bigr\vert^2,
\end{split}
\end{equation*}
the inequality being in fact an equality if $\beta \equiv \beta^\star$.

 It remains to compute the bracket on the last line of \eqref{eq:4}. We get
 \begin{equation*}
 \begin{split}
&  \frac{{\varepsilon}}{\sqrt 2} 
Q_{s}^i \sum_{j \in \ES}
\sqrt{\frac{P_{s}^j}{P_{s}^i}}  
\ud \bigl[ W_{s}^{i,j} - W_{s}^{j,i} \bigr]
\cdot 
\sum_{j,k \in \ES : j \not =k} \nu_{s}^{i,j,k} \ud W_{s}^{j,k}
 = 
  \frac{{\varepsilon}}{\sqrt 2} 
Q_{s}^i \sum_{j \in \ES : j \not =i}
\sqrt{\frac{P_{s}^j}{P_{s}^i}} 
\Bigl(
\nu_{s}^{i,i,j}
- 
\nu_{s}^{i,j,i} 
\Bigr) \ud s,
\end{split}
 \end{equation*}
which cancels out 
with the last term on the first line of \eqref{eq:4}. 

Integrating from $t$ to $T$ and taking conditional expectation in \eqref{eq:4}, we then deduce that 
\begin{equation}
\label{eq:4:0}
\begin{split}
&\sum_{i \in \ES} Q_{t}^i u_{t}^i 
+
 \frac12 
{\mathbb E} 
\biggl[ \sum_{i \in \ES}
\int_{t}^T Q_{s}^i   
\sum_{j \not =i}
\bigl\vert 
\beta_{s}^{i,j}
-
( u_{s}^i - u_{s}^j )_{+}
\bigr\vert^2 \ud s \, \vert \, {\mathcal F}_{t}^{\boldsymbol W}
\biggr]
\\
&\leq 
{\mathbb E} \biggl[ \sum_{i \in \ES} Q_{T}^i u_{T}^i +\int_{t}^T \sum_{i \in \ES}
Q_{s}^i \bigl( f^i(s,P_{s}) + \frac12 \sum_{j \not = i} \vert \beta^{i,j}_{s} \vert^2
\bigr) \ud s \, \vert \, {\mathcal F}_{t}^{\boldsymbol W} \biggr],
\end{split}
\end{equation}
the inequality being an equality if $\beta \equiv \beta^\star$.
Recalling that 
$Q_{t}^i = \delta_{i,l}$, this is what we want. 
\end{proof}

\begin{proof}[Proof of Lemma \ref{lem:verification:2}.] 
\textit{First Step.}
The first step of the proof is to consider a truncated version of  
\eqref{shjbmfg}. Hence, for a given constant $c>0$, we consider the equation
\begin{equation}
\label{shjbmfg:truncated}
\begin{split}
&\displaystyle \ud u_{t}^i = - \Bigl( 
\sum_{j \in \ES} \varphi(P^j_{t})
\bigl[ u_{t}^j - u_{t}^i \bigr]
+ H^i_{c}(u_{t}) + f^i(t,P_{t}) \Bigr) \ud t 
- \frac{\varepsilon}{\sqrt{2}} \sum_{j \not =i} 
\sqrt{\frac{{P_{t}^j}}{{P_{t}^i}}} \bigl( \nu_{t}^{i,i,j} - \nu_{t}^{i,j,i} \bigr) \ud t
\\
&\hspace{30pt} 
+ \sum_{j \not = k} \nu_{t}^{i,j,k} \ud W_{t}^{j,k}, 
\\
&u^i_T=g^i(P_T),
\end{split}
\end{equation}
where $H^i_{c}$ stands for the truncated Hamiltonian
\begin{equation*}
H^i_{c}(y)  := -\frac12 \sum _{j \in \ES} \bigl[ 
(y_{i}-y_{j})_+^2 {\mathbf 1}_{\{y_{i} - y_{j} \leq c\}}
+ \bigl(    2
c (y_{i} - y_{j}) - c^2 \bigr) {\mathbf 1}_{\{y_{i} - y_{j} > c\}}
\bigr],
\quad y=(y_{j})_{j \in \ES}.
\end{equation*}
Then, \eqref{shjbmfg:truncated} is a backward equation with a time dependent driver that is Lipschitz continuous with respect to 
the entries $(u^{i}_{t})_{i \in \ES}$ and
$(\nu^{i,j,k}_{t})_{i,j,k \in \ES : j \not =k}$, the Lipschitz constant with respect to 
the entries
$(u^{i}_{t})_{i \in \ES}$  being bounded by a deterministic constant $C$ (possibly depending on 
$c$) and the Lipschitz constant with respect to 
$(\nu_{t}^{i,j,k})_{i,j,k \in \ES : j \not = k}$ being bounded by
\begin{equation*}
c_{t} := \frac{\varepsilon}{\sqrt{2}}
 \biggl[ \sum_{i \in \ES} \frac{1}{P_{t}^i} \biggr]^{1/2},
\end{equation*} 
in the sense that (using the fact that the driver is linear in 
$(\nu_{t}^{i,j,k})_{i,j,k \in \ES :j \not = k}$)
\begin{equation*}
\begin{split}
\frac{\varepsilon}{\sqrt{2}}
\biggl( \sum_{i \in \ES}
\biggl[
\sum_{j \not =i} \sqrt{\frac{P_{t}^j}{P_{t}^i}}
\bigl( \nu_{t}^{i,i,j} - \nu_{t}^{i,j,i} \bigr)
\biggr]^{2}
\biggr)^{1/2} 
&\leq
\frac{\varepsilon}{\sqrt{2}}
\biggl(
\sum_{i \in \ES}
\biggl[
\frac{1}{P_{t}^i}
\sum_{j \not =i} 
\bigl( \nu_{t}^{i,i,j} - \nu_{t}^{i,j,i} \bigr)^2
\biggr] \biggr)^{1/2} 
\\
&\leq 
\frac{\varepsilon}{\sqrt{2}}
\biggl[ \sum_{i \in \ES}
\frac{1}{P_{t}^i}
\biggr]^{1/2}
\biggl[
\sum_{i,j \in \ES : j \not =i} 
\bigl( \nu_{t}^{i,i,j} - \nu_{t}^{i,j,i} \bigr)^2
\biggr]^{1/2} 
\\
&= c_{t}
\biggl[
\sum_{i,j \in \ES : j \not =i} 
\bigl( \nu_{t}^{i,i,j} - \nu_{t}^{i,j,i} \bigr)^2
\biggr]^{1/2}.
\end{split}
\end{equation*}
By {Proposition
\ref{expfin} (with $\lambda=2d-1$ and $\gamma \geq 60\varepsilon^2$)
and from the condition $\kappa \geq (61+d) \varepsilon^2$} together with H\"older's inequality, we notice that 
${\mathbb E}
[\exp ( 2 \varepsilon^2 
\sum_{l \in \ES} \int_{0}^T (1/{P_{s}^l})
ds)] < \infty$. Then, by \cite[Theorem 2.1 (i)]{GashiLi} (with, using the notations therein, $\gamma \equiv 1$, $\beta_{1}$ a positive constant, 
$c_{1}$ a non-negative constant, $\beta_{2}=2$ and $c_{2}(t)= c_{t}$), there exists a unique solution to 
\eqref{shjbmfg:truncated} satisfying
\begin{equation*}
\begin{split}
&\sum_{i \in \ES} {\mathbb E} \biggl[ 
\sup_{0 \leq t \leq T}
\biggl( \exp \biggl(  \varepsilon^2 
\sum_{l \in \ES} \int_{0}^t \frac1{P_{s}^l}
\ud s
\biggr)\vert u_{t}^i \vert^2
\biggr) \biggr] <\infty, 
\\
&\sum_{i,j,k \in \ES : j \not = k} {\mathbb E} \biggl[ 
\int_{0}^T
 \exp \biggl(  \varepsilon^2 
\sum_{l \in \ES} \int_{0}^t \frac1{P_{s}^l}
\ud s
\biggr)
\Bigl[ 
\vert \nu_{t}^{i,j,k} \vert^2
+
\Bigl( 1+ 
\sum_{l \in \ES}
\int_{0}^t \frac1{P_{s}^l}
\ud s
\Bigr) 
\vert u_{t}^i \vert^2
\Bigr]
\ud t \biggr] <\infty.
\end{split}
\end{equation*}

\textit{Second Step.}
We now prove that we can find a bound for the solution that is independent of $c$.
To do so, we follow the proof of Lemma
\ref{lem:verification:1}, noticing that the Hamiltonian $H_{c}$ introduced in the first step is associated 
with the same cost functional ${\mathcal J}$ as in 
\eqref{eq:limit:cost:functional:new}
except that the processes $(({\beta}^{i,j}_{t})_{0 \le t \le T})_{i,j \in \ES : i \not = j}$
therein are required to be bounded by $c$, and similarly 
for 
${\mathcal J}^l$ 
in 
 \eqref{mfgvaluef}. In particular,
 $u_{t}^l$ defined in the first step satisfies
 $u_{t}^l=\textrm{\rm ess inf}_{\boldsymbol \beta :  \boldsymbol \beta^{i,j}  \leq c }  
{\mathcal J}^l(t, (\beta_{s})_{t \leq s \leq T}, (P_{s})_{t \leq s \leq T})$.
Call now $((Q_{s}^i)_{i \in \ES})_{t \le s \le T}$ the solution to \eqref{eq:weak:sde:final:2}
with $Q_{t}^i = \delta_{i,l}$ for some $l \in \ES$ and $\beta \equiv 0$. Then, 
by \eqref{eq:4:0} (but with the solution 
$((u_{t}^i)_{i \in \ES})_{0 \le t \le T}$
to 
\eqref{shjbmfg:truncated} and so with the new Hamiltonian)
\begin{equation*}
u_{t}^l \leq 
{\mathbb E} \biggl[ \sum_{i \in \ES} Q_{T}^i u_{T}^i +\int_{t}^T \sum_{i \in \ES}
Q_{s}^i \bigl( f^i(s,P_{s}) + \frac12 \sum_{j \not = i} \vert \beta^{i,j}_{s} \vert^2
\bigr) \ud s \, \vert \, {\mathcal F}_{t}^{\boldsymbol W} \biggr].
\end{equation*}
Here, $\beta \equiv 0$ and $u_{T}^i = g^i(P_{T})$, which provides an upper bound 
for $((u_{t}^l)_{l \in \ES})_{0 \leq t \le T}$, by using \eqref{eq:mean:q:1} and the $L^\infty$ bounds on $f$ and $g$.
Importantly, the upper bound is independent of $c$.
In order to obtain a lower bound, we 
call $((Q_{s}^i)_{i \in \ES})_{t \le s \le T}$ the solution to \eqref{eq:weak:sde:final:2}
with $Q_{t}^i = \delta_{i,l}$ for some $l \in \ES$ 
, given an open-loop strategy ${\boldsymbol \beta}$ whose coordinates are bounded by $c$. Using again the bounds on $f$ and $g$, we get
\[
\begin{split}
{\mathcal J}^l\Bigl(t, (\beta_{s})_{t \leq s \leq T}, (P_{s})_{t \leq s \leq T}\Bigr)&=
{\mathbb E} \biggl[ \sum_{i \in \ES} Q_{T}^i u_{T}^i +\int_{t}^T \sum_{i \in \ES}
Q_{s}^i \bigl( f^i(s,P_{s}) + \frac12 \sum_{j \not = i} \vert \beta^{i,j}_{s} \vert^2
\bigr) \ud s \, \vert \, {\mathcal F}_{t}^{\boldsymbol W} \biggr]\\
&\geq{\mathbb E} \biggl[ -\sum_{i \in \ES} Q_{T}^i ||g^i||_\infty -\int_{t}^T \sum_{i \in \ES}
Q_{s}^i || f^i||_\infty \ud s \, \vert \, {\mathcal F}_{t}^{\boldsymbol W} \biggr]
\geq -C_0,
\end{split}
\]
by \eqref{eq:mean:q:1}, for a constant $C_0$ independent of $c$ and ${\boldsymbol \beta}$; 
so $u_{t}^l \geq -C_0$.


In the end, we may find a constant $C_{0}$ such that, whatever the value of 
$c$ in \eqref{shjbmfg:truncated}, 
the solution is bounded by $C_{0}$. We deduce that, whenever $c \geq 2 C_{0}$, 
the solution of 
\eqref{shjbmfg:truncated}
is also a solution 
of the backward equation 
\eqref{shjbmfg}.
This proves the existence of a bounded solution 
 to 
\eqref{shjbmfg}.
As for uniqueness, it suffices to notice that a bounded solution to
\eqref{shjbmfg} is also a solution to 
\eqref{shjbmfg:truncated}, but for a large enough $c$ inside. Hence, we get that any bounded solution to 
\eqref{shjbmfg} is bounded by $C_{0}$, which shows uniqueness.
\end{proof}

\subsection{Proof of Theorem \ref{main:thm}: Existence and uniqueness of an MFG equilibrium}
\label{subse:MFG:proof:existence:!}
The system \eqref{eq:MFG:system:epsilon}
is what we call a forward-backward stochastic differential equation. But, differently from most of the cases that have been addressed so far in the literature (see for instance \cite[Chapter 3]{CarmonaDelarue_book_I}), solutions to the forward equation are here 
regarded as processes with values in ${\mathcal S}_{d-1}$. This  requires a special treatment, {which is the main rationale of our paper. Whatever the setting,  a standard strategy for solving forward-backward stochastic differential equations (at least in the so-called Markovian case, see for instance 
\cite{MaProtterYong,Delarue02})
is to regard the system formed by the two forward and backward equations as the characteristics of 
a system of parabolic second order PDEs. In our framework, this system of PDEs is precisely 
the master equation
\eqref{eq:master:equation:local}--\eqref{eq:master:equation:intrinsic}.
Therein, the unknown is the tuple of functions $(U^i)_{i \in \ES}$, each $U^i$ standing for a real valued function defined on $[0,T] \times {\mathcal S}_{d-1}$. Accordingly, the master equation is formally obtained by imposing $u^i_t=U^i(t,P_t)$ and expanding $U$ using It\^o formula; this is basically what we do later for proving Theorem \ref{lem:existence:mfg:from:master}.}

{The precise connection between 
the master equation
and the 
MFG system 
\eqref{eq:MFG:system:epsilon} is  given by the following statement, which is in fact a general feature of MFGs: Once the master equation is known to have a unique smooth solution, the MFG should be uniquely solvable.}

\begin{theorem}
\label{lem:existence:mfg:from:master}
Assume that there exists a $d$-tuple $(U^1,\cdots,U^d)$ of real valued functions 
defined on $[0,T] \times {\mathcal S}_{d-1}$ such that, for any 
$i \in \ES$, $U^i$ belongs to the Wright--Fisher space 
${\mathscr C}_{\textrm{\rm WF}}^{1+\gamma'/2,2+\gamma'}([0,T] \times {\mathcal S}_{d-1})$ 
for some $\gamma' >0$ (see 
\S \ref{subse:space:Wright--Fisher}
for the definition), and for any $(t,p) \in [0,T] \times \textrm{\rm Int}(\hat{\mathcal S}_{d-1})$, 
equation \eqref{eq:master:equation:local}
holds at $(t,p)$.
Then, 
for any (deterministic) initial condition $(p_{0,i})_{i \in \ES} \in {\mathcal S}_{d-1}$, 
with $p_{0,i}>0$ for any $i \in \ES$, the 
MFG system 
\eqref{eq:MFG:system:epsilon} has a unique solution
$(P_{t}=(P_{t}^i)_{i \in \ES},u_{t}=(u_{t}^i)_{i \in \ES},\nu_{t}=(\nu_{t}^{i,j,k})_{i,j,k \in \ES : j \not =k})_{0 \leq t \leq T}$ in the class 
of ${\mathbb F}^{\boldsymbol W}$-progressively-measurable processes $(\widetilde P_{t}=(\widetilde P_{t}^i)_{i \in \ES},\widetilde u_{t}=(\widetilde u_{t}^i)_{i \in \ES},\widetilde \nu_{t}=(\widetilde \nu_{t}^{i,j,k})_{i,j,k \in \ES : j \not =k})_{0 \leq t \leq T}$ such that 
$(\widetilde P_{t})_{0 \leq t \leq T}$ is continuous 
and takes values in ${\mathcal S}_{d-1}$, $(\widetilde u_{t})_{0 \leq t \leq T}$ is continuous and 
is bounded by a deterministic constant
and 
$(\widetilde \nu_{t})_{0 \leq t \leq T}$ satisfies 
${\mathbb E}[ \int_{0}^T 
 \exp (  \varepsilon^2
\sum_{l \in \ES} \int_{0}^t (1/{P_{s}^l})
\ud s)
\vert \widetilde \nu_{t} \vert^2 \ud t ] < \infty$. The solution satisfies,
$\ud{\mathbb P}$ almost surely, for all $t \in [0,T]$ and all $i \in \ES$, $u^i_t= U^i(t,P_t)$,
and, 
$\ud{\mathbb P} \otimes \ud t$ almost everywhere,
for all $i,j,k \in \ES$, 
with $j \not =k$, 
$\nu_t^{i,j,k} = V^{i,j,k}(t,P_t)$, where
\[
V^{i,j,k}(t,p):= \frac{\varepsilon}{\sqrt{2}} \left({\mathfrak d}_{p_j}U^i(t,p)-{\mathfrak d}_{p_k} U^i(t,p)\right) \sqrt{p_j p_k}.
\] 
\end{theorem}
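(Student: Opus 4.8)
The plan is to run a verification argument of four‑step‑scheme type (as in \cite{MaProtterYong}): solve the forward equation first, then identify the backward component through the $d$‑tuple $(U^1,\dots,U^d)$. I split it into existence and uniqueness.

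\textbf{Existence.} Introduce the feedback function $\upalpha(t,i,p)(j):=\bigl(U^i(t,p)-U^j(t,p)\bigr)_+$, which is continuous and bounded because the $U^i$ are continuous on the compact $[0,T]\times{\mathcal S}_{d-1}$, and which satisfies \eqref{eq:constraint:B:drift}. Since $\kappa\ge(61+d)\varepsilon^2\ge\varepsilon^2/2$, Proposition \ref{thm:approximation:diffusion:2} provides a unique strong solution $(P_t)_{0\le t\le T}$ of \eqref{eq:weak:sde:final} with this $\upalpha$ and the prescribed deterministic initial condition, whose coordinates stay strictly positive; Proposition \ref{expfin} then gives \eqref{eq:expfin2}--\eqref{eq:expfin2:bis} for $(P_t)$. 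Next set $u^i_t:=U^i(t,P_t)$ and $\nu^{i,j,k}_t:=V^{i,j,k}(t,P_t)$ and expand $u^i_t$ by It\^o's formula in the local coordinates $p^{-i}$ of \S\ref{subsub:derivatives:simplex}. Using that $U$ solves \eqref{eq:master:equation:local} (equivalently \eqref{eq:master:equation:intrinsic}) at $(t,P_t)$, the elementary identity
\[
\sum_{j,k\in\ES}p_k\bigl[\varphi(p_j)+(U^k-U^j)_+\bigr]\bigl(\fd_{p_j}U^i-\fd_{p_k}U^i\bigr)=\sum_{m\in\ES}a_m(t,p)\,\fd_{p_m}U^i
\]
with $a_m$ the drift \eqref{eq:bi:new}, and the elementary identities $V^{i,i,j}=-V^{i,j,i}$ and $\varepsilon^2 p_j\bigl(\fd_{p_i}U^i-\fd_{p_j}U^i\bigr)=\tfrac{\varepsilon}{\sqrt2}\sqrt{p_j/p_i}\,\bigl(V^{i,i,j}-V^{i,j,i}\bigr)$, one checks that the drift and martingale parts of $\ud u^i_t$ match those of \eqref{shjbmfg} with the above $\nu^{i,j,k}_t$, so $(P_t,u_t,\nu_t)$ solves \eqref{eq:MFG:system:epsilon}. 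It belongs to the stated class: $u$ is bounded, and $\nu$ is bounded since $\fD U^i$ extends continuously to ${\mathcal S}_{d-1}$ and $\sqrt{p_jp_k}\le 1$, so the weighted integrability of $\nu$ follows from \eqref{eq:expfin2} together with $\kappa\ge(61+d)\varepsilon^2$ after a H\"older inequality.

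The one delicate point in the expansion is that each $U^i$ only belongs to ${\mathscr C}^{1+\gamma'/2,2+\gamma'}_{\textrm{\rm WF}}$, so its second‑order derivatives live only on $\textrm{\rm Int}(\hat{\mathcal S}_{d-1})$ and may blow up at the boundary; this is harmless because $(P_t)$ never touches the boundary and, by the decay rates \eqref{eq:rate:2nd:order}, the second‑order contribution $\tfrac{\varepsilon^2}{2}\sum_{j,k}(p_j\delta_{jk}-p_jp_k)\partial^2_{p_jp_k}U^i$ stays bounded up to the boundary, so It\^o's correction term is a bona fide bounded Lebesgue integral.

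\textbf{Uniqueness.} Let $(\widetilde P,\widetilde u,\widetilde\nu)$ be any solution in the prescribed class. Since the forward equation of \eqref{eq:MFG:system:epsilon} is \eqref{eq:weak:sde:final} driven by the bounded progressively measurable open‑loop rates $\bigl((\widetilde u^j_t-\widetilde u^i_t)_+\bigr)_{i,j}$, the open‑loop extensions of Propositions \ref{thm:approximation:diffusion:2}--\ref{expfin} (Remark \ref{foot:extension:alpha:open loop}) show that $\widetilde P$ stays strictly positive and obeys \eqref{eq:expfin2}--\eqref{eq:expfin2:bis}. Put $\Delta^i_t:=\widetilde u^i_t-U^i(t,\widetilde P_t)$ and $\bar\nu^{i,j,k}_t:=\widetilde\nu^{i,j,k}_t-V^{i,j,k}(t,\widetilde P_t)$. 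Expanding $U^i(t,\widetilde P_t)$ by It\^o as above, subtracting from the backward equation of \eqref{eq:MFG:system:epsilon}, and invoking the master equation along $(t,\widetilde P_t)$ together with the same algebraic identities, one gets a closed \emph{linear} backward SDE
\[
\ud\Delta^i_t=\Bigl(\sum_{l\in\ES}g^{i,l}_t\,\Delta^l_t-\tfrac{\varepsilon}{\sqrt2}\sum_{j\ne i}\sqrt{\widetilde P^j_t/\widetilde P^i_t}\,\bigl(\bar\nu^{i,i,j}_t-\bar\nu^{i,j,i}_t\bigr)\Bigr)\ud t+\sum_{j\ne k}\bar\nu^{i,j,k}_t\,\ud W^{j,k}_t,
\]
with terminal condition $\Delta^i_T\equiv 0$ and uniformly bounded adapted coefficients $g^{i,l}$ (they arise from differences of $(\,\cdot\,)^2_+$ in $H^i$, of $\varphi$, and of the transport drift, all locally Lipschitz and evaluated at the uniformly bounded $\widetilde u$ and $U$). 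Since $\Delta$ is bounded and $\bar\nu$ obeys the weighted $L^2$ bound built into the class, the uniqueness statement of the weighted BSDE theory \cite[Theorem 2.1]{GashiLi}, applied exactly as in the proof of Lemma \ref{lem:verification:2} with the singular coefficient $c_t=\tfrac{\varepsilon}{\sqrt2}\bigl[\sum_{i\in\ES}1/\widetilde P^i_t\bigr]^{1/2}$ controlled by \eqref{eq:expfin2} and $\kappa\ge(61+d)\varepsilon^2$, forces $\Delta\equiv 0$ and $\bar\nu\equiv 0$. Hence $\widetilde u^i_t=U^i(t,\widetilde P_t)$, so the forward equation of \eqref{eq:MFG:system:epsilon} for $\widetilde P$ is precisely \eqref{eq:weak:sde:final} with the feedback $\upalpha$ above and the prescribed initial condition; by the pathwise uniqueness in Proposition \ref{thm:approximation:diffusion:2}, $\widetilde P\equiv P$, whence $\widetilde u\equiv u$ and $\widetilde\nu\equiv\nu$. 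The representation formulas $u^i_t=U^i(t,P_t)$ and $\nu^{i,j,k}_t=V^{i,j,k}(t,P_t)$ are a byproduct of these identifications.

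\textbf{Main obstacle.} I expect the main hurdle to be making the two It\^o expansions of $U^i(t,P_t)$ and $U^i(t,\widetilde P_t)$ fully rigorous: one works in the manifold‑with‑corners charts of \S\ref{subse:Wright--Fisher:model}, checks that the process never leaves the relative interior, and combines the prescribed decay rates \eqref{eq:rate:2nd:order} of the second‑order derivatives with the exponential moment bounds of Proposition \ref{expfin} so that every term --- in particular the $\nu$‑correction carrying the singular factor $\sqrt{P^j/P^i}$ --- is genuinely integrable. Once this is in place, the remainder is standard linear‑BSDE bookkeeping and an application of pathwise uniqueness for the Wright--Fisher SDE.
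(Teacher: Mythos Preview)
Your proposal is correct and follows essentially the same four-step-scheme route as the paper: solve the forward SDE with the feedback $(U^i-U^j)_+$, verify by It\^o expansion that $\bigl(U^i(t,P_t),V^{i,j,k}(t,P_t)\bigr)$ solves the backward part, and for uniqueness derive a linear BSDE for $\Delta^i_t=\widetilde u^i_t-U^i(t,\widetilde P_t)$ with zero terminal condition. The only cosmetic difference is in how this linear BSDE is shown to vanish: the paper writes out the weighted energy estimate directly (It\^o on $e_t|\Delta^i_t|^2$ with $e_t=\exp(\varepsilon^2\sum_{l}\int_0^t 1/\widetilde P^l_s\,\ud s)$, Young's inequality to absorb the $\bar\nu$ terms, then Gronwall), whereas you invoke the abstract uniqueness statement of \cite{GashiLi}, which is proved by exactly such an estimate.
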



We notice that Theorem \ref{main:thm} from the above theorem, thanks to what we discussed in \S \ref{subver}.
%

\begin{proof} 
The proof is inspired by  
\cite{MaProtterYong}, but the fact that the master equation is set on the simplex makes it more difficult. 
Also, we recall that  $\kappa \geq (61+d) \varepsilon^2$.
\vskip 4pt

\textit{First Step.}
In order to prove the existence of a solution, one may first solve the SDE
\begin{align}
&\ud P_{t}^i =  \sum_{j \in \ES} \Bigl( P_{t}^j \bigl[ \varphi(P_{t}^i) + \bigl(U^j(t,P_t)-U^i(t,P_{t})\bigr)_+ \bigr] - P_{t}^i \bigl[ \varphi(P_{t}^j) 
+ 
 \bigl(U^i(t,P_t)-U^j(t,P_{t})\bigr)_+
 \bigr]
\Bigr) \ud t \nonumber
\\
&\hspace{30pt}
  + {\varepsilon} 
\sum_{j \in \ES : j \not = i}
\sqrt{P_{t}^i P_{t}^j}  
\ud \overline W_{t}^{i,j}, \quad t \in [0,T],
\label{eq:SDE:epsilon}
\end{align}
with $p_{0}=(p_{0,i})_{i \in \ES}$ as initial condition. 
Solvability is a mere consequence of 
Proposition 
\ref{thm:approximation:diffusion:2}. 

Then, it suffices to let
$u^i_t:= U^i(t,P_t)$
and 
$\nu_t^{i,j,k} := V^{i,j,k}(t,P_t)$,
for $t \in [0,T]$ and $i,j,k \in \ES$, with $j \not =k$. 
By It\^o's formula (see the next step if needed for the details), we may easily expand $(u_{t}^i)_{0 \leq t \leq T}$
and check that it solves the backward equation in 
\eqref{eq:MFG:system:epsilon}. (The fact that the second-order derivatives are just defined on the interior of simplex is not a hindrance since $(P_{t})_{0 \le t \le T}$ does not touch the boundary, see Proposition 
\ref{thm:approximation:diffusion:2}.)
Obviously, the processes 
$((u^i_{t})_{0 \le t \le T})_{i \in \ES}$
and
$((\nu^{i,j,k}_{t})_{0 \le t \le T})_{i,j,k \in \ES : j \not = k}$
are bounded and hence satisfy the required growth conditions. 
\vspace{4pt}

\textit{Second Step.}
Consider now another solution, say 
$$\Bigl(\bigl((\widetilde P_{t}^i)_{0 \leq t \leq T}\bigr)_{i \in \ES},
\bigl((\widetilde u_{t}^i)_{0 \leq t \leq T}\bigr)_{i \in \ES},\bigl((\widetilde \nu_{t}^{i,j,k})_{0 \leq t \leq T}\bigr)_{i,j,k \in \ES : j \not = k}\Bigr)$$
to \eqref{eq:MFG:system:epsilon}. 
%
%
%
%
We denote $\widetilde U^i_t := U^i(t,\widetilde P_t)$, 
$\partial_{p_{j}}\widetilde U^i_t := {\mathfrak d}_{p_j} U^i(t,\widetilde P_t)$,
$\partial_{p_{j}p_{k}}^2 \widetilde U^i_t := {\mathfrak d}_{p_jp_k}^2 U^i (t,\widetilde P_t)$,
for $t \in [0,T]$ and $i,j,k \in \ES$, $j \not = k$. Thanks to the fact that $U^i \in \mathcal{C}^{1,2}([0,T] \times \mathrm{Int}(\hat{\mathcal S}_{d-1}))$, 
we can apply It\^o's formula to $(\widehat {\{ U^i\}}{}^{i}(t,\widetilde P_{t}))_{0 \le t \le T}$ (which obviously coincides with $(\widetilde U^i_{t})_{0 \le t \le T}$).
We get (the computations of the various intrinsic derivatives that appear in the expansion are similar to those in
\eqref{eq:master:equation:intrinsic})
\begin{align*}
 \ud \widetilde{U}^i_t 
&=  \bigg\{  \partial_t \widetilde{U}^i_t +\frac{\varepsilon^2}{2} \sum_{j,k \in \ES}\bigl(\widetilde P^j_t \delta_{jk}- \widetilde P^j_t \widetilde P^k_t\bigr) 
 \partial^2_{p_{j} p_{k}} \widetilde{U}^i_t \\
&\qquad+ \sum_{j,k \in \ES} \widetilde P^k_t \left[\varphi\bigl(\widetilde P^j_t\bigr) +(\widetilde U^k_t-\widetilde U^j_t)_+ \right]\left( \partial_{p_{j}} \widetilde U^i_t - \partial_{p_{k}} \widetilde U^i_t  \right)\bigg\}\ud t
\\ 
&\qquad + \frac{\varepsilon}{\sqrt{2}} \sum_{j, k \in \ES : j \not =k}  
	\sqrt{\widetilde P_{t}^j \widetilde P_{t}^k}  \left( \partial_{p_{j}} \widetilde U^i_t - \partial_{p_{k}} \widetilde U^i_t
	 \right) \ud W_{t}^{j,k} \\
	&= -\bigg\{  H^i\bigl(\widetilde{U}_t\bigr) 
	+ f^i\bigl(t,\widetilde P_{t}\bigr) 
	+
\sum_{j \in \ES} \varphi\bigl(\widetilde P_{t}^j\bigr)
\bigl[ \widetilde U_{t}^j - \widetilde U_{t}^i \bigr]
	\\
	&\qquad +\varepsilon^2\sum_{j \in \ES} \widetilde P^j_t \left( \partial_{p_{i}} \widetilde U^i_t - \partial_{p_{j}} \widetilde U^i_t 
	 \right)\bigg\}\ud t + \frac{\varepsilon}{\sqrt{2}} \sum_{j, k \in \ES : j \not =k}  
	\sqrt{\widetilde P_{t}^j \widetilde P_{t}^k}  \left( \partial_{p_{j}} \widetilde U^i_t - \partial_{p_{k}} \widetilde U^i_t
	  \right) \ud W_{t}^{j,k}, 
\end{align*}
where in the last equality we used the equation \eqref{eq:master:equation:intrinsic} satisfied by $U$.
This prompts us to let 
\begin{equation*}
\widetilde V_{t}^{i,j,k} = \frac{\varepsilon}{\sqrt 2} \sqrt{\widetilde P_{t}^j \widetilde P_{t}^k}  \left( \partial_{p_{j}} \widetilde U^i_t - \partial_{p_{k}} \widetilde U^i_t
	  \right), \quad t \in [0,T], \quad i,j,k \in \ES, \quad j \not =k. 
\end{equation*}
Subtracting the equation satisfied by $((\widetilde u_{t}^i)_{i \in \ES})_{0 \le t \le T}$, we get 
\begin{align*}
\ud \bigl( 
\widetilde{U}^i_t 
- \widetilde u_{t}^i \bigr) 
&= -\Bigg\{  H^i\bigl(\widetilde{U}_t\bigr) - H^i\bigl( \widetilde u_{t} \bigr) 
+ \sum_{j \in \ES} \varphi\bigl(\widetilde P_{t}^j\bigr)
\bigl[ \widetilde U_{t}^j - \widetilde u_{t}^j- \bigl( \widetilde U_{t}^i - \widetilde u_{t}^i \bigr) \bigr]
	\\
	&\qquad + \frac{\varepsilon}{\sqrt 2}\sum_{j \in \ES} \sqrt{\frac{\widetilde P^j_t}{\widetilde P^i_t}} \Bigl( 
	\widetilde V_{t}^{i,i,j} - \widetilde \nu_{t}^{i,i,j}-
	\bigl( \widetilde V_{t}^{i,j,i} - \widetilde \nu_{t}^{i,j,i}
	\bigr)
		 \Bigr)\Bigg\}\ud t
		 \\
&\qquad		  +   \sum_{j, k \in \ES : j \not = k}  
\bigl( \widetilde{V}_{t}^{i,j,k} - \widetilde \nu_{t}^{i,j,k} \bigr)
 \ud W_{t}^{j,k}, \quad t \in [0,T], \quad i \in \ES.
\end{align*}
Consider now
\begin{equation*}
e_{t}:= \exp \biggl( \varepsilon^2 \int_{0}^t \sum_{j \in \ES} \frac{1}{\widetilde P^j_{s}} \ud s \biggr), \quad t \in [0,T]. 
\end{equation*}
Then, by It\^o's formula, 
we obtain, for any $t \in [0,T]$,  
\begin{align}
 &e_{t}
 |\widetilde U^i_t - \widetilde{u}^i_t|^2    
 + \int_t^T \varepsilon^2 e_{s}
 |\widetilde U^i_s - \widetilde{u}^i_s|^2 
  \biggl(\sum_{j \in \ES} \frac{1}{ \widetilde P^j_s}\biggr) \ud s
 +  \int_t^T e_{s} \sum_{j, k \in \ES : j \not =k}  
  \left| 
 \widetilde V_{s}^{i,j,k} - \widetilde \nu_{s}^{i,j,k}  \right|^2   \ud s \nonumber
 \\
	&= 2 \int_t^T e_{s} \bigl( \widetilde U^i_s - \widetilde{u}^i_s\bigr)
	 \Bigg\{ 
	 H^i\bigl(\widetilde U_s\bigr) -H^i\bigl(\widetilde{u}_s\bigr) 
+ \sum_{j\in \ES} \varphi\bigl(\widetilde P_{s}^j\bigr)
\bigl[ \widetilde U_{s}^j - \widetilde u_{s}^j- \bigl( \widetilde U_{s}^i - \widetilde u_{s}^i \bigr) \bigr] \nonumber
	\\
	&\qquad + 
	  \frac{\varepsilon}{\sqrt 2}\sum_{j \in \ES} \sqrt{\frac{\widetilde P^j_s}{\widetilde P^i_s}} \Bigl( 
	\widetilde V_{s}^{i,i,j} - \widetilde \nu_{s}^{i,i,j}-
	\bigl( \widetilde V_{s}^{i,j,i} - \widetilde \nu_{s}^{i,j,i}
	\bigr)
		 \Bigr)
	 \Bigg\} \ud s \label{eq:e_{t}}
	 \\
&\quad	 + 2 \int_t^T e_{s}
\bigl( 
\widetilde{U}^i_t 
- \widetilde u_{t}^i \bigr)
	   \sum_{j, k \in \ES : j \not = k}  
\bigl( \widetilde{V}_{s}^{i,j,k} - \widetilde \nu_{s}^{i,j,k} \bigr)
 \ud W_{s}^{j,k}. \nonumber
\end{align} 
By Proposition 
\ref{expfin} (together with Remark \ref{foot:extension:alpha:open loop}),
with $\lambda=2d-1$, 
$\gamma \geq 60 \varepsilon^2$ and $\kappa \geq (61+d) \varepsilon^2$ therein, 
and by H\"older's inequality, ${\mathbb E}[e_{T}^4]$
is finite. 
Since 
$((\widetilde U_{t}^i)_{i \in \ES})_{0 \le t \le T}$
and 
$((\widetilde u_{t}^i)_{i \in \ES})_{0 \le t \le T}$
are bounded (by deterministic constants)
and
$((\widetilde V_{t}^{i,j,k})_{i,j,k \in \ES : j \not = k})_{0 \le t \le T}$
and 
$((\widetilde u_{t}^{i,j,k})_{i \in \ES : j \not = k})_{0 \le t \le T}$
are square-integrable, all the terms in the right-hand side have integrable
sup norm (over $t \in [0,T]$); as for the last term in the right-hand side, the latter 
follows from Burkholder--Davis--Gundy inequalities. 
Also, we can treat the difference 
$H^i(\widetilde U_s) -H^i(\widetilde{u}_s) $
as a Lipschitz difference, since $U$ and $\tilde{u}$ are bounded. Hence, taking expectations and applying Young's inequality, we can find a constant $C$ such that 
\begingroup
\allowdisplaybreaks
\begin{align*}
&\E\biggl[ e_{t}
 |\widetilde U^i_t - \widetilde{u}^i_t|^2    
 + \int_t^T  \varepsilon^2 e_{s}
 |\widetilde U^i_s - \widetilde{u}^i_s|^2 
  \biggl(\sum_{j \in \ES} \frac{1}{\widetilde P^j_s}\biggr) \ud s
 +  \int_t^T e_{s} \sum_{j, k \in \ES : j \not = k}  
  \left| 
 \widetilde V_{s}^{i,j,k} - \widetilde \nu_{s}^{i,j,k}  \right|^2   \ud s  \biggr]
 \\
	&\leq C \sum_{j \in \ES} \E\biggl[ \int_t^T e_{s} |\widetilde U_s^{j} - \widetilde{u}_s^{j}|^2   \ud s\biggr] 
	+ \varepsilon^2
	 \E \biggl[\int_t^T \frac{e_{s}}{\widetilde P^i_s}|\widetilde U^i_s - \widetilde{u}^i_s|^2  \ud s\biggr]
	   \\
&\hspace{15pt}	   +   {\mathbb E} \biggl[\int_t^T e_{s} \sum_{j, k \in \ES : j \not = k}  
  \left| 
 \widetilde V_{s}^{i,j,k} - \widetilde \nu_{s}^{i,j,k}  \right|^2   \ud s\biggr].
\end{align*}
\endgroup
 We obtain, 
 \[
 \sum_{j \in \ES}
\E\left[ e_{t} |\widetilde U_t^{j} - \widetilde{u}_t^{j}|^{{2}} \right] 
 \leq C  \sum_{j \in \ES} \int_t^T \E\left[ e_{s} |\widetilde U_s^{j} - \widetilde{u}_s^{j}|^2   \right] \ud s,
 \]
 and thus Gronwall's lemma yields, for any $i \in \ES$ and any $t\in [0,T]$,
 \[
\mathbb{P}\left( \widetilde u_{t}^i = \widetilde{U}^i_{t}=U^i(t,\widetilde P_t)\right)=1. 
 \]
This permits to identify $(\widetilde P_{t})_{0 \le t \le T}$ with the solution of
\eqref{eq:SDE:epsilon}. It is then pretty straightforward to show that 
$(\widetilde u_{t}^i)_{0 \leq t \leq T}$
coincides with 
$( u_{t}^i)_{0 \leq t \leq T}$, for each $i \in \ES$,
and then that $(\widetilde \nu_{t}^{i,j,k})_{0 \leq t \leq T}$
coincides with 
$( \nu_{t}^{i,j,k})_{0 \leq t \leq T}$, for each $i,j,k \in \ES$, $j \not = k$.
\end{proof}
\subsection{Proof of Theorem \ref{existence:master}: Unique solvability of the master equation}
\label{subse:master:e!}

{In order to establish the unique solvability of the master equation 
\eqref{eq:master:equation:local}--\eqref{eq:master:equation:intrinsic},}
the first point is to observe that it may be rewritten in a somewhat generic form. Indeed, for a given 
coordinate $i \in \ES$, we may let
\begin{equation}
\label{eq:B:F}
\begin{split}
&B^i_{j}(t,p,y) := \varphi(p_{j}) + \sum_{k \in \ES} p_{k} \bigl( y_{k} - y_{j} \bigr)_{+}
- p_{j} \sum_{k \in \ES} \bigl[ \varphi(p_{k}) + \bigl( y_{j} - y_{k} \bigr)_{+} \bigr]
+  {\varepsilon^2 \bigl( \delta_{i,j} -  p_{j}\bigr)},
\\
&F^i(t,p,y) := H^i(y) + f^i(t,p) + \sum_{k \in \ES} \varphi(p_{j})\bigl( y_{j} - y_{i} \bigr),
\end{split}
\end{equation}
where $t \in [0,T]$, $p \in {\mathcal S}_{d-1}$ and $y=(y_{k})_{k \in \ES} \in \RR^d$.
{Recalling  
\eqref{eq:Ha}}, we may rewrite 
\eqref{eq:master:equation:intrinsic} as
\begin{equation}
\label{eq:master:equation:intrinsic:reformulation}
\begin{split}
&\partial_t U^i(t,p) + F^i\bigl(t,p,U(t,p)\bigr) 
+
\sum_{j \in \ES} B^i_{j}\bigl(t,p,U(t,p)\bigr) \fd_{p_{j}} U^i(t,p) 
\\
&\hspace{30pt}+\frac{\varepsilon^2}{2} \sum_{j,k \in \ES}(p_j \delta_{jk}-p_{j} p_{k}) \fd^2_{p_{j} p_{k}} U^i(t,p) =0,\\
&U^i(T,p)= g^i(p),
\end{split}
\end{equation}
for $t \in [0,T]$ and $p \in \textrm{\rm Int}( {\mathcal S}_{d-1})$, 
with the shorten notation $U(t,p)=(U^i(t,p))_{i \in \ES}$. For sure, we could write 
\eqref{eq:master:equation:local} in a similar form. 
In fact, what really matters is that 
\begin{equation*}
\sum_{j \in \ES} B^i_{j}(t,p,y) =0,
\end{equation*}
for any $i \in \ES$, $t \in [0,T]$, $p \in \textrm{\rm Int}(\hat {\mathcal S}_{d-1})$
and $y =(y_{i})_{i \in \ES} \in {\mathbb R}^d$, and that 
$B^i_{j}(t,p,y) >0$  
whenever $p_{j}=0$.

In the sequel, solvability of \eqref{eq:master:equation:intrinsic:reformulation} is addressed  in several steps. The first one
is to address the solvability of the linear version of 
\eqref{eq:master:equation:intrinsic:reformulation} obtained by freezing the nonlinear component 
$U$ in $B^i$ and $F^i$; as we make it clear below, this mostly follows from the earlier results of \cite{EpsteinMazzeo}. 
The second one is to prove a priori estimates for the solutions to the latter linear version 
independently of the nonlinear component $U$ that is frozen in the coefficients $B^i$ and $F^i$;
{this is where we invoke Theorem \ref{main:holder}}. The last step is to deduce the existence of a classical solution to
the master equation by means of Schauder's fixed point theorem. 

\subsubsection{Linear version}
{We first establish a preliminary solvability result for the linear version 
\eqref{eq:pde:apriori:1:intro}, keeping in mind that it should be
reformulated in intrinsic derivatives as follows:}
\begin{equation}
\label{eq:pde:apriori:1}
\begin{split}
&\partial_{t} u(t,p) + \sum_{j \in \ES} \Bigl( \varphi(p_{j}) + b_{j}(t,p) + p_{j} b_{j}^{\circ}(t,p)  \Bigr) \fd_{p_{j}} u(t,p)
\\ 
&\hspace{15pt} +\frac{\varepsilon^2}{2} \sum_{j,k \in \ES}\bigl(p_j \delta_{jk}-p_{j} p_{k}\bigr) \fd^2_{p_{j} p_{k}} u(t,p)
+ h(t,p) = 0,
\\
&u(T,p) = \ell(p),
\end{split}
\end{equation}
for $(t,p) \in [0,T] \times \mathrm{Int}(\hat{{\mathcal S}}_{d-1})$ {($\mathrm{Int}(\hat{{\mathcal S}}_{d-1})$ being here regarded as a subset of ${\mathcal S}_{d-1}$)}, 
{where we recall that}
$b=(b_{j})_{j \in \ES} : [0,T] \times {\mathcal S}_{d-1} \rightarrow (\RR_{+})^d$, $b^{\circ} =(b_{j}^{\circ})_{j \in \ES} : 
[0,T] \times {\mathcal S}_{d-1} \rightarrow {\mathbb R}^{d}$,
 $h : [0,T] \times {\mathcal S}_{d-1} \rightarrow \RR$ 
 and
 $\ell : {\mathcal S}_{d-1} \rightarrow \RR$ 
 are bounded and satisfy
\begin{equation}
\label{eq:zero:sum}
\begin{split}
&\sum_{j \in \ES} \Bigl( \varphi(p_{j}) + b_{j}(t,p) + p_{j} b_j^{\circ}(t,p) \Bigr)= 0, \quad t \in [0,T], \quad p \in {\mathcal S}_{d-1}. 
\end{split}
\end{equation}
Our solvability result is 
\begin{lemma}
\label{lem:4.6}
Assume that the functions $(b_{j})_{j \in \ES}$ and $(b^{\circ}_{j})_{j \in \ES}$ 
are in 
${\mathscr C}_{\textrm{\rm WF}}^{\eta/2,\eta}([0,T] \times {\mathcal S}_{d-1})$, 
that $h$ is in 
${\mathscr C}_{\textrm{\rm WF}}^{\eta/4,\eta/2}([0,T] \times {\mathcal S}_{d-1})$
and $\ell$  
is 
in 
${\mathscr C}_{\textrm{\rm WF}}^{2+\eta/2}({\mathcal S}_{d-1})$, for some 
$\eta \in (0,1)$. Then, 
equation 
\eqref{eq:pde:apriori:1} has a unique classical solution in the space 
${\mathscr C}_{\textrm{\rm WF}}^{1+\eta/4,2+\eta/2}([0,T] \times {\mathcal S}_{d-1})$.
\end{lemma}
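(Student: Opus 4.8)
The plan is to reduce the time-inhomogeneous equation \eqref{eq:pde:apriori:1} to the time-homogeneous setting covered by the Schauder theory of Epstein and Mazzeo \cite{EpsteinMazzeo}. First I would record the structural facts: by \eqref{eq:zero:sum} the drift coefficients $a_{j}(t,p):=\varphi(p_{j})+b_{j}(t,p)+p_{j}b^{\circ}_{j}(t,p)$ satisfy $\sum_{j\in\ES}a_{j}(t,p)=0$, so the first-order operator is well defined as an operator on the simplex (it acts only through differences $\fd_{p_{j}}u-\fd_{p_{k}}u$), and moreover $a_{j}(t,p)\ge 0$ whenever $p_{j}=0$, because there $\varphi(p_{j})=\kappa>0$, $b_{j}(t,p)\ge 0$ and $p_{j}b^{\circ}_{j}(t,p)=0$; hence the inward-pointing (tangency) condition \eqref{eq:bi:condition:sign} holds at each time $t$. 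Combined with the computation in \eqref{eq:canonical:form:kimura}, this shows that, frozen at any time $t$, the operator in \eqref{eq:pde:apriori:1} falls into the class of Kimura operators of \cite[Definition 2.2.1]{EpsteinMazzeo}, in every local chart used to define the Wright--Fisher spaces of \S\ref{subse:space:Wright--Fisher}.

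The second step is to handle the time dependence. The standard device is to add time as a spatial-type variable: I would not literally do that, but rather use the method of continuity together with the frozen-time Schauder estimates. Concretely, freeze the coefficients at a time $t_{0}$ and invoke \cite[Chapter 10]{EpsteinMazzeo} (in particular Theorem 10.0.2 therein) to get, for the corresponding parabolic Cauchy problem with terminal data, a unique solution in ${\mathscr C}_{\textrm{\rm WF}}^{1+\eta/4,2+\eta/2}$ on a short time interval together with the a priori estimate
\begin{equation*}
\|u\|_{1+\eta/4,2+\eta/2}\le C\bigl(\|h\|_{\eta/4,\eta/2}+\|\ell\|_{2+\eta/2}\bigr).
\end{equation*}
The time-dependent coefficients are treated as a perturbation: writing the operator as the frozen one plus a remainder whose coefficients have small ${\mathscr C}^{\eta/2,\eta}_{\textrm{\rm WF}}$-norm on a short interval (by the H\"older continuity in time of $b_{j},b^{\circ}_{j}$), one absorbs the remainder into the right-hand side and runs a fixed-point/continuation argument. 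Because the a priori bound above does not deteriorate and because the time interval can be partitioned into finitely many short pieces, one propagates the solution backward from $T$ to $0$ and patches the pieces, obtaining a unique classical solution in ${\mathscr C}_{\textrm{\rm WF}}^{1+\eta/4,2+\eta/2}([0,T]\times{\mathcal S}_{d-1})$. Uniqueness globally follows either from the maximum principle for Kimura operators or from the local uniqueness on each subinterval.

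The main obstacle I anticipate is bookkeeping at the corners of the simplex: the operator must be put into the normal form of \cite{EpsteinMazzeo} in each chart (cf. footnote \ref{foo:normal:form}, the change of variables $x_{i}\mapsto\arcsin^{2}(\sqrt{x_{i}})$), and one has to check that the H\"older regularity hypotheses on $b_{j}$, $b^{\circ}_{j}$, $h$, $\ell$ in the Wright--Fisher scale are preserved under these changes of coordinates and under the projections $p\mapsto p^{-i}$, and that the compatibility/boundary conditions (10.1) of \cite{EpsteinMazzeo} are met in every chart. The slight mismatch of H\"older exponents ($\eta$ for the drift versus $\eta/2$ for $h$ and $\eta/2$ for $\ell$, yielding $\eta/4$ in time) is exactly what is needed so that the frozen-time Schauder estimate closes, and keeping track of this is the delicate—but routine—part; the genuinely nontrivial input, the Schauder estimate itself, is quoted from \cite{EpsteinMazzeo}.
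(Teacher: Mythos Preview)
Your proposal is correct and follows essentially the same route as the paper: freeze the drift at a reference time $t_{0}$, invoke \cite[Theorem 10.0.2]{EpsteinMazzeo} for the resulting time-homogeneous Kimura problem, treat the remainder $(b_{j}(t,\cdot)+p_{j}b^{\circ}_{j}(t,\cdot))-(b_{j}(t_{0},\cdot)+p_{j}b^{\circ}_{j}(t_{0},\cdot))$ as a perturbation whose ${\mathscr C}_{\textrm{\rm WF}}^{\eta/4,\eta/2}$-norm is small with $T-t_{0}$ (this is where the gap between the $\eta$-regularity of the drift and the $\eta/2$-regularity of the solution space is used, via an interpolation of the form $|b(t,p)-b(t_{0},p)-(b(s,q)-b(t_{0},q))|\le C(T-t_{0})^{\eta/4}(|t-s|^{\eta/4}+|\sqrt{p}-\sqrt{q}|^{\eta/2})$), run a fixed-point argument on a short interval, and iterate. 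The paper phrases the fixed point as Schauder's theorem applied to the map $w\mapsto \fd_{p}v$ (with $w$ standing in for the gradient in the perturbation term), though your implicit contraction also works since the map is linear with norm $\le \tfrac12$; uniqueness is obtained there via the Kolmogorov representation (Proposition~\ref{prop:ito:formula}) rather than the maximum principle, but either suffices.
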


\begin{proof}
For a fixed $t_{0} \in [0,T]$, we rewrite 
\eqref{eq:pde:apriori:1}
in the form 
\begin{align}
&\partial_{t} u(t,p) +
\sum_{j \in \ES} 
\Bigl( \varphi(p_{j}) + b_{j}(t_{0},p) + p_{j} b_{j}^{\circ}(t_{0},p) \Bigr) \fd_{p_{j}} u(t,p) 
 +\frac{\varepsilon^2}{2} \sum_{ j,k \in \ES}\bigl(p_j \delta_{jk}-p_{j} p_{k}\bigr) \fd^2_{p_{j} p_{k}} u(t,p)
 \nonumber
\\
&\hspace{15pt}
+ 
\sum_{j \in \ES}
\Bigl( \bigl(b_{j}(t,p) + p_{j} b_{j}^{\circ}(t,p) \bigr)
- 
\bigl(b_{j}(t_{0},p) + p_{j} b_{j}^{\circ}(t_{0},p)\bigr) \Bigr)  \fd_{p_{j}} u(t,p) 
+ h(t,p) = 0,
\label{eq:pde:apriori:2}
\\
&u(T,p) = \ell(p), \nonumber
\end{align}
for $(t,p) \in [0,T] \times \textrm{\rm Int}(\hat{\mathcal S}_{d-1})$. 
Our first goal is to solve the equation on $[t_{0},T] \times \mathrm{Int}(\hat{\mathcal S}_{d-1})$ provided 
$t_{0}$ is chosen close enough to $T$. 

In order to solve the above equation, we define the following mapping. For a vector-valued
function $w=(w_{j})_{j \in \ES} : [0,T] \times {\mathcal S}_{d-1} \rightarrow \RR^d$ whose components are 
in ${\mathscr C}_{\textrm{\rm WF}}^{\eta/4,\eta/2}([0,T] \times {\mathcal S}_{d-1})$, 
 we call 
$v$ the solution of the equation 
\begin{equation*}
\begin{split}
&\partial_{t} v(t,p) + 
\sum_{j \in \ES} 
\Bigl( \varphi(p_{j}) + b_{j}(t_{0},p) + p_{j} b_{j}^{\circ}(t_{0},p) \Bigr) \fd_{p_{j}} v(t,p) 
 +\frac{\varepsilon^2}{2} \sum_{j,k \in \ES}(p_j \delta_{jk}-p_{j} p_{k}) \fd^2_{p_{j} p_{k}} v
\\
&\hspace{15pt}
+
\sum_{j \in \ES}
\Bigl( \bigl(b_{j}(t,p) + p_{j} b_{j}^{\circ}(t,p) \bigr)
- 
\bigl(b_{j}(t_{0},p) + p_{j} b_{j}^{\circ}(t_{0},p)\bigr) \Bigr)  w_{j}(t,p) 
+ h(t,p) = 0,
\\
&v(T,p) = \ell(p),
\end{split}
\end{equation*}
for $(t,p) \in [t_{0},T] \times \textrm{\rm Int}(\hat{\mathcal S}_{d-1})$, the solution being known, by \cite[Theorem 10.0.2]{EpsteinMazzeo}, to exist and to satisfy
\begin{equation}
\label{schau}
\| v \|_{1+\eta/4,2+\eta/2;[t_{0},T])} \leq C \Bigl( \| \ell \|_{2+\eta/2}
+ \|  
W \|_{\eta/4,\eta/2;[t_{0},T]}
+
\|  
h \|_{\eta/4,\eta/2;[t_{0},T]} \Bigr),
\end{equation} 
where we added the notation $[t_{0},T]$ 
in the Wright--Fisher norm in order to emphasize the fact that the underlying domain is 
$[t_{0},T] \times {\mathcal S}_{d-1}$
and not $[0,T] \times {\mathcal S}_{d-1}$, and with
\begin{equation*}
W(t,p) :=
\sum_{j \in \ES}
\Bigl( \bigl(b_{j}(t,p) + p_{j} b_{j}^{\circ}(t,p) \bigr)
- 
\bigl(b_{j}(t_{0},p) + p_{j} b_{j}^{\circ}(t_{0},p)\bigr) \Bigr)  w_{j}(t,p).
\end{equation*}
Clearly, we can find a universal constant $c>0$ such that 
\begin{equation*}
\begin{split}
&\bigl\|  W \bigr\|_{\eta/4,\eta/2;[t_{0},T]}
\leq 
c 
\sum_{j \in \ES}
\bigl\| 
B^{\circ}_{j}  \bigr\|_{\eta/4,\eta/2;[t_{0},T]}
 \| 
 w_{j}  \|_{\eta/4,\eta/2;[t_{0},T]},
\end{split}
\end{equation*}
with
\begin{equation*}
B_{j}^{\circ}(t,p) : = b_{j}(t,p) + p_{j}b_{j}^{\circ}(t,p)
- 
\Bigl( b_{j}(t_{0},p) + p_{j} b_{j}^{\circ}(t_{0},p)\Bigr), \quad (t,p) \in [t_{0},T] \times {\mathcal S}_{d-1}. 
\end{equation*}
Now, we can find a constant $C$, only depending on the Wright--Fisher norm of $b=(b_{j})_{j \in \ES}$ such that, for any 
$s,t \in [t_{0},T]$ and any $p,q \in {\mathcal S}_{d-1}$,  
\begin{equation*}
\begin{split}
&\bigl\vert b(t,p) - b(t_{0},p) - \bigl( b(s,q) - b(t_{0},q) \bigr) \bigr\vert
\\
&= 
\bigl\vert  b(t,p) - b(t_{0},p) - \bigl( b(s,q) - b(t_{0},q) \bigr) \bigr\vert^{1/2}
\bigl\vert b(t,p) - b(t_{0},p) - \bigl( b(s,q) - b(t_{0},q) \bigr)
 \bigr\vert^{1/2}
\\
&\leq C \Bigl( 
\bigl\vert b(t,p) - b(t_{0},p) \bigr\vert + \bigl\vert b(s,q) - b(t_{0},q) \bigr\vert
\Bigr)^{1/2} 
\Bigl(
\bigl\vert b(t,p) -  b(s,q) \bigr\vert + \bigl\vert b(t_{0},p)- b(t_{0},q)  \bigr\vert
\Bigr)^{1/2}
\\
&\leq C (T-t_{0})^{\eta/4}
\Bigl( \vert t- s \vert^{\eta/4} + \vert \sqrt{p}-\sqrt{q} \vert^{\eta/2} \Bigr),
\end{split}
\end{equation*}
which shows that the Wright--Fisher H\"older norm (of exponents $(\eta/4,\eta/2)$) of $b-b(t_{0},\cdot)$ is small with $T-t_{0}$ (it is easy to see that sup norm is small
with $T-t_{0}$). Proceeding in a similar way with the other functions entering the definition of 
$B^{\circ}$, 
we deduce that the Wright--Fisher H\"older norm (of exponent $\eta/2$) of $B^{\circ}$ is small with $T-t_{0}$. 

Therefore, Schauder's estimates \eqref{schau} imply that, for $T-t_{0}$ small enough 
\begin{equation*}
\| \fd_{p} v \|_{\eta/4,\eta/2;[t_{0},T]} \leq C \Bigl( 
\| \ell \|_{2+\eta/2;[t_{0},T]}
+
\|  
h \|_{\eta/4,\eta/2;[t_{0},T]}
\Bigr) + \frac12
 \bigl\| 
 w \bigr\|_{\eta/4,\eta/2;[t_{0},T]},
\end{equation*} 
for a constant $C$ which is independent of $w$ and $t_0$. 
This shows in particular that 
$\| \fd_{p} v \|_{\eta/4,\eta/2;[t_{0},T]} \leq 2 C 
(\| \ell \|_{2+\eta,[t_{0},T]} + 
\|  
h \|_{\eta/4,\eta/2;[t_{0},T]})
$ whenever 
$\| w \|_{\eta/4,\eta/2;[t_{0},T]} \leq 
2 C 
(\| \ell \|_{2+\eta,[t_{0},T]} + 
\|  
h \|_{\eta/4,\eta/2;[t_{0},T]})
$. In particular, the map $w \mapsto v$ preserves 
a closed ball of $[{\mathscr C}_{\textrm{\rm WF}}^{\eta/4,\eta/2}([t_{0},T] \times {\mathcal S}_{d-1})]^d$. By linearity, the map $w \mapsto v$ is obviously continuous from
$[{\mathscr C}_{\textrm{\rm WF}}^{\eta'/4,\eta'/2}([t_{0},T] \times {\mathcal S}_{d-1})]^d$ into itself, for any $\eta' \in (0,\eta]$. By Schauder's theorem (regarding 
any closed ball of 
$[{\mathscr C}_{\textrm{\rm WF}}^{\eta/4,\eta/2}([t_{0},T] \times {\mathcal S}_{d-1})]^d$
as a compact subset of 
$[{\mathscr C}_{\textrm{\rm WF}}^{\eta'/4,\eta'/2}([t_{0},T] \times {\mathcal S}_{d-1})]^d$, for $\eta' \in (0,\eta)$), we deduce that there exists a solution $v$ to 
\eqref{eq:pde:apriori:2} (and hence to 
\eqref{eq:pde:apriori:1})
in ${\mathscr C}_{\textrm{\rm WF}}^{1+\eta/4,2+\eta/2}([t_{0},T] \times {\mathcal S}_{d-1})$ (so on 
$[t_{0},T] \times {\mathcal S}_{d-1}$).
By iterating in time, we deduce that
there exists a solution 
to 
\eqref{eq:pde:apriori:1}
on the entire 
$[{0},T] \times {\mathcal S}_{d-1}$
in the space 
${\mathscr C}_{\textrm{\rm WF}}^{1+\eta/4,2+\eta/2}([{0},T] \times {\mathcal S}_{d-1})$.

Uniqueness follows 
from a straightforward application of Kolmogorov representation formula, see Proposition 
\ref{prop:ito:formula} if needed.
\end{proof}

\subsubsection{Fixed point argument via Schauder's theorem}

Here is now the last step of our proof of Theorem 
\ref{existence:master}, which strongly relies on 
Theorem 
\ref{main:holder}.

\begin{proof}[Proof of Theorem \ref{existence:master}.]
{ \ }
\vskip5pt

\noindent {\textit{Existence.} The proof of existence holds in two steps.} 
\vskip 4pt

\textit{First Step.}
We first consider the following nonlinear variant of \eqref{eq:pde:apriori:1}:
\begin{equation}
\label{eq:pde:apriori:3}
\begin{split}
&\partial_{t} U^i(t,p) + 
\sum_{j \in \ES}
\Bigl(  \varphi(p_{j}) + b^i_{j}\bigl(t,p,U(t,p)\bigr) + p_{j} b_j^\circ \bigl(t,p,U(t,p)\bigr)\Bigr)  \fd_{p_{j}} U^i (t,p) 
\\
&\hspace{15pt} +\frac{\varepsilon^2}{2} \sum_{ j,k \in \ES}\bigl(p_j \delta_{jk}-p_{j} p_{k}\bigr) \fd^2_{p_{j} p_{k}} U^i(t,p)
+ h^i\bigl(t,p,U(t,p)\bigr) = 0,
\\
&U^i(T,p) = g^i(p),
\end{split}
\end{equation}
where for any $i \in \ES$, 
$b^i = (b^i_{j})_{j \in \ES} : [0,T] \times {\mathcal S}_{d-1} \times \RR^d \rightarrow (\RR_{+})^d$, $b^\circ=(b^\circ_{j})_{j \in \ES} : 
[0,T] \times {\mathcal S}_{d-1} \times \RR^d \rightarrow {\mathbb R}^{d}$
and
$h^i : [0,T] \times {\mathcal S}_{d-1} \times \RR^d \rightarrow \RR$. 
We are going to prove the existence of 
a solution 
$U=(U^1,\cdots,U^d)$
to 
\eqref{eq:pde:apriori:3}
whenever, for some constant $C_{0} \geq 0$, the functions 
$(b^i)_{i \in \ES}$, 
$b^{\circ}$
and
$(h^i)_{i \in \ES}$
are bounded by $C_{0}$ and satisfy the following regularity properties
\begin{equation}
\label{eq:continuity:drift:coefficients}
\begin{split}
&\bigl\vert 
b^i(t,p,y) - b^i(s,q,z)
\bigr\vert
+
\bigl\vert 
b^\circ(t,p,y) - b^\circ(s,q,z)
\bigr\vert
+
\bigl\vert
h^i(t,p,y) - h^i(s,q,z)
\bigr\vert
\\
&\hspace{15pt} \leq C_{0} \bigl( \vert t -s \vert^{\gamma/2}
+ \vert \sqrt{p}
- \sqrt{q} \vert^{\gamma} + \vert y - z \vert
\bigr),
\end{split}
\end{equation} 
for $i \in \ES$, $s,t \in [0,T]$, $p,q \in {\mathcal S}_{d-1}$ and 
$y,z \in {\mathbb R}^d$.
Without any loss of generality, we can assume that 
$\sup_{i \in \ES}\| g^i \|_{1,\infty} \leq C_{0}$. 
Existence of a classical solution to 
\eqref{eq:pde:apriori:3}
is then proved by a new application of Schauder's fixed point theorem. 
To do so, we call $\eta$ and $C$
the exponent and the constant from 
Theorem \ref{main:holder} 
 when $\| b \|_{\infty}$, $\|b^\circ \|_{\infty}$, $\| h \|_{\infty}$ and $\| \ell \|_{1,\infty}$ are less than $C_{0}$. 
 We then take an input function $V=(V^1,\cdots,V^d) \in [{\mathscr C}_{\textrm{\rm WF}}^{\eta/2,\eta}([0,T] 
 \times {\mathcal S}_{d-1})]^d$ such that, for each $i \in \ES$, 
 $\| V^i \|_{\eta/2,\eta} \leq C$. 
 By Lemma
\ref{lem:4.6} with $\eta$ therein being replaced by $\min(\eta,\gamma)$, we can solve 
\eqref{eq:pde:apriori:3} for each $i \in \ES$ when, in the nonlinear terms, $U$ is replaced by $V$. 
We call $U=(U^1,\cdots,U^d)$ the solution. It belongs to 
$[{\mathscr C}_{\textrm{\rm WF}}^{1+\gamma'/2,2+\gamma'}([0,T] 
 \times {\mathcal S}_{d-1})]^d$. By 
Theorem \ref{main:holder}, 
it also satisfies 
 $\| U^i \|_{\eta/2,\eta} \leq C$,
 for each $i \in \ES$.
 Revisiting if needed the proof of Lemma 
\ref{lem:4.6}, there is no difficulty in proving that the resulting map 
$V \mapsto U$ is continuous from 
$[{\mathscr C}_{\textrm{\rm WF}}^{\eta'/2,\eta'}([0,T] 
 \times {\mathcal S}_{d-1})]^d$
 into itself, for any $\eta' \in (0,\eta)$. 
 This permits to apply Schauder's theorem. 
\vskip 4pt

\textit{Second Step.}
The goal now is to  choose 
$(b^i)_{i \in \ES}$, 
$b^\circ$
and 
$(h^i)_{i \in \ES}$
(and hence $C_{0}$ as well)
accordingly so that the solution to 
\eqref{eq:pde:apriori:3} is in fact 
a solution to the master equation 
\eqref{eq:master:equation:intrinsic:reformulation}.
In order to proceed, we follow the same idea as in the proof of 
Lemma 
\ref{lem:verification:2}
and recall the truncated Hamiltonian 
\begin{equation*}
H^i_{c}(y)  = -\frac12 \sum _{j \in \ES} \bigl[ 
(y_{i}-y_{j})_+^2 {\mathbf 1}_{\{y_{i} - y_{j} \leq c\}}
+ \bigl(    2
c (y_{i} - y_{j}) - c^2 \bigr) {\mathbf 1}_{\{y_{i} - y_{j} > c\}}
\bigr],
\quad y=(y_{j})_{j \in \ES},
\end{equation*}
for a constant $c$ to be fixed later. 
Also, for another constant $\Gamma$, the value of which will be also fixed later on, we call
$\psi_{\Gamma}$ the function 
\begin{equation*}
\psi_{\Gamma}(r) :=
\left\{
\begin{array}{ll} 
r, &\quad \textrm{\rm if} \ \vert r \vert \leq \Gamma,
\\
\Gamma \textrm{\rm sign}(r),
&\quad \textrm{\rm if} \ \vert r \vert \geq \Gamma,
\end{array}
\right. \quad r \in {\mathbb R}. 
\end{equation*}
Given these notations, we let
(compare with \eqref{eq:B:F})
\begin{align}
&b^i_{j}(t,p,y) := \sum_{k \in \ES} p_{k} \min \Bigl( c, \bigl( y_{k} - y_{j} \bigr)_{+} \Bigr) 
+ \varepsilon^2  \delta_{i,j} , \quad 
i,j \in \ES \nonumber,
\\
&b^\circ_{j}(t,p,y) := 
-  \sum_{k \in \ES} \Bigl[ \varphi(p_{k}) +  \min \Bigl( c,  \bigl( y_{j} - y_{k} \bigr)_{+}\Bigr) \Bigr] -\varepsilon^2, 
\quad j \in \ES, \label{eq:B:F:2}
\\
&h^i(t,p,y) := \psi_{\Gamma}\biggl( H_{c}^i(y) + f^i(t,p) + \sum_{j \in \ES}  \varphi(p_{j}) \psi_{\Gamma}\bigl( y_{j} - y_{i} \bigr) \biggr), \quad i \in \ES,
\nonumber
\end{align}
for $(t,p,y) \in [0,T] \times {\mathcal S}_{d-1} \times \RR^d$. For a given value of $c$, we can choose $\Gamma$ (hence depending on $c$) such that all the above coefficients are bounded by 
$\Gamma$.  
Moreover, the coefficients satisfy 
\eqref{eq:continuity:drift:coefficients} for a suitable choice of $C_{0}$ therein (notice in this regard that this is the specific interest of the second occurence of $\psi_{\Gamma}$ to force the whole term to be jointly Lipschitz in $(p,y)$). 
By the first step, there exists a classical solution, say $U=(U^1,\cdots,U^d)$, to \eqref{eq:pde:apriori:3}. 
As in the proof of Theorem 
\ref{lem:existence:mfg:from:master}, 
we can represent $U$ through a forward-backward stochastic differential equation. 
Following 
\eqref{eq:MFG:system:epsilon}, the backward equation writes 
\begin{equation}
\label{eq:backward:just:used:once}
\begin{split}
&\displaystyle \ud u_{t}^i = - h^i(t,P_{t},u_{t})  \ud t  - \frac{\varepsilon}{\sqrt{2}} \sum_{j \in \ES} 
\sqrt{\frac{{P_{t}^j}}{{P_{t}^i}}} \bigl( \nu_{t}^{i,i,j} - \nu_{t}^{i,j,i} \bigr) \ud t
+ \sum_{j,k \in \ES} \nu_{t}^{i,j,k} \ud W_{t}^{j,k},
\end{split}
\end{equation}
with $u_{T}^i = g^i(P_{T})$ as terminal boundary condition, 
where $(P_{t})_{0 \le t \le T}$ is the solution to the corresponding forward equation (but there is no need to write it down). 
The key point here is to observe that $h^i$ is at most of linear growth in $u$, uniformly in $(t,p)$, 
the constant in the linear growth depending on 
$c$ but not on $\Gamma$. 
By considering the drifted Brownian motions $((W^{\prime,i,j}_{t}=W^{i,j}_{t} -\int_{0}^t 
(\varepsilon \sqrt{P_{s}^j})/(\sqrt{2 P_{s}^i}) \ud s)_{0 \le t \le T})_{j \in \ES: j \not = i}$ and $((W^{\prime,j,i}_{t}=W^{j,i}_{t} +\int_{0}^t 
(\varepsilon \sqrt{P_{s}^j})/(\sqrt{2 P_{s}^i}) \ud s)_{0 \le t \le T})_{j \in \ES : j\not = i}$, for a given value of $i \in \ES$, we can apply Girsanov theorem to get rid of the second term in the equation for $(u^i_{t})_{0 \le t \le T}$ in \eqref{eq:backward:just:used:once}, the application of Girsanov theorem being here made licit by Proposition \ref{expfin}. We easily deduce that, with $U$ as in  \eqref{eq:backward:just:used:once}, 
$\| U(t,\cdot)\|_{\infty} \leq C (1+ \int_{t}^T \| U(s,\cdot)\|_{\infty} \ud s)$ and then deduce that 
$U$ and hence $(u_{t})_{0 \le t \le T}$ are bounded by a constant $C$ that depends on $c$ but not on $\Gamma$. 
In particular, it makes sense to choose $\Gamma$ large enough such that, for 
$y=(y_{j})_{j \in \ES}$
with 
$\vert y \vert \leq C$, $h^i(t,p,y)$ in 
\eqref{eq:B:F:2} 
is also equal to 
\begin{equation*}
h^i(t,p,y) =  H_{c}^i(y) + f^i(t,p) + \sum_{j \in \ES} \varphi(p_{j})\bigl( y_{j} - y_{i} \bigr).
\end{equation*}
It says that the backward equation 
\eqref{eq:backward:just:used:once}
identifies with the backward equation 
\eqref{shjbmfg:truncated}
in the proof of Lemma 
\ref{lem:verification:2}. But the point in the second step of the proof of Lemma 
\ref{lem:verification:2} is precisely to show that 
the solution to 
\eqref{eq:backward:just:used:once}
can be bounded independently of $c$. In words, we can find a constant $C_{1}$, independent of $c$ (and of course 
of $\Gamma$) such that $U=(U^i)_{i \in \ES}$ is bounded by $C_{1}$.
Then, choosing $c \geq 2C_{1}$ (and $\Gamma$ large enough as before), we have that, for any 
$y=(y_j)_{j \in \ES}$ with 
$\vert y \vert \leq C_{1}$, and any $(t,p) \in [0,T] \times {\mathcal S}_{d-1}$
and $i,j \in \ES$,
\begin{equation*}
B_{j}^i(t,p,y) = 
  \varphi(p_{j} )+ b^i_{j}(t,p,y) + b^{\circ}_{j}(t,p,y) p_{j}, 
\quad 
F^i(t,p,y)
=h^i(t,p,y),
\end{equation*}
with 
$B^i$ and $F^i$ as in 
\eqref{eq:B:F}. 
This shows that $U=(U^1,\cdots,U^d)$ solves the master equation \eqref{eq:master:equation:intrinsic:reformulation}. 
\vskip 5pt

{\noindent \textit{Uniqueness.} By Theorem 
\ref{lem:existence:mfg:from:master}, we know
that, 
for any initial condition $p_{0}=(p_{0,i})_{i \in \ES} \in \mathrm{Int}({\mathcal S}_{d-1})$, 
the system 
\eqref{eq:MFG:system:epsilon}
has a unique solution. Hence, for any two solutions $U$ and $U'$ to the master equation, one has 
$U(0,p_{0})=U'(0,p_{0})$. Since $p_{0}$ is arbitrary, we get that 
$U(0,\cdot)$ and $U'(0,\cdot)$ coincide on $\textrm{\rm Int}(\hat{\mathcal S}_{d-1})$. By continuity, they coincide up to the boundary. 
Here, the initial time is arbitrary and we can replace the initial time $0$ by any other initial time 
$t \in (0,T)$. }
\end{proof}

\section{Proof of the a priori H\"older estimate}\label{sec:apriori}
The proof of Theorem 
\ref{main:holder}
is the core of the paper. The main ingredient is a coupling estimate for the diffusion process associated with 
the linear equation \eqref{eq:pde:apriori:1}, see the statement of Proposition 
\ref{prop:coupling:2}.
Whilst this approach is mostly inspired by earlier coupling arguments used to prove regularity of various classes of harmonic functions ({see for instance \cite{ChenLi,Cranston,Eberle,LuoWang}}), 
we here need a tailored version
that fits the specificities of Kimura operators. 
In short, 
the coupling estimate we obtain
below does not suffice to conclude directly in full generality. In fact, it just permits to derive the required H\"older estimate in the case $d=2$. 
In the higher dimensional setting, we need an additional argument that uses induction on the dimension of the state space 
to pass from the coupling estimate to the H\"older bound; see 
{Remark \ref{rem:about:induction} for a first account and
Subsection 
\ref{subse:induction:d} for more details}.  {In short,}
the rationale for this additional induction argument is that the coupling estimate
obtained in Proposition 
\ref{prop:coupling:2} blows up near the boundary, except when $d=2$. 
As for the induction argument itself, it is based on a conditioning property that is proper to Kimura type operators: Roughly speaking, the last $d-m$ 
coordinates of the diffusion process 
associated with 
the linear equation \eqref{eq:pde:apriori:1} (see also \eqref{eq:pde:apriori:1:intro}) behave, conditional on the first $m$ coordinates, 
as a diffusion associated with 
a linear equation of the same type as \eqref{eq:pde:apriori:1} but in dimension $d-m$ instead of $d$, {see Proposition 
\ref{prop:representation} for the complete statement. 
It is worth mentioning that our induction argument is inspired by the work \cite{albanesemangino}. Therein, the authors prove a gradient estimate for simpler and more regular forms of drifts by iterating on the dimension of the state space. Differently from ours, their approach is purely deterministic: As a result, the conditioning principle exposed in Proposition 
\ref{prop:representation} manifests implicitly in \cite{albanesemangino} through the form of the underlying Kimura operators.} 

Throughout the section, we are given coefficients 
$b=(b_{i})_{i \in \ES}$, $b^\circ=(b^\circ_{i})_{i \in \ES}$, $h$ and $\ell$ as in the statement of 
Theorem 
\ref{main:holder}. Then, the aforementioned diffusion process associated with 
\eqref{eq:pde:apriori:1} is given by the following statement. 

\begin{proposition}
\label{prop:ito:formula}
Consider
$\varphi$ as in  
\eqref{eq:varphi}
with $\delta \in (0,1)$ and $\kappa \geq \varepsilon^2/2$, for $\varepsilon >0$. Then, the stochastic differential equation 
\begin{equation}
\label{eq:P}
\ud P_{s}^i = \Bigl( \varphi(P_{s}^i) +  b_{i}(s,P_{s}) + P_{s}^i b_{i}^{\circ}(s,P_{s}) \Bigr) \ud t + \varepsilon \sqrt{P_{s}^i} \sum_{j \in \ES} \sqrt{P_{s}^j} \ud \overline{W}_{t}^{i,j}, \quad s \in [t,T], \  i \in \ES, 
\end{equation}
is uniquely solvable for any initial time in $t\in [0,T]$ and any (possibly random) initial condition in $\mathrm{Int}({\mathcal S}_{d-1})$. Moreover, the coordinates of the solution remain almost surely strictly positive. 
In particular, for any $(t,p) \in [0,T] \times \textrm{\rm Int}(\hat{\mathcal S}_{d-1})$,
for any $[t,T]$-valued stopping time $\tau$ (with respect to the filtration ${\mathbb F}^{\boldsymbol W}$), 
any function $u$ as in the statement of Theorem 
\ref{main:holder} 
(with 
\eqref{eq:pde:apriori:1:intro} therein
being replaced by the more rigorous version \eqref{eq:pde:apriori:1})
admits the representation
\begin{equation}
\label{eq:Kolmogorov:u:5.1}
u(t,p) = {\mathbb E} \biggl[ u\bigl( \tau,P_{\tau}^{t,p} \bigr) + \int_{t}^{\tau} h(s,P_{s}^{t,p}) \ud s \biggr],
\end{equation}
where $P^{t,p}$ is the $d$-dimensional process whose dynamics are given by \eqref{eq:P} and starts from $p$ at time $t$. 
In particular, the $L^\infty$ bound in Theorem 
\ref{main:holder} holds true. 
\end{proposition}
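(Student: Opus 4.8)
The plan is to derive everything from the well-posedness theory already established for equation \eqref{eq:weak:sde:final}. First I would observe that \eqref{eq:P} has exactly the structure of \eqref{eq:weak:sde:final}: the drift $p\mapsto(\varphi(p_i)+b_i(s,p)+p_ib_i^\circ(s,p))_{i\in\ES}$ is bounded and Lipschitz continuous on any compact subset of $\textrm{\rm Int}(\hat{\mathcal S}_{d-1})$, it sums to zero over $i\in\ES$ by \eqref{eq:zero:sum}, and, since $b_i\geq0$ and $\varphi(0)=\kappa$, it is bounded below by $\kappa$ on each face $\{p_i=0\}$. Hence the two-step proof of Proposition \ref{thm:approximation:diffusion:2} applies essentially verbatim. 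In the first step one uses the constraint $\sum_iP^i=1$ — itself obtained by summing \eqref{eq:P} and invoking the antisymmetry of $\overline W^{i,j}$ in $(i,j)$ — to reduce to the $(d-1)$-dimensional equation for $(P^1,\dots,P^{d-1})$, which is uniformly non-degenerate in the interior by \eqref{eq:ellipticity} and has locally Lipschitz coefficients there; localizing over the events that $\min_{i\in\ES}P^i$ stays above $\rho$ and letting $\rho\downarrow0$, and invoking Veretennikov's strong existence and pathwise uniqueness theorem \cite{Veretennikov} (whose remarks cover random initial conditions and the mismatch between the dimension of the state and that of the driving noise), one obtains a unique strong solution up to the first exit time from the interior, for any initial time and any $\textrm{\rm Int}(\hat{\mathcal S}_{d-1})$-valued initial condition. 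In the second step one writes the $i$-th coordinate as $\ud P^i_s=a_i(s,P_s)\,\ud s+\varepsilon\sqrt{P^i_s(1-P^i_s)}\,\ud\widetilde W^i_s$ and runs Feller's test exactly as in the proof of Proposition \ref{thm:approximation:diffusion:2}; the hypothesis $2\kappa/\varepsilon^2\geq1$ makes the natural scale diverge at $0$, so no coordinate can vanish before reaching $1$, which is impossible since $\sum_iP^i_s=1$. This gives global well-posedness together with the strict positivity of the coordinates.

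For the representation \eqref{eq:Kolmogorov:u:5.1} I would fix $(t,p)\in[0,T]\times\textrm{\rm Int}(\hat{\mathcal S}_{d-1})$, an $[t,T]$-valued ${\mathbb F}^{\boldsymbol W}$-stopping time $\tau$, and a solution $u$ of \eqref{eq:pde:apriori:1} as in Theorem \ref{main:holder}, and apply It\^o's formula to $(u(s,P^{t,p}_s))_{t\le s\le T}$ after a localization keeping the trajectory off the boundary. Precisely, set $\tau_n:=\inf\{s\geq t:\min_{i\in\ES}P^{t,p,i}_s\leq 1/n\}\wedge\tau$; strict positivity gives $\tau_n\uparrow\tau$ a.s. Working in the chart $p^{-d}=(p_1,\dots,p_{d-1})$, the vector $(P^{t,p,1}_s,\dots,P^{t,p,d-1}_s)$ is a genuine $(d-1)$-dimensional It\^o process with generator the reduced Kimura operator $\hat{\mathcal L}_s$ of \eqref{eq:reduced:L:d-1}, and $\hat u^d(s,\cdot)$ is ${\mathcal C}^{1,2}$ on the interior; on $[t,\tau_n]$ the trajectory stays in a compact subset of the interior, so the classical It\^o formula applies. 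Converting the derivatives of $\hat u^d$ into the intrinsic derivatives of $u$ via \eqref{first:der}--\eqref{second:der} (equivalently \eqref{eq:mathfrakd:partiald}), and noting that the drift and second-order coefficients carried by \eqref{eq:P} are exactly those appearing in \eqref{eq:pde:apriori:1}, the equation forces the drift of $u(s,P^{t,p}_s)$ to be $-h(s,P^{t,p}_s)\,\ud s$, while the stochastic integral is a true martingale on $[t,\tau_n]$ (the integrand being bounded there). Taking expectations,
\[
u(t,p)={\mathbb E}\Bigl[\,u\bigl(\tau_n,P^{t,p}_{\tau_n}\bigr)+\int_t^{\tau_n}h\bigl(s,P^{t,p}_s\bigr)\,\ud s\,\Bigr].
\]

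It then remains to let $n\to\infty$. Since $u$ is continuous on the compact set $[0,T]\times{\mathcal S}_{d-1}$ it is bounded, and $h$ is likewise bounded, so by path continuity of $s\mapsto P^{t,p}_s$ and $\tau_n\uparrow\tau$, dominated convergence yields \eqref{eq:Kolmogorov:u:5.1}. Specializing to $\tau\equiv T$, where $u(T,\cdot)=\ell$, gives $\|u\|_\infty\leq\|\ell\|_\infty+T\|h\|_\infty$, i.e. the $L^\infty$ bound of Theorem \ref{main:holder}; the same specialization also yields the uniqueness claim used in the proof of Lemma \ref{lem:4.6}, since the right-hand side of \eqref{eq:Kolmogorov:u:5.1} depends only on the data. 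The step I expect to be the main obstacle is the It\^o-formula argument: $u$ is twice differentiable only in the open interior and its second derivatives are allowed to blow up at the boundary, so it cannot be expanded globally; the cure is precisely the localization by $\tau_n$ combined with the strict positivity of $P^{t,p}$, the passage between ambient and intrinsic derivatives being the bookkeeping already set up in \S\ref{subsub:derivatives:simplex}.
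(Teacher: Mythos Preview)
Your proposal is correct and follows exactly the route the paper takes: well-posedness by rerunning the two-step argument of Proposition~\ref{thm:approximation:diffusion:2} (Veretennikov localized in the interior, then Feller's test coordinate by coordinate), and the representation \eqref{eq:Kolmogorov:u:5.1} by It\^o's formula with a localization $\tau_n$ keeping the path off the boundary, exactly as in the proof of Theorem~\ref{lem:existence:mfg:from:master}. One small inaccuracy: under the hypotheses of Theorem~\ref{main:holder} the coefficients $b_i,b_i^\circ$ are merely continuous, so the drift need not be Lipschitz even on compact subsets of the interior; this is harmless, since Veretennikov's result only requires the drift to be bounded and measurable (the diffusion coefficient is Lipschitz and non-degenerate on such compacts, which is what matters).
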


\begin{proof}
Strong existence and uniqueness may be proven in Proposition 
\ref{thm:approximation:diffusion:2} (Equation \eqref{eq:P} is slightly more general than the equation handled in 
Proposition 
\ref{thm:approximation:diffusion:2}, but the proof works in the same way). Representation of $u$ is a straightforward consequence of 
It\^o's formula (as in the proof of Theorem
\ref{lem:existence:mfg:from:master}). 
\end{proof}

It is worth observing that, taking $\tau=T$ in 
\eqref{eq:Kolmogorov:u:5.1}, $u$ has the (standard) representation: 
\begin{equation}
\label{eq:Kolmogorov:u:5.1:b}
u(t,p) = {\mathbb E} \biggl[ \ell\bigl(P_{T}^{t,p} \bigr) + \int_{t}^{T} h(s,P_{s}^{t,p}) \ud s \biggr],
\quad (t,p) \in [0,T] \times \textrm{\rm Int}\bigl(\hat{\mathcal S}_{d-1}\bigr), 
\end{equation}
which is of course very useful to us. Indeed, using a standard mollification argument
(taking benefit of the fact that the coefficients 
$(b_{i})_{i \in \ES}$ and $(b_{i}^\circ)_{i \in \ES}$
are continuous), we can easily approximate the coefficients
$(b_{i})_{i \in \ES}$ and $(b_{i}^\circ)_{i \in \ES}$ for the sup norm by sequences of coefficients 
$((b_{i}^n)_{i \in \ES})_{n \geq 1}$ and $((b_{i}^{\circ,n})_{i \in \ES})_{n \geq 1}$
that are time-space continuous and Lipschitz continuous in the space variable (uniformly in the time variable).
Hence, if we prove that 
$u$ in
\eqref{eq:Kolmogorov:u:5.1:b} 
satisfies the H\"older estimate stated in Theorem 
\ref{main:holder} for coefficients $(b_{i})_{i \in \ES}$ and $(b_{i}^\circ)_{i \in \ES}$
that are Lipschitz continuous in space (uniformly in time), we can 
deduce that the same holds when 
$(b_{i})_{i \in \ES}$ and $(b_{i}^\circ)_{i \in \ES}$
are merely continuous
by passing to the limit along the aforementioned mollification\footnote{\label{foo:mollif} As noticed in Remark \ref{rem:thm47}, 
we may think of adapting the argument when $(b_{i})_{i \in \ES}$ and $(b_{i}^\circ)_{i \in \ES}$ are just measurable. In such a case, 
we could no longer approximate 
them in sup norm and we should work instead with some $L^p$ norm. This should require 
to control 
the mean occupation measure of the 
 process ${\boldsymbol P}^{t,p}$ 
in terms of this  $L^p$ norm. We believe that this is possible thanks to the ellipticity property of the operator inside the domain.}.

In other words, we may assume for our purpose that $(b_{i})_{i \in \ES}$ and $(b_{i}^\circ)_{i \in \ES}$ are Lipschitz continuous in space, uniformly in time, provided that we prove that the resulting H\"older estimate does not depend on the Lipschitz constants of $(b_{i})_{i \in \ES}$ and $(b_{i}^\circ)_{i \in \ES}$.

Throughout the section, we assume that, as in the statement of Theorem \ref{main:holder},  $\varepsilon$ is in $(0,1)$.

\subsection{Preliminary results on coupling and conditioning}

\subsubsection{Conditioning on the $m$ first coordinates}
The core of the analysis is based upon the probabilistic representation 
\eqref{eq:Kolmogorov:u:5.1}
and in turn on the properties of the process 
${\boldsymbol P}=(P_{t}^1,\cdots,P_{t}^d)_{0 \leq t \leq T}$
solving equation
\eqref{eq:P}. 

As we already alluded to a few lines before, our general strategy relies on 
an induction argument based upon the dimension of 
the 
state variable. This is precisely the goal of this paragraph to clarify the way we 
may reduce dimension inductively. 
General speaking, the arguments is based on a conditioning argument. 

In order to make it clear, we rewrite 
\eqref{eq:P}, but using (at least for the sole purpose of the statement 
of Proposition \ref{prop:representation} right below) the letter $X_{t}$ instead of $P_{t}$ for the unknown:
\begin{equation}
\label{eq:Xti:2}
\ud X_{t}^i = \Bigl( \varphi(X_{t}^i) +  b_{i}(t,X_{t}) + X_{t}^i b_{i}^{\circ}(t,X_{t}) \Bigr) \ud t + \varepsilon \sqrt{X_{t}^i} \sum_{j \in \ES} \sqrt{X_{t}^j} \ud \overline{W}_{t}^{i,j}, 
\quad t \in [t_{0},T], \ i \in \ES, 
\end{equation}
for a given initial time $t_{0}$. 
Our rationale to change $P_{t}$ into $X_{t}$
is motivated by the fact that we feel better to keep the letter $P_{t}$ for the new state variable 
once the dimension has been reduced. 
The objective is then to write the law of $(X_{t})_{t_{0} \leq t \le T}$ in the form 
\begin{equation}
\label{eq:identity:law}
\begin{split}
&\bigl(X_{t}\bigr)_{t_{0} \le t \le T}
\overset{\text{law}}{=}
\Bigl(P^{\circ,1}_{t},\cdots,P^{\circ,m}_{t}, 
\varsigma^2(P_{t}^{\circ}) P_{t}^{1},
\cdots,
\varsigma^2(P_{t}^{\circ}) P_{t}^{d-m}
\Bigr)_{t_{0} \le t \le T},
\end{split}
\end{equation}
where ${\boldsymbol P}^\circ=(P^\circ_{t}=(P_{t}^{\circ,1},\cdots,P_{t}^{\circ,m}))_{t_{0} \le t \le T}$ and ${\boldsymbol P}=(P_{t}=(P_{t}^1,\cdots,P_{t}^{d-m}))_{t_{0} \le t \le T}$ are new stochastic processes taking respectively values within the set 
$\hat {\mathcal S}_{m} = \{ (p^\circ_{1},\cdots,p^\circ_{m}) \in ({\mathbb R}_{+})^m : 
\sum_{i=1}^m p^\circ_{i} \leq 1 \}$
and 
${\mathcal S}_{d-m-1} = \{(p_{1},\cdots,p_{d-m}) \in ({\mathbb R}_{+})^{d-m} :
\sum_{i=1}^{d-m} p_{i} =1 \}$. Above,   
 $\varsigma$ is given by 
\begin{equation*}
\varsigma(p^\circ) := {\sqrt{1-( p_{1}^\circ +\cdots+p_{m}^\circ})}, \quad (p^\circ_{1},\cdots,p^\circ_{m}) \in \hat{\mathcal S}_{m}.
\end{equation*}

\begin{proposition}
\label{prop:representation}
Given coefficients 
$(b_{i})_{i \in \ES}$
and
$(b_{i}^\circ)_{i \in \ES}$
as in the statement of Theorem 
\ref{main:holder}, 
there exist new coefficients
\begin{itemize}
\item 
$(\widetilde b_{i})_{i \in \ESdm}$, with values in $({\mathbb R}_{+})^{d-m}$, 
 that are bounded by a constant that only depends on 
$(\| b_{i} \|_{\infty})_{i \in \ES}$,
\item 
$(\widetilde b_{i}^\circ)_{i \in \ESdm}$, 
with values in 
${\mathbb R}^{d-m}$, 
 that are bounded by a constant that only depends on 
$(\| b_{i} \|_{\infty})_{i \in \ES}$
and 
$(\| b_{i}^\circ \|_{\infty})_{i \in \ES}$,
\end{itemize}
 and that are Lipschitz continuous in space uniformly in time such that
\begin{itemize}
\item 
whenever $\delta \in (0,1)$ and $\kappa \geq \varepsilon^2/2$,
\item
for any family of antisymmetric Brownian motions 
$\overline{\boldsymbol W}^\circ=(\overline W^\circ_{t}=(\overline W^{\circ,i,j}_{t})_{i,j \in \ES : i \not = j})_{0 \leq t \leq T}$
of dimension $d(d-1)/2$ that is independent of 
$\overline{\boldsymbol W}=(\overline W_{t}=(\overline W^{i,j}_{t})_{i,j \in \ES: i \not =j})_{0 \leq t \leq T}$,
\item
for any given initial condition $(t_{0},p^\circ,p) \in [0,T] \times \hat{\mathcal S}_{m} \times {\mathcal S}_{d-m-1}$, 
\end{itemize}
the system
\begin{equation}
\label{eq:new:system:prop:5:2}
\begin{split}
&\ud P_{t}^i = \varsigma^{-2}(P_{t}^\circ) \Bigl( \varphi\bigl(\varsigma^2(P_{t}^\circ) P_{t}^i \bigr) + \widetilde b_{i}\bigl(t,P_{t}^\circ,P_{t}\bigr)  + P_{t}^i \widetilde b_i^{\circ}(t,P_{t}^\circ,P_{t}) \Bigr) \ud t 
\\
&\hspace{5pt}+ \varepsilon \varsigma^{-1}(P_{t}^\circ) \sum_{j \in \ESdm : j \not = i} \sqrt{P_{t}^i P_{t}^j} \ud \overline W_{t}^{i,j}, 
\quad i \in \ESdm,
\\
&\ud P_{t}^{\circ,i} = 
\Bigl(  \varphi\bigl( P_{t}^{\circ,i} \bigr) + b_{i}\bigl(t,(P_{t}^\circ,\varsigma^2(P_{t}^\circ)P_{t})\bigr)  + P_{t}^i  b_i^{\circ}\bigl(t,P_{t}^\circ,
\varsigma^2(P_{t}^\circ)
P_{t}
\bigr) 
\Bigr)  \ud t 
\\
&\hspace{5pt}
+\varepsilon \sum_{j \in \ESm : j \not = i}
\sqrt{P^{\circ,i}_{t} {P}_{t}^{\circ,j}}
\ud \overline{W}^{\circ,i,j}_{t} 
+
\varepsilon \varsigma(P_{t}^\circ) 
 \sqrt{P_{t}^{\circ,i}}
 \sum_{j \in \ESdm}
\sqrt{{P}_{t}^{j}}
\ud \overline{W}^{\circ,i,m+j}_{t}, \quad i \in \ESm,
\end{split}
\end{equation}
for $t \in [t_{0},T]$,
with $(P^\circ_{t_{0}},P_{t_{0}})=(p^\circ,p)$, 
has a unique  {strong} solution, which satisfies 
the identity in law 
\eqref{eq:identity:law}
whenever 
\eqref{eq:Xti:2}
is initialized from $(p^\circ,\varsigma^2(p^\circ)p)$ at time $t_{0}$. 
\end{proposition}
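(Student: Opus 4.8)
The plan is to prove Proposition \ref{prop:representation} by a direct change-of-variables computation, verifying that the claimed decomposition transforms equation \eqref{eq:Xti:2} into the system \eqref{eq:new:system:prop:5:2}, and then invoking existing well-posedness results to close the argument.

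\textbf{Step 1: Well-posedness of the new system.} First I would check that the system \eqref{eq:new:system:prop:5:2} is uniquely solvable with strictly positive coordinates. The key observation is that the function $\varsigma^2(p^\circ) = 1 - (p_1^\circ + \cdots + p_m^\circ)$ stays bounded away from $0$ as long as $(P_t^\circ, P_t)$ remains in the interior of the relevant simplex, so the coefficients $\varsigma^{-1}$ and $\varsigma^{-2}$ are locally Lipschitz there; the diffusion coefficients are of Wright--Fisher type in each block, and the drift satisfies the inward-pointing condition at the boundary thanks to the $\varphi$-term (which contributes $\kappa$ near a vanishing coordinate, using $\kappa \geq \varepsilon^2/2$). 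Hence a standard localization plus comparison argument exactly as in the proof of Proposition \ref{thm:approximation:diffusion:2} applies: one solves the system up to the first exit from the interior, shows via Feller's test (applied to each scalar coordinate, written in the form \eqref{eq:pti:marginal} after summing) that no coordinate can hit $0$, and concludes global strong existence and uniqueness on $[t_0,T]$. The boundedness of $(\widetilde b_i)$ and $(\widetilde b_i^\circ)$ will drop out of their explicit formulas obtained in Step 2.

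\textbf{Step 2: Identification of the coefficients via It\^o's formula.} The heart of the proof is to apply It\^o's formula to the right-hand side of \eqref{eq:identity:law}, i.e. to the processes $P_t^{\circ,i}$ (for $i \in \ESm$) and $\varsigma^2(P_t^\circ) P_t^i$ (for $i \in \ESdm$), where $(P^\circ, P)$ solves \eqref{eq:new:system:prop:5:2}, and to match the resulting drift and diffusion coefficients with those of \eqref{eq:Xti:2}. For the first $m$ coordinates this is immediate: the dynamics of $P_t^{\circ,i}$ in \eqref{eq:new:system:prop:5:2} is designed so that, with $X_t^i := P_t^{\circ,i}$, the drift is $\varphi(X_t^i) + b_i(t,X_t) + X_t^i b_i^\circ(t,X_t)$ (using that the last $d-m$ coordinates of $X_t$ are $\varsigma^2(P_t^\circ) P_t^j$), and the two Brownian blocks in the $P^{\circ,i}$-equation combine, after computing the bracket, to give exactly $\varepsilon \sqrt{X_t^i}\sum_{j\in\ES}\sqrt{X_t^j}\,\ud\overline W_t^{i,j}$ in law — here one uses the standard fact that a sum of independent scalar Brownian increments with prescribed variances can be realized as a single antisymmetric Wright--Fisher noise, which is where the auxiliary family $\overline{\boldsymbol W}^\circ$ enters. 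For the remaining coordinates, one expands $Y_t^i := \varsigma^2(P_t^\circ) P_t^i$: the product rule gives a term $P_t^i\, \ud[\varsigma^2(P_t^\circ)]$, a term $\varsigma^2(P_t^\circ)\, \ud P_t^i$, and a cross-variation term $\ud\langle \varsigma^2(P^\circ), P^i\rangle_t$. The factor $\varsigma^{-2}(P_t^\circ)$ built into the drift of $P_t^i$ in \eqref{eq:new:system:prop:5:2} is precisely chosen to cancel against the $\varsigma^2$ prefactor, and the $\varsigma^{-1}(P_t^\circ)$ factor in the diffusion coefficient of $P_t^i$ makes the martingale part of $Y_t^i$ have the correct Wright--Fisher bracket $\varepsilon^2(Y_t^i\delta_{ij} - Y_t^i Y_t^j)$ once the contribution of $\ud[\varsigma^2(P_t^\circ)]$ (itself a Wright--Fisher-type martingale in the $P^\circ$-variables) and the cross term are added in. Reading off the drift of $Y_t^i$ and comparing with \eqref{eq:Xti:2} then \emph{defines} $\widetilde b_i$ and $\widetilde b_i^\circ$ in terms of $b_i$, $b_i^\circ$, $\varphi$ and the It\^o correction from $d\langle\varsigma^2(P^\circ),\varsigma^2(P^\circ)\rangle_t$; since $\varsigma^2$ and its derivatives are bounded on the simplex and $\varphi$ is bounded, the asserted bounds on $(\widetilde b_i)$ and $(\widetilde b_i^\circ)$ follow, and Lipschitz continuity in space (uniform in time) is inherited from the corresponding property of $b_i$, $b_i^\circ$ together with smoothness of $\varsigma$ on the relevant compact set.

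\textbf{Step 3: Conclusion via weak uniqueness.} Having shown that the process on the right-hand side of \eqref{eq:identity:law} solves the same martingale problem as $(X_t)_{t_0\le t\le T}$ with the same initial condition $(p^\circ,\varsigma^2(p^\circ)p)$, and knowing by Proposition \ref{prop:ito:formula} (or Proposition \ref{thm:approximation:diffusion:2}) that \eqref{eq:Xti:2} has a unique strong, hence weakly unique, solution, the identity in law \eqref{eq:identity:law} follows. I expect the main obstacle to be the bookkeeping in Step 2: correctly computing all the It\^o cross-variation terms coming from the nonlinear change of variables $Y^i = \varsigma^2(P^\circ)P^i$ and verifying that the $\varsigma^{-1}$, $\varsigma^{-2}$ prefactors conspire to reproduce \emph{exactly} the Wright--Fisher structure $\varepsilon^2(x_i\delta_{ij}-x_ix_j)$ with no leftover terms — in particular checking that the mixed brackets between the $P^\circ$-block and the $P$-block, which involve the noises $\overline W^{\circ,i,m+j}$, contribute precisely the off-diagonal pieces $-\varepsilon^2 x_i x_j$ with $i\in\ESm$, $j\notin\ESm$ that appear when one writes out the $d$-dimensional Wright--Fisher operator. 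This is the computation implicitly underlying the normal-form decomposition \eqref{eq:canonical:form:kimura}, and carrying it out cleanly is the technical crux; everything else is a routine application of localization, Feller's test, and weak uniqueness as already established in the paper.
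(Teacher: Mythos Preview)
Your proposal is correct and follows essentially the same route as the paper: compute, via It\^o's formula, the drift and full covariance of $\tilde X_t := (P_t^\circ,\varsigma^2(P_t^\circ)P_t)$, match them against \eqref{eq:Xti:2}, and conclude by weak uniqueness of the latter. The only notable difference is the ordering of well-posedness and the It\^o computation: the paper solves \eqref{eq:new:system:prop:5:2} only up to the first boundary-hitting time $\uptau$, performs the bracket computation on $[t_0,\uptau]$, and then invokes the already-established positivity of all coordinates of the solution to \eqref{eq:Xti:2} (Proposition~\ref{prop:ito:formula}) to conclude $\uptau=T$ --- this automatically takes care of the constraint $\varsigma^2(P_t^\circ)>0$ (i.e.\ $|P_t^\circ|_1<1$), which in your direct Step~1 would require a separate Feller-test argument for the non-coordinate quantity $1-|P_t^\circ|_1$ that you glossed over.
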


The proof of 
Proposition 
\ref{prop:representation}
is deferred to Subsection \ref{subsubse:proof:proposition 5.2}.
Throughout, we denote by ${\mathbb F}^{{\boldsymbol W}^\circ,{\boldsymbol W}}=({\mathcal F}_{t}^{{\boldsymbol W}^\circ,{\boldsymbol W}})_{0 \leq t \leq T}$ 
the augmented filtration
generated by 
${\boldsymbol W}^\circ=((W_{t}^{\circ,i,j})_{0 \leq t \leq T})_{i,j \in \ES : i \not = j}$
and
${\boldsymbol W}=((W_{t}^{i,j})_{0 \leq t \leq T})_{i,j \in \ES : i \not = j}$, the latter two being implicitly understood as 
two independent collections of Brownian motions such that 
$\overline{\boldsymbol W}^{\circ,i,j}=( {\boldsymbol W}^{\circ,i,j}- {\boldsymbol W}^{\circ,j,i})/\sqrt{2}$ and
$\overline{\boldsymbol W}^{i,j}=( {\boldsymbol W}^{i,j}- {\boldsymbol W}^{j,i})/\sqrt{2}$, 
when
$\overline{\boldsymbol W}^{\circ}$
and
$\overline{\boldsymbol W}$
are as in the statement of Proposition 
\ref{prop:representation}.

\subsubsection{Main coupling estimate}

\begin{proposition}
\label{prop:coupling:2}
Assume that $\delta$ in 
\eqref{eq:varphi}
belongs to $(0,1/(4\sqrt{d}))$ Moreover, 
take two initial conditions $(t_{0},p^{\circ},p)$ and $(t_{0},q^\circ,q)$ in $[0,T] \times \hat{\mathcal S}_{m} \times {\mathcal S}_{d-m-1}$, with $m \in \{1,\cdots,d-1\}$, such that $\vert p^\circ \vert_{1} := p^\circ_{1}+\cdots+p^\circ_{m} \leq 1/2$, $\vert q^\circ \vert_{1} \leq 1/2$, and $\vert p -q \vert < \delta^2/(64 \sqrt{d})$. On the (filtered) probability space carrying $\overline{\boldsymbol W}^\circ$ and $\overline{\boldsymbol W}$, call $(P_t^\circ=(P_{t}^{\circ,1},\cdots,P_{t}^{\circ,m}),P_{t}=(P_{t}^1,\cdots,P_{t}^{d-m}))_{t_{0} \leq t \leq T}$ the solution to 
\eqref{eq:new:system:prop:5:2} with $(t_{0},p^\circ,p)$ as initial condition. 

Then, 
for any $\eta \in (0,1)$, 
there exists a threshold $\kappa_{0} \geq 2$, only depending on $\eta$, $\varepsilon$ and $(\| b_{i} \|_{\infty})_{i \in \ES}$
(but not on $\delta$)
such that, for any $\kappa \geq \kappa_{0}$, we can find a constant $C$, depending on $\delta$, 
$\varepsilon$, 
$\kappa$, $\eta$, $(\|b_{i}\|_{\infty})_{i \in \ES}$, 
$(\|b_{i}^{\circ}\|_{\infty})_{i \in \ES}$
and $T$ such that, provided that $\vert p-q \vert^{1/3} \leq T-t_{0}$, there exists an adapted process
$(Q_{t}^\circ=(Q_{t}^{\circ,1},\cdots,Q_{t}^{\circ,m}),Q_{t}=(Q_{t}^1,\cdots,Q_{t}^{d-m}))_{t_{0} \le t \le T}$ that has the same law as the solution to 
\eqref{eq:new:system:prop:5:2} with $(t_{0},q^\circ,q)$ as initial condition and for which the following property holds true.

If we call
\begin{equation*}
\widetilde{P}_{t} := \Bigl( \sqrt{P_{t}^1},\cdots, \sqrt{ P_{t}^{d-m}}  \Bigr), 
\quad
\widetilde{Q}_{t} := \Bigl( \sqrt{ Q_{t}^1},\cdots,\sqrt{Q_{t}^{d-m}} \Bigr), 
\quad t \in [t_{0},T],
\end{equation*}
and
\begin{equation}
\label{eq:stopping:times}
\begin{split}
&\varrho := \inf \bigl\{ s \geq t_{0} : \bigl\vert P_{s}^\circ - Q_{s}^\circ \bigr\vert > 
\bigl\vert \widetilde P_{s} - \widetilde Q_{s} \bigr\vert \bigr\}, \quad 
\rho: = \inf \bigl\{ s \geq t_{0} : \bigl\vert P_{s}^\circ \bigr\vert_{1} \geq 3/4\}, 
\\ 
&\sigma := \inf\bigl\{ s \geq t_{0} : \bigl\vert \widetilde P_{s}
- \widetilde Q_{s} \bigr\vert \geq \delta/4 \bigr\},
\quad
\tau := \inf \bigl\{ s \geq t_{0} : \bigl\vert \widetilde P_{s}
- \widetilde Q_{s} \bigr\vert =0 \bigr\},
\end{split}
\end{equation} 
then 
\begin{equation}
\label{eq:induction:coupling}
{\mathbb P} \bigl( \bigl\{ \varpi_{S} < \tau \wedge \varrho \wedge \rho \bigr\} \bigr) 
  \leq C  \frac{\vert p -q \vert^{1/12}}{\min_{i \in \ESdm}(\max(p_i,q_i))^{\eta}}, 
\end{equation}
where $\varpi_{S}:=\varpi \wedge S$, with $\varpi: = \rho \wedge \varrho \wedge \sigma \wedge \tau$ and $S:= t_{0}+\vert p - q \vert^{1/3}$.
\end{proposition}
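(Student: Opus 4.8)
The plan is to build the coupled process $(Q^\circ_t,Q_t)_{t_0\le t\le T}$ by a \emph{coupling by reflection} in the rescaled $\sqrt{\cdot}$--coordinates, run it until the first of the stopping times in \eqref{eq:stopping:times}, and then estimate the probability that the reflection has not yet brought the two copies together. First I would introduce the process $\widetilde P_t=(\sqrt{P^1_t},\dots,\sqrt{P^{d-m}_t})$ and compute, via It\^o's formula applied to \eqref{eq:new:system:prop:5:2}, the dynamics of each $\sqrt{P^i_t}$: the diffusion coefficient becomes essentially $\tfrac{\varepsilon}{2}\varsigma^{-1}(P^\circ_t)$ times a bounded vector, while the drift produces a term of the form $\tfrac{\varepsilon^2}{8}\varsigma^{-2}(P^\circ_t)\,(P^i_t)^{-1/2}(\text{positive rate}+\varphi(\cdot))$ coming from the $\varphi$--forcing plus an $O((P^i_t)^{-1/2})$ error controlled by $\|b\|_\infty,\|b^\circ\|_\infty$. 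This is the place where $\kappa$ enters: choosing $\kappa\ge\kappa_0$ makes the net radial drift of the difference $|\widetilde P_t-\widetilde Q_t|$ strictly dissipative whenever the coordinates are small, which is what ultimately gives the exponent $1/12$ in \eqref{eq:induction:coupling}. On the region $\{|P^\circ_t|_1<3/4\}$ the factor $\varsigma^{-2}(P^\circ_t)$ is bounded above and below, so all these estimates are uniform there.

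Next I would set up the reflection coupling itself. On the same probability space I would drive $(Q^\circ,Q)$ by the original Brownian motions for the $P^\circ$--block (so that $P^\circ$ and $Q^\circ$ are fed identical noise, forcing $|P^\circ_t-Q^\circ_t|$ to stay small and giving control of the stopping time $\varrho$ through a Gronwall argument on the difference of the $P^\circ$--equations), and by a \emph{reflected} copy of the noise in the $\widetilde P$--block: writing $e_t=(\widetilde P_t-\widetilde Q_t)/|\widetilde P_t-\widetilde Q_t|$, one replaces the Brownian increment $dB_t$ driving $\widetilde Q$ by $(\mathrm{Id}-2e_te_t^\dagger)dB_t$ up to time $\tau$, and lets the two copies coincide afterwards. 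One checks that the resulting $(Q^\circ,Q)$ has the prescribed law (the reflection is an orthogonal transformation, hence preserves the law of the Brownian motion; this uses that $\overline{\boldsymbol W}$ and $\overline{\boldsymbol W}^\circ$ are independent so the $P^\circ$--block can be coupled synchronously while the $P$--block is coupled by reflection — cf.\ the discussion preceding the Proposition). The scalar process $r_t:=|\widetilde P_t-\widetilde Q_t|$ then satisfies an autonomous-looking one-dimensional SDE $dr_t=\mu_t\,dt+2\varsigma^{-1}(P^\circ_t)\sigma_t\,d\beta_t$ with a genuine (nondegenerate, on $\{\rho>t\}$) Brownian motion $\beta$ and a drift $\mu_t$ that, on $\{\sigma\wedge\rho\wedge\varrho>t\}$, is bounded above by $-c\kappa\sum_i(P^i_t\wedge Q^i_t)^{-1}r_t + Cr_t$ thanks to the radial-dissipativity computation from the first paragraph (the key sign being $\kappa\ge\kappa_0$, i.e.\ inequality \eqref{eq:condition:kappa0:epsilon} referenced in Remark \ref{rem:thm47}).

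The core estimate is then a hitting-time bound for this one-dimensional diffusion $r_t$ started from $r_{t_0}=|\widetilde p-\widetilde q|\asymp |p-q|^{1/2}$ (using $|\sqrt p-\sqrt q|\le|p-q|/\min(\dots)^{?}$ — here I would rather use $|\sqrt{p_i}-\sqrt{q_i}|\le\sqrt{|p_i-q_i|}$, which is why the bound in \eqref{eq:induction:coupling} carries a $\min(\max(p_i,q_i))^{-\eta}$ denominator rather than being clean). I want to show that, before time $S=t_0+|p-q|^{1/3}$, the process $r_t$ either hits $0$ (good, this is $\tau$) or the exit time $\varpi=\rho\wedge\varrho\wedge\sigma\wedge\tau$ occurs, except on an event of probability $\le C|p-q|^{1/12}\big/\min_i(\max(p_i,q_i))^{\eta}$. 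The strategy is a supermartingale/Lyapunov argument: one builds a function $\Phi(r)$ (a power $r^{\eta}$ suffices, this is where the free parameter $\eta\in(0,1)$ of the statement appears) such that $L_t\Phi(r_t)\le 0$ on $\{\varpi>t\}$ once $\kappa$ is large, so $\Phi(r_{t\wedge\varpi})$ is a supermartingale; then $\mathbb P(\varpi_S<\tau\wedge\varrho\wedge\rho)$ is controlled by comparing $\Phi$ at the starting radius $|p-q|^{1/2}$ with its value on the set $\{r=\delta/4\}$ (i.e.\ the event $\sigma\le\varpi$), while the $S$--truncation is absorbed because on $\{\varpi>S\}$ the time-integral of the dissipation $\int_{t_0}^S(P^i_s)^{-1}ds$ is handled via the exponential moment bound \eqref{eq:expfin2} of Proposition \ref{expfin} together with the choice $S-t_0=|p-q|^{1/3}$ to balance the powers.

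The main obstacle I expect is precisely the \emph{degeneracy of the diffusion at the boundary combined with the need for a bound uniform in the Lipschitz constants} of $b,b^\circ$: near the faces where some $P^i_t$ is small the reflection coupling's radial drift is a difference of two blowing-up terms ($\kappa(P^i)^{-1}$ against $\kappa(Q^i)^{-1}$ plus $\varepsilon^2(P^i)^{-1}$ curvature terms and the $(P^i)^{-1/2}\|b\|_\infty$ error), and one must extract a net \emph{negative} contribution of size $\gtrsim\kappa(P^i\wedge Q^i)^{-1}r$ while the error is only $O((P^i\wedge Q^i)^{-1/2})$ — this forces the hypothesis $|p-q|<\delta^2/(64\sqrt d)$ and $|p^\circ|_1\le 1/2$ (so that $\varsigma^{-2}\le 4$) and requires carefully keeping $P^i$ and $Q^i$ comparable on $\{\varrho\wedge\sigma>t\}$, which in turn is why all four stopping times $\rho,\varrho,\sigma,\tau$ must be carried simultaneously. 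Controlling $\varrho$ (closeness of the $P^\circ$-- and $Q^\circ$--blocks) is comparatively routine Gronwall once the blocks are synchronously coupled, but one must check the feedback is one-directional, i.e.\ that $|\widetilde P-\widetilde Q|$ does not destabilize $|P^\circ-Q^\circ|$ faster than the latter can be absorbed — that coupling between the two blocks through the shared $\varsigma(P^\circ_t)$ factor is the delicate bookkeeping point of the whole argument.
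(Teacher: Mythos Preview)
Your plan has a genuine gap at the point where you claim the radial drift of $r_t=|\widetilde P_t-\widetilde Q_t|$ is dissipative, i.e.\ bounded above by $-c\kappa\sum_i(P^i_t\wedge Q^i_t)^{-1}r_t+Cr_t$. If you carry out the It\^o expansion of $d|Z_t|$ (this is Proposition~\ref{prop:coupling:1} in the paper), the $\varphi$--forcing does produce a nonpositive term of the shape $-\kappa\sum_i \tfrac{(Z^i_t)^2}{|Z_t|\,\widetilde P^i_t\widetilde Q^i_t}$, but the \emph{residual} coming from the difference $\widetilde b_i(t,P^\circ_t,P_t)-\widetilde b_i(t,Q^\circ_t,Q_t)$ is \emph{not} of order $Cr_t$: since the H\"older estimate must be uniform in the Lipschitz constants of $b,b^\circ$ (see the remark after Theorem~\ref{main:holder}), you can only bound this difference by $2\|\widetilde b_i\|_\infty$, giving a term $\sum_i\tfrac{Z^i_t}{|Z_t|}\tfrac{\widetilde\beta^i_t}{\max(\widetilde P^i_t,\widetilde Q^i_t)}$ with $|\widetilde\beta^i_t|\le 4\|\widetilde b_i\|_\infty$. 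This residual does not vanish as $r_t\to 0$, and it blows up like $(\min_iP^i_t)^{-1/2}$ near the boundary; coordinate by coordinate, the dissipative weight is $(Z^i_t)^2/|Z_t|$ while the residual weight is $|Z^i_t|/|Z_t|$, so on any coordinate with $|Z^i_t|\ll\|b\|_\infty/\kappa$ the residual wins. Your Lyapunov function $\Phi(r)=r^\eta$ would require $\mu_t r_t\le c\varepsilon^2$, which fails once $\min_iP^i_t$ is small enough.

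The paper's route is different in kind: rather than proving dissipativity, it removes the bad residual drift by a Girsanov change of measure $\mathbb P\to\mathbb Q$ (defining $W^{\prime,i,j}$ as in \eqref{eq:W:prime}). Under $\mathbb Q$, the upper bound for $d|Z_t|$ has a \emph{bounded} drift, $|Z_t|$ is dominated by a time-changed Brownian motion, and elementary hitting-time estimates give $\mathbb Q(\varpi_S<\tau\wedge\varrho\wedge\rho)\le C|p-q|^{1/6}$. The role of $\kappa\ge\kappa_0$ is not dissipativity but control of the Radon--Nikodym density: one needs $\mathbb E[\exp(c\varepsilon^{-2}\|\widetilde b\|_\infty^2\int_{t_0}^{\varpi\wedge T}(P^i_t)^{-1}dt)]\le C/p_i^\eta$, which is precisely the exponential-moment argument of Proposition~\ref{expfin} and is where condition~\eqref{eq:condition:kappa0:epsilon} arises. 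Cauchy--Schwarz then yields $\mathbb P(\cdot)\le C|p-q|^{1/12}/\min_i(\max(p_i,q_i))^{\eta}$. (Incidentally, your plan to control $\varrho$ by Gronwall also runs into the same Lipschitz-independence constraint; the paper does not estimate $\varrho$ separately---it simply carries it inside $\varpi$.)
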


The proof of 
Proposition \ref{prop:coupling:2}
is deferred to 
Subsection
\ref{subse:proof:coupling:property}. 

{\begin{remark}
\label{rem:about:induction}
We now explain the difficulty when the dimension $d$ is greater than or equal to 3 and the reason why we need an induction argument to derive the required H\"older estimate.

A naive way to proceed is indeed to choose $m=0$ in the above statement. In such a case, the 
process $(P^1,\cdots,P^{d-m})$ in 
\eqref{eq:new:system:prop:5:2} coincides with the solution 
$(P^1,\cdots,P^d)$ in 
\eqref{eq:P}. In other words, 
Proposition 
\ref{prop:coupling:2} with $m=0$ reads as a coupling estimate for the 
solution to \eqref{eq:P}. 

Let us now see what 
 the right hand side of 
\eqref{eq:induction:coupling} becomes
when $m=0$. 
Up to an obvious change of coordinates, we then may assume that 
$\min_{i \in \ES}(\max(p_{i},q_{i}))=\max(p_{1},q_{1})$, in which case the right-hand side of 
\eqref{eq:induction:coupling} 
writes 
$\vert p-q \vert^{1/12}/\max(p_{1},q_{1})^{\eta}$,
both $p=(p_{1},\cdots,p_{d})$ and $q=(q_{1},\cdots,q_{d})$ being now regarded as 
$d$-dimensional vectors. The point is then to upper bound 
$\vert p-q \vert^{1/12}/\max(p_{1},q_{1})^{\eta}$. When $d=2$, this is pretty easy because 
\begin{equation*}
\vert p-q \vert = \sqrt{ \vert p_{1}- q_{1} \vert^2 + \vert p_{2} - q_{2} \vert^2}=
\sqrt{ \vert p_{1} - q_{1} \vert^2 + \vert (1-p_{1}) - (1-q_{1}) \vert^2} \leq \sqrt{2} \vert p_{1} - q_{1} \vert,
\end{equation*}
and then we get 
\begin{equation*}
\frac{\vert p-q \vert^{1/12}}{\max(p_{1},q_{1})^{\eta}}
\leq \sqrt{2} \vert p-q \vert^{1/12-\eta}.
\end{equation*}
Unfortunately, this argument no longer works when $d \geq 3$ since, in that case, 
one of the entries
$(\vert p_{i} - q_{i} \vert)_{i=2,\cdots,d}$ may be much larger than $\vert p_{1} - q_{1} \vert$. 
\end{remark}

}

\subsection{Derivation of the H\"older estimate and proof of Theorem 
\ref{main:holder}}
\label{subse:induction:d}
We now explain how to derive the H\"older estimate in Theorem 
\ref{main:holder}
from 
Proposition \ref{prop:coupling:2}. 
As we already alluded to, it relies on an additional iteration on the dimension, which is in turn 
inspired by 
earlier PDE results on Kimura diffusions, see for instance \cite{albanesemangino}. 
The induction assumption takes the following form. 

Take $h$, $\ell$ and $u$ as in \eqref{eq:pde:apriori:1}. For a given $m \in \ES$, call ${\mathscr P}_{m}$ the following property: For any $\varepsilon,\eta \in (0,1)$ and $\delta \in  (0,1/(4\sqrt{d}))$, there exist a threshold $\kappa_{0}$, only depending 
on $\varepsilon$, $\eta$, $m$ and $(\| b_{i} \|_{\infty})_{i \in \ES}$, 
and 
 an exponent $\alpha \in (0,1)$, only depending on $\eta$ and $m$, 
such that, for any $\kappa \geq \kappa_{0}$, we can find a constant $C$, depending on $\delta$, $\varepsilon$, $\kappa$, $\eta$, $(\|b_{i}\|_{\infty})_{i \in \ES}$, 
$(\|b_i^\circ \|_{\infty})_{i \in \ES}$, $\|h\|_{\infty}$, 
$\|\ell \|_{1,\infty}$
and $T$ such that, for any $p=(p_1,\cdots,p_d)$ and $q=(q_1,\cdots,q_d)$ in 
${\mathcal S}_{d-1}$, 
\begin{equation}
\label{eq:induction}
\vert u(t,p_1,\cdots,p_d) - u(t,q_1,\cdots,q_d) \vert \leq C \frac{\vert p - q \vert^{\alpha}}{\max(p,q)_{(m)}^{\eta \alpha}},
\end{equation}
where $\max(p,q)_{(m)}$ denotes the $m$th element in the increasing reordering of 
$$\max(p,q)=\bigl(\max(p_{1},q_{1}),\cdots,\max(p_{d},q_{d})\bigr).$$ 

We then have the following two propositions:

\begin{proposition}
\label{prop:induction:step:0}
Within the framework of Theorem \ref{main:holder}, 
${\mathscr P}_{1}$ holds true. 
\end{proposition}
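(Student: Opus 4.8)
The plan is to establish $\mathscr{P}_1$, i.e.\ the case $m=1$, directly from the coupling estimate of Proposition \ref{prop:coupling:2} specialized to $m=0$, together with the probabilistic representation \eqref{eq:Kolmogorov:u:5.1:b}. When $m=0$, the process $(P^1_t,\dots,P^{d-m}_t)$ in \eqref{eq:new:system:prop:5:2} coincides with the solution $(P^1_t,\dots,P^d_t)$ of \eqref{eq:P}, there is no conditioning component $P^\circ$, and the stopping times $\varrho$ and $\rho$ in \eqref{eq:stopping:times} are identically $+\infty$ (or $=T$). So Proposition \ref{prop:coupling:2} with $m=0$ gives us, for initial conditions $p,q\in\mathcal S_{d-1}$ with $|p-q|$ small and $|p-q|^{1/3}\le T-t_0$, a coupling $(P_t,Q_t)_{t_0\le t\le T}$ with $P$ started from $p$, $Q$ started from $q$ (in law), such that
\[
\mathbb{P}\bigl(\varpi_S < \tau\bigr) \le C\,\frac{|p-q|^{1/12}}{\min_{i\in\ES}(\max(p_i,q_i))^{\eta}},
\]
where $\varpi_S = (\rho\wedge\varrho\wedge\sigma\wedge\tau)\wedge S = (\sigma\wedge\tau)\wedge S$ and $S=t_0+|p-q|^{1/3}$.

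Next I would use \eqref{eq:Kolmogorov:u:5.1} with the stopping time $\varpi_S$ applied to each of the two copies: since $P$ and $Q$ solve (in law) the same SDE from $p$ and $q$ respectively, and $u$ is continuous with the $L^\infty$ bound of Proposition \ref{prop:ito:formula},
\[
u(t_0,p)-u(t_0,q) = \mathbb{E}\Bigl[ u(\varpi_S, P_{\varpi_S}) - u(\varpi_S, Q_{\varpi_S}) + \int_{t_0}^{\varpi_S}\bigl(h(s,P_s)-h(s,Q_s)\bigr)\,ds\Bigr].
\]
On the event $\{\tau\le \varpi_S\}$ (the coupling has succeeded by time $S$, i.e.\ $\widetilde P$ and $\widetilde Q$ have met) one has $P_{\varpi_S}=Q_{\varpi_S}$ since $\sqrt{\cdot}$ is injective on $\mathcal S_{d-1}$; moreover after meeting, the two processes can be taken equal, so the integral term on $[t_0,\varpi_S]$ contributes only before $\tau$. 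Actually, to control the $h$-integral, I would instead run the coupling on the full interval $[t_0,S]$, using that after $\tau$ the processes coincide, so $h(s,P_s)=h(s,Q_s)$ for $s\ge\tau$; hence the $h$-contribution is bounded by $2\|h\|_\infty \cdot \mathbb{E}[\tau\wedge S - t_0]\le 2\|h\|_\infty |p-q|^{1/3}$. The boundary terms: on $\{\varpi_S<\tau\}$ we bound $|u(\varpi_S,P_{\varpi_S})-u(\varpi_S,Q_{\varpi_S})|\le 2\|u\|_\infty\le 2(\|\ell\|_\infty+T\|h\|_\infty)$, multiplied by the probability estimate above; on $\{\sigma<\tau\wedge S\}$ (the displacement grew to $\delta/4$ before meeting) this is already inside $\{\varpi_S<\tau\}$; on $\{S\le\tau\wedge\sigma\}$, i.e.\ neither met nor escaped by time $S$, we also land in $\{\varpi_S<\tau\}$ unless $S\ge T$—and if $t_0$ is within $|p-q|^{1/3}$ of $T$ we argue separately using continuity of $u$ up to $t=T$ and the Lipschitz bound on $\ell$ directly. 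Putting these together,
\[
|u(t_0,p)-u(t_0,q)| \le C\Bigl(\frac{|p-q|^{1/12}}{\min_{i}(\max(p_i,q_i))^{\eta}} + |p-q|^{1/3}\Bigr).
\]

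Finally I would reconcile this with the precise form of \eqref{eq:induction} for $m=1$: note $\min_{i\in\ES}(\max(p_i,q_i)) = \max(p,q)_{(1)}$ is exactly the smallest entry of the vector $\max(p,q)$, so the right-hand side is $C\,|p-q|^{1/12}/\max(p,q)_{(1)}^{\eta}$ up to the lower-order term $|p-q|^{1/3}$; since $\max(p,q)_{(1)}\le 1$, the term $|p-q|^{1/3}$ is dominated by $|p-q|^{1/12}/\max(p,q)_{(1)}^{\eta}$ as soon as $1/3\ge 1/12$ (which holds), after shrinking the exponent. Taking $\alpha = 1/12$ (and absorbing: replacing $1/12$ by any slightly smaller exponent if needed so that the restrictions $|p-q|<\delta^2/(64\sqrt d)$ and $|p-q|^{1/3}\le T-t_0$ can be removed by a standard "large $|p-q|$ or large $T-t_0$" argument using the global bound $|u(t,p)-u(t,q)|\le 2\|u\|_\infty$), we obtain \eqref{eq:induction} with $m=1$, with $\kappa_0$ and $\alpha$ depending only on the allowed quantities. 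The time-Hölder part of Theorem \ref{main:holder} is not needed here; it follows once the space-Hölder estimate is available by a separate and standard argument (using the representation \eqref{eq:Kolmogorov:u:5.1} with $\tau = (t_0+\Delta)\wedge T$ and the moment bounds on $P$). The main obstacle I anticipate is the bookkeeping of the various stopping-time events—ensuring that every scenario on which $P_{\varpi_S}\ne Q_{\varpi_S}$ or the $h$-integrands differ is contained in $\{\varpi_S<\tau\}$ so that the single probability bound from Proposition \ref{prop:coupling:2} suffices—and handling cleanly the degenerate case where $T-t_0$ is comparable to $|p-q|^{1/3}$, where one falls back on the regularity of $u$ near the terminal time coming from the Lipschitz terminal datum.
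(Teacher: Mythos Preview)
Your approach is correct and coincides with the paper's: the paper says explicitly that the proof of $\mathscr{P}_1$ is ``completely similar'' to that of Proposition~\ref{prop:induction}, run with $m=0$, i.e.\ applying the coupling of Proposition~\ref{prop:coupling:2} with no conditioned coordinates (so that $\varrho=\rho=+\infty$), and then combining the representation \eqref{eq:Kolmogorov:u:5.1} with the event split $\{\varpi_S=\tau\}\cup\{\varpi_S<\tau\}$; the analogues of the events $E_2,E_3$ in that proof are vacuous when $m=0$. The near-terminal case $T-t_0<|p-q|^{1/3}$ is handled exactly as you describe, via the Lipschitz bound on $\ell$ and moment estimates for the SDE \eqref{eq:P}.

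There is, however, one bookkeeping slip. The form required by \eqref{eq:induction} is $C\,|p-q|^{\alpha}/\max(p,q)_{(1)}^{\eta\alpha}$, so with your choice $\alpha=1/12$ the denominator must carry the exponent $\eta/12$, not $\eta$. Since $\max(p,q)_{(1)}$ can be arbitrarily small, the bound $C\,|p-q|^{1/12}/\max(p,q)_{(1)}^{\eta}$ that you derive does \emph{not} imply $C'\,|p-q|^{1/12}/\max(p,q)_{(1)}^{\eta/12}$ (the inequality goes the wrong way). The fix, which the paper makes explicit at the very start of the proof of Proposition~\ref{prop:induction}, is to invoke Proposition~\ref{prop:coupling:2} with its free exponent set to $\eta/12$ rather than $\eta$; the coupling probability then reads $C\,|p-q|^{1/12}/\max(p,q)_{(1)}^{\eta/12}$, which is exactly $C\,|p-q|^{\alpha}/\max(p,q)_{(1)}^{\eta\alpha}$ with $\alpha=1/12$. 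With this adjustment your argument goes through.
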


\begin{proposition}
\label{prop:induction}
Within the framework of Theorem \ref{main:holder}, assume that there exists an integer $m \in \{1,\cdots,d-1\}$ such that 
${\mathscr P}_{m}$ holds true. Then, ${\mathscr P}_{m+1}$ holds true. 
\end{proposition}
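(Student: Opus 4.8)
The plan is to prove Proposition \ref{prop:induction} by a conditioning-and-coupling argument, using Proposition \ref{prop:representation} to reduce the $(m{+}1)$-dimensional oscillation of $u$ to an $m$-dimensional one. Fix $p,q\in{\mathcal S}_{d-1}$ and set $r:=|p-q|$. After reordering coordinates we may assume $\max(p,q)_{(m+1)}=\max(p_{m+1},q_{m+1})$, and we must bound $|u(t,p)-u(t,q)|$ by $C\,r^{\alpha'}/\max(p,q)_{(m+1)}^{\eta\alpha'}$ for some new exponent $\alpha'=\alpha'(\eta,m+1)$. When $r$ is not small (say $r^{1/3}\ge T-t$, or $\max(p,q)_{(m+1)}$ is itself comparable to $r$, or $|p^\circ|_1>1/2$ in the relevant chart) the estimate is trivial from the $L^\infty$ bound $\|u\|_\infty\le\|\ell\|_\infty+T\|h\|_\infty$ in Theorem \ref{main:holder}; so the work is in the complementary regime, where we may also assume, after a further change of chart if necessary, that the $m{+}1$ largest of the $\max(p_i,q_i)$ correspond to a block of coordinates that stays bounded away from $0$ and whose complement has small mass $|p^\circ|_1\le 1/2$.

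First I would apply Proposition \ref{prop:representation} with the splitting $d=m+(d-m)$, choosing as the ``$\circ$''-coordinates the $m$ indices carrying the smallest values of $\max(p_i,q_i)$ (equivalently, the complement of a fixed block of size $d-m$ on which both $p$ and $q$ are bounded below), so that $X=(X^1,\dots,X^d)$ solving \eqref{eq:Xti:2} has the law \eqref{eq:identity:law}. The representation \eqref{eq:Kolmogorov:u:5.1:b} then lets me write $u(t,p)-u(t,q)$ as the difference of two expectations over the coupled system of Proposition \ref{prop:coupling:2}, with $(P^\circ,P)$ started from the chart coordinates of $p$ and $(Q^\circ,Q)$ an independent-law copy started from those of $q$. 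On the event where the diffusions have coalesced in the reduced variables before any of the bad stopping times $\rho,\varrho$ of \eqref{eq:stopping:times} — i.e.\ before the $\circ$-mass grows past $3/4$ or the control $|P^\circ-Q^\circ|\le|\widetilde P-\widetilde Q|$ fails — the strong Markov property together with the induction hypothesis ${\mathscr P}_m$ applied to the reduced $(d-m)$-dimensional equation (whose coefficients are the $\widetilde b,\widetilde b^\circ$ of Proposition \ref{prop:representation}, bounded in terms of $(\|b_i\|_\infty)$, $(\|b_i^\circ\|_\infty)$ only) controls the remaining discrepancy by $C|P_\tau-Q_\tau|^{\alpha}/\max(P_\tau,Q_\tau)_{(m)}^{\eta\alpha}$, which on the coalescence event is $0$ in the $\widetilde P$-difference, so one is left with bounding the $\circ$-part oscillation of a reduced value function, again by ${\mathscr P}_m$. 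On the complementary event $\{\varpi_S<\tau\wedge\varrho\wedge\rho\}$ one uses the $L^\infty$ bound on $u$ and Proposition \ref{prop:coupling:2}, which gives probability at most $C r^{1/12}/\min_{i\in\ESdm}(\max(p_i,q_i))^{\eta}$; by our reduction this denominator is $\max(p,q)_{(m+1)}^{\eta}$ up to the fixed lower bound on the block, so this term is of the required form $C r^{1/12}/\max(p,q)_{(m+1)}^{\eta}$.

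The bookkeeping step is then to combine the two contributions: a term $\lesssim r^{\alpha}/\max(p,q)_{(m)}^{\eta\alpha}$ from ${\mathscr P}_m$ on the good event and a term $\lesssim r^{1/12}/\max(p,q)_{(m+1)}^{\eta}$ from the coupling on the bad event, and to check that their sum can be re-expressed as $C r^{\alpha'}/\max(p,q)_{(m+1)}^{\eta\alpha'}$ for a suitable $\alpha'\in(0,1)$ depending only on $\eta$ and $m$. This is an elementary but slightly delicate interpolation: one splits into the cases $\max(p,q)_{(m)}\ge \max(p,q)_{(m+1)}^{\theta}$ and $\max(p,q)_{(m)}<\max(p,q)_{(m+1)}^{\theta}$ for a threshold $\theta\in(0,1)$ to be optimized, using in the second case that $\max(p,q)_{(m+1)}\ge\max(p,q)_{(m)}$ together with the trivial bound $\max(p,q)_{(m+1)}\ge c$ from the block lower bound, and in the first case simply $\max(p,q)_{(m)}^{-\eta\alpha}\le\max(p,q)_{(m+1)}^{-\eta\alpha\theta}$; balancing the resulting powers of $r$ yields $\alpha'$. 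One also has to propagate the dependence of the thresholds: $\kappa_0$ for ${\mathscr P}_{m+1}$ is the maximum of the $\kappa_0$ from ${\mathscr P}_m$ and the one from Proposition \ref{prop:coupling:2}, both depending only on $\varepsilon,\eta$ (and $m$, and $(\|b_i\|_\infty)$), and $C$ absorbs everything else.

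The main obstacle, I expect, is handling the change of local chart cleanly: Proposition \ref{prop:representation} and Proposition \ref{prop:coupling:2} are stated for a fixed ordering in which the last $d-m$ coordinates are the ``non-degenerate'' ones and $|p^\circ|_1\le 1/2$, whereas the position of the $m{+}1$ largest $\max(p_i,q_i)$ is arbitrary; one must argue that after permuting coordinates (and possibly passing to the $p^{-l}$ chart of \S\ref{subse:Wright--Fisher:model}) one can always arrange the hypotheses of Proposition \ref{prop:coupling:2}, and that the reduced coefficients and the reduced value function still satisfy the hypotheses needed to invoke ${\mathscr P}_m$ — in particular that $u$ restricted to the reduced problem is again of the form \eqref{eq:Kolmogorov:u:5.1:b} for a $(d-m)$-dimensional linear Kimura equation with bounded continuous coefficients, which is exactly what Proposition \ref{prop:representation} guarantees. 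A secondary technical point is the treatment of the stopping time $\sigma$ (control entering the $\delta/4$-ball) and of the requirement $|p-q|<\delta^2/(64\sqrt d)$ in Proposition \ref{prop:coupling:2}: for $r$ above that scale the estimate is again trivial from $\|u\|_\infty$, so one only ever uses the coupling in the small-$r$ regime, but this case division must be spelled out consistently with the one used to dispatch the ``$r$ not small'' cases above.
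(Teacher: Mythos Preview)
Your overall architecture (representation via Proposition~\ref{prop:representation}, coupling via Proposition~\ref{prop:coupling:2}, splitting into good/bad events) matches the paper's, but there are two genuine gaps that make the argument as written fail.

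\textbf{First gap: the telescoping reduction to $p^\circ=q^\circ$ is missing.} Before invoking the coupling, the paper performs a telescoping decomposition in the first $m$ coordinates (see \eqref{eq:decomposition}): each single-coordinate increment is controlled directly by ${\mathscr P}_m$ (since $\max(p,q)_{(m)}\ge |p_i-q_i|$), and one is left comparing two points that \emph{agree} on the first $m$ coordinates. Only then are the processes $(P^\circ,P)$ and $(Q^\circ,Q)$ launched, so that $P^\circ_{t_0}=Q^\circ_{t_0}$. This is essential: on the coalescence event the residual difference is $|P^\circ_{\varpi_S}-Q^\circ_{\varpi_S}|=|P^\circ_{\varpi_S}-P^\circ_{t_0}-(Q^\circ_{\varpi_S}-Q^\circ_{t_0})|$, which is then small of order $(\varpi_S-t_0)^{1/3}$ by Kolmogorov--Centsov. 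Without forcing $p^\circ=q^\circ$, the $\circ$-difference is of order $|p^\circ-q^\circ|$ and does not vanish, so the ``good event'' yields nothing useful.

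\textbf{Second gap: the denominator on the good event is random, not $\max(p,q)_{(m)}$.} After applying ${\mathscr P}_m$ at the stopping time, the bound is
\[
|u(\varpi_S,X_{\varpi_S})-u(\varpi_S,Y_{\varpi_S})|\le \frac{C\,|P^\circ_{\varpi_S}-Q^\circ_{\varpi_S}|^\alpha}{\max(X_{\varpi_S},Y_{\varpi_S})_{(m)}^{\eta\alpha}},
\]
where the denominator is the $m$-th order statistic of the \emph{full $d$-dimensional} vectors $X_{\varpi_S},Y_{\varpi_S}$, not of the reduced $(d-m)$-dimensional $P,Q$ and not $\max(p,q)_{(m)}$. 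Your interpolation between $\max(p,q)_{(m)}$ and $\max(p,q)_{(m+1)}$ at the level of initial data does not address this. The paper handles it by a dichotomy: either $\max(X_{\varpi_S},Y_{\varpi_S})_{(m)}$ is realised among the first $m$ coordinates, in which case it dominates $|P^\circ_{\varpi_S}-Q^\circ_{\varpi_S}|$ and the fraction collapses to $|P^\circ-Q^\circ|^{(1-\eta)\alpha}$; or it is dominated by some $\max(X^l_{\varpi_S},Y^l_{\varpi_S})$ with $l>m$, in which case one needs $\mathbb{E}[(X^l_{\varpi_S})^{-2\eta\alpha}(\varpi_S-t_0)^{\alpha/2}]\le C$. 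The latter is obtained by an It\^o expansion of $(X^l_t)^{-2\eta\alpha}(t-t_0)^{\alpha/2}$, exploiting that the drift of $X^l$ is bounded below by $\kappa$ near zero (this is where the inward-pointing forcing $\varphi$ is used). Combined with the $1/3$-H\"older bound on $P^\circ-Q^\circ$, this yields the power $|p-q|^{\alpha/36}$ on the good event.

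A related conceptual point: ${\mathscr P}_m$ is \emph{not} a statement about a reduced $(d-m)$-dimensional equation or a ``reduced value function''; it is always an estimate for the same $u$ on ${\mathcal S}_{d-1}$, with the denominator being the $m$-th order statistic of the full $d$-vector. The induction is on the index $m$ of that order statistic, not on the ambient dimension.
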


Notice that Proposition 
\ref{prop:induction}
implies Theorem \ref{main:holder}: It suffices to choose $\eta=1/2$
in ${\mathscr P}_{d}$, noticing that $\max(p,q)_{(d)}$ is necessarily greater than $1/d$. Below, we directly prove Proposition \ref{prop:induction}. The proof of Proposition \ref{prop:induction:step:0} is completely similar (somehow, everything works as if we had a property ${\mathscr P}_{0}$). 
 
\begin{proof}
For 
some $m \in \{1,\cdots,d-1\}$
and
some $\eta \in (0,1/4)$, we consider $\kappa_{0}$ as being the maximum 
of $\kappa_{0}$ given by Proposition
 \ref{prop:coupling:2}
with $\eta$ replaced by $\eta/12$ therein
and of $\kappa_{0}$ given by 
${\mathscr P}_{m}$ with $\eta$ therein. 
Also, we consider $\alpha$ given by ${\mathscr P}_{m}$ and $\kappa \geq \kappa_{0}$.
We then assume that ${\mathscr P}_{m}$ holds true and we take $p,q \in {\mathcal S}_{d-1}$ together with $t_{0}\in [0,T]$. 
Without any loss of generality, we may assume that 
\begin{equation}
\label{eq:1/2md}
\max(p_1,q_1) \leq \max(p_2,q_2) \leq \cdots \leq \max(p_m,q_m) \leq \frac{1}{2m d}. 
\end{equation}
Observe that if the last inequality is not satisfied, the bound \eqref{eq:induction} at rank $m+1$ (at $t_{0}$ instead of $t$) is a straightforward consequence of the bound at rank $m$ with $(2md)^{\eta \alpha} C$ instead of $C$ as constant. 
For sure, we may also assume 
that
\begin{equation}
\label{eq:increasing:order}
\max(p_1,q_1) \leq \max(p_2,q_2) \leq \cdots \leq \max(p_d,q_d),
\end{equation} 
in which case $\max(p_d,q_d)$ is the largest element in the sequence 
$\max(p_i,q_i)$. In particular, at least $p_d$ or $q_d$ is above $1/d$ (since one of the two elements  dominates all the other elements in the family $(p_{1},\cdots,p_{d},q_{1},\cdots,q_{d})$). Hence, we may assume that $\min(p_d,q_d) \geq 1/(2d)$. Again, the proof is over if not since $\vert p-q \vert$ is then necessarily larger than $1/(2d)$: Tuning $C$ accordingly, \eqref{eq:induction} follows from the fact that $u$ is bounded, see Proposition 
\ref{prop:ito:formula}. By the same argument, we may assume that $\vert p-q\vert <  \delta^2/(128 d^{3/2})$. 
\vskip 5pt

\textit{First Step.}
Clearly, 
\begin{equation}
\label{eq:decomposition}
\begin{split}
&\vert u(t_{0},p_1,\cdots,p_d) - u(t_{0},q_1,\cdots,q_d) \vert
\\
&\leq \sum_{i \in \ESm}
\Bigl(
\bigl\vert u\bigl(t_{0},q_{1},\cdots,q_{i-1},p_i,\cdots,p_{d-1},1 - q_1 - \cdots q_{i-1} - p_i - \cdots- p_{d-1}\bigr) 
\\
&\hspace{45pt}- u(t_{0},q_1,\cdots,q_i,p_{i+1},\cdots,1 - q_1 - \cdots q_{i} - p_{i+1} - \cdots - p_{d-1}) \bigr\vert \Bigr)
\\
&\hspace{15pt} + 
\bigl\vert u\bigl(t_{0},q_{1},\cdots,q_{m},p_{m+1},\cdots,p_{d-1},1 - q_1 - \cdots q_{m} - p_{m+1} - \cdots- p_{d-1}\bigr)
\phantom{\Bigr)}
\\
&\hspace{45pt}- u(t_{0},q_1,\cdots,q_m,q_{m+1},\cdots) \bigr\vert,
\end{split}
\end{equation}
with the obvious convention that 
$(q_{1},\cdots,q_{i-1},p_i,\cdots,p_{d-1},1 - q_1 - \cdots q_{i-1} - p_i - \cdots- p_{d-1})=(p_{1},\cdots,p_{d})$
when $i=1$. 
Notice from 
\eqref{eq:1/2md} 
and
from the bound $\min(p_d,q_d) \geq 1/(2d)$
that, for $i \in \ESm$, $q_1 + \cdots + q_i \leq 1/(2d)$ and $p_i + \cdots + p_{d-1} \leq 1 - 1/(2d)$, which fully justifies 
the fact that all the entries above are non-negative.
Obviously, by the induction assumption, for any $i \in \ESm$, 
\begin{equation*}
\begin{split}
&\sum_{i \in \ESm}
\Bigl(
\bigl\vert u\bigl(t_{0},q_{1},\cdots,q_{i-1},p_i,\cdots,p_{d-1},1 - q_1 - \cdots q_{i-1} - p_i - \cdots- p_{d-1}\bigr) 
\\
&\hspace{15pt}- u(t_{0},q_1,\cdots,q_i,p_{i+1},\cdots,1 - q_1 - \cdots q_{i} - p_{i+1} - \cdots - p_{d-1}) \bigr\vert \Bigr)
\\
&\leq C \sum_{i \in \ESm} \frac{\vert p_{i} - q_{i}\vert^{\alpha}}{\max(p,q)^{\eta \alpha}_{(m)}}
\leq C \vert p-q \vert^{(1-\eta) \alpha}, \phantom{\Bigr)}
\end{split}
\end{equation*}
where we used the fact that 
$\max(p,q)_{(m)}
=\max(p_{m},q_{m})$
and where we modified the value of $C$ in the last term. 
The conclusion is that, in 
\eqref{eq:decomposition}, we can focus on the last term. Equivalently,  we can assume that $p_i=q_i$, for $i=1,\cdots,m$, 
provided we replace 
\eqref{eq:increasing:order}
by (which is weaker, but which is the right assumption here since there is no way to compare properly the last coordinates in 
the last term of \eqref{eq:decomposition})
\begin{equation}
\label{eq:increasing:order:b}
\max_{i \in \ESm} p_{i} = 
\max_{i \in \ESm} q_{i} \leq \max(p_{m+1},q_{m+1}) \leq \cdots \leq \max(p_{d-1},q_{d-1}),
\end{equation} 
with $q_{d} \geq 1/(2d)$.
We now invoke Proposition
\ref{prop:ito:formula} to represent $u(t,p_{1},\cdots,p_{d})$ and $u(t,q_{1},\cdots,q_{d})$ through the respective solutions to 
\eqref{eq:P} together with Proposition  
\ref{prop:representation} above which 
provides another representation for the process used 
in the Kolmogorov formula
\eqref{eq:Kolmogorov:u:5.1}. 
In particular, we can find  
$(P^{\circ,1},\cdots,P^{\circ,m},P^1,\cdots,P^{d-m})$ as in the statement of Proposition
\ref{prop:coupling:2}
such that the tuple
$(P^{\circ,1},\cdots,P^{\circ,m},\varsigma^2(P^\circ) P^{1},\cdots,\varsigma^2(P^\circ) P^{d-m})$ has the same law
as 
the solution to 
\eqref{eq:P}
when starting from 
$p$ at time $t_{0}$,
and, in a similar manner, 
$(Q^{\circ,1},\cdots,Q^{\circ,m},Q^1,\cdots,Q^{d-m})$ such that the tuple
$(Q^{\circ,1},\cdots,Q^{\circ,m},\varsigma^2(Q^\circ) Q^{1},\cdots,\varsigma^2(Q^\circ) Q^{d-m})$ has the same law
as  
the solution to 
\eqref{eq:P}
when starting from 
$q$ at time $t_{0}$. In particular, we have
$(P^{\circ,1}_{t_{0}},\cdots,P^{\circ,m}_{t_{0}})
=
(Q^{\circ,1}_{t_{0}},\cdots,Q^{\circ,m}_{t_{0}})
=p^\circ$, with $p^\circ=
(p_{1},\cdots,p_{m})$, 
$(P^{1}_{t_{0}},\cdots,P^{d-m}_{t_{0}})
=
\varsigma^{-2}(p^\circ)(p_{m+1},\cdots,p_{d})$
and
$(Q^{1}_{t_{0}},\cdots,Q^{d-m}_{t_{0}})
=
\varsigma^{-2}(p^\circ)(q_{m+1},\cdots,q_{d})$.  

Then, for any deterministic time $S \in [t_{0},T]$, using the same notation as in the statement of Proposition 
\ref{prop:coupling:2},
\begin{equation}
\label{eq:kolmogorov:final:coupling}
\begin{split}
&u(t_{0},p_1,\cdots,p_d) 
= {\mathbb E} \Bigl[ u \Bigl(\varpi_{S},P^{\circ,1}_{\varpi_{S}},\cdots,P^{\circ,m}_{\varpi_{S}}, 
\varsigma^2(P_{\varpi_{S}}^{\circ}) P_{\varpi_{S}}^{1},
\cdots,
\varsigma^2(P_{\varpi_{S}}^{\circ}) P_{\varpi_{S}}^{d-m}
\Bigr) \Bigr]
 + O(S-t_{0}),
\\
&u(t_{0},q_1,\cdots,q_d) 
 ={\mathbb E} \Bigl[ u \Bigl(\varpi_{S},Q^{\circ,1}_{\varpi_{S}},\cdots,Q^{\circ,m}_{\varpi_{S}}, 
\varsigma^2(Q_{\varpi_{S}}^{\circ}) Q_{\varpi_{S}}^{1},
\cdots,
\varsigma^2( Q_{\varpi_{S}}^{\circ}) Q_{\varpi_{S}}^{d-m}
\Bigr) \Bigr]
 + O(S-t_{0}),
\end{split}
\end{equation}
where $\vert O(r) \vert \leq \|h \|_{\infty} r$. 
To make it simpler, we also let
(the notation $(X_{t})_{t_{0} \le t \le T}$
below is rather abusive since 
$(X_{t})_{t_{0} \le t \le T}$
also denotes the solution to 
\eqref{eq:Xti:2}, but, in fact, Proposition 
\ref{prop:representation} says both $(X_{t})_{t_{0} \le t \le T}$'s have the same law)
\begin{equation*}
\begin{split}
&X_{t}=
\Bigl(P^{\circ,1}_{t},\cdots,P^{\circ,m}_{t}, 
\varsigma^2(P_{t}^{\circ}) P_{t}^{1},
\cdots,
\varsigma^2(P_{t}^{\circ}) P_{t}^{d-m}
\Bigr),
\\
&Y_{t}=
\Bigl(Q^{\circ,1}_{t},\cdots,Q^{\circ,m}_{t}, 
\varsigma^2(Q_{t}^{\circ}) Q_{t}^{1},
\cdots,
\varsigma^2(Q_{t}^{\circ}) Q_{t}^{d-m}
\Bigr), \quad t \in [t_{0},T].
\end{split}
\end{equation*}
We then denote by $(\max(X_{t},Y_{t})_{(1)},\cdots,\max(X_{t},Y_{t})_{(d)})$ the order statistic of 
the $d$-dimensional tuple 
$(\max(X_{t},Y_{t})_{1},\cdots,\max(X_{t},Y_{t})_{d})$.
\vskip 5pt

\textit{Second Step.} We first assume 
that $S:=t_{0}+\vert p-q \vert^{1/3} \leq T$. 
The strategy is to split into four events the set $\Omega$ over which the expectations 
appearing in 
\eqref{eq:kolmogorov:final:coupling} are computed. 
\vspace{5pt}

\textbf{1st event.} 
On the event $E_{1}:=\{ \varpi_{S}=\tau \} \subset \{(P_{\varpi_{S}}^{1},\cdots,P_{\varpi_{S}}^{d-m}) =
 (Q_{\varpi_{S}}^{1},\cdots,Q_{\varpi_{S}}^{d-m})\}$, we have, by the induction assumption and from the Lipschitz property of $\varsigma^2$, 
\begin{align}
\bigl\vert u\bigl( \varpi_{S}, X_{\varpi_{S}} \bigr) -
u\bigl( \varpi_{S}, Y_{\varpi_{S}} \bigr)  \bigr\vert
&= \Bigl\vert u \Bigl(\varpi_{S},P^{\circ,1}_{\varpi_{S}},\cdots,P^{\circ,m}_{\varpi_{S}}, 
\varsigma^2(P_{\varpi_{S}}^{\circ}) P_{\varpi_{S}}^{1},
\cdots,
\varsigma^2(P_{\varpi_{S}}^{\circ})  P_{\varpi_{S}}^{d-m}
\Bigr) \nonumber
\\
&\hspace{15pt} - 
u \Bigl(\varpi_{S},Q^{\circ,1}_{\varpi_{S}},\cdots,Q^{\circ,m}_{\varpi_{S}}, 
\varsigma^2(Q_{\varpi_{S}}^{\circ}) Q_{\varpi_{S}}^{1},
\cdots,
\varsigma^2(Q_{\varpi_{S}}^{\circ})  Q_{\varpi_{S}}^{d-m}
\Bigr)\Bigr\vert \nonumber
\\
&\leq \frac{C}{\max\bigl(X_{\varpi_{S}},Y_{\varpi_{S}}\bigr)_{(m)}^{\eta \alpha}} \bigl\vert P^\circ_{\varpi_{S}} - Q_{\varpi_{S}}^\circ \bigr\vert^{\alpha},
\label{eq:1st:event}
\end{align}
the constant $C$ being allowed to change from line to line provided that it only depends on the parameters listed in the induction assumption. 

Assume that $\max(X_{\varpi_{S}},Y_{\varpi_{S}})_{(m)} < 
\max(X_{\varpi_{S}}^{l},Y_{\varpi_{S}}^{l})$
 for any $l=m+1,\cdots,d$, then necessarily 
$\max(X_{\varpi_{S}},Y_{\varpi_{S}})_{(m)} \geq \max(X_{\varpi_{S}}^i,Y_{\varpi_{S}}^i)$ for any $i=1,\cdots,m$. 
We then obtain
$C \vert P^\circ_{\varpi_{S}} - Q_{\varpi_{S}}^\circ \vert^{(1-\eta)\alpha}$
 as upper bound for the right-hand side of 
\eqref{eq:1st:event}. 
Therefore,
we can focus on the complementary 
event when 
$\max(X_{\varpi_{S}},Y_{\varpi_{S}})_{(m)} \geq 
\max(X_{\varpi_{S}}^{l},Y_{\varpi_{S}}^{l})$
 for some $l=m+1,\cdots,d$. 
 We obtain 
\begin{equation*}
\begin{split}
\bigl\vert u\bigl( \varpi_{S}, X_{\varpi_{S}} \bigr) -
u\bigl( \varpi_{S}, Y_{\varpi_{S}} \bigr)  \bigr\vert
&\leq 
 C \vert P^\circ_{\varpi_{S}} - Q_{\varpi_{S}}^\circ \vert^{(1-\eta)\alpha} 
  + \sum_{l=m+1}^{d}\frac{C}{\max\bigl(X_{\varpi_{S}}^{l},Y_{\varpi_{S}}^{l}\bigr)^{\eta \alpha}} \bigl\vert P_{\varpi_{S}}^{\circ} -Q_{\varpi_{S}}^{\circ} \bigr\vert^{\alpha},
\end{split}
\end{equation*}
which we rewrite into (recalling that $p_1=q_1$, $\cdots$, $p_m=q_m$)
\begin{equation}
\label{eq:1stevent}
\begin{split}
\bigl\vert u\bigl( \varpi_{S}, X_{\varpi_{S}} \bigr) -
u\bigl( \varpi_{S}, Y_{\varpi_{S}} \bigr)  \bigr\vert
&\leq
C
\bigl\vert 
P_{\varpi_{S}}^{\circ} -
P_{t_{0}}^{\circ} - \bigl( 
Q_{\varpi_{S}}^{\circ}
-
Q_{t_{0}}^{\circ} \bigr) \bigr\vert^{(1-\eta) \alpha}
\\
&\hspace{10pt} +
 \sum_{l=m+1}^{d}\frac{C}{\max\bigl(X_{\varpi_{S}}^{l},Y_{\varpi_{S}}^{l}\bigr)^{\eta \alpha}} \bigl\vert 
P_{\varpi_{S}}^{\circ} -
P_{t_{0}}^{\circ} - \bigl( 
Q_{\varpi_{S}}^{\circ}
-
Q_{t_{0}}^{\circ} \bigr) \bigr\vert^{\alpha}. 
\end{split}
\end{equation}
%
%
%
In order to upper bound $C/(\max(X_{\varpi_{S}}^{l},Y_{\varpi_{S}}^{l}))^{\eta \alpha}$, we expand 
$((X_{t}^{l})^{- 2 \eta \alpha})_{t_{0} \leq t \leq \varpi_{S}}$ by It\^o's formula, for $l= m+1,\cdots,d$. To do so, 
we recall that $(X^1,\cdots,X^d)$ has the same law as the solution of \eqref{eq:Xti:2}. It is an It\^o process with bounded coefficients (in terms of 
$(\|b_{i}\|_{\infty})_{i=1,\cdots,d}$ and $(\|b_i ^{\circ}\|_{\infty})_{i=1,\cdots,d}$). 
Importantly, the drift of the $l$th coordinate is lower bounded by $\kappa - \|b_l^{\circ}\|_{\infty} p_{l}$ when $p_{l} \leq \delta$. 
Recalling that $\kappa_{0} \geq 2 \varepsilon^2$, we easily deduce that, for a new value of $C$ (say $C \geq 1$) whose value is allowed to change from line to line (see also footnote \ref{foot:notation:geq} for the meaning of the inequality right below),
\begin{equation*}
\begin{split}
&\ud \Bigl( \bigl(X_{t}^{l} \bigr)^{-2\eta \alpha}  \bigl(t-t_{0}\bigr)^{\alpha/2}
\Bigr) 
\\
&\leq \Bigl( C \bigl(t-t_{0}\bigr)^{\alpha/2-1} (X_{t}^l)^{-2\eta \alpha}  -  \eta \alpha
\bigl( 2 \kappa
- \varepsilon^2 (1+ 2 \eta \alpha)
\bigr)
  \bigl(t-t_{0}\bigr)^{\alpha/2}\bigl(X_{t}^l\bigr)^{-(1+2\eta \alpha)}
\Bigr) \ud t + \ud m_{t}
\\
&\leq \Bigl( C \bigl(t-t_{0}\bigr)^{\alpha/2-1} (X_{t}^l)^{-2\eta \alpha}  -   C^{-1}
  \bigl(t-t_{0}\bigr)^{\alpha/2}\bigl(X_{t}^l\bigr)^{-(1+2\eta \alpha)}
\Bigr) \ud t + \ud m_{t}
\\
&= \bigl(t-t_{0}\bigr)^{\alpha/2} \bigl(X_{t}^l\bigr)^{-2\eta \alpha}   \Bigl( C \bigl(t-t_{0}\bigr)^{-1} - C^{-1} \bigl(X_{t}^l \bigr)^{-1}
\Bigr) \ud t +  \ud m_{t}
\\
&\leq \bigl(t-t_{0}\bigr)^{\alpha/2} (X_{t}^l)^{-2\eta \alpha}   \Bigl( C \bigl(t-t_{0}\bigr)^{-1} - C^{-1} \bigl(X_{t}^l\bigr)^{-1}
\Bigr) {\mathbf 1}_{\{X_{t}^{l} \geq (t-t_{0})/C^2\}}\ud t 
 + \ud m_{t}
\\
&\leq C \bigl(t-t_{0}\bigr)^{\alpha/2- 1 - 2\eta \alpha} \ud t + \ud m_{t}, 
\end{split}
\end{equation*}
where $(m_{t})_{t \geq 0}$ is a local martingale. 
Recall that $\eta<1/4$ (see the remark at the very beginning of the proof), we get 
by a standard localization argument:
\begin{equation*}
{\mathbb E} \bigl[ (X_{\varpi_{S}}^{l})^{-2 \eta \alpha}  \bigl(\varpi_{S}-t_{0})^{\alpha/2} \bigr] \leq C. 
\end{equation*}
Using the fact that the coefficients 
in the second equation of 
\eqref{eq:new:system:prop:5:2}
are bounded, we deduce that, from Kolmogorov--Centsov theorem,
$P^\circ- Q^\circ$ has a version that is $1/3$-H\"older continuous and the H\"older constant $\Lambda$
has a finite fourth moment, which we may assume to be bounded by $C$. Hence, 
\begin{equation*}
\begin{split}
{\mathbb E} \biggl[ \frac{\vert P_{\varpi_{S}}^\circ - Q_{\varpi_{S}}^\circ - (P_{t_{0}}^\circ - Q_{t_{0}}^\circ) \vert^{2 \alpha}}{(\varpi_{S} - t_{0})^{\alpha/2}} \biggr] 
&\leq {\mathbb E} \bigl[ \Lambda^{2\alpha}\bigl( \varpi_{S} - t_{0}\bigr)^{\alpha/6} \bigr] \leq C {\mathbb E} \bigl[ \bigl( \varpi_{S} - t_{0}\bigr)^{\alpha/3} \bigr]^{1/2}. 
\end{split}
\end{equation*}
Similarly, 
\begin{equation*}
\begin{split}
{\mathbb E} \Bigl[ {\vert P_{\varpi_{S}}^\circ - Q_{\varpi_{S}}^\circ - (P_{t_{0}}^\circ - Q_{t_{0}}^\circ) \vert^{(1-\eta) \alpha}} \Bigr] 
&\leq {\mathbb E} \bigl[ \Lambda^{(1-\eta) \alpha}\bigl( \varpi_{S} - t_{0}\bigr)^{(1-\eta)\alpha/3} \bigr] 
\\
&\leq C {\mathbb E} \bigl[ \bigl( \varpi_{S} - t_{0}\bigr)^{2(1-\eta)\alpha/3} \bigr]^{1/2}. 
\end{split}
\end{equation*}
From 
\eqref{eq:1stevent}, we get (use the condition $\eta <1/4$ to get the last bound)
\begin{align}
{\mathbb E} \Bigl[ \bigl\vert u\bigl( \varpi_{S}, X_{\varpi_{S}} \bigr) -
u\bigl( \varpi_{S}, Y_{\varpi_{S}} \bigr)  \bigr\vert {\mathbf 1}_{E_{1}} \Bigr] 
 &\leq 
 C {\mathbb E} \bigl[ \bigl( \varpi_{S} - t_{0}\bigr)^{2(1-\eta)\alpha/3} \bigr]^{1/2}
 + C {\mathbb E} \bigl[ \bigl( \varpi_{S} - t_{0}\bigr)^{\alpha/3} \bigr]^{1/4} 
 \nonumber
\\
&\leq  
C \vert p-q\vert^{\alpha/36} + C \vert p-q\vert^{(1-\eta) \alpha/9} \leq 
C \vert p-q\vert^{\alpha/36}. 
\label{eq:conclusion:1stevent}
\end{align}
\vskip 4pt

\textbf{2nd event.} On the event 
$E_{2}:= \{ \vert P^\circ_{\varpi_{S}} - Q^\circ_{\varpi_{S}} \vert \geq \vert \widetilde P_{\varpi_{S}} -
\widetilde Q_{\varpi_{S}} \vert \} \supset \{ \varpi_{S} = \varrho\}$ (with the same notation as in the statement of 
Proposition 
\ref{prop:coupling:2}), we have
\begin{equation*}
\begin{split}
\vert X_{\varpi_{S}} - Y_{\varpi_{S}} \vert^2
&= \sum_{i \in \ESm}\bigl\vert P^{\circ,i}_{\varpi_{S}} - Q^{\circ,i}_{\varpi_{S}}  
\bigr\vert^2 + \sum_{i \in \ESdm} \bigl\vert P^{i}_{\varpi_{S}} - Q^{i}_{\varpi_{S}}  
\bigr\vert^2
\\
&=\sum_{i \in \ESm} \bigl\vert P^{\circ,i}_{\varpi_{S}} - Q^{\circ,i}_{\varpi_{S}}  
\bigr\vert^2 + \sum_{i \in \ESdm} 
\bigl\vert \widetilde P^{i}_{\varpi_{S}} + \widetilde Q^{i}_{\varpi_{S}}  
\bigr\vert^2
\bigl\vert \widetilde P^{i}_{\varpi_{S}} - \widetilde Q^{i}_{\varpi_{S}}  
\bigr\vert^2
 \leq 5 \vert P^\circ_{\varpi_{S}} - Q^\circ_{\varpi_{S}} \vert^2. 
\end{split}
\end{equation*}
Modifying the value of the constant $C$ in \eqref{eq:induction}, we deduce that 
\begin{equation*}
\begin{split}
\bigl\vert u\bigl( \varpi_{S}, X_{\varpi_{S}} \bigr) -
u\bigl( \varpi_{S}, Y_{\varpi_{S}} \bigr)  \bigr\vert
&= \Bigl\vert u \Bigl(\varpi_{S},P^{\circ,1}_{\varpi_{S}},\cdots,P^{\circ,m}_{\varpi_{S}}, 
\varsigma^2(P_{\varpi_{S}}^\circ) P_{\varpi_{S}}^{1},
\cdots,
\varsigma^2(P_{\varpi_{S}}^\circ)  P_{\varpi_{S}}^{d-m}
\Bigr)
\\
&\hspace{15pt} - 
u \Bigl(\varpi_{S},Q^{\circ,1}_{\varpi_{S}},\cdots,Q^{\circ,m}_{\varpi_{S}}, 
\varsigma^2(Q_{\varpi_{S}}^{\circ}) Q_{\varpi_{S}}^{1},
\cdots,
\varsigma^2(Q_{\varpi_{S}}^{\circ})  Q_{\varpi_{S}}^{d-m}
\Bigr)\Bigr\vert 
\\
&\leq \frac{C}{\max\bigl(X_{\varpi_{S}},Y_{\varpi_{S}}\bigr)_{(m)}^{\eta \alpha}} \bigl\vert P^\circ_{\varpi_{S}} - Q_{\varpi_{S}}^\circ \bigr\vert^{\alpha},
\end{split}
\end{equation*}
which is the same as 
\eqref{eq:1st:event}. Therefore, we get the same conclusion as in the 
first step, see \eqref{eq:conclusion:1stevent}:
\begin{equation}
\label{eq:conclusion:2ndevent}
\begin{split}
{\mathbb E} \Bigl[ \bigl\vert u\bigl( \varpi_{S}, X_{\varpi_{S}} \bigr) -
u\bigl( \varpi_{S}, Y_{\varpi_{S}} \bigr)  \bigr\vert {\mathbf 1}_{E_{2}} \Bigr] 
 &\leq 
C \vert p-q\vert^{\alpha/36}. 
\end{split}
\end{equation}\vskip 4pt

\textbf{3rd event.}
We now consider the event $E_{3}:=\{ \sup_{t \in [t_{0},S]} \vert P_{t}^\circ \vert_{1} \geq \tfrac34\} \supset \{ \varpi_{S} = \rho\}$, where we recall 
from 
\eqref{eq:1/2md}
that $\vert P_{t_{0}}^\circ \vert_{1} = \sum_{i=1}^m  p_{i} \leq \tfrac1{2d} \leq \tfrac12$. 
Obviously ({since the SDE for $P^\circ$ has bounded coefficients}), there exists a constant $c$ such that ${\mathbb E}[ \sup_{t \in [t_{0},S]}\vert P_{t}^\circ - P_{t_{0}}^\circ \vert^2 ] \leq c (S-t_{0}) = c \vert p-q \vert^{1/3}$. 
Therefore, using the fact that 
$u$ is bounded
together with Markov's inequality, we deduce that  
\begin{equation}
\label{eq:conclusion:3rdevent}
\begin{split}
&{\mathbb E} \Bigl[ \bigl\vert u\bigl( \varpi \wedge S, X_{\varpi \wedge S} \bigr) -
u\bigl( \varpi \wedge S, Y_{\varpi \wedge S} \bigr)  \bigr\vert {\mathbf 1}_{E_{3}} \Bigr]  
\leq C \vert p-q \vert^{1/3}.
\end{split}
\end{equation}
\vskip 4pt

\textbf{4th event.} Lastly, we let $E_{4}: = \{ \varpi_{S} < \tau \wedge \varrho \wedge \rho   \}$.  
Since $\varsigma^{-2}(p^\circ)=\varsigma^{-2}(q^\circ) \leq 2d$ 
and 
$\vert p-q \vert < \delta^2/(128 d^{3/2})$, we have $\vert 
\varsigma^{-2}(p^\circ)(p_{m+1},\cdots,p_{d})
-
\varsigma^{-2}(q^\circ)(q_{m+1},\cdots,q_{d})\vert < \delta^2/(64\sqrt{d})$. Hence, 
by Proposition \ref{prop:coupling:2}
(with $p$ therein being given by $\varsigma^{-2}(p^\circ)(p_{m+1},\cdots,p_{d})$
and similarly for $q$) and with $\eta$ therein being replaced by $\eta/12$, we know that, for a new value of $C$,  
\begin{equation*}
\begin{split}
{\mathbb P} \bigl( E_{4} \bigr)   \leq 
C  \frac{\vert p -q \vert^{1/12}}{\min_{i=m+1,\cdots,d}(\max(p_i,q_i))^{\eta/12}}
&\leq C  \frac{\vert p -q \vert^{1/12}}{\max(p_{m+1},q_{m+1})^{\eta/12}}
=C  \frac{\vert p -q \vert^{1/12}}{\max(p,q)_{(m+1)}^{\eta/12}},
\end{split} 
\end{equation*}
where the derivation of the last two terms follows from 
\eqref{eq:increasing:order:b} and from the condition $\max(p_{d},q_{d}) \geq 1/(2d)$.
Since the left-hand side is less than 1, we deduce that, for any exponent $\beta \in (0,1]$,
\begin{equation*}
{\mathbb P} \bigl( E_{4} \bigr)   \leq
{\mathbb P} \bigl( E_{4} \bigr)^{\beta}
 \leq 
 C^\beta  \frac{\vert p -q \vert^{\beta/12}}{\max(p,q)_{(m+1)}^{\beta \eta/12}},
\end{equation*}
and then, for a new value of $C$ possibly depending on $\beta$, 
\begin{equation}
\label{eq:conclusion:4thevent}
\begin{split}
&{\mathbb E} \Bigl[ \bigl\vert u\bigl( \varpi \wedge S, X_{\varpi \wedge S} \bigr) -
u\bigl( \varpi \wedge S, Y_{\varpi \wedge S} \bigr)  \bigr\vert {\mathbf 1}_{E_{4}} \Bigr] 
\leq 
C  \frac{\vert p -q \vert^{\beta/12}}{\max(p,q)_{(m+1)}^{\beta \eta/12}}.
\end{split}
\end{equation}

\vskip 4pt

\textbf{Conclusion.}
Here is now the conclusion of the second step. 
For the same $\eta$ and $\alpha$ as before, 
choose the largest $\beta \in (0,1]$ such that 
$\beta/12 \leq \alpha/36$.
Finally, let 
$\alpha'=\beta/12$. 
Deduce that, for a possibly new value of the constant $C$ therein, all the terms in 
\eqref{eq:conclusion:1stevent}, 
\eqref{eq:conclusion:2ndevent}, 
\eqref{eq:conclusion:3rdevent}
and 
\eqref{eq:conclusion:4thevent}
are bounded by 
$C \vert p -q \vert^{\alpha'} / \max(p,q)_{(m+1)}^{\eta\alpha' }$. 
Since $E_{1} \cup E_{2} \cup E_{3} \cup E_{4}=\Omega$, we deduce that 
\begin{equation*}
\begin{split}
&{\mathbb E} \Bigl[ \bigl\vert u\bigl( \varpi \wedge S, X_{\varpi \wedge S} \bigr) -
u\bigl( \varpi \wedge S, Y_{\varpi \wedge S} \bigr)  \bigr\vert  \Bigr] 
\leq 
C  \frac{\vert p -q \vert^{\alpha'}}{\max(p,q)_{(m+1)}^{\alpha' \eta}},
\end{split}
\end{equation*} 
which together with \eqref{eq:kolmogorov:final:coupling}, is ${\mathscr P}_{m+1}$, at least for initial conditions 
$(t_{0},p)$ and 
$(t_{0},q)$ such that $T-t_{0} \geq \vert p-q\vert^{1/3}$, 
and $\eta <1/4$. As for the requirement $\eta< 1/4$, this is not a hindrance: Since the denominator in \eqref{eq:induction} is less than 1, the exponent can be increased for free. 
As for the case
$T-t_{0} < \vert p-q\vert^{1/3}$, it is discussed in the next step. 

\vskip 4pt

\textit{Third Step.} It remains to handle the case  
that $t_{0}+\vert p-q \vert^{1/3} \geq T$. 
We then 
rewrite
\eqref{eq:kolmogorov:final:coupling} 
in the form
\begin{equation*}
\begin{split}
&u(t_{0},p_1,\cdots,p_d) 
= {\mathbb E} \Bigl[ \ell \Bigl(P^{\circ,1}_{T},\cdots,P^{\circ,m}_{T}, 
\varsigma^2(P_{T}^{\circ}) P_{T}^{1},
\cdots,
\varsigma^2(P_{T}^{\circ}) P_{T}^{d-m}
\Bigr) \Bigr]
 + O(T-t_{0}),
\\
&u(t_{0},q_1,\cdots,q_d) 
 ={\mathbb E} \Bigl[ \ell \Bigl(Q^{\circ,1}_{T},\cdots,Q^{\circ,m}_{T}, 
\varsigma^2(Q_{T}^{\circ}) Q_{T}^{1},
\cdots,
\varsigma^2( Q_{T}^{\circ}) Q_{T}^{d-m}
\Bigr) \Bigr]
 + O(T-t_{0}).
\end{split}
\end{equation*}
By 
\eqref{eq:identity:law}, each expectation may be (directly) rewritten by means of the 
solution to the stochastic differential equation \eqref{eq:P}. Since the latter has bounded coefficients and 
since $\ell$ is Lipschitz continuous, we deduce that 
\begin{equation*}
\vert u(t_{0},p) - u(t_{0},q) \vert 
\leq 
\vert \ell(p) - \ell(q) \vert
+
\vert u(t_{0},p) - \ell(p) \vert  
+
\vert u(t_{0},q) - \ell(q) \vert  
 \leq C \vert p-q \vert + C (T-t_{0})^{1/2},
\end{equation*}
for a constant $C$ that only depends on $\kappa$, 
$\| \ell \|_{1,\infty}$, 
$(\| b_{i} \|_{\infty})_{i \in \ES}$
and 
$(\| b_{i}^\circ \|_{\infty})_{i \in \ES}$.
Since $T-t_{0} \leq \vert p-q \vert^{1/3}$, 
this completes the proof of ${\mathscr P}_{m+1}$
in the remaining case when
$T-t_{0} \leq \vert p-q\vert^{1/3}$. 
%
%
\end{proof}

%


 \subsection{Proof of the coupling property}
\label{subse:proof:coupling:property}
We now prove 
Proposition \ref{prop:coupling:2} by means of a reflection coupling, inspired by 
\cite{ChenLi,LindvallRogers}. 
Throughout, we use the notation 
$Z_{t} := \widetilde{P}_{t} - \widetilde{Q}_{t}$, for $t \in [0,T]$.

 \subsubsection{Preliminary result}

\begin{proposition}
\label{prop:coupling:1}
Under the same assumption and notation as in the statement of 
Proposition \ref{prop:coupling:2},
%
%
there exists 
a constant $C$, only depending on $\delta$,  $\kappa$, $(\|b_{i}\|_{\infty})_{i \in \ES}$, 
$(\|b_i ^{\circ}\|_{\infty})_{i \in \ES}$
and $T$ such that (see footnote \ref{foot:notation:geq} for the meaning of the inequality right below), 
\begin{equation}
\label{eq:Z:coupling}
\begin{split}
\ud \vert Z_{t} \vert 
&\leq C \ud t + 
\sum_{i \in \ESdm} \frac{Z_{t}^i}{\vert Z_{t} \vert} \frac{\widetilde \beta^i_{t}}{\max(\widetilde P_{t}^i,\widetilde Q_{t}^i)} 
\ud t + \varepsilon \frac{\varsigma^{-1}(P_{t}^\circ) + \varsigma^{-1}(Q_{t}^\circ)}{2}
\sum_{i,j \in \ESdm : i \not = j}
\frac{Z_{t}^{i}}{\vert Z_{t} \vert} \ud \overline W_{t} ^{i,j}\widetilde P_{t}^j, 
\end{split}
\end{equation}
for $t \in [t_{0},\varpi \wedge T)$,
where for each $i \in \ESdm$, $(\beta^i_{t})_{0 \leq t \leq T}$ is a progressively measurable process that is dominated by 
$4 \| \widetilde b_{i} \|_{\infty}$.
\end{proposition}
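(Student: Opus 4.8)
The plan is to construct $(Q_t^\circ,Q_t)_{t_0\le t\le T}$ by a mirror (reflection) coupling adapted to the Wright--Fisher noise and then to expand $|Z_t|$ by It\^o's formula; the guiding phenomenon is that the degeneracy of the coefficients at the simplex boundary is benign in $\ud|Z_t|$, either carrying the right sign or being exactly what the singular term $\sum_{i\in\ESdm}(Z_t^i/|Z_t|)\widetilde\beta_t^i/\max(\widetilde P_t^i,\widetilde Q_t^i)$ in \eqref{eq:Z:coupling} is designed to absorb. First I would apply It\^o's formula to $\widetilde P_t^i=\sqrt{P_t^i}$, $i\in\ESdm$, along the first line of \eqref{eq:new:system:prop:5:2}. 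Since $\sum_{j\in\ESdm}P_t^j=1$, the bracket of $P^i$ equals $\varepsilon^2\varsigma^{-2}(P_t^\circ)P_t^i(1-P_t^i)\,\ud t$, so
\[
\ud\widetilde P_t^i=\Bigl(g_t^{P}(\widetilde P_t^i)+\frac{\varsigma^{-2}(P_t^\circ)}{2\widetilde P_t^i}\,\widetilde b_i\bigl(t,P_t^\circ,P_t\bigr)+\frac{\varsigma^{-2}(P_t^\circ)}{2}\,\widetilde P_t^i\,\widetilde b_i^\circ\bigl(t,P_t^\circ,P_t\bigr)\Bigr)\,\ud t+\frac{\varepsilon\varsigma^{-1}(P_t^\circ)}{2}\sum_{j\in\ESdm,\,j\ne i}\widetilde P_t^j\,\ud\overline W_t^{i,j},
\]
with $g_t^{P}(x):=\tfrac{\varsigma^{-2}(P_t^\circ)}{2x}\varphi(\varsigma^2(P_t^\circ)x^2)-\tfrac{\varepsilon^2\varsigma^{-2}(P_t^\circ)(1-x^2)}{8x}$, and similarly $\widetilde Q_t$ with the analogous $g_t^{Q}$ built from $Q^\circ$. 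The crucial point is that $|\widetilde P_t|=1$ and the diffusion matrix of $\widetilde P_t$ is $\tfrac{\varepsilon^2\varsigma^{-2}(P_t^\circ)}{4}(\mathrm{Id}-\widetilde P_t\widetilde P_t^\dagger)$, i.e.\ a multiple of the orthogonal projection onto $\widetilde P_t^\perp$: $\widetilde P_t$ is a drifted, time-changed Brownian motion on the unit sphere of $\RR^{d-m}$.

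Next I would couple: drive the $\hat{\mathcal S}_m$-components synchronously (same $\overline{\boldsymbol W}^\circ$) and the $\mathcal S_{d-m-1}$-components by reflecting, across the hyperplane orthogonal to $e_t:=Z_t/|Z_t|$, the spherical Brownian increments found above (continued by the synchronous coupling after the time $\tau$ of \eqref{eq:stopping:times}, where $Z_\tau=0$). Since a reflection is an isometry, the noise driving $Q$ is again of Wright--Fisher form, so by Proposition~\ref{prop:representation} (strong well-posedness) and L\'evy's characterisation $(Q^\circ,Q)$ has the law of the solution of \eqref{eq:new:system:prop:5:2} from $(t_0,q^\circ,q)$ and has positive coordinates; moreover $[t_0,\varpi\wedge T)\subset[t_0,\tau)$, so $Z_t\ne0$ there and $|Z_t|$ is smooth in $Z_t$. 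Expanding $|Z_t|$ then splits into four groups of terms. The martingale part, after using $|\widetilde P_t|=|\widetilde Q_t|=1$ and the identities $Z_t\cdot\widetilde P_t=|Z_t|^2/2=-Z_t\cdot\widetilde Q_t$, equals the last term of \eqref{eq:Z:coupling}. The second-order (geometric) drift is the spherical-curvature term of the mirror coupling; on the positively curved sphere it has the good sign, the only defect being the mismatch of the diffusion speeds $\varsigma^{-1}(P_t^\circ)$ vs.\ $\varsigma^{-1}(Q_t^\circ)$, which is $\le C|P_t^\circ-Q_t^\circ|\le C|Z_t|\le C$ on $[t_0,\varrho)$ by Lipschitz continuity of $\varsigma^{-1}$ on $\{\,|p^\circ|_1\le 7/8\,\}$ — a region containing the relevant states since $|P_t^\circ|_1\le 3/4$ on $[t_0,\rho)$ and $|Q_t^\circ|_1\le|P_t^\circ|_1+|Z_t|\le 3/4+\delta/4$ on $[t_0,\rho\wedge\varrho\wedge\sigma)$; together with the bounded $\widetilde b_i^\circ$-contributions this group is $\le C\,\ud t$.

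The remaining two groups are where the boundary degeneracy shows up. The $\widetilde b_i$-drift contributes $\sum_{i\in\ESdm}(Z_t^i/|Z_t|)\bigl(\tfrac{\varsigma^{-2}(P_t^\circ)}{2\widetilde P_t^i}\widetilde b_i^P-\tfrac{\varsigma^{-2}(Q_t^\circ)}{2\widetilde Q_t^i}\widetilde b_i^Q\bigr)$; since $\widetilde b_i\ge 0$ (Proposition~\ref{prop:representation}), the summand carrying $1/\min(\widetilde P_t^i,\widetilde Q_t^i)$ has the sign opposite to $Z_t^i$ and can be discarded, while the one carrying $1/\max(\widetilde P_t^i,\widetilde Q_t^i)$ has exactly the form $(Z_t^i/|Z_t|)\,\widetilde\beta_t^i/\max(\widetilde P_t^i,\widetilde Q_t^i)$ with $\widetilde\beta_t^i=\pm\tfrac{\varsigma^{-2}}{2}\widetilde b_i$, and $|\widetilde\beta_t^i|\le 4\|\widetilde b_i\|_\infty$ because $\varsigma^{-2}\le 8$ in the relevant region — this is the middle term of \eqref{eq:Z:coupling}. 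For the singular restoring-force differences $\sum_i(Z_t^i/|Z_t|)(g_t^P(\widetilde P_t^i)-g_t^Q(\widetilde Q_t^i))$, I would first check, using that $\varphi$ is non-increasing and $\kappa\ge\varepsilon^2/2$, that $x\mapsto g_t^{P}(x)$ is non-increasing on the boundary region $\{\varsigma^2(P_t^\circ)x^2\le\delta\}$ and has derivative bounded in absolute value, off that region, by a constant depending only on $\delta,\varepsilon,\kappa$ (so that $\tfrac{Z_t^i}{|Z_t|}(g_t^P(\widetilde P_t^i)-g_t^P(\widetilde Q_t^i))\le C$, the analogue holding for $g_t^Q$); then, splitting the difference so that the $\varsigma$-discrepancy $g_t^P-g_t^Q$ is evaluated at $\max(\widetilde P_t^i,\widetilde Q_t^i)$, I would bound $|g_t^P-g_t^Q|\le C|P_t^\circ-Q_t^\circ|/\max(\widetilde P_t^i,\widetilde Q_t^i)$, cancel the $1/\max(\widetilde P_t^i,\widetilde Q_t^i)$ against $|Z_t^i|\le\max(\widetilde P_t^i,\widetilde Q_t^i)$, and use $|P_t^\circ-Q_t^\circ|\le|Z_t|$ on $[t_0,\varrho)$, so that this group is again $\le C\,\ud t$. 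Assembling the four groups gives \eqref{eq:Z:coupling}, with the meaning of the inequality as in footnote~\ref{foot:notation:geq}.

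The step I expect to be the main obstacle is the construction of the coupling in the second paragraph so that the second-order drift of $|Z_t|$ remains bounded: because the diffusion coefficient of $\widetilde P_t$ is the \emph{matrix} $\varsigma^{-1}(P_t^\circ)(\mathrm{Id}-\widetilde P_t\widetilde P_t^\dagger)$ — not a scalar — and because this matrix, together with its scalar prefactor, differs between $\widetilde P$ and $\widetilde Q$, a naive global reflection of a single driving Brownian motion does \emph{not} make the It\^o correction of $|Z_t|$ vanish; it leaves an $O(1/|Z_t|)$ term that would be fatal as $t\uparrow\tau$. One must therefore design a Kendall--Cranston-type reflection respecting the spherical geometry (equivalently, reflect the antisymmetric increments $\overline W^{i,j}$ in a way compatible with the rotational fields that generate the Wright--Fisher noise), so that this term acquires the favourable sign of positive curvature, all while accommodating the space-dependent, mismatched speeds $\varsigma^{-1}(P_t^\circ),\varsigma^{-1}(Q_t^\circ)$ and reproducing exactly the martingale term in \eqref{eq:Z:coupling}; once the right coupling is in place, the bookkeeping of the third paragraph is routine.
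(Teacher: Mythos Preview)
Your plan is on the right track and matches the paper's approach: square-root change of variables $\widetilde P_t=\sqrt{P_t}$, reflection coupling across the hyperplane $e_t^\perp$, It\^o expansion of $|Z_t|$, and a careful case analysis of the singular drift. The drift bookkeeping you outline (separating the $\varphi$/$\varepsilon^2$ part $g_t$ from the $\widetilde b_i$ part, using monotonicity of $g_t$ near the boundary and $|Z_t^i|\le\max(\widetilde P_t^i,\widetilde Q_t^i)$ to cancel the residual $1/x$) is a slightly different organisation than the paper's (which keeps $\varphi+\widetilde b_i-\varepsilon^2/4$ together and uses its sign), but it works.

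The one concrete ingredient you are missing is the explicit form of the reflection at the level of the \emph{matrix} noise. The paper drives $\widetilde Q$ by the conjugated increment $R_t\,\ud\overline W_t\,R_t$, with $R_t=I_{d-m}-2e_te_t^\dagger$; Lemma~\ref{lem:coupled:sde:2} shows this is again an antisymmetric Brownian motion, so $(Q^\circ,Q)$ is a genuine copy of the law. Two algebraic identities then do all the work: $R_t\widetilde Q_t=\widetilde P_t$ and $Z_t^\dagger R_t=-Z_t^\dagger$. The first gives $\ud\overline W_t\widetilde P_t-R_t\,\ud\overline W_t\,R_t\widetilde Q_t=2\,e_te_t^\dagger\,\ud\overline W_t\widetilde P_t$, so the bracket of $\ud Z_t$ is the rank-one matrix $4\,e_te_t^\dagger\bigl(1-\langle e_t,\widetilde P_t\rangle^2\bigr)$ (times $\varsigma^{-2}$, up to the $\varsigma$-mismatch which is $O(|Z_t|)$). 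Contracting this rank-one matrix, aligned with $Z_t$, against the Hessian factor $\tfrac{1}{|Z_t|}\bigl(I-e_te_t^\dagger\bigr)$ gives \emph{exactly zero}, not merely a term of favourable sign. So your curvature heuristic, while morally right, undersells the mechanism: with the conjugation coupling the It\^o correction vanishes identically, leaving only the bounded $\varsigma$-discrepancy. The second identity gives the martingale term of \eqref{eq:Z:coupling} in one line.

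In short, the obstacle you flag in your last paragraph is resolved not by a delicate Kendall--Cranston construction but by the simple observation that conjugation $\overline W\mapsto R\overline WR$ preserves antisymmetry and the Brownian law, and produces a rank-one covariance for $Z$ that the Hessian of $|\cdot|$ annihilates.
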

\begin{proof}
Throughout the proof, we use the convention $\overline W^{i,i}_{t}: =0$, for $t \in [0,T]$ and $i \in \ESdm$.
\vskip 4pt

\textit{First Step.}
The first step is to perform a change of variable in equation 
\eqref{eq:new:system:prop:5:2}, letting therein 
\begin{equation*}
\widetilde P_{t}^i := \sqrt{P_{t}^i}, \quad t \in [t_{0},T], \ i \in \ESdm. 
\end{equation*}
By It\^o's formula, we get
\begin{equation}
\label{eq:tildeP:coupling}
\begin{split}
\ud \widetilde P_{t}^i &= \varsigma^{-2}(P_{t}^\circ) \Bigl( \frac{ \varphi(\varsigma^2(P_{t}^\circ)P_{t}^i) +  \widetilde b_{i} (t, P_{t}^\circ, P_{t})  + (\widetilde P_{t}^i)^2 \widetilde b_i^{\circ}(t,P_{t}^\circ,P_{t}) }{2 \widetilde P_{t}^i}
- 
\frac{\varepsilon^2}{8 \widetilde P_{t}^i }
+ \frac{\varepsilon^2}8 \widetilde P_{t}^i \Bigr) \ud t
\\
&\hspace{15pt}
+ \varepsilon \frac{\varsigma^{-1}(P_{t}^\circ)}2 \sum_{j \in \ESdm} \widetilde P_{t}^j \ud \overline W_{t}^{i,j},
\end{split} 
\end{equation}
for $t \in [t_{0},T]$, which prompts us to let
\begin{equation*}
\widetilde {B}_{i}\bigl(t,r^\circ,r\bigr)
:=  \frac{\varphi(\varsigma^2(r^\circ)r_{i}) +\widetilde b_{i}(t,r^\circ,r)  + r_i \widetilde b_i^{\circ}(t,r^\circ,r) }{2 \sqrt{r_{i}}}
- 
\frac{\varepsilon^2}{8 \sqrt{r_{i}} }
+ \frac{\varepsilon^2}8 \sqrt{r_{i}}, 
\end{equation*}
for $r^\circ \in \hat{\mathcal S}_{m}$ and $r \in {\mathcal S}_{d-m-1}$. Denoting by $B^\circ$ and $\Sigma^\circ$ the drift and diffusion coefficients in the dynamics of 
$P^\circ$ in equation \eqref{eq:new:system:prop:5:2}, 
we then look at the solutions of the coupled SDEs:
\begin{equation}
\label{eq:coupled:sde}
\begin{split}
&\ud \widetilde P_{t}^i = \varsigma^{-2}(P_{t}^\circ) \widetilde{B}_{i} (t,P_{t}^\circ,  P_{t} ) \ud t
+\varepsilon \frac{\varsigma^{-1}(P_{t}^\circ)}2 \sum_{j \in \ESdm} \widetilde{P}_{t}^j \ud \overline W_{t}^{i,j}
\\
&\ud P_{t}^\circ = B^\circ(t,P_{t}^\circ,P_t) \ud t + \Sigma^{\circ}(t,P_{t}^\circ,P_{t}) \ud W_{t}^\circ
\\
&\ud \widetilde Q_{t}^i = \varsigma^{-2}(Q_{t}^\circ) \widetilde{B}_{i} (t,Q_{t}^\circ, Q_{t} ) \ud t
+\varepsilon \frac{\varsigma^{-1}(Q_{t}^\circ)}2 \sum_{j \in \ESdm} \widetilde{Q}_{t}^j \bigl( R_{t} \ud \overline W_{t} R_{t} \bigr)^{i,j},
\\
&\ud Q_{t}^\circ = B^\circ(t,Q_{t}^\circ,Q_t) \ud t + \Sigma^{\circ}(t,Q_{t}^\circ,Q_{t}) \ud W_{t}^\circ,
\end{split}
\end{equation}
where $ Q_{t}^i := (\widetilde{Q_{t}^i})^2$, for 
$t \in [t_{0},T]$ and $i \in \ESdm$, and where $R_{t}$ denotes the reflection matrix:
\begin{equation}
\label{eq:def:Rt}
R_{t} := I_{d-m} - 2 \frac{(\widetilde{P}_{t} - \widetilde{Q}_{t})(\widetilde P_{t} - \widetilde Q_{t})^\dagger}{\vert 
\widetilde P_{t} - \widetilde Q_{t} \vert^2} {\mathbf 1}_{\{t < \tau\}}, \quad t \in [ {t_{0}},T],  
\end{equation}
with $\tau := \inf \{t \geq t_{0} : \widetilde P_{t} = \widetilde Q_{t} \}$,
$I_{d-m}$ standing for the identity matrix of dimension $d-m$.
The initial conditions are 
$\widetilde P_{t_{0}}=\widetilde{p}:=\sqrt{p}$,
$P^\circ_{t_{0}}=p^\circ$, 
$\widetilde Q_{t_{0}}=\widetilde{q}:=\sqrt{q}$
and 
$Q^\circ_{t_{0}}=q^\circ$,
for the same $p$, $p^\circ$, $q$ and $q^\circ$ as in the statement of Proposition  
\ref{prop:coupling:2}.

We claim that \eqref{eq:coupled:sde} is  {uniquely solvable in the strong sense, see Lemma \ref{lem:coupled:sde:1}}. Importantly, we prove in Lemma
\ref{lem:coupled:sde:2} below that 
$( \int_{t_{0}}^t R_{s} \ud \overline{W}_{s} R_{s}
)_{t \geq t_{0}}$
is an antisymmetric Brownian motion of dimension $(d-m)(d-m-1)/2$.
Since \eqref{eq:new:system:prop:5:2}
is uniquely solvable, this proves that the law of
$(Q_{t}^\circ,\widetilde Q_{t})_{t_{0} \le t \le T}$ coincides  
with the law of the solution to 
the first two equations in \eqref{eq:coupled:sde} when the latter are initiated from $(q^\circ,\sqrt{q})$ at time $t_{0}$. In other words,
 the law of $(Q_{t}^\circ,Q_{t})_{t_{0} \le t \le T}$ coincides  
with the law of the solution to 
\eqref{eq:new:system:prop:5:2} when the latter is initiated from $(q^\circ,{q})$ at time $t_{0}$.   
We then 
define the stopping times $\rho$, $\varrho$ and $\sigma$ as in the statement
of 
Proposition \ref{prop:coupling:2}. 
\vskip 5pt

\textit{Second Step.}
We now have a look at $(Z_{t}
 = \widetilde P_{t} - \widetilde Q_{t})_{t_{0} \leq t \leq T}$.
 Using the fact that $R_{t} \widetilde Q_{t}= \widetilde P_{t}$, for $t \in [t_{0},\tau \wedge T)$, we get 
 \begin{equation*}
\begin{split}
\ud \overline{W}_{t} \widetilde{P}_{t} - R_{t} \ud \overline{W}_{t} R_{t} \widetilde{Q}_{t}
&= \ud \overline{W}_{t} \widetilde{P}_{t} - R_{t} \ud\overline{W}_{t} \widetilde{P}_{t}
 = 2 \frac{Z_{t} Z_{t}^{\dagger}}{\vert Z_{t} \vert^2} \ud \overline{W}_{t} \widetilde P_{t}. 
\end{split}
\end{equation*}
%
%
We deduce the following expression
\begin{equation*}
\begin{split}
&\frac{Z_{t}^{\dagger}}{\vert Z_{t} \vert} \bigl(\ud \overline{W}_{t} \widetilde{P}_{t} - R_{t} \ud\overline{W}_{t} R_{t} \widetilde{Q}_{t}\bigr)
=  
2  \frac{Z_{t}^{\dagger}}{\vert Z_{t} \vert} \ud \overline{W}_{t}
  \widetilde P_{t}.
\end{split}
\end{equation*}
Now, we may compute the bracket of the above right-hand side. We get
\begin{align}
\label{eq:coupling:covariance}
\frac{1}{\ud t}\Bigl\langle \frac{Z_{t}^{\dagger}}{\vert Z_{t} \vert} \ud \overline{W}_{t}
  \widetilde P_{t} \Bigr\rangle
&= \sum_{i,j \in \ESdm} \sum_{i',j' \in \ESdm}
 \frac{Z_{t}^{i} Z_{t}^{i'}}{\vert Z_{t} \vert^2} \bigl( \delta_{i,i'} \delta_{j,j'}
 - \delta_{i,j'} \delta_{j,i'}
 \bigr)
  \widetilde P_{t}^j \widetilde P_{t}^{j'} 
  \\
  &=  1 - 
\bigl\langle \frac{Z_{t}}{\vert Z_{t} \vert},\widetilde P_{t} \bigr\rangle^2
 =    1 -\frac{ (
1-
\langle \widetilde Q_{t},\widetilde P_{t} \rangle )^2}{2 - 2 \langle \widetilde Q_{t}, \widetilde P_{t} \rangle} 
=   1 -
\frac12
\bigl( 
1-
\langle \widetilde Q_{t},\widetilde P_{t} \rangle \bigr)
= 1 -
\frac14 \vert Z_{t} \vert^2, \nonumber
\end{align}
which holds true for $t  < T \wedge \tau$. 
More generally, we need to compute the bracket of $(\int_{t_{0}}^{t \wedge \tau} \ud \overline{W}_{s} \widetilde{P}_{s} - R_{s} d\overline{W}_{s} R_{s} \widetilde{Q}_{s})_{t_{0} \le t \le T}$. For $i,j \in \ESdm$ and $t \in [t_{0},\tau)$, we have
\begin{equation*}
\begin{split}
&\bigl\langle \bigl( \ud \overline {W}_{t} \widetilde P_{t} - R_{t} \ud \overline W_{t} R_{t} \widetilde Q_{t} \bigr)^i,
\bigl( \ud \overline{W}_{t} \widetilde P_{t} - R_{t} \ud \overline W_{t} R_{t} \widetilde Q_{t} \bigr)^j \bigr\rangle
= 4 \Bigl\langle \Bigl( 
\frac{Z_{t} Z_{t}^{\dagger}}{\vert Z_{t} \vert^2} \ud \overline{W}_{t} \widetilde P_{t}
  \Bigr)^i,
\Bigl( 
\frac{Z_{t} Z_{t}^{\dagger}}{\vert Z_{t} \vert^2} \ud \overline{W}_{t} \widetilde P_{t}
\Bigr)^j \Bigr\rangle
\end{split}
\end{equation*}
Here,
\begin{equation*}
\begin{split}
&\Bigl\langle \Bigl( 
\frac{Z_{t} Z_{t}^{\dagger}}{\vert Z_{t} \vert^2} \ud \overline{W}_{t} \widetilde P_{t}
  \Bigr)^i,
\Bigl( 
\frac{Z_{t} Z_{t}^{\dagger}}{\vert Z_{t} \vert^2} \ud \overline{W}_{t} \widetilde P_{t}
\Bigr)^j \Bigr\rangle
= \sum_{k,l \in \ESdm} 
\frac{Z_{t}^i Z_{t}^j Z_{t}^{k} Z_{t}^{l}}{\vert Z_{t} \vert^4} 
\bigl\langle \bigl( \ud \overline {W}_{t} \widetilde P_{t}
\bigr)^k,\bigl( \ud \overline{W}_{t} \widetilde P_{t}  \bigr)^{l}
\bigr\rangle.
\end{split}
\end{equation*}
Now, 
\begin{equation*}
\begin{split}
\frac{1}{\ud t}\bigl\langle \bigl( \ud \overline {W}_{t} \widetilde P_{t}
\bigr)^i,\bigl( \ud \overline{W}_{t} \widetilde P_{t}  \bigr)^j
\bigr\rangle
&= \sum_{k,l \in \ESdm} \frac{1}{\ud t} \bigl\langle 
\ud \overline {W}_{t}^{i,k} \widetilde P_{t}^k,
\ud \overline {W}_{t}^{j,l} \widetilde P_{t}^l
\bigr\rangle
\\
&= \sum_{k,l \in \ESdm}  \widetilde P_{t}^k \widetilde P_{t}^l
\bigl( \delta_{i,j} \delta_{k,l} - \delta_{i,l} \delta_{j,k} \bigr) 
= \delta_{i,j} - \widetilde P_{t}^i \widetilde P_{t}^j. 
\end{split}
\end{equation*}
So, 
\begin{equation*}
\begin{split}
\frac{1}{\ud t} \Bigl\langle \Bigl( 
\frac{Z_{t} Z_{t}^{\dagger}}{\vert Z_{t} \vert^2} \ud \overline{W}_{t} \widetilde P_{t}
  \Bigr)^i,
\Bigl( 
\frac{Z_{t} Z_{t}^{\dagger}}{\vert Z_{t} \vert^2} \ud \overline{W}_{t} \widetilde P_{t}
\Bigr)^j \Bigr\rangle
&= \sum_{k,l \in \ESdm} 
\frac{Z_{t}^i Z_{t}^j Z_{t}^{k} Z_{t}^{l}}{\vert Z_{t} \vert^4} 
\Bigl( \delta_{k,l} - \widetilde P_{t}^k \widetilde P_{t}^l \Bigr)
\\
&= \frac{Z_{t}^i Z_{t}^j}{\vert Z_{t} \vert^2} - 
\frac{Z_{t}^i Z_{t}^j}{\vert Z_{t} \vert^4} \bigl\langle Z_{t},\widetilde P_{t} \bigr\rangle^2.
\end{split}
\end{equation*}
And then,
\begin{equation}
\label{eq:coupling:covariance:2}
\begin{split}
&\frac{1}{\ud t} \bigl\langle \bigl( \ud \overline {W}_{t} \widetilde P_{t} - R_{t} \ud \overline W_{t} R_{t} \widetilde Q_{t} \bigr)^i,
\bigl( \ud \overline{W}_{t} \widetilde P_{t} - R_{t} \ud \overline W_{t} R_{t} Q_{t} \bigr)^j \bigr\rangle
= 4 \Bigl(
\frac{Z_{t}^i Z_{t}^j}{\vert Z_{t} \vert^2} - 
\frac{Z_{t}^i Z_{t}^j}{\vert Z_{t} \vert^4} \bigl\langle Z_{t},\widetilde P_{t} \bigr\rangle^2
\Bigr).
\end{split}
\end{equation}
\vspace{5pt}

\textit{Third Step.}
Now, we return to the equation satisfied by $(Z_{t})_{t_{0} \leq t \leq T}$:
\begin{equation*}
\begin{split}
\ud  Z_{t} 
&=
\bigl[\varsigma^{-2}(P_{t}^\circ) \widetilde{B}(t,P_{t}^\circ,P_{t}) - \varsigma^{-2}(Q_{t}^\circ) \widetilde{B}(t,Q_{t}^\circ,Q_{t})
\bigr] \ud t 
\\
&\hspace{15pt}+ \varepsilon \frac{\varsigma^{-1}(P_{t}^\circ)}2 
 \ud \overline{W}_{t} \widetilde{P}_{t} - \varepsilon \frac{\varsigma^{-1}(Q_{t}^\circ)}2  R_{t} \ud \overline{W}_{t} R_{t} \widetilde{Q}_{t},   \end{split}
\end{equation*}
for $t \in [0,T]$.
The point is to apply It\^o's formula to $\vert Z \vert$. 
Using 
the relationship $R_{t} \widetilde Q_{t}  = \widetilde P_{t}$ together with 
the fact that $Z_{t}^{\dagger} R_{t} = (R_{t} Z_{t})^{\dagger} = -Z_{t}^{\dagger}$, we get, for $t < \varpi \wedge T$, 
\begin{equation*}
\begin{split}
\ud \vert Z_{t} \vert 
&=
\frac{Z_{t}^{\dagger}}{\vert Z_{t} \vert}  \bigl[\varsigma^{-2}(P_{t}^\circ) \widetilde{B}(t,P_{t}^\circ, P_{t}) -
\varsigma^{-2}(Q_{t}^\circ) \widetilde{B}(t,Q_{t}^\circ, Q_{t})
\bigr] \ud t
 + \varepsilon \frac{\varsigma^{-1}(P_{t}^\circ) +  \varsigma^{-1}(Q_{t}^\circ)}2
\frac{Z_{t}^{\dagger}}{\vert Z_{t} \vert} \ud \overline{W}_{t} \widetilde{P}_{t}
  \\
&\hspace{5pt}   + \frac{\varepsilon^2}8 
  \sum_{i,j \in \ESdm} \Bigl[
  \Bigl\langle
  \Bigl(
\varsigma^{-1}(P_{t}^\circ) 
  \ud \overline{W}_{t} \widetilde{P}_{t} - \varsigma^{-1}(Q_{t}^\circ) 
 R_{t} \ud \overline{W}_{t} R_{t} \widetilde{Q}_{t}
 \Bigr)^i,
   \\
   &\hspace{100pt}
\Bigl(
\varsigma^{-1}(P_{t}^\circ) 
  \ud \overline{W}_{t} \widetilde{P}_{t} - \varsigma^{-1}(Q_{t}^\circ) 
 R_{t} \ud \overline{W}_{t} R_{t} \widetilde{Q}_{t}\Bigr)^j
 \Bigr\rangle  
  \times \frac1{\vert Z_{t}\vert}\Bigl( \delta_{i,j} -     \frac{Z_{t}^i Z_{t}^j}{\vert Z_{t} \vert^2}
\Bigr) \Bigr].
\end{split}
\end{equation*}
At this stage of the proof, we have a special look at the brackets in the last two lines in the above expression. By Kunita--Watanabe inequality, we get, for $t < \varpi \wedge T$, 
\begin{equation*}
\begin{split}
 &\Bigl\langle
  \Bigl(
\varsigma^{-1}(P_{t}^\circ) 
  \ud \overline{W}_{t} \widetilde{P}_{t} - \varsigma^{-1}(Q_{t}^\circ) 
 R_{t} \ud \overline{W}_{t} R_{t} \widetilde{Q}_{t}
 \Bigr)^i,
\Bigl(
\varsigma^{-1}(P_{t}^\circ) 
  \ud \overline{W}_{t} \widetilde{P}_{t} - \varsigma^{-1}(Q_{t}^\circ) 
 R_{t} \ud \overline{W}_{t} R_{t} \widetilde{Q}_{t}\Bigr)^j
 \Bigr\rangle 
\\
&=\varsigma^{-2}(P_{t}^\circ) 
\Bigl\langle
  \Bigl(
  \ud \overline{W}_{t} \widetilde{P}_{t} -
   R_{t}\ud \overline{W}_{t} R_{t} \widetilde{Q}_{t}
 \Bigr)^i,
\Bigl(
  \ud \overline{W}_{t} \widetilde{P}_{t} -  
 R_{t} \ud \overline{W}_{t} R_{t} \widetilde{Q}_{t}\Bigr)^j
 \Bigr\rangle 
+ O \Bigl( \bigl\vert \varsigma^{-1}( P_{t}^\circ) - \varsigma^{-1}(Q_{t}^\circ) \bigr\vert
\Bigr) \ud t,
\end{split}
\end{equation*}
where $(O ( \vert \varsigma^{-1}( P_{t}^\circ) - \varsigma^{-1}(Q_{t}^\circ) \vert
))_{0 \leq t < \varpi \wedge T}$ stands for a progressively measurable process that is dominated by $C \bigl( \vert \varsigma^{-1}(P_{t}^\circ) - \varsigma^{-1}(Q_{t}^\circ) \vert \bigr)_{0 \leq t < \varpi \wedge T}$
for a universal constant $C$.
Invoking \eqref{eq:coupling:covariance:2}, we get 
\begin{equation*}
\begin{split}
 &\Bigl\langle
  \Bigl(
\varsigma^{-1}(P_{t}^\circ) 
  \ud \overline{W}_{t} \widetilde{P}_{t} - \varsigma^{-1}(Q_{t}^\circ) 
 R_{t} \ud \overline{W}_{t} R_{t} \widetilde{Q}_{t}
 \Bigr)^i,
\Bigl(
\varsigma^{-1}(P_{t}^\circ) 
  \ud \overline{W}_{t} \widetilde{P}_{t} - \varsigma^{-1}(Q_{t}^\circ) 
 R_{t} \ud \overline{W}_{t} R_{t} \widetilde{Q}_{t}\Bigr)^j
 \Bigr\rangle 
\\
&=4 \varsigma^{-2}(P_{t}^\circ) 
\Bigl(
\frac{Z_{t}^i Z_{t}^j}{\vert Z_{t} \vert^2} - 
\frac{Z_{t}^i Z_{t}^j}{\vert Z_{t} \vert^4} \bigl\langle Z_{t},\widetilde P_{t} \bigr\rangle^2
\Bigr) \ud t
+ O \Bigl( \bigl\vert P_{t}^\circ - Q_{t}^\circ \bigr\vert
\Bigr) \ud t
\\
&=4 \varsigma^{-2}(P_{t}^\circ) 
\Bigl(
\frac{Z_{t}^i Z_{t}^j}{\vert Z_{t} \vert^2} - 
\frac{Z_{t}^i Z_{t}^j}{\vert Z_{t} \vert^4} \bigl\langle Z_{t},\widetilde P_{t} \bigr\rangle^2
\Bigr) \ud t
+ O \bigl( \vert Z_{t} \vert
\bigr) \ud t,
\end{split}
\end{equation*}
where we used the fact that $t < \varrho$ to derive the last line of the statement. As before, 
the process
$(O ( \vert Z_{t} \vert
))_{0 \leq t < \varpi \wedge T}$ is a progressively measurable process that is dominated by $(C \vert Z_{t} \vert \bigr)_{0 \leq t < \varpi \wedge T}$
for a universal constant $C$. Returning to the expression for $d \vert Z_{t} \vert$, we then get that 
\begin{align}
&\frac18 
  \sum_{i,j \in \ESdm} \Bigl[
  \Bigl\langle
  \Bigl(
\varsigma^{-1}(P_{t}^\circ) 
  \ud \overline{W}_{t} \widetilde{P}_{t} - \varsigma^{-1}(Q_{t}^\circ) 
 R_{t} \ud \overline{W}_{t} R_{t} \widetilde{Q}_{t}
 \Bigr)^i, \nonumber
    \\
   &\hspace{100pt} 
\Bigl(
\varsigma^{-1}(P_{t}^\circ) 
  \ud \overline{W}_{t} \widetilde{P}_{t} - \varsigma^{-1}(Q_{t}^\circ) 
 R_{t} \ud \overline{W}_{t} R_{t} \widetilde{Q}_{t}\Bigr)^j
 \Bigr\rangle   \times \frac1{\vert Z_{t}\vert}\Bigl( \delta_{i,j} -     \frac{Z_{t}^i Z_{t}^j}{\vert Z_{t} \vert^2}
\Bigr) \Bigr] \label{eq:bracket:last:line}
\\
&= \frac1{2\vert Z_{t}\vert} \varsigma^{-2}(P_{t}^\circ) 
\sum_{i,j \in \ESdm}
\Bigl(
\frac{Z_{t}^i Z_{t}^j}{\vert Z_{t} \vert^2} - 
\frac{Z_{t}^i Z_{t}^j}{\vert Z_{t} \vert^4} \bigl\langle Z_{t},\widetilde P_{t} \bigr\rangle^2
\Bigr)
\Bigl( \delta_{i,j} -     \frac{Z_{t}^i Z_{t}^j}{\vert Z_{t} \vert^2}
\Bigr) \ud t
+ O(1) \ud t, \nonumber
\end{align}
where 
$(O (1))_{0 \leq t < \varpi \wedge T}$ stands for a progressively measurable process that is dominated by $C$
for a universal constant $C$. The key fact here is that 
\begin{equation*}
\sum_{i,j \in \ESdm}
\frac{Z_{t}^i Z_{t}^j}{\vert Z_{t} \vert^2}
\Bigl( \delta_{i,j} -     \frac{Z_{t}^i Z_{t}^j}{\vert Z_{t} \vert^2}
\Bigr)
= \sum_{i \in \ESdm} \frac{(Z_{t}^i)^2}{\vert Z_{t} \vert^2}
- \biggl( \sum_{i \in \ESdm} \frac{(Z_{t}^i)^2}{\vert Z_{t} \vert^2} \biggr)^2 
= 1 - 1 =0. 
\end{equation*}
We deduce that the last line in \eqref{eq:bracket:last:line} reduces to $O(1)\ud t$. We end up with 
\begin{equation*}
\begin{split}
\ud \vert Z_{t} \vert &=
\frac{Z_{t}^{\dagger}}{\vert Z_{t} \vert} \bigl[ \varsigma^{-2}(P_{t}^\circ) \widetilde{B}(t,P_{t}^\circ,  P_{t}) - 
\varsigma^{-2}(Q_{t}^\circ)\widetilde{B}(t,Q_{t}^\circ,  Q_{t})
\bigr] \ud t
  + \varepsilon \frac{\varsigma^{-1}(P_{t}^\circ) + \varsigma^{-1}(Q_{t}^\circ)}2 \frac{Z_{t}^{\dagger}}{\vert Z_{t} \vert} \ud \overline{W}_{t}
  \widetilde P_{t} 
  \\
&\hspace{15pt}  + O(1) \, \ud t, \quad t < \varpi \wedge T. 
  \end{split}
  \end{equation*}
  \vskip 4pt

\textit{Fourth Step.} We now have a look at the drift in a more precise way. 
Recalling that 
\begin{equation*}
\widetilde B_{i}(t,P_{t}^\circ, P_{t}) = \frac{\varphi(\varsigma^2(P_{t}^\circ) P_{t}^i) + \widetilde b_{i}(t,P_{t}^\circ, P_{t})  + (\widetilde P_{t}^i)^2 \widetilde b_i^{\circ}(t,P_{t}^\circ,P_{t})}{2 \widetilde P_{t}^i}
- 
\frac{\varepsilon^2}{8 \widetilde P_{t}^i }
+ \frac{\varepsilon^2}8 \widetilde P_{t}^i,
\end{equation*}
we write
\begin{equation*}
\begin{split}
\ud \vert Z_{t} \vert &=
\sum_{i \in \ESdm} \frac{Z_{t}^{i}}{\vert Z_{t} \vert} \Bigl[ \varsigma^{-2}(P_{t}^\circ)
\frac{  \varphi(\varsigma^2(P_{t}^\circ) P_{t}^i) + \widetilde b_{i}(t,P_{t}^\circ, P_{t}) - \varepsilon^2/4}{2 \widetilde P_{t}^i} 
\\
&\hspace{50pt}- \varsigma^{-2}(Q_{t}^\circ)
\frac{ \varphi(\varsigma^2(Q_{t}^\circ) Q_{t}^i) + 
\widetilde b_{i}(t,Q_{t}^\circ, Q_{t}) - \varepsilon^2/4}{2 \widetilde Q_{t}^i} 
\Bigr] \ud t
+ O(1) \, \ud t
\\
&\hspace{15pt} + \varepsilon \frac{\varsigma^{-1}(P_{t}^\circ) + \varsigma^{-1}(Q_{t}^\circ)}2 \frac{Z_{t}^{\dagger}}{\vert Z_{t} \vert} \ud \overline{W}_{t}
  \widetilde P_{t},
  \end{split}
\end{equation*}
where the constant dominating $O(1)$ is now allowed to depend on 
$(\| \widetilde b^{\circ}_{i} \|_{\infty})_{i \in \ESdm}$. 
Fix now an index $i \in \ESdm$. Then, on the event $P_{t}^i \leq Q_{t}^i$, we have, for $t <  \varpi \wedge T$, 
\begin{equation*}
\frac{Z_{t}^i}{\vert Z_{t} \vert} \bigl\vert \varsigma^{-2} \bigl( P_{t}^\circ \bigr) - \varsigma^{-2} \bigl( Q_{t}^\circ \bigr)
\bigr\vert \leq C 
\frac{\vert Z_{t}^i \vert}{\vert Z_{t} \vert}
\vert  P_{t}^\circ -  Q_{t}^\circ \vert
\leq 
C \vert Z_{t}^i \vert
 \leq C \bigl( \widetilde P_{t}^i + \widetilde Q_{t}^i \bigr) \leq 2 C \widetilde Q_{t}^i,
\end{equation*}
for a universal constant $C$. Proceeding similarly whenever $Q_{t}^i \leq P_{t}^i$, we can find a collection of non-negative bounded processes $((\zeta_{t}^{i})_{0 \leq t \leq T})_{i \in \ESdm}$ (bounded by $4$ since $\varsigma^{-2}(P_{t}^\circ)$
and 
$\varsigma^{-2}(Q_{t}^\circ)$ are bounded by $4$ for $t < \varpi \wedge T$) such that 
\begin{equation*}
\begin{split}
\ud \vert Z_{t} \vert &=
\sum_{i=1}^{d-m} \frac{Z_{t}^{i}}{\vert Z_{t} \vert} \zeta_{t}^{i} \Bigl[ 
\frac{ \varphi(\varsigma^2(P_{t}^\circ) P_{t}^i) +\widetilde b_{i}(t,P_{t}^\circ,P_{t})  - \varepsilon^2/4}{2 \widetilde P_{t}^i} 
- 
\frac{  \varphi(\varsigma^2(Q_{t}^\circ) Q_{t}^i) + \widetilde b_{i}(t,Q_{t}^\circ,
 Q_{t}) - \varepsilon^2/4}{2 \widetilde Q_{t}^i} 
\Bigr] \ud t
\\
&\hspace{15pt} 
+ O(1) \, \ud t
+ \varepsilon \frac{\varsigma^{-1}(P_{t}^\circ) + \varsigma^{-1}(Q_{t}^\circ)}2 \frac{Z_{t}^{\dagger}}{\vert Z_{t} \vert} \ud \overline{W}_{t}
  \widetilde P_{t}, \quad t < \varpi \wedge T.
  \end{split}
\end{equation*}
Notice that, on the event $\{\max(P_{t}^i,Q_{t}^i) > \delta\}$, we have 
$\min(P_{t}^i,Q_{t}^i) > \delta/2$, for $t <\varpi \wedge   T$, since 
$\vert P_{t}^i - Q_{t}^i \vert \leq 2 \vert \widetilde P_{t}^i - \widetilde Q_{t}^i \vert \leq 
\delta/2$. 
Allowing $O(1)$ to depend on $\delta$, $\kappa$ $(\| \widetilde b_{i} \|_{\infty})_{i \in \ESdm}$ and $(\| \widetilde b_{i}^{\circ} \|_{\infty})_{i \in \ESdm}$, we get 
\begin{equation*}
\begin{split}
\ud \vert Z_{t} \vert &\leq 
\sum_{i=1}^{d-m} \frac{Z_{t}^{i}}{\vert Z_{t} \vert} \zeta_{t}^{i} \Bigl[ 
\frac{\varphi(\varsigma^2(P_{t}^\circ) P_{t}^i) + \widetilde b_{i}(t,P_{t}^\circ, P_{t}) - \varepsilon^2/4}{2 \widetilde P_{t}^i} 
\\
&\hspace{45pt}- 
\frac{\varphi(\varsigma^2(Q_{t}^\circ) Q_{t}^i) + \widetilde b_{i}(t,Q_{t}^\circ, Q_{t}) - \varepsilon^2/4}{2 \widetilde Q_{t}^i} 
\Bigr] 
{\mathbf 1}_{\{\max(P_{t}^i,Q_{t}^i) \leq \delta\}}\ud t
\\
&\hspace{15pt} 
+ O(1) \, \ud t
+\varepsilon \frac{\varsigma^{-1}(P_{t}^\circ) +  \varsigma^{-1}(Q_{t}^\circ)}2 \frac{Z_{t}^{\dagger}}{\vert Z_{t} \vert} \ud \overline{W}_{t}
  \widetilde P_{t}, \quad t < \varpi \wedge T.
  \end{split}
\end{equation*}
Now, it remains to see that 
\begin{equation*}
\begin{split}
&\frac{\varphi(\varsigma^2(P_{t}^\circ) P_{t}^i) + \widetilde b_{i}(t,P_{t}^\circ,  P_{t}) - \varepsilon^2/4}{2 \widetilde P_{t}^i} 
- 
\frac{\varphi(\varsigma^2(Q_{t}^\circ) Q_{t}^i) + \widetilde b_{i}(t,Q_{t}^\circ, Q_{t}) - \varepsilon^2/4}{2 \widetilde Q_{t}^i} 
 \\
 &= -
 \frac{  \varphi(\varsigma^2(Q_{t}^\circ ) Q_{t}^i) + \widetilde b_{i}(t,Q_{t}^\circ ,  Q_{t}) - \varepsilon^2/4}{2} \Bigl( \frac{1}{\widetilde Q_{t}^i}
 - \frac{1}{\widetilde P_{t}^i} \Bigr)
 \\
&\hspace{15pt} + \frac{ \varphi(\varsigma^2(P_{t}^\circ) P_{t}^i) +  \widetilde  b_{i}(t,P_{t}^\circ,\widetilde P_{t}) - [
  \varphi(\varsigma^2(Q_{t}^\circ) Q_{t}^i) + \widetilde b_{i}(t,Q_{t}^\circ, Q_{t})]}{2 \widetilde P_{t}^i}
 \\
 &= -
 \frac{  \varphi(\varsigma^2(Q_{t}^\circ) Q_{t}^i) + \widetilde b_{i}(t,Q_{t}^\circ,Q_{t}) - \varepsilon^2/4}{2} \frac{Z_{t}^i}{\widetilde P_{t}^i \widetilde Q_{t}^i} 
 \\
&\hspace{15pt} + \frac{ \varphi(\varsigma^2(P_{t}^\circ) P_{t}^i) +  \widetilde  b_{i}(t,P_{t}^\circ,  P_{t}) - [ 
\varphi(\varsigma^2(Q_{t}^\circ) Q_{t}^i) +\widetilde b_{i}(t,Q_{t}^\circ, Q_{t})]}{2 \widetilde P_{t}^i}.
%
%
  \end{split}
\end{equation*}
Recalling that 
$\widetilde b_{i}$, as given by Proposition \ref{prop:representation} for each $i \in \{1,\cdots,d-m\}$, has non-negative values, that 
$\kappa \geq \kappa_{0} \geq 2$, see
Proposition 
\ref{prop:coupling:2}, and that 
$\varepsilon \in (0,1)$, 
we deduce that, on the event $\{\max(P_{t}^i,Q_{t}^i) \leq \delta\}$,  
$ \varphi(\varsigma^2(Q_{t}^\circ) Q_{t}^i) +  \widetilde  b_{i}(t,Q_{t}^\circ,\widetilde Q_{t}) - \varepsilon^2/4 \geq 0$. Therefore,
whenever $P_{t}^i \geq Q_{t}^i$ and $P_{t}^i \leq \delta$,
\begin{equation*}
\begin{split}
&\frac{ \varphi(\varsigma^2(P_{t}^\circ) P_{t}^i) + \widetilde b_{i}(t,P_{t}^\circ, P_{t}) - \varepsilon^2/4}{2 \widetilde P_{t}^i} 
- 
\frac{  \varphi(\varsigma^2(Q_{t}^\circ) Q_{t}^i) + \widetilde b_{i}(t,Q_{t}^\circ,  Q_{t}) - \varepsilon^2/4}{2 \widetilde Q_{t}^i} 
 \\
 &\leq \frac{  \varphi(\varsigma^2(P_{t}^\circ) P_{t}^i) +\widetilde b_{i}(t,P_{t}^\circ, P_{t}) - [ 
\varphi(\varsigma^2(Q_{t}^\circ) Q_{t}^i) + \widetilde b_{i}(t,Q_{t}^\circ, Q_{t})]}{2 \widetilde P_{t}^i}
 \\
 &= \frac{\kappa + b_{i}(t,P_{t}^\circ,\widetilde P_{t}) - [\kappa
 + \widetilde b_{i}(t,Q_{t}^\circ, Q_{t})]}{2 \widetilde P_{t}^i}
\leq 
\frac{\| \widetilde b_{i} \|_{\infty}} {\widetilde P_{t}^i}
=
\frac{\| \widetilde b_{i} \|_{\infty}} {\max(\widetilde P_{t}^i,\widetilde Q_{t}^i)}.
\end{split}
\end{equation*}
Proceeding similarly when $Q_{t}^i \geq P_{t}^i$ and $Q_{t}^i \leq \delta$
and then 
 letting
\begin{equation*}
\begin{split}
\widetilde \beta_{t}^i &:= \zeta_{t}^i 
\| \widetilde b_{i} \|_{\infty}
{\mathbf 1}_{\{\max(P_{t}^i,Q_{t}^i) \leq \delta\}}, 
\end{split}
\end{equation*}
we get 
\begin{equation*}
\begin{split}
\ud \vert Z_{t} \vert &\leq 
\sum_{i=1}^{d-m} \frac{Z_{t}^{i}}{\vert Z_{t} \vert} \frac{ \widetilde \beta_{t}^{i}}{\max(\widetilde P_{t}^i,\widetilde Q_{t}^i)} \ud t
+ O(1) \, \ud t
+ \varepsilon \frac{\varsigma^{-1}(P_{t}^\circ) + \varsigma^{-1}(Q_{t}^\circ)}2 \frac{Z_{t}^{\dagger}}{\vert Z_{t} \vert} \ud \overline{W}_{t}
  \widetilde P_{t}, \quad t < \varpi\wedge   T,
  \end{split}
\end{equation*}
which completes the proof. 
\end{proof}

\begin{lemma}
\label{lem:coupled:sde:2}
Take $(\widetilde{P}_{t})_{0 \le t \le T}$ and $(\widetilde{Q}_{t})_{0 \le t \le T}$ two continuous 
${\mathbb F}^{\boldsymbol W^\circ,\boldsymbol W}$-
adapted processes with values in the intersection of the orthant $(\RR_{+})^d$ and of the sphere of dimension $d$. Then, 
letting
\begin{equation*}
R_{t} := I_{d} - 2 \frac{(\widetilde{P}_{t} - \widetilde{Q}_{t})(\widetilde P_{t} - \widetilde Q_{t})^\dagger}{\vert 
\widetilde P_{t} - \widetilde Q_{t} \vert^2} {\mathbf 1}_{t < \tau}, \quad t \in [0,T], 
\end{equation*}
with $\tau = \inf \{t \geq t_{0} : \widetilde P_{t} = \widetilde Q_{t} \}$, 
$I_{d}$ standing for the identity matrix of dimension $d$,
the process 
 $( \int_{0}^t R_{s} \ud \overline{W}_{s} R_{s})_{0 \le t \le T}$, 
 with the convention $\overline W_{t}^{i,i}:=0$ for $t \in [0,T]$ and $i \in \ES$, 
is an antisymmetric Brownian motion of dimension $d(d-1)/2$ independent of 
${\boldsymbol W}^\circ$.
\end{lemma}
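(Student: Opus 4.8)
The plan is to verify that $M_t := \int_0^t R_s \,\ud \overline W_s R_s$ is a continuous local martingale (with $M_0 = 0$) whose quadratic covariation structure matches that of an antisymmetric Brownian motion of dimension $d(d-1)/2$, and then to invoke L\'evy's characterization; the independence from $\boldsymbol W^\circ$ will follow from the fact that $M$ has zero bracket with $\boldsymbol W^\circ$. First I would record that, since $R_s$ is symmetric ($R_s^\dagger = R_s$) and orthogonal ($R_s^2 = I_d$) for each $s$, and since $\overline W$ is antisymmetric in its two indices by construction, the process $R_s \overline{\ud W}_s R_s$ remains antisymmetric: indeed $(R_s \ud\overline W_s R_s)^\dagger = R_s^\dagger (\ud\overline W_s)^\dagger R_s^\dagger = - R_s \ud\overline W_s R_s$. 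In particular the diagonal entries of $M_t$ vanish identically, consistently with the convention $\overline W^{i,i}_t \equiv 0$, and $M$ is determined by its $d(d-1)/2$ strictly-upper-triangular entries $(M^{i,j})_{i<j}$.

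Next I would compute the brackets. Writing $\overline W^{i,j}_t = (W^{i,j}_t - W^{j,i}_t)/\sqrt 2$ with the $W^{i,j}$, $i\neq j$, independent standard Brownian motions, one has $\ud\langle \overline W^{i,j}, \overline W^{k,l}\rangle_t = (\delta_{i,k}\delta_{j,l} - \delta_{i,l}\delta_{j,k})\,\ud t$ for all $i\neq j$, $k\neq l$, which is exactly the covariance structure of an antisymmetric Brownian motion. Then for $1\le i<j\le d$ and $1\le k<l\le d$,
\begin{equation*}
\frac{1}{\ud t}\bigl\langle M^{i,j}, M^{k,l}\bigr\rangle_t
= \sum_{a\neq b}\sum_{c\neq d'} (R_t)_{i,a}(R_t)_{j,b}(R_t)_{k,c}(R_t)_{l,d'}\bigl(\delta_{a,c}\delta_{b,d'} - \delta_{a,d'}\delta_{b,c}\bigr).
\end{equation*}
Carrying out the contraction over $a,b,c,d'$ and using $\sum_a (R_t)_{i,a}(R_t)_{k,a} = (R_t^2)_{i,k} = \delta_{i,k}$ (and likewise for the $j,l$ pair, and for the cross terms), this collapses to $\delta_{i,k}\delta_{j,l} - \delta_{i,l}\delta_{j,k}$, i.e.\ the bracket of $M$ equals the bracket of $\overline W$. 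Since $M$ is a continuous local martingale vanishing at $0$ with this deterministic bracket, L\'evy's characterization theorem (applied to the $d(d-1)/2$ independent Brownian motions $(M^{i,j} + M^{j,i})/\sqrt2$ extracted from the upper-triangular entries, or more directly to the antisymmetric family) shows that $M$ is an antisymmetric Brownian motion of dimension $d(d-1)/2$ adapted to $\mathbb F^{\boldsymbol W^\circ,\boldsymbol W}$.

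Finally, for the independence from $\boldsymbol W^\circ$: since $\boldsymbol W^\circ$ and $\boldsymbol W$ are independent and $R_t$ is $\mathbb F^{\boldsymbol W^\circ,\boldsymbol W}$-adapted, the cross bracket $\langle M^{i,j}, (W^\circ)^{k,l}\rangle$ is obtained by contracting $R_t$ against $\ud\langle \overline W^{a,b}, (W^\circ)^{k,l}\rangle = 0$, hence vanishes; therefore the $(\mathbb R^{d(d-1)/2}\times\mathbb R^{d(d-1)})$-valued continuous local martingale $(M, \boldsymbol W^\circ)$ has block-diagonal bracket, and a second application of L\'evy's characterization to the pair gives that $M$ and $\boldsymbol W^\circ$ are independent Brownian motions. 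The main obstacle here is purely bookkeeping: one must be careful that the stopping at $\tau$ (where $R_t$ is switched to $I_d$ via the indicator ${\mathbf 1}_{t<\tau}$) does not spoil the bracket computation, but since $I_d$ is itself symmetric and orthogonal the identity $(R_t)^2 = I_d$ holds for all $t$ regardless of whether $t<\tau$, so the contraction argument goes through verbatim on all of $[0,T]$; no genuine analytic difficulty arises.
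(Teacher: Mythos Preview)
Your proof is correct and follows essentially the same strategy as the paper---compute the brackets of the entries of $M_t=\int_0^t R_s\,\ud\overline W_s R_s$ using $R_s^2=I_d$ and conclude by L\'evy's characterization---though your route is slightly more direct: the paper first extends $\boldsymbol W$ to a full $d\times d$ array $\widetilde{\boldsymbol W}$ of independent Brownian motions by adding independent diagonal entries $\widetilde W^{i,i}$, shows that $\int_0^t R_s\,\ud\widetilde W_s R_s$ is a $d^2$-dimensional standard Brownian motion (the bracket computation then reduces to $\sum_k R^{i,k}_s R^{i',k}_s=\delta_{i,i'}$), and recovers the antisymmetric Brownian motion by forming $(\cdot)^{i,j}-(\cdot)^{j,i}$ over $\sqrt2$; you instead work directly with the antisymmetric covariance $\ud\langle\overline W^{a,b},\overline W^{c,d'}\rangle=(\delta_{a,c}\delta_{b,d'}-\delta_{a,d'}\delta_{b,c})\,\ud t$ and contract, which saves the auxiliary construction. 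Both approaches hinge on the same orthogonality identity and are equivalent in difficulty.

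One small slip: in your parenthetical you write ``the $d(d-1)/2$ independent Brownian motions $(M^{i,j}+M^{j,i})/\sqrt2$,'' but by antisymmetry $M^{i,j}+M^{j,i}\equiv0$; you presumably meant the upper-triangular entries $M^{i,j}$, $i<j$, themselves (which your bracket computation shows to have $\langle M^{i,j},M^{k,l}\rangle_t=\delta_{i,k}\delta_{j,l}\,t$ for $i<j$, $k<l$). This does not affect the argument.
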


\begin{proof}
We first extend the family 
${\boldsymbol W}$ into a new family 
$\widetilde{\boldsymbol W}$ of independent Brownian motions, by letting 
$\widetilde{\boldsymbol W}^{i,j} = 
{\boldsymbol W}^{i,j}$ for $i,j \in \ES$ with $i \not = j$ and by assuming that 
the family
$(\widetilde{\boldsymbol W}^{i,i})_{i \in \ES}$ is a collection of Brownian motions 
that is independent of  ${\boldsymbol W}$. We then observe that, for any 
$i,j \in \ES$ with $i \not =j$, 
\begin{equation*}
\begin{split}
\biggl( \int_{0}^t R_{s} \ud \overline{W}_{s} R_{s} \biggr)_{i,j}
= 
\frac1{\sqrt 2} 
\biggl[ \biggl( \int_{0}^t R_{s} \ud \widetilde{W}_{s} R_{s} \biggr)_{i,j}
- 
\biggl( \int_{0}^t R_{s} \ud \widetilde{W}_{s} R_{s} \biggr)_{j,i} \biggr], \quad t \in [0,T].
\end{split}
\end{equation*}
In order to complete the proof, it suffices to show that 
the family $(( \int_{0}^t R_{s} \ud \overline{W}_{s} R_{s})_{0 \le t \le T})_{i,j \in \ES : i \not =j}$ forms a collection of independent Brownian motions 
that is independent of ${\boldsymbol W}^\circ$. 
Independence between
$(( \int_{0}^t R_{s} \ud \widetilde{W}_{s} R_{s})_{0 \le t \le T})_{i,j \in \ES }$
and  
${\boldsymbol W}^\circ$ is obvious. 
It thus remains to compute the brackets of the family
$(( \int_{0}^t R_{s} \ud \widetilde{W}_{s} R_{s})_{0 \le t \le T})_{i,j \in \ES }$
to conclude. Using the fact that $R^{\dagger}_{s} = R_{s}$
and $R_{s} R_{s} = I_{d}$, 
we have
\begin{equation*}
\begin{split}
\sum_{k,l \in \ES} R^{i,k}_{s} 
d \widetilde{W}_{s}^{k,l} R^{l,j}_{s}
\cdot 
\sum_{k',l' \in \ES}
R^{i',k'}_{s}
d \widetilde{W}_{s}^{k',l'} R^{l',j'}_{s}
&=
\sum_{k,l \in \ES} \sum_{k',l' \in \ES} \Bigl( R^{i,k}_{s} 
R^{i',k'}_{s}
R^{l,j}_{s}
R^{l',j'}_{s}
 \delta_{k,k'} \delta_{l,l'}
 \Bigr)
\\
&= \sum_{k,l \in \ES} \Bigl( R^{i,k}_{s} 
R^{i',k}_{s}
R^{l,j}_{s}
R^{l,j'}_{s}
\Bigr) 
= \delta_{i,i'} \delta_{j,j'},
\end{split}
\end{equation*}
which completes the proof. 
\end{proof}

\begin{lemma}
\label{lem:coupled:sde:1}
Under the assumption and notations of
Propositions 
\ref{prop:coupling:2}
and
\ref{prop:coupling:1}, 
Equation 
\eqref{eq:coupled:sde} is uniquely solvable (in the strong sense). 
\end{lemma}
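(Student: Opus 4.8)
The plan is to establish strong existence and uniqueness for the coupled system \eqref{eq:coupled:sde} by a localization argument that reduces the problem, away from the diagonal $\{\widetilde P_t = \widetilde Q_t\}$ and away from the boundary of the simplex, to a standard SDE with locally Lipschitz coefficients. First I would observe that the only sources of irregularity in \eqref{eq:coupled:sde} are: (i) the square-root singularities coming from the change of variable $\widetilde P^i = \sqrt{P^i}$, which through the terms $\varphi(\varsigma^2(P^\circ_t)P^i_t)/(2\widetilde P^i_t)$ and $-\varepsilon^2/(8\widetilde P^i_t)$ in $\widetilde B_i$ blow up as $\widetilde P^i_t \to 0$; (ii) the factor $\varsigma^{-2}(P^\circ_t)$, which blows up as $|P^\circ_t|_1 \to 1$; and (iii) the reflection matrix $R_t$ in \eqref{eq:def:Rt}, which is discontinuous at $t = \tau$ (equivalently on $\{\widetilde P_t = \widetilde Q_t\}$). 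Away from these sets the drift and diffusion coefficients of the full system $(\widetilde P, P^\circ, \widetilde Q, Q^\circ)$ are Lipschitz in space (uniformly in time): the coefficients $\widetilde b_i$, $\widetilde b_i^\circ$, $b_i$, $b_i^\circ$, $B^\circ$, $\Sigma^\circ$ are Lipschitz by hypothesis and by Proposition \ref{prop:representation}, the functions $\varsigma$ and $\varsigma^{-1}$ are smooth on $\{|p^\circ|_1 < 1\}$, and $R_t$ is a smooth (indeed analytic) function of $(\widetilde P_t - \widetilde Q_t)$ on $\{\widetilde P_t \ne \widetilde Q_t\}$.

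Concretely, I would proceed as follows. For $n \ge 1$ introduce the stopping time
\begin{equation*}
\theta_n := \inf\Bigl\{ t \ge t_0 : \min_{i \in \ESdm}\widetilde P^i_t \le \tfrac1n \ \text{ or } \ \min_{i \in \ESdm}\widetilde Q^i_t \le \tfrac1n \ \text{ or } \ |P^\circ_t|_1 \vee |Q^\circ_t|_1 \ge 1 - \tfrac1n \ \text{ or } \ |\widetilde P_t - \widetilde Q_t| \le \tfrac1n \Bigr\} \wedge T.
\end{equation*}
On $[t_0, \theta_n]$ the coefficients of \eqref{eq:coupled:sde} agree with those of a modified system in which $\widetilde B_i$, $\varsigma^{-2}$ and $R_t$ have been replaced by globally Lipschitz extensions (e.g. freezing $1/\widetilde P^i$ at $n$ when $\widetilde P^i < 1/n$, freezing $\varsigma^{-2}$ at its value at $|p^\circ|_1 = 1 - 1/n$, and replacing $|\widetilde P_t - \widetilde Q_t|^{-2}$ by $\min(n^2, |\widetilde P_t - \widetilde Q_t|^{-2})$ in \eqref{eq:def:Rt}). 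The modified system has globally Lipschitz coefficients, so it has a unique strong solution by the classical theorem; restricting that solution to $[t_0, \theta_n]$ gives a solution of \eqref{eq:coupled:sde} on $[t_0, \theta_n]$, and pathwise uniqueness on $[t_0, \theta_n]$ follows from the same comparison of coefficients. Letting $n \to \infty$, the stopping times $\theta_n$ increase to a limit $\theta_\infty \le T$, and on $[t_0, \theta_\infty)$ we obtain a unique strong solution.

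The remaining point — and this is the step I expect to be the main obstacle, although it is essentially already contained in the earlier analysis — is to show $\theta_\infty = T$ almost surely, i.e. that the solution does not explode into any of the forbidden sets before time $T$. For the boundary of the simplex, i.e. the events $\{\min_i \widetilde P^i_t \to 0\}$ and $\{|P^\circ_t|_1 \to 1\}$ (and likewise for $Q$), I would note that, by construction in Proposition \ref{prop:representation}, on $[t_0,\theta_\infty)$ the process $(P^\circ_t, \varsigma^2(P^\circ_t)P_t)$ solves \eqref{eq:Xti:2} (equivalently \eqref{eq:P}), which by Proposition \ref{prop:ito:formula} (or directly Proposition \ref{thm:approximation:diffusion:2}, using $\kappa \ge \varepsilon^2/2$) has coordinates that remain almost surely strictly positive and sum to one; hence none of the coordinates of $X_t = (P^\circ_t, \varsigma^2(P^\circ_t)P_t)$, and therefore neither $\min_i P^\circ_t$, $1 - |P^\circ_t|_1 = \varsigma^2(P^\circ_t)$, nor $\min_i P^i_t$ can vanish on $[t_0, T]$. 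The Feller-type argument underlying those propositions shows more precisely that $\int_{t_0}^T (1/\widetilde P^i_s)\,\ud s$ and $\int_{t_0}^T \varsigma^{-2}(P^\circ_s)\,\ud s$ are finite a.s., which rules out explosion of the square-root-singular drift in finite time. Finally, the diagonal: since $\tau := \inf\{t \ge t_0 : \widetilde P_t = \widetilde Q_t\}$ appears in the definition of $R_t$ precisely so that $R_t = I_{d-m}$ for $t \ge \tau$, the coefficients are continuous across $\{t = \tau\}$ once we set $R_t$ to the identity there, so $\tau$ is not a singularity of the system at all — the solution simply continues past $\tau$ with the two diffusions coupled synchronously (indeed $(\widetilde P_t,P^\circ_t) \equiv (\widetilde Q_t,Q^\circ_t)$ for $t \ge \tau$ by pathwise uniqueness of \eqref{eq:new:system:prop:5:2}). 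Thus the only genuine constraint is $|\widetilde P_t - \widetilde Q_t|^{-2}$ blowing up as $t \uparrow \tau$, but this is harmless because $R_t$ remains an orthogonal matrix of norm $1$ throughout, so the diffusion coefficient of the $\widetilde Q$-equation stays bounded; only the auxiliary quantity in the definition of $R_t$ is large, not any coefficient of the SDE. Assembling these observations, $\theta_\infty = T$ a.s. and \eqref{eq:coupled:sde} is uniquely solvable in the strong sense on $[t_0,T]$, which is the assertion of Lemma \ref{lem:coupled:sde:1}.
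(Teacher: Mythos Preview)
Your approach is essentially the paper's: localize away from the boundary and the diagonal, invoke the positivity results for the underlying Kimura diffusion to rule out boundary explosion, and then continue past $\tau$ where $R_t$ becomes the identity. However, the write-up has two issues worth fixing.

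First, and most importantly, your conclusion ``$\theta_\infty = T$ a.s.'' does not follow from your argument and is in fact false whenever the coupling succeeds before time $T$. By construction $\theta_n \le \tau$ for all large $n$, so $\theta_\infty \le \tau \wedge T$; the whole point of the coupling is that typically $\tau < T$. What you have actually shown (combining boundary non-explosion with the boundedness of $R_t$) is that $\theta_\infty = \tau \wedge T$, that the solution extends by continuity to the closed interval $[t_0,\tau]$ because the coefficients remain bounded, and that on $[\tau,T]$ the system with $R_t = I_{d-m}$ is solved by the same localization without the diagonal constraint. This two-phase structure is exactly how the paper organizes the proof; your final sentence conflates the two phases.

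Second, the parenthetical ``indeed $(\widetilde P_t,P^\circ_t) \equiv (\widetilde Q_t,Q^\circ_t)$ for $t \ge \tau$'' is incorrect: only $\widetilde P_\tau = \widetilde Q_\tau$ is guaranteed, while $P^\circ_\tau$ and $Q^\circ_\tau$ need not coincide (they solve the same SDE in $W^\circ$ but with drifts depending on $P_t$ and $Q_t$ respectively, which differ on $[t_0,\tau)$). This does not affect unique solvability, but the claim should be dropped. Finally, your ``likewise for $Q$'' when ruling out boundary explosion requires knowing that $\int_{t_0}^{\cdot} R_s\,\ud\overline W_s\,R_s$ is again an antisymmetric Brownian motion (so that $(Q^\circ,Q)$ is, in law, a solution of \eqref{eq:new:system:prop:5:2}); this is precisely Lemma~\ref{lem:coupled:sde:2}, which the paper invokes explicitly at this step.
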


\begin{proof}
We first observe that, at any time $t \in [0,T]$, the 
coefficients  $\tilde{B}(t,\cdot)$ and 
$B^\circ(t,\cdot)$
are a priori defined as functions of the space variable $(r^\circ,r) \in \hat{\mathcal S}_{m} \times {\mathcal S}_{d-m-1}$. 
By projecting $\RR^m$ onto $\hat{\mathcal S}_{m}$
(which is convex)
 and then $\RR^{d-m}$ onto ${\mathcal S}_{d-m-1}$ (which is also convex), we may easily extend them to the entire ${\mathbb R}^d$. 
We then observe from Proposition 
\ref{prop:representation}
that the full-fledged drift coefficient in the system 
\eqref{eq:coupled:sde}
remains Lipschitz continuous in the four entries $(P^\circ_{t},P_{t},Q^\circ_{t},Q_{t})$ as long the coordinates of the latter remain away from zero. 
Similarly, the diffusion coefficient remains Lipschitz continuous in the same four entries 
as long as the coordinates of the latter remain strictly positive and the distance between $P_{t}$ and $Q_{t}$ remains also 
strictly positive. 

Therefore, we deduce that, for any small $a>0$, the system \eqref{eq:coupled:sde} is uniquely solvable up to the first time $\uptau^a$ when one of the coordinates of the vector $(P^\circ_{\uptau^a},P_{\uptau^a},Q^\circ_{\uptau^a},Q_{\uptau^a})$ is less than $a$ or the distance 
between $P_{\uptau^a}$ and $Q_{\uptau^a}$ becomes less than $a$. 
Letting $a$ tend to $0$, we deduce that 
\eqref{eq:coupled:sde} is uniquely solvable up to $\uptau=\lim_{a \searrow 0} \uptau^a \wedge T$.

By Lemma \ref{lem:coupled:sde:2}, we know that, up to time $\uptau$, we may see 
$(P^\circ_{t},P_{t})_{t_{0} \le t < \uptau}$
and 
$(Q^\circ_{t},Q_{t})_{t_{0} \le t < \uptau}$
as solutions of an SDE of the same type 
as 
\eqref{eq:new:system:prop:5:2}. Hence, by identity in law 
\eqref{eq:identity:law} and by Proposition 
\ref{prop:ito:formula} (or equivalently by Proposition 
\ref{thm:approximation:diffusion:2}, recalling that $\kappa_{0} \geq 2$), we deduce that,
both processes take values in $\hat{\mathcal S}_{m} \times {\mathcal S}_{d-m}$ and that 
\begin{equation*}
\begin{split}
&{\mathbb P} \Bigl( 
\inf_{i \in \ESm} \inf_{t_{0} \leq t< \uptau}
P_{t}^{\circ,i} >0, \
\inf_{i \in \ESm} \inf_{t_{0} \leq t< \uptau}
Q_{t}^{\circ,i} >0, 
  \inf_{i \in \ESdm} \inf_{t_{0}\leq t< \uptau}
P_{t}^{i} >0, \
\inf_{i \in \ESdm} \inf_{t_{0} \leq t< \uptau}
Q_{t}^{i} >0
\Bigr)  = 1.
\end{split}
\end{equation*}
This shows in particular that the drift in 
\eqref{eq:coupled:sde} remains bounded up to time $\uptau$ and that it makes sense to extend (by continuity) the process
$(P,P^\circ,Q,Q^\circ)$ to the closed interval $[0,\uptau]$. 
Moreover, we must have
\begin{equation*}
{\mathbb P} \bigl( \uptau = \tau \bigr)=1,
\end{equation*}
where we recall that $\tau$ denotes the first time when the two processes 
$(P_{t})_{0 \leq t \leq \uptau}$ and 
$(Q_{t})_{0 \leq t \leq  \uptau}$ meet. 
This proves the unique strong solvability on $[0,\tau]$.
Unique solvability from $\tau$ to $T$ is addressed in a similar manner noting that the diffusion coefficient then becomes simpler, see   
\eqref{eq:def:Rt}.
\end{proof}

\subsubsection{Proof of Proposition \ref{prop:coupling:2}}
We recall that  
$\vert p-q \vert < \delta^2/ (64\sqrt{d})$. 
\vspace{5pt}
\\
\textit{First Step.}
The proof mostly relies on a Girsanov argument. Using the same notations as in the statement and in the proof of Proposition 
\ref{prop:coupling:1}, we let
(see
\eqref{eq:tildeP:coupling}
and
\eqref{eq:coupled:sde})
\begin{equation}
\label{eq:W:prime}
W_{t}^{\prime,i,j} := 
W^{i,j}_{t} + \frac1{\varepsilon} \int_{ t_{0}}^t \Psi_{s}^{i,j} \ud s,
\quad 
\Psi_{t}^{i,j} := \frac{2 \sqrt 2 \widetilde P_{t}^j}{
(\varsigma^{-1}(P_{t}^\circ) + \varsigma^{-1}(Q_{t}^\circ))
} \frac{\widetilde \beta_{t}^i}{\max(\widetilde P_{t}^i,\widetilde Q_{t}^i)} {\mathbf 1}_{\{ t  < \varpi\}}, 
\end{equation}
for 
$t \in [t_{0},T]$
and
$i,j \in \ESdm$ with $i \not = j$, and 
$W_{t}^{\prime,i,i} =0$ for $i \in \ESdm$. Then, for all $t \in [t_{0},\varpi \wedge T)$, 
\begin{equation*}
\varepsilon 
\frac{Z_{t}^\dagger}{\vert Z_{t} \vert} 
\ud \overline W_{t}  \widetilde P_{t} 
=
{\varepsilon}
\frac{Z_{t}^\dagger}{\vert Z_{t} \vert} 
\ud \overline W_{t}'  \widetilde P_{t}
-  
\frac{Z_{t}^\dagger}{\sqrt{2} \vert Z_{t} \vert} 
\bigl( \Psi_{t} - \Psi_{t}^\dagger \bigr) 
\widetilde P_{t} \ud t,
\end{equation*}
with
\begin{equation*}
\begin{split}
&
\frac{Z_{t}^\dagger}{\sqrt{2} \vert Z_{t} \vert} 
\bigl( \Psi_{t} - \Psi_{t}^\dagger \bigr) 
\widetilde P_{t} 
\\
&= 
\frac2{ \varsigma^{-1}(P_{t}^\circ) + \varsigma^{-1}(Q_{t}^\circ)}
\sum_{i,j \in \ESdm } 
\frac{Z_{t}^i }{ \vert Z_{t}\vert}
\Bigl( 
\frac{\widetilde \beta_{t}^i \widetilde P_{t}^j 
}{\max(\widetilde P_{t}^i,\widetilde Q_{t}^i)}
-    \frac{\widetilde \beta_{t}^j \widetilde P_{t}^i}{\max(\widetilde P_{t}^j,\widetilde Q_{t}^j)} 
\Bigr) \widetilde P_{t}^j 
\\
&=
\frac2{  \varsigma^{-1}(P_{t}^\circ) + \varsigma^{-1}(Q_{t}^\circ) }
\biggl( \sum_{i \in \ESdm} \frac{Z_{t}^i}{\vert Z_{t}\vert}  \frac{\widetilde \beta_{t}^i}{\max(\widetilde P_{t}^i,\widetilde Q_{t}^i)} - 
\Bigl\langle \frac{Z_{t}}{\vert Z_{t} \vert},\widetilde P_{t} \Bigr\rangle \sum_{j \in \ESdm} 
\frac{\widetilde \beta_{t}^j \widetilde P_{t}^j}{\max(\widetilde P_{t}^j,\widetilde Q_{t}^j)}
 \biggr),
\end{split}
 \end{equation*}
where we used the identity $\sum_{j \in \ESdm} \bigl(\widetilde P_{t}^j \bigr)^2 =1$. 
Plugging the above identity into 
\eqref{eq:Z:coupling}, we get
\begin{equation}
\label{eq:xi}
\begin{split}
&\ud    \vert Z_{t} \vert
\leq  
C \ud t + \varepsilon
\frac{\varsigma^{-1}(P_{t}^\circ) + \varsigma^{-1}(Q_{t}^\circ)}2
\frac{Z_{t}^{\dagger}}{\vert Z_{t} \vert} \ud \overline  W_{t}' \widetilde P_{t}, \quad t \in [t_{0},  \varpi \wedge T), 
\end{split}
\end{equation}
where $C$ is a constant only depending on $\delta$,  $\kappa$, $(\|b_{i}\|_{\infty})_{i=1,\cdots,d}$, 
$(\|b_i^{\circ}\|_{\infty})_{i=1,\cdots,d}$
and $T$. 
\vspace{5pt}

\textit{Second Step.}  
We now  introduce the probability measure:
\begin{equation}
\label{eq:dQ:dP}
\frac{\ud{\mathbb Q}}{\ud{\mathbb P}}
= \exp \biggl( - \frac1{\varepsilon} \sum_{i,j \in \ESdm : i \not =j} 
\int_{0}^{\varpi \wedge T}\Psi_{t}^{i,j} \ud W_{t}^{i,j}
- \frac1{2\varepsilon^{2}}  
\sum_{i,j \in \ESdm : i \not =j} 
\int_{0}^{\varpi \wedge T} \bigl\vert 
\Psi_{t}^{i,j}
 \bigr\vert^2 
\ud t \biggr).
\end{equation}
Under ${\mathbb Q}$, the processes $((W^{\prime,i,j}_{t})_{t_{0} \leq t \leq T})_{i,j \in \ESdm : i \not = j}$
are independent Brownian motions (the fact that we can apply Girsanov's theorem is fully justified in the third step of the proof). 
By 
\eqref{eq:coupling:covariance}, the bracket of the martingale part in 
\eqref{eq:xi} is given by (up to the leading multiplicative factor)
\begin{equation*}
\frac{1}{\ud t} \Bigl\langle \frac{Z_{t}^{\dagger}}{\vert Z_{t} \vert} \ud \overline{W}'_{t}
  \widetilde P_{t} \Bigr\rangle
  = 1 -
\frac14 \vert Z_{t} \vert^2.
\end{equation*}
In particular, there exists a Brownian motion $(B_{t})_{t_{0} \le t \le T}$
under ${\mathbb Q}$ such that 
\begin{equation}
\label{eq:vert Ztvert}
\begin{split}
&d  \vert Z_{t} \vert 
\leq  
C \ud t + 
\varepsilon
\frac{\varsigma^{-1}(P_{t}^\circ) +  \varsigma^{-1}(Q_{t}^\circ)}2
\sqrt{1-\frac14 \vert Z_{t} \vert^2} dB_{t}, \quad t \in [t_{0},\varpi \wedge   T). 
\end{split}
\end{equation}
Let now
\begin{equation*}
\ud \Theta_{t} = C  \ud t + \varepsilon \theta_{t} \ud B_{t},
\quad 
\theta_{t}: = 
\min \Bigl( \frac{\varsigma^{-1}(P_{t}^\circ) + \varsigma^{-1}(Q_{t}^\circ)}2, c' \Bigr) 
\sqrt{1-\frac14 \min(\vert Z_{t} \vert^2,\frac{\delta^2}{4})},
\end{equation*}
with $\vert \Theta_{t_{0}} \vert = \vert Z_{t_{0}} \vert = \vert \widetilde p - \widetilde q\vert$, with 
$\widetilde{p}=(\sqrt{p_{1}},\cdots,\sqrt{p_{d-m}})$ and similarly for $\widetilde{q}$, and 
where $c'= \min \{ \varsigma^{-1}(p^\circ), \vert p^\circ \vert_{1} \geq 3/4+\delta \sqrt{d}/4\}=
(1/4-\delta \sqrt{d}/4)^{-1/2}$. 
(Note that, for 
$t \in [t_{0},\varpi \wedge   T)$, 
$\vert P_{t}^\circ \vert_{1} \leq 3/4$ and 
$\vert Q_{t}^{\circ} \vert_{1} \leq 3/4 + \vert P_{t}^\circ - Q_{t}^\circ \vert_{1}
 \leq 3/4 + \sqrt{m} \vert P_{t}^\circ - Q_{t}^\circ \vert < 3/4 + \delta \sqrt{d} /4 < 
 3/4+1/16=13/16$.)
Obviously, $\vert Z_{t} \vert \leq \Theta_{t}$ for all $t \in [t_{0}, \varpi \wedge  T]$ (because, 
up to time $\varpi \wedge  T$, 
$\theta_{t}$ coincides with the integrand in the stochastic integral appearing in the right-hand side of 
\eqref{eq:vert Ztvert}). 
Since $(\theta_{t})_{t_{0} \le t \leq T}$
stays in a (universal) deterministic compact subset of $(0,+\infty)$, we deduce from a new application of Girsanov's theorem that there exists a new probability measure ${\mathbb Q}'$ under which 
\begin{equation*}
\ud\Theta_{t} = \varepsilon \theta_{t} \ud B_{t}', \quad t \in [t_{0},T],
\end{equation*}
$(B_{t}')_{t_{0} \leq t \leq T}$ being a Brownian motion under ${\mathbb Q}'$.
By expanding the Girsanov transformation, we can check that 
${\mathbb E}^{\mathbb Q'}[ (\ud {\mathbb Q}/\ud {\mathbb Q'})^2]
= {\mathbb E}^{\mathbb Q}[\ud {\mathbb Q}/\ud {\mathbb Q'}]
 \leq \gamma^2$, that is 
${\mathbb Q}(A) = {\mathbb E}^{\mathbb Q'}[
(\ud {\mathbb Q}/\ud {\mathbb Q'}) {\mathbf 1}_{A}]
 \leq \gamma {\mathbb Q}'(A)^{1/2}$ for any event $A \in {\mathcal F}_{T}^{{\boldsymbol W^\circ},{\boldsymbol W}}$, for a constant $\gamma$ that may depend on 
$\varepsilon$.
 
Clearly, $(B_{t}')_{t_{0} \leq t \leq T}$ can be extended into a Brownian motion (under ${\mathbb Q}'$) on the entire $[t_{0},\infty)$
and, similarly, $(\theta_{t})_{t_{0} \leq t \leq T}$ can be also extended to the entire $[t_{0},\infty)$ by letting 
$\theta_{t}=c'$ for $t >T$. The process $(\Theta_{t})_{t_{0} \leq t \leq T}$ can be extended accordingly to the entire $[t_{0},\infty)$. Representing $(\Theta_{t})_{t \geq t_{0}}$ in the form a time-changed Brownian motion, there exists a new Brownian 
motion $(\hat B_{t})_{t \geq 0}$ under ${\mathbb Q}'$ (with respect to a time-changed filtration) such that
\begin{equation*}
\Theta_{t} =
\vert \widetilde p - \widetilde q \vert
+
 \varepsilon \hat{B}_{I_{t}}, 
\quad 
I_{t}:=
\int_{t_{0}}^t \theta_{s}^2 \ud s, \quad t \geq t_{0}. 
\end{equation*}
Obviously, there exists a universal constant $\Gamma \geq 1$ such that, with probability 1 under ${\mathbb Q}'$, 
\begin{equation*}
\Gamma^{-1} (t-t_{0}) \leq I_{t} \leq \Gamma (t-t_{0}), 
\quad t \geq t_{0}. 
\end{equation*}
We now call 
\begin{equation*}
\sigma(\Theta) := \inf\bigl\{ s \geq t_{0} : \vert \Theta_{s} \vert \geq \frac{\delta}{4} \bigr\},
\quad
\tau(\Theta) := \inf\{ s \geq t_{0} : \Theta_{s} =0\}.
\end{equation*}
Then,
recalling that 
$\vert p-q \vert < \delta^2/ (64\sqrt{d})$
and
observing that 
\begin{equation}
\label{eq:z:p-q}
\begin{split}
\vert \widetilde p - \widetilde q \vert^2 &=  \sum_{i \in \ESdm} \bigl\vert \sqrt{p_{i}} - \sqrt{q_{i}} \bigr\vert^2
\leq \sum_{i \in \ESdm} \vert p_{i} - q_{i} \vert
 \leq \sqrt{d} \vert p - q \vert.   
\end{split} 
\end{equation}
we get 
$\vert \widetilde p - \widetilde q \vert  < \delta/8$ and then,  for $t \in [t_{0},T]$, 
\begin{equation*}
\begin{split}
{\mathbb Q}' \Bigl( \tau(\Theta) < \sigma(\Theta), \tau(\Theta) \leq t \Bigr)
&\geq  {\mathbb Q}' \biggl( 
\varepsilon \inf_{0 \leq s \leq  I_{t}} \hat B_{s}
\leq - \vert \widetilde p - \widetilde q \vert, 
\quad
\varepsilon \sup_{0 \leq s \leq I_{t}} \hat B_{s}
\leq  \frac{\delta}{8} 
\biggr) 
\\
&\geq 1- 
{\mathbb Q}' \Bigl( \varepsilon
\sup_{{0} \leq s \leq I_{t}} \hat B_{s}
\leq  \vert \widetilde p - \widetilde q \vert \Bigr)  
-
{\mathbb Q}' \Bigl( \varepsilon
\sup_{{0} \leq s \leq  I_{t}} \hat B_{s}
\geq  \frac{\delta}{8} 
\Bigr)
\\
&\geq 1- 
{\mathbb Q}'\Bigl( \varepsilon
\sup_{0 \leq s \leq (t-t_{0})/\Gamma} \hat B_{s}
\leq  \vert \widetilde p - \widetilde q \vert \Bigr)  
-
{\mathbb Q}' \Bigl( \varepsilon
\sup_{0 \leq s \leq   \Gamma (t-t_{0})} \hat B_{s}
\geq  \frac{\delta}{8} 
\Bigr).
\end{split}
\end{equation*}
We deduce that 
\begin{equation*}
\begin{split}
{\mathbb Q}' \Bigl( \tau(\Theta) < \sigma(\Theta), \tau(\Theta) \leq t \Bigr)
&\geq 1- C'   \frac{ \vert \widetilde p - \widetilde q \vert}{\varepsilon \sqrt{t-t_{0}}}  
 - C'  \exp \bigl( - \frac{1}{C' \varepsilon^2 (t-t_{0})} \bigr),
\end{split}
\end{equation*}
for a new constant $C'$ that is independent of $\varepsilon$. Therefore, 
by 
\eqref{eq:z:p-q}, 
up to a new value of $C'$, 
\begin{equation*}
\begin{split}
{\mathbb Q}' \Bigl( \tau(\Theta) < \sigma(\Theta), \tau(\Theta) \leq t \Bigr)
&\geq 1- C'  \frac{ \sqrt{\vert   p -  q \vert}}{\varepsilon \sqrt{t-t_{0}}}  
 - C'  \exp \bigl( - \frac{1}{C' \varepsilon^2 (t-t_{0})} \bigr).
\end{split}
\end{equation*}
In particular, choosing $t-t_{0}= \vert p-q \vert^{1/3}/2$, which is possible since 
$\vert p-q \vert^{1/3} \leq T-t_{0}$, we deduce that 
(with $S:= t_{0} + \vert p-q \vert^{1/3}$)
\begin{equation*}
{\mathbb Q}' \Bigl( \tau(\Theta) < \sigma(\Theta), \tau(\Theta) < S \Bigr)\geq 1 - \frac{C'}{\varepsilon} \vert p-q \vert^{1/3}, 
\end{equation*}
or, equivalently, 
\begin{equation*}
{\mathbb Q}' \biggl( \Bigl\{ \tau(\Theta) < \sigma(\Theta), \tau(\Theta) \leq S \Bigr\}^{\complement}\biggr) \leq  C \vert p-q \vert^{1/3},
\end{equation*}
where we recall that $C$ is allowed to depend on $\varepsilon$. 
Then, returning to ${\mathbb Q}$, we get, for a new value of $C$,
\begin{equation*}
{\mathbb Q} \bigl( \tau(\Theta) \geq  \sigma(\Theta) \wedge S \bigr)
=
{\mathbb Q} \biggl( \Bigl\{ \tau(\Theta) < \sigma(\Theta), \tau(\Theta) \leq S \Bigr\}^{\complement}\biggr) \leq  C \vert p-q \vert^{1/6}.
\end{equation*}
We then notice from the inequality $\vert Z_{t} \vert \leq \Theta_{t}$, for $t \in [t_{0},\varpi \wedge T]$, that
$t \leq \varpi \wedge T$ implies $t \leq  \tau(\Theta)$ and 
that 
$\sigma \leq \varpi \wedge T$ implies
$\sigma(\Theta) \leq \sigma$. Therefore,
on the event $\{\varpi \geq S\}$, 
we have 
$S \leq \tau(\Theta)$. 
Moreover, 
on the event $\{\tau \wedge \sigma \leq \varpi  \wedge T\}$, 
$$ \bigl\{ \sigma(\Theta) \leq \tau(\Theta) \bigr\} \supset \bigl\{ \sigma \leq \tau \bigr\} = \bigl\{ \tau < \sigma \bigr\}^{\complement}.$$ 
Hence, 
\begin{equation*}
\begin{split}
{\mathbb Q} \bigl( 
 \varpi_{S} < \tau \wedge \varrho \wedge \rho \bigr)  
&\leq {\mathbb Q} \bigl( \bigl\{ S \leq \varpi \} \cup \{\sigma \leq \tau, \ \tau \wedge \sigma 
\leq \varpi \wedge T\} \bigr)
\\
&\leq 
{\mathbb Q} \bigl( \bigl\{ S \leq \tau(\Theta) \} \cup \{\sigma(\Theta) \leq \tau(\Theta) \} \bigr)
={\mathbb Q} \bigl( \tau(\Theta) \geq  \sigma(\Theta) \wedge S \bigr)
\leq  C \vert p-q \vert^{1/6}.
\end{split}
\end{equation*}
  \vskip 5pt
 
 \textit{Third Step.}
 In order to complete the proof, it remains to prove that, for a new value of the constant $C$, for any event $A \in {\mathcal F}_{T}^{{\boldsymbol W^\circ},{\boldsymbol W}}$,
 ${\mathbb P}(A) \leq C {\mathbb Q}(A)^{1/2}$, 
  provided that $\kappa$ is chosen large enough. 
 As for the comparison of ${\mathbb Q}$ and ${\mathbb Q}'$ in the previous step, it suffices to 
 prove that 
$ {\mathbb E} [  \ud {\mathbb P}/\ud {\mathbb Q}  ]^{1/2}
\leq C$ (since
${\mathbb P}(A) 
= {\mathbb E}^{\mathbb Q}[ (
\ud {\mathbb P}/\ud {\mathbb Q}){\mathbf 1}_{A}]
\leq {\mathbb E} [  \ud {\mathbb P}/\ud {\mathbb Q}  ]^{1/2}
{\mathbb Q}(A)^{1/2}
$). 
%
%
%
Here (compare with 
\eqref{eq:dQ:dP}),
\begin{equation*}
\begin{split}
\frac{\ud {\mathbb P}}{\ud {\mathbb Q}}
&= \exp \biggl( \frac1{\varepsilon} \sum_{i,j \in \ESdm : i \not = j} 
\int_{t_{0}}^{\varpi \wedge T}
\Psi^{i,j}_{t}
 \ud W_{t}^{i,j}
+ \frac1{2 \varepsilon^2} 
\sum_{i,j \in \ESdm : i \not = j} 
\int_{t_{0}}^{\varpi \wedge T} \bigl\vert 
\Psi_{t}^{i,j}
 \bigr\vert^2 
\ud t \biggr).
\end{split}
\end{equation*}
Letting 
$$\Bigl(M_{t} := 
\frac1{\varepsilon} \sum_{i,j \in \ESdm : i \not = j} 
\int_{t_{0}}^t \Psi_{s}^{i,j} \ud W_{s}^{i,j}\Bigr)_{t_{0} \leq t \leq T},$$ this may be rewritten as 
\begin{equation*}
\frac{\ud {\mathbb P}}{\ud {\mathbb Q}}
= \exp \Bigl( M_{\varpi \wedge T} +  \frac12 \langle M \rangle_{\varpi \wedge T} \Bigr),
\end{equation*}
and then 
\begin{equation*}
\begin{split}
{\mathbb E} \Bigl[ \frac{\ud{\mathbb P}}{\ud {\mathbb Q}} 
 \Bigr]
&= 
{\mathbb E} \Bigl[ 
\exp \Bigl(   M_{\varpi \wedge T} +  \frac{1}2 \langle M \rangle_{\varpi \wedge T} \Bigr)
 \Bigr]
 \\
 &\leq 
 {\mathbb E} \Bigl[ 
\exp \Bigl(  M_{\varpi \wedge T} -    \langle M \rangle_{\varpi \wedge T} 
+  (1 + \frac{1}{2}) \langle M \rangle_{\varpi \wedge T} \Bigr) 
\Bigr]
 \leq
 {\mathbb E}  \Bigl[ 
\exp \Bigl( 3 \langle M \rangle_{\varpi \wedge T} \Bigr) 
\Bigr]^{1/2},
 \end{split}
\end{equation*}
where to get the last line, we used the fact that 
$ {\mathbb E}[ 
\exp( 2M_{\varpi \wedge T} -  2  \langle M \rangle_{\varpi \wedge T} )] \leq 1$. 
Returning to the definition of 
$\Psi$ in 
\eqref{eq:W:prime}
and recalling that $\varsigma^{-1}$ is lower bounded by $1$, 
the point is to prove that 
\begin{equation*}
 {\mathbb E}  \Bigl[ 
\exp \Bigl( \frac3{\varepsilon^2}
\sum_{i,j \in \ESdm : i \not = j} 
\int_{t_{0}}^{\varpi \wedge T} \bigl\vert 
\Psi_{t}^{i,j}
 \bigr\vert^2 
 \ud t \Bigr) \Bigr]
 \leq
 {\mathbb E}  \Bigl[ 
\exp \Bigl( \frac{6}{\varepsilon^2}
\sum_{i \in \ESdm} 
\int_{t_{0}}^{\varpi \wedge T} 
 \Bigl\vert \frac{\widetilde \beta_{t}^i}{\max(\widetilde P_{t}^i,\widetilde Q_{t}^i)}
\Bigr\vert^2 \ud t 
\Bigr) \Bigr] 
\end{equation*}
is finite, provided that $\kappa$ is chosen large enough and then to find a tractable bound. 
The proof is similar to that of Proposition 
\ref{expfin}, but we feel better to expand it as it plays a key role in the determination of the constant 
$\kappa$. Recalling 
the bound for 
$(\beta^i_{t})_{0 \le t \le T}$ in 
the statement of Proposition 
\ref{prop:coupling:1}, using the fact that $(\widetilde P_{t}^i)^2=P_{t}^i$ and invoking H\"older's inequality, it suffices to upper bound
\begin{equation}
\label{eq:Girsanov:needed:bounded}
\sup_{i \in \ESdm} {\mathbb E}  \Bigl[ 
\exp \Bigl( 
\frac{6d}{\varepsilon^2}
\int_{t_0}^{\varpi \wedge T}  \frac{(4 \| \widetilde b_{i} \|_{{\infty}})^2}{\max(P_{t}^i,Q_{t}^i)} 
 \ud t \Bigr) \Bigr]^{1/d}.
\end{equation}
Here, we recall from Proposition 
\ref{prop:representation}
that the coefficients 
$(\widetilde b_{j})_{j \in \ESdm}$
are bounded by a constant that only depends on 
$(\| b_{j} \|_{\infty})_{j \in \ES}$.
Moreover, 
we recall from 
\eqref{eq:new:system:prop:5:2} that 
\begin{equation*}
\begin{split}
\ud P_{t}^i &= \varsigma^{-2}(P_{t}^\circ) \Bigl(  \varphi\bigl(\varsigma^2(P_{t}^\circ) P_{t}^i \bigr) + \widetilde b_{i}(t,P_{t}^\circ,P_{t})  + P_{t}^i \widetilde b_i^{\circ}(t,P_{t}^\circ,P_{t}) \Bigr) \ud t 
+ \varepsilon \varsigma^{-1}(P_{t}^\circ) \sum_{j=1}^{d-m} \sqrt{P_{t}^i P_{t}^j} \ud \overline W_{t}^{i,j}, 
\end{split}
\end{equation*}
for $i \in \ESdm$ and 
$t \in [t_{0},T]$.
Using  again the fact that 
$(1/\ud t)
\langle \sum_{j \in \ESdm} \sqrt{P_{t}^i P_{t}^j} \ud \overline W_{t}^{i,j}
\rangle = P_{t}^i \bigl( 1 - P_{t}^i \bigr)$, 
we get, 
by It\^o's formula,
\begin{equation}
\label{ito:ln}
\begin{split}
\ud \bigl[ \ln P_{t}^i \bigr] &= \varsigma^{-2}(P_{t}^\circ) \Bigl( \frac{  \varphi( \varsigma^{2}(P_{t}^\circ) P_{t}^i) + \widetilde b_{i}(t,P_{t}^\circ,P_{t})}{P_{t}^i}  + \widetilde b_i^{\circ}(t,P_{t}^\circ,P_{t}) \Bigr) \ud t 
\\
&\hspace{15pt}
+ \varepsilon \varsigma^{-1}(P_{t}^\circ) \frac1{\sqrt{P_{t}^i}} \sum_{j \in \ESdm}  \sqrt{P_{t}^j} \ud \overline W_{t}^{i,j}
 - \frac{\varepsilon^2}2 \varsigma^{-2}(P_{t}^\circ) \frac{1- P_{t}^i}{P_{t}^i} \ud t,
\quad t \in [t_{0},T].
\end{split}
\end{equation}
Recalling that $\widetilde b_{i}$ takes non-negative values and 
denoting by 
$(O (1))_{0 \leq t \leq T}$ a progressively measurable process that is dominated by a constant $C$ that may depend on 
$\delta$, $\kappa$, $(\|b_{j}\|_{\infty})_{j \in \ES}$, $(\|b_{j}^{\circ}\|_{\infty})_{j \in \ES}$ and $T$,
recalling that $\varphi \equiv \kappa$ on $[0,\delta]$ and $\varsigma^{-2}(P_{t}^\circ) \leq 4$ for $t \leq \varpi$, 
and choosing $\kappa$ as large as needed (in terms of the sole 
$(\|b_{j}\|_{\infty})_{j \in \ES}$), we get, for $t \leq \varpi \wedge T$,  
\begin{equation*}
\begin{split}
&\varsigma^{-2}(P_{t}^\circ) \Bigl( \frac{ \varphi(\varsigma^2(P_{t}^\circ)P_{t}^i) + \widetilde b_{i}(t,P_{t}^\circ,P_{t})}{P_{t}^i}  + \widetilde b^{\circ}_{i}(t,P_{t}^\circ,P_{t}) 
- \frac{\varepsilon^2}2  \frac{1- P_{t}^i}{P_{t}^i}
\Bigr)
\\
&\geq \frac{\varsigma^{-2}(P_{t}^\circ)}{P_{t}^i} \bigl( \kappa -   \tfrac12
\bigr) {\mathbf 1}_{P_{t}^i \leq \delta} - O(1)
 \geq \frac{\kappa}{2} \frac{\varsigma^{-2}(P_{t}^\circ)}{P_{t}^i}    - O(1).   
\end{split}
\end{equation*}
Hence, integrating \eqref{ito:ln}, multiplying by some $\eta>0$ and then taking exponential, 
\begin{equation*}
\label{eq:E:Pt:eta}
\begin{split}
&(P_{\varpi \wedge T}^i)^{\eta} \exp \biggl( - \eta \varepsilon \int_{t_{0}}^{\varpi \wedge T} \varsigma^{-1}(P_{t}^\circ) \frac1{\sqrt{P_{t}^i}}
\sum_{j \in \ESdm} \sqrt{P_{t}^j} \ud \overline W_{t}^{i,j} - \frac{\eta^2 \varepsilon^2}2 
\int_{t_{0}}^{\varpi \wedge T} \varsigma^{-2}(P_{t}^\circ) \frac{1-P_{t}^i}{P_{t}^i} \ud t 
\biggr)
\\
&\geq (P_{0}^i)^{\eta}
\exp \biggl( \bigl(   \frac{\eta \kappa-\eta^2 \varepsilon^2}{2}\bigr) \int_{t_{0}}^{\varpi \wedge T} \frac{\varsigma^{-2}(P_{t}^\circ)}{P_{t}^i} \ud t  -C\biggr),
\end{split}
\end{equation*}
where $C$ is a constant as before. 
For any given $\eta \in (0,1)$, we can choose $\kappa$ as large as needed ($\kappa$ now depending on 
$\varepsilon$, $\eta$ 
and
$(\| b_{j} \|_{{\infty}})_{j \in \ES}$) such that 
\begin{equation}
\label{eq:condition:kappa0:epsilon}
\frac{\eta \kappa-\eta^2 \varepsilon^2}{2} \geq 
\frac{6d}{\varepsilon^2}
\bigl(4 \max_{j \in \ESdm}\| \widetilde b_{j} \|_{{\infty}}\bigr)^2,
\end{equation}
and then (compare with
\eqref{eq:Girsanov:needed:bounded})
\begin{equation*}
 {\mathbb E}  \biggl[ 
\exp \biggl( 
\frac{6 d}{\varepsilon^2}
\int_{t_{0}}^{\varpi \wedge T} \varsigma^{-2}(P_{t}^\circ) \frac{(4  \max_{j \in \ESdm}\| \widetilde b_{j} \|_{{\infty}})^2}{ P_{t}^i}
 \ud t \biggr) \biggr] \leq \frac{C}{p_i^{\eta}},
\end{equation*}
where $C$ is independent of $p_{0}$ but depends on $\delta$, $\varepsilon$, $(\| b_{j} \|_{\infty})_{j \in \ES}$ and 
$(\| b_{j}^\circ \|_{\infty})_{j \in \ES}$ and $T$. 
Since $\varsigma^{-1}$ is above $1$, 
\begin{equation*}
 {\mathbb E}  \biggl[ 
\exp \biggl( 
\frac{6 d}{\varepsilon^2}
\int_{t_{0}}^{\varpi \wedge T} \frac{(4  \max_{j \in \ESdm}\| \widetilde b_{j} \|_{{\infty}})^2}{ P_{t}^i}
 \ud t \biggr) \biggr] \leq \frac{C}{p_i^{\eta}},
\end{equation*}
Similarly,  we have
the same inequality, but replacing $P_{t}^i$ by $Q_{t}^i$ in the left-hand side and 
$p_{i}$ by $q_{i}$ in the right-hand side. Hence,  
we can upper bound 
\eqref{eq:Girsanov:needed:bounded}
by 
%
$C/\max(p_i,q_i)^\eta$. 
\vskip 5pt

\textit{Conclusion.}
We deduce from 
the conclusions of the second and third steps 
 that 
\begin{equation*}
\begin{split}
&{\mathbb P} \bigl( 
 \varpi_{S} < \tau \wedge \varrho \wedge \rho \bigr)  
%
%
 \leq C  \frac{\vert p -q \vert^{1/12}}{\min_{i \in \ESdm}(\max(p_i,q_i))^{d\eta/2}}, 
\end{split}
\end{equation*}
where $C$ depends on $\delta$, $\varepsilon$, $\kappa$, $\eta$, $(\| b_{i} \|_{\infty})_{i=1,\cdots,d}$, $(\|b^{\circ}_{i}\|_{\infty})_{i=1,\cdots,d}$ and $T$. 
Since the value of $\eta$ is arbitrary (provided that it belongs to $(0,1)$), 
we can easily apply the above inequality with $2\eta /d$ instead of $\eta$ (observe that, whenever 
$\eta \in (0,1)$, $2\eta/d$ also belongs to $(0,1)$, since $d \geq 2$).  \qed

\subsection{Proof of Proposition \ref{prop:representation}}
\label{subsubse:proof:proposition 5.2}

\begin{proof}
{\textit{First Step.}}
We introduce some useful notations. 
Having in mind the shape of the coefficients in equation 
\eqref{eq:P}, we let, 
for 
$i \in \ES$ and for
$p \in {\mathcal S}_{d-1}$, 
\begin{equation*}  
{\mathfrak b}_{i}(t,p) :=   \varphi(p_{i}) +  b_{i}(t,p) + p_{i} b_{i}^{\circ}(t,p).
\end{equation*}
Importantly, we recall
from 
\eqref{eq:zero:sum}
 that, for any {$(t,p) \in [0,T] \times {\mathcal S}_{d-1}$}, $\sum_{i \in \ES}  {\mathfrak b}_{i}(t,p)=0$. 
In fact, we can easily extend ${\mathfrak b}_{i}$, for each $i \in \ES$, 
to the entire $[0,T] \times {\mathbb R}^d$ by composing ${\mathfrak b}_{i}$ with the orthogonal projection from 
$\RR^d$ into ${\mathcal S}_{d-1}$. This allows us to define the drift $B^\circ$ entering the dynamics of the second equation in 
\eqref{eq:new:system:prop:5:2}. For a given coordinate $i \in \ESm$, we indeed let
\begin{equation*}
B^{\circ}_{i}(t,r^\circ,r)
:= {\mathfrak b}_{i} \Bigl(t, \bigl( r^\circ,\varsigma^2(r^\circ)r\bigr) \Bigr), \quad t \in [0,T],
\end{equation*}
for $r^\circ \in \RR^m$ and
$r \in \RR^{d-m}$. Notice that 
the definition is especially interesting for our purpose whenever $r^\circ \in \hat{\mathcal S}_{m}$
and $r \in {\mathcal S}_{d-m-1}$, but it is well defined in any case (with the obvious convention that 
$\varsigma^2(r^\circ)=1-(r^\circ_{1}+\cdots+r^\circ_{m})$ even if it is negative). Similarly, for $i \in \ESdm$, we let
\begin{equation*}
B_{i}(t,r^\circ,r)
:=  {\mathfrak b}_{m+i} \Bigl(t, \bigl( r^\circ,\varsigma^2(r^\circ)r\bigr) \Bigr), \quad t \in [0,T].
\end{equation*}
For a new collection of antisymmetric Brownian motions $\overline W^\circ = (\overline W^\circ_{t}=(\overline W^{\circ,i,j}_{t})_{i,j \in \ES : i \not =j})_{0 \leq t \leq T}$
of dimension $d(d-1)/2$ (with the convention  that $\overline{\boldsymbol W}^{\circ,i,i} \equiv 0$ for 
$i \in \ES$), we 
consider the system
\begin{equation}
\label{eq:system:tilde:samelaw}
\begin{split}
&\ud P_{t}^i = \varsigma^{-2}(P_{t}^\circ) \Bigl(
B_{i}(t,P^\circ_{t},P_{t})
- P_{t}^i \sum_{j \in \ESdm}
B_{j} (t,P^\circ_{t},P_{t})
 \Bigr) \ud t 
 \\
&\hspace{35pt}
+ \varepsilon \varsigma^{-1}(P_{t}^\circ) \sum_{j \in \ESdm} \sqrt{P_{t}^i P_{t}^j} \ud \overline W_{t}^{i+m,j+m}, 
 \quad i \in \ESdm,
\\
&\ud \bigl( P_{t}^\circ \bigr)^{i} = B^{\circ}_{i}(t,P_{t}^\circ,P_t) \ud t 
+\varepsilon \sum_{j \in \ESm}
\sqrt{(P^\circ_{t})^{i} ({P}_{t}^\circ)^{j}}
\ud \overline{W}^{\circ,i,j}_{t} 
\\
&\hspace{35pt} +
\varepsilon \varsigma(P_{t}^\circ) 
 \sum_{j \in \ESdm}
  \sqrt{(P_{t}^\circ)^i ({P}_{t})^{j}}
\ud \overline{W}^{\circ,i,m+j}_{t}, 
 \quad i \in \ESm,
\end{split}
\end{equation}
for $t \in [t_{0},T]$.
 {The unique solvability of 
\eqref{eq:system:tilde:samelaw}
is addressed in the next two steps.} 
\vskip 4pt

\textit{Second Step.}
Observing that 
${\mathfrak b}$ is Lipschitz continuous, we deduce that 
the coefficients 
of 
\eqref{eq:system:tilde:samelaw}
are Lipschitz continuous in the entries $(P^\circ,P)$ 
as long the coordinates of the 
latter remain bounded and away from zero and as long as 
the sum of the coordinates of $P^\circ$ remains away below $1$, we deduce that, for any small $a>0$,
the system \eqref{eq:system:tilde:samelaw} is uniquely solvable up to the first $\uptau^{a}$
when one of the coordinates of $P^\circ_{\uptau^{a}}$ or of $P_{\uptau^a}$ becomes lower than 
$a$ or when the sum of the coordinates of $P^\circ_{\uptau^a}$ becomes greater than $1-a$
. 
Letting $a$ tend to $0$, we deduce that 
\eqref{eq:system:tilde:samelaw} is uniquely solvable up to time $\uptau = \lim_{a \searrow 0} \uptau^{a} \wedge T$.

Hence, unique solvability follows if we can prove that
\begin{equation}
\label{eq:proba:to:be:1}
{\mathbb P}
\biggl( \inf_{i \in \ESm} \inf_{t_{0} \le t < \uptau} P^{\circ,i}_{t} >0, 
\quad \sup_{t_{0} \le t < \uptau} \sum_{i \in \ESm} P^{\circ,i}_{t} < 1, 
\quad 
\forall t \in [t_{0},\uptau), \ \sum_{i \in \ESdm} P_{t}^i =1
\biggr) = 1,
\end{equation}
since the latter implies that $\uptau=T$.

In order to 
check \eqref{eq:proba:to:be:1}, we first observe that, for $t \in [t_{0},\uptau)$, 
$\ud ( \sum_{i \in \ESdm} P_{t}^i)=0$. Hence, 
\begin{equation*}
\sum_{i \in \ESdm} P_{t}^i = 1, \quad t \in [t_{0},\uptau].
\end{equation*}
(Notice that the time interval is closed: Observing that the coefficients in 
\eqref{eq:system:tilde:samelaw} are bounded, we may indeed easily extend the solution in hand 
at time $\uptau$ itself.)
This prompts us to let
\begin{equation*}
\begin{split}
&\widetilde X_{t}^i = (P_{t}^\circ)^i, \quad i \in \ESm \ ; \quad 
\widetilde X_{t}^i = 
\varsigma^2 \bigl( P_{t}^\circ \bigr) P_{t}^{i-m}, 
\quad 
i = m+1,\cdots,d, 
\end{split}
\end{equation*}
for $t \in [t_{0},\uptau]$. 
Observe in particular that $\sum_{i \in \ES} 
\widetilde X_{t}^i =1$, for all $t \in [t_{0},\uptau]$.
If we prove that 
$(\widetilde X^1_{t},\cdots,\widetilde X_{t}^d)_{t_{0} \le t \le \uptau}$
satisfies the SDE \eqref{eq:P}
but for a new choice of the noise, then we are done: 
Not only we then deduce from  
Proposition 
\ref{prop:ito:formula}
(or, equivalently, 
Proposition 
\ref{thm:approximation:diffusion:2}) that 
\eqref{eq:proba:to:be:1} indeed holds true, but 
we also obtain the required identity in law, see 
\eqref{eq:identity:law}. 
\vskip 4pt

\textit{Third Step.} 
In order to prove that 
$(\widetilde X^1_{t},\cdots,\widetilde X_{t}^d)_{t_{0} \le t \le \uptau}$
satisfies \eqref{eq:P} (for a new choice of noise), we proceed as follows. 
First, we notice that, 
for $i \in \ESm$, 
\begin{equation*}
\ud \widetilde X_{t}^i  = 
{\mathfrak b}_i\bigl(t, \widetilde X_{t} \bigr) \ud t + \varepsilon
 \sum_{j \in \ES}
\sqrt{\widetilde{X}_{t}^{i}}\sqrt{\widetilde{X}_{t}^{j}} \ud \overline{W}^{\circ,i,j}_{t}, \quad t \in [t_{0},\uptau]. 
\end{equation*}
And, for $i=m+1,\cdots,d$, 
\begin{align}
\ud \widetilde X_{t}^i &=
 \Bigl(
B_{i-m}(t,P^\circ_{t},P_{t})
- P_{t}^{i-m} \sum_{j \in \ESdm}
B_{j} (t,P^\circ_{t},P_{t})
 \Bigr) \ud t 
+ \varepsilon \varsigma(P_{t}^\circ) \sum_{j \in \ESdm} \sqrt{P_{t}^{i-m} P_{t}^j} \ud \overline W_{t}^{i,j+m}
\nonumber
\\
&\hspace{5pt}
- P_{t}^{i-m}
\sum_{j \in \ESm}
B^{\circ}_{j}(t,P_{t}^\circ,P_t) \ud t 
- \sum_{j \in \ESdm}
\ud \bigl\langle
P_{t}^i, \bigl( P_{t}^\circ \bigr)^j\bigr\rangle 
\label{eq:widetildeXti}
\\
&\hspace{5pt}
- \varepsilon P_{t}^{i-m}
 \sum_{j,l \in \ESm}
\sqrt{(P^\circ_{t})^{j} ({P}_{t}^\circ)^{l}}
\ud \overline{W}^{\circ,j,l}_{t}  - \varepsilon P_{t}^{i-m}
\varsigma(P_{t}^\circ) 
 \sum_{j \in \ESm}
 \sum_{l \in \ESdm}
 \sqrt{(P_{t}^\circ)^j ({P}_{t})^{l}}
\ud \overline W^{\circ,j,m+l}_{t}.
\nonumber
\end{align}
Obviously, the bracket on the second line is zero since the underlying noises are independent. 
Hence, using the fact that $\sum_{i \in \ES} {\mathfrak b}_{i}(t,p)=0$ for any $(t,p) \in [0,T] \times \RR^{d}$, 
the drift reads 
\begin{equation*}
\begin{split}
&B_{i-m}(t,P^\circ_{t},P_{t})
- P_{t}^{i-m} \sum_{j \in \ESdm}
B_{j} (t,P^\circ_{t},P_{t})
- P_{t}^{i-m} \sum_{j \in \ESm}
B_{j}^\circ (t,P^\circ_{t},P_{t})
\\
&= {\mathfrak b}_{i}(t,\widetilde X_{t}) 
- P_{t}^{i-m} \sum_{j \in \ES}
{\mathfrak b}_{j} (t,P^\circ_{t},P_{t}) = {\mathfrak b}_{i}(t,\widetilde X_{t}).
\end{split}
\end{equation*}
Therefore, in order to prove that 
$(\widetilde X_{t})_{t_{0} \le t \le \uptau}$ 
satisfies \eqref{eq:P} (for a new choice of noise), it suffices to identify the martingale structure in 
\eqref{eq:widetildeXti}. To do so, we rewrite the three martingale increments in the above expansion for 
$i=m+1,\cdots,d$ in the form
\begin{equation*}
\begin{split}
&\varepsilon \varsigma(P_{t}^\circ) \sum_{j \in \ESdm} \sqrt{P_{t}^{i-m} P_{t}^j} \ud \overline W_{t}^{i,j+m}
\\
&\hspace{15pt}
- \varepsilon P_{t}^{i-m}
 \sum_{j,l \in \ESm}
\sqrt{(P^\circ_{t})^{j} ({P}_{t}^\circ)^{l}}
\ud \overline W^{\circ,j,l}_{t} 
- \varepsilon P_{t}^{i-m}
\varsigma(P_{t}^\circ) 
 \sum_{j \in \ESm}
 \sum_{l \in \ESdm}
 \sqrt{(P_{t}^\circ)^j ({P}_{t})^{l}}
\ud \overline W^{\circ,j,m+l}_{t}
\\
&=\varepsilon\varsigma^{-1}(P_{t}^\circ) \sum_{j=m+1}^{d} \sqrt{\widetilde X_{t}^{i} \widetilde X_{t}^j} \ud \overline W_{t}^{i,j}
- \varepsilon  \varsigma^{-2}(P_{t}^\circ) \widetilde X_{t}^i \sum_{j \in \ESm} \sum_{l \in \ES}
\sqrt{\widetilde X_{t}^{j} \widetilde X_{t}^l}\ud \overline W^{\circ,j,l}_{t}.
\end{split}
\end{equation*}
Hence, in order to complete the analysis, it remains to compute the various brackets 
$\ud \langle \widetilde X_{t}^i,\widetilde X_{t}^j \rangle $
for $i,j \in \ES$. 
Obviously, whenever $i,j \in \ESm$, 
\begin{equation*}
\begin{split}
\ud \langle \widetilde X_{t}^i,\widetilde X_{t}^j \rangle 
= \varepsilon^2 \Bigl( \widetilde X_{t}^i \delta_{i,j} - {\widetilde X_{t}^i \widetilde X_{t}^j }\Bigr) \ud t. 
\end{split}
\end{equation*}
Now, $i,j=m+1,\cdots,d$,
\begin{equation*}
\begin{split}
\ud \langle \widetilde X_{t}^i,\widetilde X_{t}^j \rangle_{t}
&= \varepsilon^2 \varsigma^{-2} (P_{t}^\circ)
\sqrt{\widetilde X_{t}^i \widetilde X_{t}^j}
\sum_{l,l'=m+1}^d \sqrt{\widetilde X_{t}^{l} \widetilde X_{t}^{l'}} \bigl( \delta_{i,j} \delta_{l,l'} 
- \delta_{i,l'} \delta_{j,l}
\bigr) \ud t
\\
&\hspace{15pt}
+  \varepsilon^2 \varsigma^{-4} (P_{t}^\circ) {\widetilde X_{t}^i \widetilde X_{t}^j}
\sum_{k,k' \in \ESm} 
\sum_{l,l' \in \ES}
\sqrt{\widetilde X_{t}^k \widetilde X_{t}^l
\widetilde X_{t}^{k'} \widetilde X_{t}^{l'}}
\bigl( \delta_{k,k'} \delta_{l,l'} - \delta_{k,l'} \delta_{k',l}
\bigr) \ud t
\\
&=  \varepsilon^2 \varsigma^{-2} (P_{t}^\circ)
\Bigl(\delta_{i,j}  {\widetilde X_{t}^i} \sum_{l=m+1}^d 
\widetilde X_{t}^l - {\widetilde X_{t}^i \widetilde X_{t}^j} \Bigr) \ud t
\\
&\hspace{15pt}
+  \varepsilon^2 \varsigma^{-4} (P_{t}^\circ)
\widetilde X_{t}^i \widetilde X_{t}^j
\Bigl(
\sum_{k \in \ESm} 
 \sum_{l \in \ES} 
 \widetilde X_{t}^k
 \widetilde X_{t}^l
 -  
\sum_{k \in \ESm} 
 \sum_{l \in \ESm} 
 \widetilde X_{t}^k
 \widetilde X_{t}^l 
 \Bigr) \ud t
 \\
 &=  \varepsilon^2 \varsigma^{-2} (P_{t}^\circ)
\Bigl(\delta_{i,j}  {\widetilde X_{t}^i} \sum_{l=m+1}^d 
\widetilde X_{t}^l - {\widetilde X_{t}^i \widetilde X_{t}^j} \Bigr) \ud t
+  \varepsilon^2 \varsigma^{-4} (P_{t}^\circ)
\widetilde X_{t}^i 
\widetilde X_{t}^j  
\sum_{k=1}^m 
 \sum_{l=m+1}^d 
 \widetilde X_{t}^k
 \widetilde X_{t}^l \ud t.
\end{split}
\end{equation*}
Now, the key point is to observe that 
$\sum_{l=m+1}^d \widetilde X_{t}^l = 1 - \sum_{l=1}^m \widetilde X_{t}^l =
\varsigma^2(P_{t}^\circ)$. Therefore,
 \begin{equation*}
\begin{split}
\ud \langle \widetilde X_{t}^i,\widetilde X_{t}^j \rangle_{t}
&= \Bigl( \varepsilon^2  \delta_{i,j}  {\widetilde X_{t}^i} 
 - \varepsilon^2 \varsigma^{-2}(P_{t}^\circ)
 {\widetilde X_{t}^i \widetilde X_{t}^j} 
+  \varepsilon^2 \varsigma^{-2} (P_{t}^\circ)
{\widetilde X_{t}^i}
{\widetilde X_{t}^j} 
\bigl( 1 - \varsigma^{2} (P_{t}^\circ) \bigr) \Bigr) \ud t
\\
&= \varepsilon^2  \Bigl( \delta_{i,j}  {\widetilde X_{t}^i} -
{\widetilde X_{t}^i}
{\widetilde X_{t}^j}  \Bigr) \ud t.
\end{split}
\end{equation*}
Now, for $i=1,\cdots,m$ and for $j=m+1,\cdots,d$, 
\begin{equation*}
\begin{split}
\ud \langle \widetilde X_{t}^i,\widetilde X_{t}^j \rangle_{t}
&=
- \varepsilon^2
\varsigma^{-2}(P_{t}^\circ)
\widetilde X_{t}^j
 \sum_{l,l'=1}^{d}
 \sum_{k=1}^m 
\sqrt{\widetilde{X}_{t}^{i} \widetilde{X}_{t}^{l}} 
\sqrt{\widetilde X_{t}^{k} \widetilde X_{t}^{l'}}
\bigl( \delta_{i,k} \delta_{l,l'} - \delta_{i,l'} \delta_{k,l} \bigr) \ud t
\\
&= 
\Bigl( - \varepsilon^2
\varsigma^{-2}(P_{t}^\circ)
\widetilde X_{t}^i 
\widetilde X_{t}^j
+  \varepsilon^2
\varsigma^{-2}(P_{t}^\circ)
\widetilde X_{t}^i 
\widetilde X_{t}^j
 \sum_{k=1}^m 
\widetilde{X}_{t}^{k}  \Bigr) \ud t
= - \varepsilon^2
\widetilde X_{t}^i 
\widetilde X_{t}^j \ud t,
\end{split}
\end{equation*}
which completes the proof.
\end{proof}

\noindent {{\bf Acknowledgment.} We are thankful to the anonymous referee for his or her suggestions, which helped us improve the paper.}

\bibliographystyle{abbrv}
\bibliography{references}

\end{document}